\crefname{equation}{Equation}{Equations}
\newtheorem{theorem}{Theorem} 
\newtheorem*{theorem*}{Theorem}
\newtheorem{definition}[theorem]{Definition}
\newtheorem*{definition*}{Definition} 
\newtheorem{proposition}[theorem]{Proposition}
\newtheorem{corollary}[theorem]{Corollary}
\newtheorem{lemma}[theorem]{Lemma}
\newtheorem{remark}[theorem]{Remark}
\renewcommand{\i}{{\mathrm{i}}}
\newcommand{\id}{{\mathrm{Id}}}
\newcommand{\Id}{{\mathrm{Id}}}
\renewcommand{\d}{{\,\rm d}}
\newcommand{\mutensorn}{\mu^{\otimes n}}
\newcommand\mutensor[1]{\mu^{\otimes {#1}}}
\newcommand{\dist}{{\Delta}}
\newcommand{\pp}{{\mathbb P}}
\newcommand{\ee}{{\mathbb E}}
\newcommand{\ppch}{\pp^{\mathrm{ch}}}
\newcommand{\rr}{{\mathbb R}}
\newcommand{\nn}{{\mathbb N}}
\newcommand{\nnone}{{\mathbb N^*}}
\newcommand{\nnzero}{{\mathbb N}}
\newcommand{\cc}{{\mathbb C}}
\renewcommand{\P}{{\mathrm P}}
\newcommand{\supp}{\operatorname{supp}}
\newcommand\pc[1]{{\P(\cc^{#1}) }}
\newcommand\pcr{{\P(\cc^{r_m}) }}
\newcommand\pcd{{\P(\cc^{d}) }}
\newcommand\supchi{{\supp \chi_{{\rm inv}}}}
\newcommand{\B}{\mathcal{B}}
\newcommand{\W}{{\mathcal W}}
\newcommand{\V}{{\mathcal V}}
\newcommand{\J}{{\mathcal J}}
\renewcommand{\O}{{\mathcal O}}
\newcommand{\Sd}{{\mathcal M_d^{1,+}}}
\newcommand{\Srm}{{\mathcal M_{r_m}^{1,+}}} 
\newcommand{\Sr}{{\mathcal M_{r_m}^{1,+}}} 
\newcommand{\Ud}{{\mathcal U(d)}}
\newcommand{\SUd}{{\mathcal{SU}(d)}}
\newcommand{\Urm}{{\mathcal U(r_m)}}
\newcommand{\Ld}{\mathcal M_d} 
\newcommand{\D}{{\mathcal D_m}}
\newcommand{\tJ}{\widetilde{\mathcal J}}
\newcommand{\specialcell}[1]{\ifmeasuring@#1\else\omit$#1$\ignorespaces\fi}
\begin{document}
    
    \title[Invariant measures of quantum trajectories]{Dark subspaces and invariant measures \\ of Quantum Trajectories}
    
    \author{T. Benoist}
    \address{Institut de Math\'ematiques de Toulouse, \'Equipe de Statistique et Probabilit\'es,
        Universit\'e Paul Sabatier, 31062 Toulouse Cedex 9, France}
    \email{tristan.benoist@math.univ-toulouse.fr}
    \author{C. Pellegrini}
    \address{Institut de Math\'ematiques de Toulouse, \'Equipe de Statistique et Probabilit\'es,
        Universit\'e Paul Sabatier, 31062 Toulouse Cedex 9, France}
    \email{clement.pellegrini@math.univ-toulouse.fr}
    \author{A. Szczepanek}
    \address{Institut de Math\'ematiques de Toulouse, \'Equipe de Statistique et Probabilit\'es,
        Universit\'e Paul Sabatier, 31062 Toulouse Cedex 9, France}
 
\email{anna.szczepanek@math.univ-toulouse.fr}
    
\setcounter{tocdepth}{1}
    
    \subjclass[2000]{60J05, 81P16, 81R05, 22E70}
    \keywords{Markov chains, Quantum measurement theory, Compact groups, Invariant measures}
    
    \begin{abstract} 
    Quantum trajectories are Markov processes describing the evolution of a quantum system subject to indirect measurements. They can be viewed as place dependent iterated function systems or the result of products of dependent and non identically distributed random matrices. In this article, we establish a complete classification of their invariant measures. 
    
    The classification is done in two steps. First, we prove a Markov process on some linear subspaces called \emph{dark subspaces}, defined in (Maassen, Kümmerer 2006), admits a unique invariant measure. Second, we study the process inside the dark subspaces. Using a notion of minimal family of isometries from a reference space to dark subspaces, we prove a set of measures indexed by orbits of a unitary group is the set of ergodic measures of quantum trajectories.

    \end{abstract}

    \maketitle
    \thispagestyle{empty}   
  
  \tableofcontents

    \section{Introduction}

\subsection{Model.}  
Consider the canonical complex $d$-dimensional space $\mathbb{C}^d$ and its projective space $\pcd$ equipped with its Borel $\sigma$-algebra. 
For a non-zero vector $x\in\mathbb C^d$, $\hat x$ stands for the equivalence class of $x$ in $\P(\mathbb C^d)$. For $\hat x\in \P(\cc^d)$,  by $x\in \cc^d$ we denote an arbitrary unit norm representative of $\hat x$, and by $\pi_{\hat x}$ the orthogonal projector onto $\cc x$.
By $\Ld$ we denote the set of linear maps on $\cc^d$, and for a linear map $v\in \Ld$, $v\cdot \hat{x} $ is the element of the projective space represented by $v\,x$ whenever $v\,x\neq 0$. We equip $\Ld$ with its Borel $\sigma$-algebra and consider a measure $\mu$ on $\Ld$ such that $v\mapsto \|v\|$ is square integrable, i.e., $$\int_{\Ld} \|v\|^2\,\d\mu(v)<\infty,$$ and the following stochasticity condition holds:
\begin{equation*} 
\int_{\Ld} v^* v \,\mathrm{d} \mu(v) = \id_{\cc^d}. 
\end{equation*}
A quantum trajectory is a Markov chain on $\pcd$ with transition kernel
    \begin{equation*}
\Pi(\hat x,S)=\int_{\Ld} \mathbf{1}_{S}(v\cdot \hat x)  \|v x\|^2 \d \mu(v),
\end{equation*}
    where $\hat x\in \pcd$ and   $S$ is a Borel subset of $\pcd$. A realization of this Markov chain $(\hat{x}_n)_n$ can be described by 
\[
\hat x_n\leadsto\hat x_{n+1}=V_{n+1}\cdot \hat x_{n},
\]
where $V_{n+1}$ is a matrix-valued random variable with law $ ||v x_n||^2 \d \mu(v).$ Every $\hat x_n$ can be written using a random product of matrices as 
$$\hat x_n = V_{n}\cdots V_{1}\cdot \hat x_0,$$
where $(V_{n})_n$ is distributed along the density $\|v_n\dotsb v_1 x_0\|^2 \d\mu^{\otimes n}(v_1,\dotsc,v_n).$

In the context of quantum physics, a realization of the process $(\hat x_n)_n$ is called a \textit{quantum trajectory}. Its definition originates in quantum optics -- see \cite{carmichael}. A prominent example of an experimental realization of a quantum trajectory is the Nobel prize-winning experiment of Serge Haroche's group \cite{guerlin_progressive_2007}. For a mathematical exposition of the theory of quantum measurements, we direct the interested reader to the books \cite{holevo2001statistical,busch2016quantum} and the foundational article \cite{davies1970operational}.

Since $\hat x_n$ can be defined as the result of the action of some random product of matrices on the initial element of the projective space $\hat x_0$, the study of quantum trajectories shares some similarity with that of products of independent and identically distributed (i.i.d.) random matrices -- see e.g. \cite{BL85} for an introduction. For example, as stated in \cite{BenFra}, uniqueness of the $\Pi$-invariant measure is a consequence of \cite[Theorem~2.6]{guivarc2016spectral} applied to the case where their parameter $s$ is equal to $2$, albeit for invertible matrices and under a strong irreducibility assumption. Uniqueness of the $\Pi$-invariant measure was proved in \cite{BenFra} for possibly non-invertible matrices and under the standard irreducibility assumption. This extension relied on the fact that $\|V_n\dotsb V_1x_0\|\neq 0$ almost surely by definition. Leveraging this property, an alternative proof of the fact that $(V_n\dotsb V_1/\|V_n\dotsb V_1\|)_n$ concentrates almost surely on a set of rank-one matrices with common pre-image could be found. Recently, a generalization of this new proof lead to the extension of the results from \cite{guivarc2016spectral} to irreducible non-invertible matrices for any $s>0$ -- see \cite{hautecoeur2025}.

Alas, both \cite{guivarc2016spectral} and \cite{BenFra} rely on an assumption of contractivity (called purification in \cite{BenFra} following the terminology of \cite{MaaKumm}). It ensures that, almost surely, any accumulation point of $(V_n\dotsb V_1/\|V_n\dotsb V_1\|)_n$ is of rank one. In the present article we propose to lift the assumption of purification. 

As shown with an example in \cite[Appendix~C]{BenFra}, without purification, uniqueness of the $\Pi$-invariant measure may or may not be true. Our main result is thus a complete classification of the set of $\Pi$-invariant measures, assuming only that $\mu$ is irreducible.

Note that quantum trajectories can also be viewed as place-dependent iterated function systems (IFS). However, the standard assumptions on IFS are not satisfied by quantum trajectories -- see \cite[Proposition~8.3]{BenFat}. Similarly, the usual approach of Markov chain using the notion of $\phi$-irreducibility fails -- see \cite[Proposition~8.1]{BenFat}. Note also that after the publication of \cite{BenFra}, the authors of this reference found that some elements of their proofs already appeared with a different formulation in \cite{kusuoka1989dirichlet} albeit for invertible matrices.

\subsection{Irreducibility.} Our only assumption throughout the article is an irreducibility one for the measure $\mu$. We say a subspace $E$ of $\cc^d$ is $\mu$-invariant if $vE\subset E$ for $\mu$-almost all $v$. We say $\mu$ is irreducible if there does not exist a non-trivial $\mu$-invariant subspace. From now on we always assume $\mu$ is irreducible without necessarily mentioning it.

This assumption is the standard irreducibility assumption for families of matrices. In the theory or random products of matrices a strong irreducibility assumption is often made. It states that there does not exist a finite set of non trivial subspaces $\{E_i\}_{i}$ such that for $\mu$-almost all $v$, $v\cup_i E_i\subset \cup_i E_i$. Strong irreducibility implies the standard one. From now on, when we evoke the irreducibility assumption, we mean the standard irreducibility.

The definition of irreducibility we require looks stronger than the {\bf ($\phi$-Erg)} assumption used in \cite{BenFra}. Indeed, under {\bf ($\phi$-Erg)}, $\{0\}$ and $\mathbb C^d$ are not necessarily the unique $\mu$-invariant subspaces. Actually, as discussed at the end of \Cref{subs:framework}, this apparent restriction is an illusion.

\subsection{Random walk on Dark subspaces.} As mentioned above, compared to \cite{BenFra}, we forgo assuming purification. In \cite{MaaKumm}, the authors show purification is equivalent to the absence of so-called dark subspaces of dimension greater than or equal to two. Dark subspaces are thus instrumental objects in our proofs. We especially study a Markov process on maximal dark subspaces introduced in \cite[Section~5]{MaaKumm}. Denoting $\SUd$ the group of $d\times d$ special unitary matrices, the definition of dark subspaces goes as follows.
\begin{definition*}
        A non-zero linear subspace $D\subset \cc^d$ is called a \emph{dark subspace} if for all $n\in\mathbb N^*$ and $\mutensorn$-almost all $(v_1,\ldots,v_n) \in \Ld^{\times n}$ there exists $\lambda  \in \cc$ and $u\in \SUd$ such that    \begin{equation*} 
              v_n\cdots v_1\big|_D = \lambda u\big|_D. 
        \end{equation*}
\end{definition*}
 
Note that any one-dimensional subspace satisfies the definition. 
Since on dark subspaces any product of matrices $v\in \supp\mu$ is equivalent to a mapping proportional to a unitary matrix, the evolution induced by the product of matrices on the dark subspace can be reverted almost surely according to the postulates of quantum mechanics. That makes dark subspaces of high dimension potential candidates for quantum error correcting codes -- see \cite{knil,blume2008characterizing,knill2000theory}.  

\medskip
A first step towards the classification of $\Pi$-invariant measures and a first relevant result on its own is the proof that a Markov process on maximal dark subspaces admits a unique invariant measure.

Let $\mathcal D_m$ be the set of dark subspaces of maximal dimension. We call them the maximal dark subspaces. Let $r_m$ be their dimension.

Let $(D_n)_n$ be the Markov chain on $\mathcal D_m$ defined by
$$D_n\leadsto D_{n+1}=V_{n+1}D_n$$
with $V_{n+1}$ distributed along $\tr\big(v\tfrac{\pi_{D_n}}{r_m}v^*\big)
\mathrm{d}\mu(v)$, where $\pi_{D_n}$ is the orthogonal projector onto $D_n$. This is the Markov chain on dark subspaces introduced in \cite[Section~5]{MaaKumm}.

We denote $K$ the associated Markov kernel. It is given by
\begin{equation*}
        K(D, B)=\int_{\Ld} {\mathbf{1}}_B(vD)\tr\big(v\tfrac{\pi_D}{r_m}v^*\big)\d\mu(v)
\end{equation*}
for any $D\in \mathcal D_m$ and $B$ a Borel subset of $\mathcal D_m$.
Interpreting $K$ as an operator on the Banach space of continuous functions of $\mathcal D_m$, we recall that for any Borel probability measure $\chi$, $\chi K$ is the Borel probability measure defined by $\chi K(B)=\int_{\mathcal D_m} K(D,B)\d\chi(D)$.

Our first relevant result is that, assuming $\mu$ is irreducible, there exists a unique $K$-invariant probability measure and the convergence towards it is exponential in Wasserstein metric.
\begin{theorem}\label{thm:invmeasDark}
        There exists a unique $K$-invariant Borel probability measure on $\mathcal D_m$.

        Moreover, denoting this unique $K$-invariant Borel probability measure $\chi_{\rm inv}$, there exist an integer $m$ and two positive constants $C$ and $\lambda <1$ such that for any Borel probability measure $\chi$ on $\mathcal D_m$,
        \[
        W_1\Bigg(\frac 1m \sum_{r=0}^{m-1}\chi K^{mn+r}, \chi_{\rm inv} \Bigg) \leq C\lambda^n 
        \]
    for every $n \in \nn^*$, where   $W_1$   denotes the  $1$-Wasserstein metric. 
\end{theorem}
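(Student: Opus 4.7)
The plan is to proceed in three stages: existence of $\chi_{\rm inv}$, identification of the period $m$ via a quantum-channel intertwining, and a minorization argument on the resulting cyclic classes giving the exponential rate. The main obstacle will be the minorization step, where standard density-based criteria are unavailable because $K$ is singular with respect to any natural reference measure on $\D$.

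First I would establish existence. The set $\D$ is a closed, hence compact, subset of the Grassmannian of $r_m$-planes in $\cc^d$, equipped with the natural Grassmannian distance. Square integrability of $v\mapsto\|v\|$ together with dominated convergence shows that $K$ is Feller: the map $D\mapsto vD$ is continuous on the $\mu$-conull set where $v|_D\neq 0$, while on the complement $\tr(v\pi_Dv^*)=0$, so those $v$ do not contribute to $K[f](D)$. Krylov--Bogolyubov then produces a $K$-invariant probability measure which I call $\chi_{\rm inv}$.

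Next I would exploit the dark property: $\mu$-a.s.\ one has $v|_D=\lambda_v u_v|_D$ with $u_v\in\SUd$, yielding $v\pi_D v^*=|\lambda_v|^2\pi_{vD}$. Setting $\Psi(D)=\pi_D/r_m$ and $\Phi(\rho)=\int v\rho v^*\,\d\mu(v)$, a direct computation using $\tr(v\pi_Dv^*)/r_m=|\lambda_v|^2$ gives the intertwining
\[
\Phi(\Psi(D))=\int_{\D}\Psi(D')\,K(D,\d D').
\]
Hence the $\Psi$-barycenter of any $K$-invariant measure is a $\Phi$-invariant state, equal to the unique such state $\rho_\Phi$ by irreducibility of $\mu$ and non-commutative Perron--Frobenius. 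The same theorem identifies a peripheral spectrum of $\Phi$ consisting of $m$-th roots of unity for a well-defined integer $m\in\nnone$, the period of $\Phi$, which is the one in the statement; it induces a decomposition of $\D$ into $m$ classes cyclically permuted by $K$.

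Finally, to upgrade the barycenter convergence to genuine convergence of $K^{mN}(D,\cdot)$ in $W_1$, I would establish a uniform minorization on each cyclic class. Irreducibility of $\mu$ forbids non-trivial $\supp\mu$-stable linear subspaces of $\cc^d$; the linear span of any closed $K$-invariant subset of a cyclic class is such a subspace, so $K^m$ is topologically transitive on each cyclic class. Combined with Feller continuity and compactness, this should yield $N\in\nnone$, $\alpha>0$ and a probability measure $\nu$ on each class such that $K^{mN}(D,\cdot)\ge\alpha\nu(\cdot)$ uniformly in $D$ in the class. A Dobrushin/Doeblin argument then gives a geometric contraction for $K^{mN}$ in total variation, and the Cesàro average $\frac{1}{m}\sum_{r=0}^{m-1}\chi K^{mn+r}$ washes out the periodicity to give the stated $W_1$ bound (on the compact $\D$, $W_1$ is dominated by total variation times the diameter). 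The minorant itself will likely have to be constructed by hand, by fixing a reference subspace $D_0$ in each class and exhibiting a chain of $\mu$-positive-probability transitions reaching any prescribed neighborhood of $D_0$ from every starting point in the class.
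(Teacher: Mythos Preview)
Your outline has a genuine gap at the minorization step, and the surrounding heuristics do not close it.

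First, the transitivity argument via linear spans does not do what you need. If $F\subset\D$ is a proper closed $K$-invariant set, the subspace $E=\operatorname{span}\bigl(\bigcup_{D\in F}D\bigr)$ can perfectly well be all of $\cc^d$ while $F$ remains a proper subset of $\D$ (indeed, in the paper $\supp\chi_{\rm inv}$ is typically a strict subset of $\D$ yet spans $\cc^d$). So irreducibility of $\mu$ does not force topological transitivity of $K^m$ on a ``cyclic class'', let alone minimality.

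Second, and more seriously, even granting topological transitivity, Feller continuity plus compactness does \emph{not} produce a Doeblin minorant $K^{mN}(D,\cdot)\ge\alpha\nu$. The kernel $K(D,\cdot)$ is the pushforward of a weighted $\mu$ by $v\mapsto vD$; when $\mu$ is discrete, $K^N(D,\cdot)$ is supported on finitely many points that move with $D$, so different starting points have disjoint supports and no common component exists. Irrational-rotation-type behaviour already shows that your chain-of-transitions construction yields at best $K^{N(D,U)}(D,U)>0$ with $N$ and the mass depending on both $D$ and the target neighbourhood $U$, not a uniform lower bound. This is exactly why the paper remarks that the $\phi$-irreducibility and small-set machinery of general Markov chain theory fails here. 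For the same reason your stated total-variation contraction is generally false; only a $W_1$ estimate is available.

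The paper's proof bypasses minorization entirely. It builds an $\O$-measurable estimator $\widehat D_n$ (depending on $V_1,\dotsc,V_n$ but not on $D_0$) via the top $r_m$ singular directions of $W_n$, and proves $\ee_\chi[d_{\rm G}(D_n,\widehat D_n)]\le C\lambda^n$ uniformly in $\chi$, using that the rank of $M_\infty=\lim_n W_n^*W_n/\tr(W_n^*W_n)$ is exactly $r_m$. This coupling replaces any Doeblin condition: two chains started at different points are both close to the same $\widehat D_n$. The period $m$ enters separately through the Perron--Frobenius convergence of $\pp^\rho\circ\theta^{-n}$ to $\pp^{\rho_\infty}$ in total variation, and a split $n=p+q$ triangle-inequality argument glues the two exponential rates into the $W_1$ bound. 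Your intertwining $\Phi\circ\Psi=\Psi\circ K$ and identification of $m$ are correct and match the paper, but the contraction mechanism has to be this estimator/coupling idea, not a minorization.
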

The proof of this theorem is provided in \Cref{sec:invdark}. On top of it, in \Cref{thm:conv to Dark} we show that the process $(\hat x_n)_n$ converges exponentially fast to $\cup_{D\in \D} \P(D)$.

We will rely on \Cref{thm:invmeasDark} to construct the whole set of $\Pi$-invariant measure. In that endeavor we need to track the evolution inside the dark subspaces induced by the matrices in $\supp \mu$. Schematically, the matrices have two actions, they map a maximal dark subspace to another one and, thanks to the definition of dark subspaces, they induce a unitary transformation \emph{inside} the maximal dark subspaces. To separate these two evolutions, we introduce a notion of \emph{minimal family of isometries}.

\subsection{Minimal families of isometries.} We split $\cc^d$ into $\mathcal D_m\times \cc^{r_m}$, where $\cc^{r_m}$ plays the part of a reference space where the evolution inside the dark subspaces is recorded. For that, we establish a correspondence between each $D \in \mathcal D_m$ and $\cc^{r_m}$ by fixing a family of maps $ \J = \{J_D\}_{ D \in \mathcal D_m}$, where each $J_D \colon \cc^{r_m} \to D$ is a linear isometry. This allows us to turn partial isometries on $\cc^d$ into special unitary matrices on the common reference space $\cc^{r_m}$.
    Then we define a group $G_\J$ as the smallest closed subgroup of $\mathcal {SU}(r_m)$ containing the set 
    \begin{equation*} 
    S:=\{u_{v,D} \in \mathcal {SU}(r_m) \,|\, v \in \supp\mu,\: D \in \supp\chi_{{\rm inv}},\: \tr(v \pi_D v^*)>0\}.
    \end{equation*}
    where  $u_{v,D} \propto J^{-1}_{vD} v J_D$ with the phase adjusted  so that $\det u_{v,D} = 1$, provided $\tr(v \pi_D v^*)>0$. That is, $u_{v,D}$ is a special unitary matrix acting on $\cc^{r_m}$ when $v$ acts on the dark subspace $D$. Recall that $\chi_{\rm inv}$ is the unique $K$-invariant Borel probability measure.

    The choice of $\J$ is far from unique. Amongst all the possible choices we singularize some minimal ones.

\begin{definition*}
    We say that  $\mathcal J =\{J_D:\cc^{r_m}\to D\}_{D\in \mathcal D_m}$  is   a \emph{minimal family}  if the group $G_\J$ is (up to a unitary equivalence) minimal as a subgroup of the special unitary group, i.e., for any other family $\tJ=\{\tilde{J}_D:\cc^{r_m}\to D\}_{D\in \mathcal D_m}$  there exists $Q \in \Urm$ such that $G_\J \subset Q G_{\tJ}Q^{-1}$.
\end{definition*}

Based on this definition we build the set of $\Pi$-invariant Borel probability measures.

\subsection{Classification of invariant measures}
Let $\pcr$ be the projective space of $\cc^{r_m}$. We use the same notational conventions as for $\P(\cc^d)$.
Let $\J$ be a family of isometries as defined in the previous subsection. Fix $\hat x \in \pcr$.  
Let $m_{\hat x,\J}$ be the uniform measure on the $G_\J$-orbit of $\hat x$, i.e., the image of the $G_\J$-Haar measure by $G_\J\ni u \mapsto u\cdot \hat x \in \pcr$. 
Let the map back from $\mathcal D_m\times \pcr$ to $\pcd$ be
$$\Psi_\J \colon \mathcal D_m \times \P(\cc^{r_m}) \ni (D,\hat z) \longmapsto J_D\cdot \hat z \in  \pcd.$$   
We define $\nu_{\hat x,\J}$ as the image measure of $\chi_{\rm inv}\otimes m_{\hat x,\J}$ by $\Psi_\J$. 

Direct computation shows that $\nu_{\hat x, \J}$ is $\Pi$-invariant for any $\hat x \in \pcr$. We denote the set of all such measures by 
 $$\Upsilon_\J = \{\nu_{\hat x,\J}\colon \hat x \in \pcr\}.$$ 
 Importantly,  all minimal families lead to the same set of measures. Indeed, if  $\J_1$ and $\J_2$ are both minimal families of isometries, then by definition, $G_{\J_1}$ and $G_{\J_2}$ are unitary conjugate. Hence, as proved in \Cref{thm:minimalfammeasures}, 
 $$\Upsilon_{\J_1} = \Upsilon_{\J_2}.$$  
 Our main result is that, when $\J$ is minimal, these measures are exactly the $\Pi$-ergodic measures. The following theorem is a direct consequence of \Cref{thm:charPiinvmeas} and \Cref{cor:Piinv}.
\begin{theorem}\label{thm:intro-ergodic} 
If $\J$ is minimal, then 
$\Upsilon_\J$ is the set of   $\Pi$-ergodic measures. In other words, a probability measure $\nu$ is $\Pi$-ergodic if and only if $\nu=\nu_{\hat x,\J}$ for some $\hat x\in\pcr$.

    As a consequence, we have a complete description of $\Pi$-invariant measures: a probability measure $\nu$ on $\pcd$ is  $\Pi$-invariant  if and only if
    $$\nu = (\Psi_\J)_\star\left(\chi_{{\rm inv}} \otimes \int_{ \P(\cc^{r_m})} m_{\hat x,\J}  \   \mathrm{d}\lambda(\hat x)\right)=\int_\pcr \nu_{\hat x,\J}\d\lambda(\hat x)$$ 
    for some probability measure $\lambda$ on $ \P(\cc^{r_m})$.
\end{theorem}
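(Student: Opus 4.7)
The plan is a skew-product reduction over the dark-subspace chain, coupled with an equivariant straightening that exploits the minimality of $\J$.

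First, combining \Cref{thm:conv to Dark} with \Cref{thm:invmeasDark} shows that any $\Pi$-invariant probability $\nu$ on $\pcd$ is concentrated on $\bigcup_{D\in\supp\chi_{\rm inv}}\P(D)=\Psi_\J(\supp\chi_{\rm inv}\times\pcr)$. Since every $D\in\D$ is dark, whenever $\tr(v\pi_Dv^*)>0$ we have the factorisation $v\,J_D=\lambda_{v,D}\,J_{vD}\,u_{v,D}$ with $|\lambda_{v,D}|^2=\tr(v\pi_Dv^*)/r_m$ and $u_{v,D}\in\mathcal{SU}(r_m)$. The kernel $\Pi$ therefore lifts through $\Psi_\J$ to the skew-product kernel
\[
\tilde\Pi\bigl((D,\hat y),\cdot\bigr)=\int_{\Ld}\delta_{(vD,\,u_{v,D}\cdot\hat y)}\tfrac{\tr(v\pi_Dv^*)}{r_m}\,\d\mu(v)
\]
on $\D\times\pcr$, whose base projection is exactly $K$. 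Any $\Pi$-invariant $\nu$ admits a $\tilde\Pi$-invariant lift $\tilde\nu$ with $(\Psi_\J)_\star\tilde\nu=\nu$.

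Assume next that $\nu$ is ergodic. The $\D$-marginal of $\tilde\nu$ is then $K$-ergodic, hence equal to $\chi_{\rm inv}$ by the uniqueness in \Cref{thm:invmeasDark}. Disintegrate $\tilde\nu=\int\rho_D\,\d\chi_{\rm inv}(D)$; invariance of $\tilde\nu$ is equivalent to the cocycle identity $(u_{v,D})_\star\rho_D=\rho_{vD}$ for $\chi_{\rm inv}$-a.e.\ $D$ and $\mu$-a.e.\ $v$ with $\tr(v\pi_Dv^*)>0$. For any $\Urm$-invariant Borel subset $C$ of the space of Borel probability measures on $\pcr$, the set $\{D:\rho_D\in C\}$ is $K$-forward-invariant, so has $\chi_{\rm inv}$-measure $0$ or $1$ by ergodicity of $\chi_{\rm inv}$. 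Taking $C$ to be a single $\Urm$-orbit through some $\rho_{D_0}$ shows that all $\rho_D$ lie $\chi_{\rm inv}$-a.e.\ in one $\Urm$-orbit, and by Kuratowski--Ryll-Nardzewski we may measurably select $D\mapsto q_D\in\Urm$ with $(q_D)_\star\rho_D=\rho_\star$ for a fixed reference $\rho_\star$.

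Set $\tJ=\{J_D q_D^{-1}\}_{D\in\D}$; then the modified cocycle is $q_{vD}u_{v,D}q_D^{-1}$ up to a phase (which acts trivially on $\pcr$), and it preserves $\rho_\star$ by construction, so $G_{\tJ}\subset\mathrm{Stab}(\rho_\star)$. Minimality of $\J$ gives $Q\in\Urm$ with $G_\J\subset QG_{\tJ}Q^{-1}$, hence $G_\J$ preserves $Q_\star\rho_\star$. Replacing $\rho_\star$ by $Q_\star\rho_\star$, each fiber measure $\rho_D$ becomes $G_\J$-invariant. Every $G_\J$-invariant probability measure on $\pcr$ decomposes uniquely as an integral of orbit measures $m_{\hat x,\J}$, and ergodicity of $\tilde\nu$ forces concentration at a single orbit, giving $\rho_D=m_{\hat x,\J}$ for some $\hat x\in\pcr$ and $\nu=\nu_{\hat x,\J}$. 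Conversely, each $\nu_{\hat x,\J}$ is $\Pi$-invariant by the direct computation already recorded, and ergodic because applying the same analysis to any ergodic component yields $\nu_{\hat{x}',\J}$ with $\hat{x}'\in G_\J\cdot\hat x$, and all such measures coincide. The general case then follows from the Choquet/ergodic decomposition applied to the convex set of $\Pi$-invariant probabilities, whose extreme points we have identified as the $\nu_{\hat x,\J}$. The main obstacle is the straightening step: turning the abstract inclusion $G_\J\subset QG_{\tJ}Q^{-1}$ supplied by minimality, together with a measurable selection of $q_D$, into a genuine $G_\J$-invariance of the fiber measures $\rho_D$ requires careful bookkeeping of the interplay between equivariant data along $\D$ and intrinsic structure on the reference fiber $\pcr$.
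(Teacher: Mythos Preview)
Your skew-product/cocycle approach is genuinely different from the paper's route, which proceeds by proving that $\Pi$ is equicontinuous (\Cref{thm:Piequicont}), identifying all $\Pi$-minimal closed sets as $\supp\nu_{\hat x,\J}$ (\Cref{thm:piminimalsets}), and then invoking Raugi's one-to-one correspondence between minimal sets and ergodic measures for equicontinuous kernels (\Cref{thm:raugi}). No disintegration over $\chi_{\rm inv}$ appears there; the smart-family machinery (Propositions~\ref{prop:smart-propagates} and~\ref{prop:smartgroup}) is used instead to move between dark subspaces while controlling the induced unitaries on $\cc^{r_m}$.

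Your argument has a genuine gap at the straightening step. From $G_{\tJ}\subset\mathrm{Stab}(\rho_\star)$ and minimality you obtain $G_\J\subset Q\,\mathrm{Stab}(\rho_\star)\,Q^{-1}=\mathrm{Stab}(Q_\star\rho_\star)$, but the sentence ``replacing $\rho_\star$ by $Q_\star\rho_\star$, each fiber measure $\rho_D$ becomes $G_\J$-invariant'' is a non sequitur: the fiber measures are $\rho_D=(q_D^{-1})_\star\rho_\star$, and knowing that \emph{some} point of the $\Urm$-orbit is $G_\J$-fixed does not force $\rho_D$ to equal that point, nor to be $G_\J$-invariant. What you actually need is that the $\rho_D$ are \emph{constant} in $D$, i.e.\ $q_D q_{D'}^{-1}\in\mathrm{Stab}(\rho_\star)$ for all $D,D'\in\supp\chi_{\rm inv}$. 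This is exactly the content of the paper's \Cref{lem:QDinG} (applied with the roles of $\J,\tJ$ as here), whose proof rests on the smartness characterisation of minimal families---precisely the ``careful bookkeeping'' you defer. Two secondary issues compound this: the existence of a $\tilde\Pi$-invariant lift needs an argument since $\Psi_\J$ is not injective (dark subspaces can intersect, cf.\ Example~2); and your cocycle identity holds only $\chi_{\rm inv}\otimes\mu$-a.e., whereas $G_{\tJ}$ is generated by $u_{v,D}$ over \emph{all} $D\in\supp\chi_{\rm inv}$ and $v\in\supp\mu$, so the inclusion $G_{\tJ}\subset\mathrm{Stab}(\rho_\star)$ is not yet justified for a merely measurable choice of $q_D$.
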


Moreover, we provide a necessary and sufficient condition under which there exists a unique $\Pi$-invariant Borel probability measure. Let $\nu_{\rm unif}$ be the Borel probability measure defined by,
$$\nu_{\rm unif}= \int_\pcr \nu_{\hat x,\J}\d\mathrm{Unif}(\hat x),$$
where $\mathrm{Unif}$ is the uniform measure over $\pcr$, i.e., the image measure of the $\mathcal{SU}(r_m)$-Haar measure by the map $u\mapsto u\cdot\hat y$ for some $\hat y\in\pcr$. \Cref{thm:intro-ergodic} implies it is always $\Pi$-invariant. The necessary and sufficient condition for its uniqueness reads as follows.
\begin{theorem}\label{thm:introuniq} There exists a unique $\Pi$-invariant Borel probability measure on $\pcd$ if and only if the group $G_{\rm min}=G_\J$ induced by a minimal family of isometries $\J$ acts transitively on $\pc{r_m}$. Namely, if and only if,
\begin{itemize}
    \item $G_{\rm min} = \mathcal{SU}(r_m)$ if $r_m$ is odd,
    \item $G_{\rm min} = \mathcal{SU}(r_m)$ or $G_{\rm min}$ is unitarily equivalent to ${\rm Sp}(r_m/2)$ if $r_m$ is even. Here $\mathrm{Sp}(r_m/2)$ denotes the subgroup of $\mathcal {SU}(r_m)$ called the \emph{compact symplectic group} (or the \emph{hyperunitary group}). 
\end{itemize}
If there is a unique $\Pi$-invariant Borel probability measure, it is equal to $\nu_{\rm unif}$.
\end{theorem}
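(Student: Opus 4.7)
The plan is to combine the ergodic characterization of Theorem~\ref{thm:intro-ergodic} with the classical Lie-theoretic classification of compact groups acting transitively on complex projective spaces. First, I would reduce the uniqueness question to a transitivity question on $\pcr$. By Theorem~\ref{thm:intro-ergodic}, the set of $\Pi$-ergodic measures equals $\Upsilon_\J = \{\nu_{\hat x,\J}\colon \hat x\in\pcr\}$, so uniqueness of the $\Pi$-invariant measure is equivalent to $|\Upsilon_\J|=1$. If $G_\J$ acts transitively on $\pcr$, then for every $\hat x$ the measure $m_{\hat x,\J}$ is a $G_\J$-invariant probability measure on the single orbit $\pcr$, hence coincides with the unique such measure; consequently $m_{\hat x,\J}$ is independent of $\hat x$, and so is $\nu_{\hat x,\J}$. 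Conversely, if two points $\hat x_1,\hat x_2$ lie in distinct $G_\J$-orbits, the measures $m_{\hat x_1,\J}$ and $m_{\hat x_2,\J}$ are mutually singular, as they are supported on the disjoint orbits $G_\J\cdot \hat x_1$ and $G_\J\cdot \hat x_2$. A disintegration of $\nu_{\hat x,\J}$ over the first factor of $\Psi_\J$ (using that on $\supp\chi_{\rm inv}$ the isometries $J_D$ provide a measurable section back to $\cc^{r_m}$) then recovers $m_{\hat x,\J}$, so $\nu_{\hat x_1,\J}\neq\nu_{\hat x_2,\J}$; equivalently, two distinct ergodic measures must be mutually singular, ruling out accidental coincidence. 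This yields: uniqueness holds if and only if $G_\J$ acts transitively on $\pcr$.

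Second, I would characterize the closed subgroups $G\subset\mathcal{SU}(r_m)$ that act transitively on $\pcr$. Since $\pcr$ is connected, the identity component $G^0$ must already act transitively, so it is enough to list the connected compact subgroups of $\mathcal{SU}(r_m)$ acting transitively on $\mathbb{CP}^{r_m-1}$. By the classical Montgomery--Samelson theorem (and its refinement by Borel for transitive actions on homogeneous spaces), these are, up to unitary conjugation, exactly $\mathcal{SU}(r_m)$ and, when $r_m$ is even, the compact symplectic group $\mathrm{Sp}(r_m/2)$. A short normalizer argument -- using that $\mathrm{Sp}(r_m/2)$ is its own normalizer inside $\mathcal{SU}(r_m)$ and that the pair $\mathrm{Sp}(r_m/2)\subset\mathcal{SU}(r_m)$ is maximal -- upgrades the classification to the case of possibly disconnected closed subgroups. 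Combined with the first step, this gives the stated dichotomy.

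Finally, for the identification of the unique invariant measure with $\nu_{\rm unif}$, observe that when $G_\J$ acts transitively on $\pcr$ the measure $\mathrm{Unif}$ is $\mathcal{SU}(r_m)$-invariant, hence $G_\J$-invariant; uniqueness of the invariant probability measure on a compact transitive homogeneous space forces $m_{\hat x,\J}=\mathrm{Unif}$ for every $\hat x\in\pcr$. Therefore $\nu_{\hat x,\J} = (\Psi_\J)_\star(\chi_{\rm inv}\otimes\mathrm{Unif}) = \nu_{\rm unif}$ for every $\hat x$, which exhibits the unique $\Pi$-invariant measure in the form claimed. I expect the main difficulty to be the Lie-theoretic step: while the Montgomery--Samelson classification is classical, matching it to $G_{\min}$ (which by definition of a minimal family is determined only up to unitary conjugation), and ruling out disconnected closed intermediate subgroups between $\mathrm{Sp}(r_m/2)$ and $\mathcal{SU}(r_m)$, both require careful bookkeeping. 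The remaining steps are direct consequences of Theorem~\ref{thm:intro-ergodic} and standard uniqueness of Haar-invariant measures on compact homogeneous spaces.
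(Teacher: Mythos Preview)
Your approach matches the paper's: reduce uniqueness of the $\Pi$-invariant measure to transitivity of $G_{\rm min}$ on $\pcr$ via the ergodic characterization of Theorem~\ref{thm:intro-ergodic} (the paper packages this step as Corollary~\ref{cor:Piinv}), then invoke the classical classification of closed subgroups of $\mathcal{SU}(r_m)$ acting transitively on $\pcr$ --- the paper cites Onishchik rather than Montgomery--Samelson/Borel, and simply defers the entire classification (including the disconnected case) to that reference. One small inaccuracy in your write-up: $\mathrm{Sp}(r_m/2)$ is \emph{not} its own normalizer in $\mathcal{SU}(r_m)$ for $r_m\geq 4$, since the center $Z(\mathcal{SU}(r_m))\cong\mathbb{Z}/r_m\mathbb{Z}$ normalizes every subgroup and is not contained in $\mathrm{Sp}(r_m/2)$; however, central elements act trivially on $\pcr$, so this does not affect the transitivity question, and the paper does not address this subtlety either.
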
 
This theorem is a consequence of \Cref{cor:uniquePiinvmeas}.

\subsection{Outline of the proof of \Cref{thm:intro-ergodic}.}
    \Cref{thm:invmeasDark} is the first step of the proof. It is proved in \Cref{sec:invdark}. The proof is similar to the one of \cite{BenFra} with non-trivial adaptations to take into account the possibility that $r_m>1$. It not only shows there exists a unique $K$-invariant Borel probability measure, but its proof also allows us to show the process $(\hat x_n)_n$ in $\pcd$ concentrates exponentially fast on  $\cup_{D\in \D} \P(D)$. This is the content of \Cref{thm:conv to Dark}. Hence, we can restrict our search of $\Pi$-invariant measures to measures supported on this union of projective spaces.

    It remains to show the measures $\nu_{\hat x,\J}$ are ergodic and are the only $\Pi$-ergodic Borel probability measures. For that we define and study the minimal families of isomorphisms in \Cref{sec:proc}. In particular, \Cref{thm:smartiffminimal} provides an equivalence between minimal families and so-called \emph{smart} families, which are families of isometries constructed from a reference maximal dark subspace and products of matrices from $\supp \mu$.

    Then, in \Cref{thm:piminimalsets}, using the smartness property, we prove that any closed minimal $\Pi$-invariant set is of the form $\supp\nu_{\hat x,\J}$ with $\hat x\in \pcr$ and $\J$ an arbitrary fixed minimal family of isometries. These last two intermediary results are completely new to our knowledge and can be seen as the main technical difficulties of the proof.

    For equicontinuous Markov kernels (i.e., Markov kernels $T$ such that for any continuous function $f$, $(T^nf)_n$ is equicontinuous) there is a one-to-one correspondence between minimal sets and ergodic measures -- see \cite[Proposition~3.2]{Raugi1992} and \cite[Proposition~2.9]{BenoistQuint2014}. In \Cref{thm:Piequicont} we show $\Pi$ is equicontinuous. Then, leveraging the identification of $\Pi$-minimal sets, we conclude on the classification of $\Pi$-ergodic measures. 

    The structure of the proof is summarized \Cref{fig:diagram1}.

     \begin{figure}[h]
     \begin{center}
        \scalebox{0.88}{

    \tikzset{every picture/.style={line width=0.75pt}}  
    
    \begin{tikzpicture}[x=0.75pt,y=0.75pt,yscale=-1,xscale=1]

\draw  [color={rgb, 255:red, 74; green, 74; blue, 74 }  ,draw opacity=1 ] (235.67,41) .. controls (235.67,30.57) and (244.13,22.12) .. (254.56,22.12) -- (435.85,22.12) .. controls (446.28,22.12) and (454.74,30.57) .. (454.74,41) -- (454.74,97.67) .. controls (454.74,108.1) and (446.28,116.56) .. (435.85,116.56) -- (254.56,116.56) .. controls (244.13,116.56) and (235.67,108.1) .. (235.67,97.67) -- cycle ;
 
\draw [color={rgb, 255:red, 74; green, 74; blue, 74 }  ,draw opacity=1 ]   (454.81,68.98) .. controls (505.54,68.32) and (482.95,106.87) .. (563.66,108.49) ;
\draw [shift={(566.13,108.52)}, rotate = 180.6] [fill={rgb, 255:red, 74; green, 74; blue, 74 }  ,fill opacity=1 ][line width=0.08]  [draw opacity=0] (10.72,-5.15) -- (0,0) -- (10.72,5.15) -- (7.12,0) -- cycle    ;
 
\draw [color={rgb, 255:red, 74; green, 74; blue, 74 }  ,draw opacity=1 ]   (232.1,68.65) .. controls (188.03,74.55) and (234.95,119.51) .. (133.5,117.89) ;
\draw [shift={(235.67,68.27)}, rotate = 175.5] [fill={rgb, 255:red, 74; green, 74; blue, 74 }  ,fill opacity=1 ][line width=0.08]  [draw opacity=0] (10.72,-5.15) -- (0,0) -- (10.72,5.15) -- (7.12,0) -- cycle    ;
 
\draw [color={rgb, 255:red, 74; green, 74; blue, 74 }  ,draw opacity=1 ]   (232.1,204.23) .. controls (188.03,198.32) and (234.95,153.36) .. (133.5,154.98) ;
\draw [shift={(235.67,204.6)}, rotate = 184.5] [fill={rgb, 255:red, 74; green, 74; blue, 74 }  ,fill opacity=1 ][line width=0.08]  [draw opacity=0] (10.72,-5.15) -- (0,0) -- (10.72,5.15) -- (7.12,0) -- cycle    ;
 
\draw  [color={rgb, 255:red, 74; green, 74; blue, 74 }  ,draw opacity=1 ] (7.87,106.71) .. controls (7.87,95.61) and (16.86,86.61) .. (27.96,86.61) -- (112.7,86.61) .. controls (123.8,86.61) and (132.79,95.61) .. (132.79,106.71) -- (132.79,166.98) .. controls (132.79,178.08) and (123.8,187.08) .. (112.7,187.08) -- (27.96,187.08) .. controls (16.86,187.08) and (7.87,178.08) .. (7.87,166.98) -- cycle ;
  
\draw [color={rgb, 255:red, 74; green, 74; blue, 74 }  ,draw opacity=1 ]   (454.13,205.94) .. controls (504.87,206.6) and (482.28,167.85) .. (562.99,166.22) ;
\draw [shift={(565.47,166.18)}, rotate = 179.4] [fill={rgb, 255:red, 74; green, 74; blue, 74 }  ,fill opacity=1 ][line width=0.08]  [draw opacity=0] (10.72,-5.15) -- (0,0) -- (10.72,5.15) -- (7.12,0) -- cycle    ;
 
\draw  [color={rgb, 255:red, 74; green, 74; blue, 74 }  ,draw opacity=1 ] (235.67,177.67) .. controls (235.67,167.24) and (244.13,158.78) .. (254.56,158.78) -- (435.85,158.78) .. controls (446.28,158.78) and (454.74,167.24) .. (454.74,177.67) -- (454.74,234.33) .. controls (454.74,244.77) and (446.28,253.22) .. (435.85,253.22) -- (254.56,253.22) .. controls (244.13,253.22) and (235.67,244.77) .. (235.67,234.33) -- cycle ;
 
\draw  [color={rgb, 255:red, 74; green, 74; blue, 74 }  ,draw opacity=1 ] (566.13,106.71) .. controls (566.13,95.61) and (575.13,86.61) .. (586.23,86.61) -- (665.11,86.61) .. controls (676.2,86.61) and (685.2,95.61) .. (685.2,106.71) -- (685.2,166.98) .. controls (685.2,178.08) and (676.2,187.08) .. (665.11,187.08) -- (586.23,187.08) .. controls (575.13,187.08) and (566.13,178.08) .. (566.13,166.98) -- cycle ;
    
    \draw (340,19.55) node [anchor=south] [inner sep=0.75pt]  [font=\footnotesize] [align=left] {\textit{dynamics 'between' dark spaces}};
    
    \draw (300,158.37) node [anchor=south] [inner sep=0.75pt]  [font=\footnotesize] [align=left] {\textit{'inner' dynamics}};

    \draw (575,105) node [anchor=north west][inner sep=0.75pt]  [font=\normalsize] [align=left] {\begin{minipage}[lt]{73.33pt}\setlength\topsep{0pt}
    \begin{center}
    {\small $\{\nu_{\hat x,\J}\}_{\hat x\in \pcr}$ \\[0.3em] are all $\Pi$-ergodic prob. measures} 
    \end{center}
    \end{minipage}};

    \draw (136,83) node [anchor=north west][inner sep=0.75pt]  [font=\footnotesize] [align=left] 
    {{ Kernel $K$}\\[-0.25em] { on $\mathcal{D}_{m}$}};
    
    \draw (250,36.64) node [anchor=north west][inner sep=0.75pt]  [font=\normalsize] [align=left]  
    {\begin{minipage}[lt]{134.29pt}\setlength\topsep{0pt}
    \begin{center}
    {
    {{ \small Random walk $(D_{n})_{n}$ on $\D$}}
    \\[0.5em]
    {{\small Existence of a unique \\[-0.2em] $K$-invariant  prob. measure}}
    }  
     \end{center}
    \end{minipage}};

    \draw (250,168) node [anchor=north west][inner sep=0.75pt]  [font=\normalsize] [align=left] {\begin{minipage}[lt]{134.29pt}\setlength\topsep{0pt}
    \begin{center}
    {\small \mbox{$G_\J\subset \mathcal {SU}(r_m)$ minimal}}
    \\[0.3em]
    {  \small Construction of $\nu_{\hat x,\J}$
    \\[0.3em]
    Equivalence of minimal \\[-0.25em] and smart families
    }
    \end{center}
    \end{minipage}};

    \draw (136,170) node [anchor=north west][inner sep=0.75pt]  [font=\footnotesize] [align=left] {\begin{minipage}[lt]{50.7pt}\setlength\topsep{0pt}
      Family of  \\[-0.25em]  isometries 
      \vspace{-2mm}
    $$\vspace{-2mm}
    \mathcal{J}\! = \!(J_{D})_{D\in \D}$$
    $$J_{D}\colon \mathbb{C}^{r_{m}}\to D$$ 
    \end{minipage}};

    \draw (455,116.5) node [anchor=north west][inner sep=0.75pt]  [font=\fontsize{0.8em}{0.88em}\selectfont] [align=left] 
    {Equicontinuity of $\Pi$};
    
    \draw (455,134.5) node [anchor=north west][inner sep=0.75pt]  [font=\fontsize{0.8em}{0.88em}\selectfont] [align=left] 
    {Identification of \\[-0.2em] $\Pi$-minimal subsets};

    \draw (11,105) node [anchor=north west][inner sep=0.75pt]  [font=\small] [align=left] 
    {\begin{minipage}[lt]{86.58pt}\setlength\topsep{0pt}
    \begin{center}
    { $(\hat{x}_n)_{n}$  valued in dark subspaces} 
    \\[0.5em]
    {Kernel $\Pi$ on $\pcd$}
    \end{center}   
    \end{minipage}};   
    \end{tikzpicture} 
    }
     \end{center}
     \caption{
     Structure of the proof of \Cref{thm:intro-ergodic}  
     }
     \label{fig:diagram1}
    \end{figure}
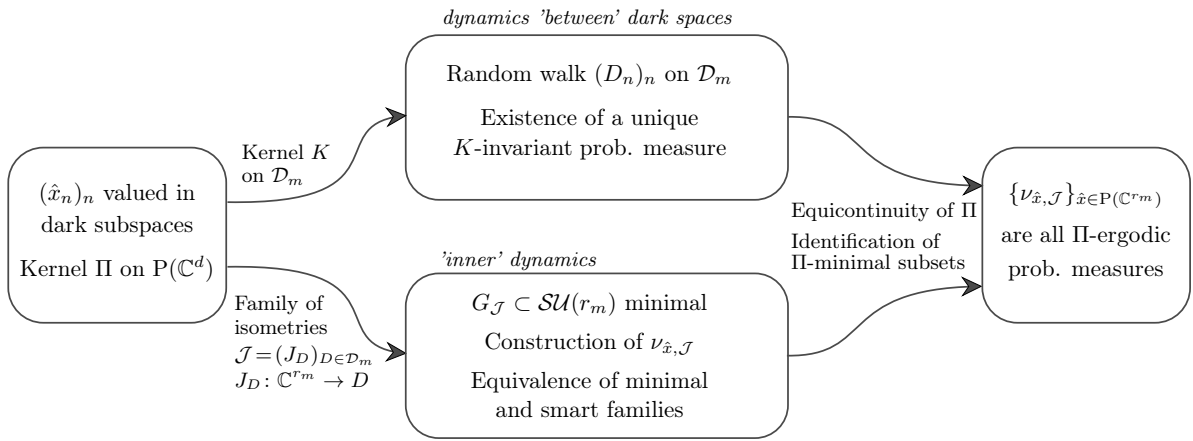    
   
\subsection{Structure of the paper.} In \Cref{sec:traj_dark}, we give a precise definition of quantum trajectories and, in particular, the probability space on which we work. The notion of dark subspaces is then described as well as that of maximal dark subspaces. In \Cref{sec:invdark}, we introduce  the Markov chain on dark subspaces and show that it admits a unique invariant measure. Furthermore, we show that the convergence towards this measure is exponentially fast. This section concerns \Cref{thm:invmeasDark}. Next, in \Cref{sec:convergence}  we show that quantum trajectories converge exponentially fast towards maximal dark subspaces. \Cref{sec:proc} discusses the isometries between the maximal dark subspaces and $\cc^{r_m}$ and introduces the notion of minimal families of isometries. Then \Cref{sec:invmeasures} provides  the classification of invariant measures for quantum trajectories, i.e., \Cref{thm:intro-ergodic} and \Cref{thm:introuniq} after proving the equicontinuity of $\Pi$ and the characterization of $\Pi$-minimal closed subsets. In the final \Cref{sec:examples} we discuss some enlightening examples that provide an illustration of the notions we use and various subtleties that can arise. Note that all these results concern the invariant measures for quantum trajectories on $\pcd$. In \Cref{sec:app}, we provide the classification of all invariant measures of quantum trajectories on density matrices.

\numberwithin{theorem}{section}
    \section{Quantum Trajectories, Dark subspaces}\label{sec:traj_dark}
    \addtocontents{toc}{\protect\setcounter{tocdepth}{2}}

We begin this section by presenting the general mathematical model of quantum trajectories and the Markov chain on dark subspaces. Then, we introduce a process which has been one of the key tools in \cite{BenFra} and which turns out to be intricately linked to maximal dark subspaces. In particular, we use it in \Cref{sec:convergence}  to show  
 that quantum trajectories converge exponentially fast towards maximal dark subspaces.

    \subsection{Framework and main assumption}\label{subs:framework} 
    Let us first introduce some notation.  
    For a fixed dimension $d \geq 2$ we consider the complex vector space $\cc^d$ as a Hilbert space equipped with the usual Hermitian inner product anti-linear in the first coordinate. We put $\Ld$ for the set of linear maps on $\cc^d$ equipped with the operator norm, $\Ud$ for the group of $d\times d$ unitary matrices, $\SUd$ for the associated special unitary group, and $\Sd$ for the set of \emph{density matrices} (also called \emph{states}), i.e.,
    $$\Sd =\{\rho\in \Ld\ | \ \rho\geq0,\, \tr(\rho)=1\}.$$
    The extreme points of $\Sd$   are the rank-one orthogonal projectors on $\cc^d$. They constitute the set  of \emph{pure states}, which is isomorphic to
    the complex projective space~$\pcd$, i.e.,  the space of rays in $\mathbb \cc^d$.  Throughout the paper we use the  following identifications. For a non-zero vector $x\in \cc^{d}$ we denote  by $\pi_{\hat x}$  the orthogonal projection on $\cc x$, and by $\hat x$  the equivalence class of $x$ in  $\pcd$. In particular, if $v \in \Ld$, then  $v \cdot \hat x$ stands for  the equivalence class of  $vx$, provided that $v x \neq 0$; i.e., $v \cdot \hat x = \widehat{{vx}}$. 
    For $\hat x \in \pcd$ we denote by  $x$   an arbitrary normalized  representative of $\hat x$ in $\cc^d$.  Also, we put $\nnone$ for the set of positive integers and  $\nnzero = \nnone \cup \{0\}$.

    Now we introduce more precisely the model of quantum trajectories. Let $\mu$ be a Borel measure on $\Ld$ that has a finite second moment, $\int_{\Ld} \|v\|^2 \d \mu(v) < \infty$, and satisfies the stochasticity condition
    \begin{equation}
        \label{eq:stoch}
        \int_{\Ld} v^*v \d \mu(v) = \id.
    \end{equation}

    Such a measure $\mu$ defines a \emph{quantum trajectory}. It is a Markov chain that takes values in $\Sd$ and has the following transition kernel
    \begin{equation}\label{eq:kernelPi}
        \Pi  (\rho, B) = \int_{\Ld} {\mathbf{1}}_{B}\left(\frac{v \rho v^*}{\tr(v \rho v^*)}\right) \tr(v \rho v^*) \d\mu(v),
    \end{equation}
    where $\rho \in \Sd$ and $B$ is a Borel subset of $\Sd$. In terms of stochastic processes, one can define the Markov chain $(\rho_n)_{n\in \nn}$ valued in $\Sd$ by
    \begin{equation}
        \rho_n\longmapsto\rho_{n+1}=\frac{V_{n+1}\rho_n V_{n+1}^*}{\tr(V_{n+1}\rho_n V_{n+1}^*)},
    \end{equation}
    where $V_{n+1}$ is a matrix-valued random variable whose law is $\tr(v\rho_n v^*)\d\mu(v)$. Note that if $\rho_0=\pi_{\hat x_0}$ for some unit vector $x_0 \in \cc^d$, we have 
    $$\frac{v\pi_{\hat  x_0} v^*}{\tr(v\pi_{\hat  x_0} v^*)}=  \pi_{v\cdot \hat x_0}$$
    for every $v \in \Sd$ such that $v x_0\neq 0$.
That is, if the initial state is a rank-one projector, the quantum trajectory remains valued in rank-one projectors. Using the identification between rank-one projectors and equivalence classes of vectors in $\pcd$, we recover the description of (pure) quantum trajectories provided in the Introduction. Recall that the transition kernel then reads
\begin{equation} \label{eq_deftranskernel}
\Pi(\hat x,B)=\int_{\Ld} \mathbf{1}_{B}(v\cdot \hat x)  \|v x\|^2 \d \mu(v),
\end{equation}
    where $\hat x\in \pcd$ and  $B$ is a Borel subset of $\pcd$, and the Markov chain $(\hat{x}_n)_{n \in \nnzero}$ on $\pcd$ is given by 
\[
\hat x_n\longmapsto\hat x_{n+1}=V_{n+1}\cdot \hat x_{n},
\]
where $V_{n+1}$ has law $||v x_n||^2 \d \mu(v)$. 
Our main goal is to classify the invariant measures of $(\hat x_n)_{n\in\nnzero}$. Analogous results for general quantum trajectories, which for the most part follow via a straightforward adaptation of the methods and results derived for pure trajectories, are relegated to  \Cref{sec:app}.

For a topological space $\mathcal T$, we denote $\mathcal C(\mathcal T)$ the set of continuous functions $\mathcal T\to \cc$.
Then, on $\mathcal C(\Sd)$ and $\mathcal C(\P(\cc^d))$, the kernel $\Pi$ acts as
$$\Pi f(\rho)=\int_{\Ld} f\left(\frac{v \rho v^*}{\tr(v \rho v^*)}\right) \tr(v \rho v^*) \d\mu(v)$$
and
$$\Pi f(\hat x)=\int_{\Ld} f(v\cdot \hat x)  \|v x\|^2 \d \mu(v),$$
respectively. Also, by duality, the kernel acts on measures as 
$$\nu\Pi(B)=\int_{\Sd}\Pi(\rho,B)\d\nu(\rho)\quad\mathrm{or}\quad \nu\Pi(B)=\int_{\pcd}\Pi(\hat x,B)\d\nu(\hat x),$$
where $\nu$ is a Borel measure on $\Sd$ or $\pcd$, and $B$ is a Borel subset of the respective set. An \textit{invariant probability measure} for $\Pi$ is a probability measure $\nu$ that satisfies $\nu\Pi=\nu$. The set of invariant probability measures is convex and its extreme points  are called \textit{ergodic measures}.

\smallskip
 
Since the kernel $\Pi$ is Feller (see \cite[Prop 2.2]{BenFat}) and $\Sd$ is compact, quantum trajectories always admit  an invariant measure. An important question is the uniqueness of this measure and convergence towards it. A key notion in this context is that of a \emph{dark subspace}, which was introduced by K\"{u}mmerer and Maassen in \cite{MaaKumm}. Let us recall its definition.
    
    \begin{definition}
        A non-zero linear subspace $D\subset \cc^d$ is called a \emph{dark subspace} when for all $n\in\nnone$ and $\mutensorn$-almost all $(v_1,\ldots,v_n) \in {\Ld^{\times n}}$ there exists $\lambda  \geq 0$ and $U\in \Ud$ such that 
        \begin{equation}\label{darkprop2}
          v_n\cdots v_1x = \lambda Ux \ \   \ \     \forall x\in D
        \end{equation}
        Equivalently, $D \subset \cc^d$  is dark if and only if 
        \begin{equation}\label{darkprop}
            \pi_D v_1^*\cdots v_{n}^*v_{n}\cdots v_{1}\pi_D \propto \pi_D 
        \end{equation}
        for all $n \in \nnone$ and  $\mutensorn$-almost all  $(v_1,\ldots,v_n)$, where   $\pi_D$ stands for the orthogonal projection on $D$.
        The orthogonal projection on a dark subspace is called a \emph{dark projection}.

        \emph{Maximal dark subspaces} are those with  highest dimension. We denote this dimension by $r_m$ and the set of maximal dark subspaces by $\mathcal D_m$.
    \end{definition}

  Note that every one-dimensional subspace satisfies \Cref{darkprop2} trivially, so $r_m$ is well defined. In the case $r_m=1$ we say that purification occurs. This condition is equivalent to contractivity in the theory of product of random matrices -- see \cite[Proposition~A.1]{BenFra}.

    \begin{remark}
        According to the original definition in \cite{MaaKumm}, dark subspaces are required to have dimension of at least two; in particular, dark subspaces may not exist for a given $\mu$.
        However, for the sake of consistency, in the present paper  we consider rays in $\cc^d$ as (trivial) dark subspaces and rank-one projections as (trivial) dark projections.
    \end{remark}

The following straightforward proposition gathers basic properties of dark subspaces. 
In particular, it states explicitly that the probability of the system undergoing a given update depends only on the dark subspace which the system occupies, i.e., it is constant for all states supported on this dark subspace. This means in particular that if two dark subspaces intersect non-trivially, they share all transition probabilities.
We do not provide a proof since it follows directly from the definition -- see also \cite[Sec. 5]{MaaKumm}.

    \begin{proposition}\label{prop:darkdefprop} Let $D$ be a dark subspace.
    \begin{enumerate}[label={\rm (\roman*)}]
        \item 
 Let $v\in \supp \mu$. If $\lambda \geq 0$  and $U\in\Ud$ are such that
$v|_D = \lambda U|_D$, which is equivalent to $\pi_D v^*v\pi_D=\lambda^2\pi_D$, then for every unit vector $x \in D$, 
$$\|vx\|^2 =\lambda^2=\tr\big(v\tfrac{\pi_D}{\tr(\pi_D)}v^*\big).$$
Furthermore, if $\lambda>0$, then $vD$ is a dark subspace {and $\dim vD = \dim D$}. 

\item  Since every point $(v_1, \ldots, v_n) \in \supp \mutensorn$ can be approximated by a sequence of points from $\supp \mutensorn$ that satisfy \Cref{darkprop},  the continuity of matrix multiplication implies that  \Cref{darkprop} holds for every $(v_1, \ldots, v_n) \in \supp\mutensorn$. 
   \end{enumerate}   
   \end{proposition}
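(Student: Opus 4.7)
The plan is to prove (ii) first, since we will invoke it to complete the nontrivial part of (i). For (ii), the idea is that the darkness condition is closed. Rewriting \Cref{darkprop} as $\pi_D(v_n\cdots v_1)^*(v_n\cdots v_1)\pi_D\in\cc\pi_D$ and using continuity of matrix multiplication, the set $A_n\subset\Ld^{\times n}$ of tuples satisfying this relation is closed. By definition of a dark subspace, $\mu^{\otimes n}(A_n^c)=0$, and since $A_n^c$ is open this forces $\supp\mu^{\otimes n}\cap A_n^c=\emptyset$, giving (ii).

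For (i), I would start with the equivalence $v|_D=\lambda U|_D\Longleftrightarrow \pi_D v^*v\pi_D=\lambda^2\pi_D$. The forward direction is immediate from $U^*U=\id$; the reverse direction gives $\|vx\|^2=\lambda^2\|x\|^2$ for every $x\in D$, so either $v|_D=0$ (if $\lambda=0$) or $v|_D/\lambda$ is an isometry on $D$, which extends to a unitary $U\in\Ud$. The norm identity $\|vx\|^2=\lambda^2$ for unit $x\in D$ is then immediate, and the trace identity follows from cyclicity,
\[
\tr\!\big(v\pi_D v^*\big)=\tr\!\big(\pi_D v^*v\pi_D\big)=\lambda^2\tr(\pi_D).
\]
Dividing by $\tr(\pi_D)$ gives the claimed value $\lambda^2$.

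The only remaining point is that $vD$ is dark of the same dimension as $D$ when $\lambda>0$. Injectivity of $v|_D=\lambda U|_D$ yields $\dim vD=\dim D$. To prove darkness, fix $n\in\nnone$ and any tuple $(w_1,\ldots,w_n)\in\supp\mu^{\otimes n}$. Since $v\in\supp\mu$, the concatenated tuple $(v,w_1,\ldots,w_n)$ belongs to $\supp\mu^{\otimes(n+1)}$, so by (ii) applied to $D$ there exist $\mu'\geq 0$ and $U'\in\Ud$ with $w_n\cdots w_1 v|_D=\mu' U'|_D$. For any $y=vx\in vD$ with $x\in D$, we have $\|x\|=\|y\|/\lambda$ and $w_n\cdots w_1 y=\mu' U'x$, so $\|w_n\cdots w_1 y\|=(\mu'/\lambda)\|y\|$; thus $w_n\cdots w_1|_{vD}$ scales norms by a constant and is proportional to an isometry, which extends to a unitary of $\cc^d$. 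The main conceptual obstacle is exactly the step just used: one needs (ii) to upgrade the almost-sure darkness condition to one that holds everywhere on $\supp\mu^{\otimes(n+1)}$, so that concatenation with a fixed $v\in\supp\mu$ remains inside the tuples on which darkness of $D$ is available.
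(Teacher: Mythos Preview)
Your proof is correct and supplies exactly the details the paper omits (the paper states that the proposition ``follows directly from the definition'' and embeds the closed-set/continuity argument for (ii) in the statement itself rather than giving a separate proof). Your reordering---proving (ii) first so that the concatenation $(v,w_1,\ldots,w_n)\in\supp\mu^{\otimes(n+1)}$ can be used to show $vD$ is dark---is the natural way to fill in the only nontrivial step, and matches the paper's intended reasoning.
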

    
    One of the main results of \cite{MaaKumm} is that the asymptotic behaviour of  quantum trajectories is to perform a random walk between dark subspaces. More precisely, the transition rule  
    $$D_n \longmapsto D_{n+1}=v D_n$$
      with probability  $\tr(v\frac{\pi_{D_n}}{\tr(\pi_{D_n})}v^*)\d\mu(v)$ defines a Markov chain $(D_n)_{n \in \nnzero}$ in $\mathcal D_m$.  This Markov chain is well defined by \Cref{prop:darkdefprop}(1). The corresponding kernel is 
     \begin{equation*} 
         K(D, B)=\int_{\Ld} {\mathbf{1}}_B(vD)\tr\big(v\tfrac{\pi_D}{r_m}v^*\big)\d\mu(v),
     \end{equation*}
     where $D \in \mathcal D_m$ and $B$ is a Borel subset of $\mathcal D_m$.
   Let us point out that in terms of  normalized  dark projectors  this transition reads
    $$\rho_n=\frac{\pi_{D_n}}{\tr(\pi_{D_n})} \longmapsto \rho_{n+1}=\frac{v \pi_{D_n}v^*}{\tr(v \pi_{D_n}v^*)}=\frac{\pi_{D_{n+1}}}{\tr(\pi_{D_{n+1}})},$$
    and so it coincides with the Markov chain defined by $\Pi$ on density matrices.

    \smallskip 

    As already mentioned in Introduction, our only assumption all along the paper is that $\mu$ satisfies the following irreducibility assumption.

\smallskip
\noindent 
\textbf{Irreducibility.} There does not exist a non-trivial subspace $E\subset \mathbb C^d$ (\emph{i.e.} $E\notin\{\cc^d,\{0\}\}$) such that $\mu$-almost surely we have $vE\subset E$.
\smallskip 

\noindent 
In \cite{BenFra} assumption {\bf ($\phi$-Erg)} corresponded to the existence of a unique minimal non-zero $\mu$-invariant subspace $E$. Here the irreducibility assumption moreover enforces $E=\cc^d$. Both assumptions are related to the uniqueness of the fixed point of some map on $\Sd$ called a quantum channel.

Let
    $$\phi:X\mapsto\int_{\Ld} vX v^*\d\mu(v).$$
    This map is positive and \Cref{eq:stoch} implies $\Sd$ is invariant for $\phi$. Then, following Perron--Frobenius theorem for positive maps on matrix algebras -- see \cite{Evans1978} -- the irreducibility assumption is equivalent to $\phi$ having a unique fixed point $\rho_\infty$ in $\Sd$ and this matrix is full rank.  
  Following \cite[Proposition~5.7]{carbone2016irreducible}, assumption {\bf ($\phi$-Erg)} in \cite{BenFra} is equivalent to $\phi$ having a unique fixed point $\rho_\infty$ in $\Sd$; however, this matrix can be rank deficient. 
  
  When one is only concerned with invariant measures, the apparently stronger condition of irreducibility is actually not a restriction. Indeed, assume {\bf ($\phi$-Erg)} from \cite{BenFra} holds and let $E=\supp(\rho_\infty)$. Then,
    following \cite[Theorem~1.3]{benoist2017exponential}, quantum trajectories $(\rho_n)_{n}$ are almost surely asymptotically supported on $E$ and the convergence to it is exponentially fast. Hence, all invariant measures only charge states with support in $E$. Now, since $\phi$ restricted to states that have support in $E$ has a unique fixed point in $\Sd$ which is full rank on $E$, $\phi\vert_E$ is irreducible. Hence, assuming irreducibility is not restrictive and our result can be extended assuming only {\bf ($\phi$-Erg)} from \cite{BenFra}.

    \subsection{Probability space}
    We work with a specific probability space on which we define a process that has the same law as the quantum trajectory defined in \Cref{eq:kernelPi}. It is essentially the one used in \cite{BenFra}.
    Let $\Omega$ be the set of infinite sequences of maps $\Ld^{\nnone}$.  For any $\omega\in \Omega$, let $\omega = (v_1, v_2, \ldots)$, and let $\pi_n \colon \Omega \to \Ld^{\times n}$  be the canonical projection on the first $n$ components, i.e. $\pi_n(\omega) = (v_1, \ldots, v_n)$.
    The usual cylinder $\sigma$-algebra on $\Omega$ is denoted $\mathcal O$. That is, denoting by $\mathfrak M$ the Borel $\sigma$-algebra on $\Ld$, for $n\in\nnone$ we define the
    $\sigma$-algebra $\O_n$ as  $\pi^{-1}_n( \mathfrak M^{\otimes n})$, i.e., $\O_n$ is the smallest $\sigma$-algebra on $\Omega$ such that all $n$-cylinder sets $\{(v_1, \ldots, v_n, \ldots ) \in \Omega \colon v_i \in A_i,\, i=1, \ldots, n\}$, where $A_1, \ldots, A_n \in \mathfrak M$,  are   measurable. 
    Then $\mathcal O$ is the smallest $\sigma$-algebra such that $\mathcal O_n\subset \mathcal O$ for any $n\in \nnone$.
    
    Throughout the paper we use a series of identifications which improve the readability even if they constitute small abuses of notation. Namely, a cylinder set $O_n \in \O_n$ is identified with its base $\pi_n(O_n)$, a function $f$ on $\Ld^{\times n}$ with $ f \circ \pi_n$ on $\Omega$, and a measure $\mutensorn$ on $(\Ld^{\times n}, \mathfrak M^{\otimes n})$ with $\mutensorn\circ \pi_n$ on $(\Omega, \O_n)$.
    Note that we cannot extend $\mu$ to the whole $\Omega$ as it is not assumed to be finite. 
    
    To any $\rho \in \Sd$ we define a law $\pp^\rho$ for the sequences in $\Omega$. It is defined as
    $$\mathbb P^{\rho}(O_n)  = \int_{ O_n } \tr(v_{n}\cdots v_{1}\rho v_{1}^*\cdots v_{n}^*)\d \mutensorn(v_1, \ldots, v_n),$$
    where $O_n \in \O_n$, $n \in \nnone$. The stochasticity condition, see  \Cref{eq:stoch}, guarantees this sequence of measures is a consistent family of probability measures, so via Kolmogorov's extension theorem it defines a unique probability measure $\pp^\rho$ on $\Omega$. In quantum mechanics, this law is the one of some repeated quantum measurements modeled by $\mu$.

    For the maximally mixed  {(chaotic)}   state $\mathbb {I}/d$ we put $\mathbb P^{\rm ch}=\mathbb P^{ \mathbb {I}/d}$. We often use $\mathbb P^{\rm ch}$   as a reference measure. Importantly, since $\rho\mapsto\pp^\rho$ is positive and extends to a linear map from $\Ld$ to finite complex measures, for every $\rho \in \Sd$,
    $$\mathbb P^\rho \ll \mathbb P^{\rm ch}.$$
    Indeed, for any $\rho\in\Sd$, $\rho \leq \|\rho\|  \mathbb {I}$. Hence, $\pp^\rho\leq d\|\rho\|\ppch$.
    
    Let $\mathcal B$ be the Borel $\sigma$-algebra  on $\Sd$  and   $\nu$   a Borel probability measure on $\Sd$. We define the probability measure $\mathbb P_\nu$ on $(\Sd \times \Omega,\,  \mathcal B\otimes \mathcal O)$ by
    \begin{equation}\label{eq:pmu}
        \mathbb P_\nu(B\times O)= \int_{B} \pp^\rho(O)\d\nu(\rho)
    \end{equation}
    for  $B\in \mathcal B$ and  $O \in \O$. Note that the restriction of $\pp_\nu$ to $\mathcal B \otimes \{\emptyset, \Omega\}$ is, by construction, equal to $\nu$.
    
    Let $\ee_{\nu}$ denote the expectation with respect to $\pp_\nu$. Then, convexity of $\Sd$ implies $\rho_\nu = \mathbb E_\nu(\rho)$ is an element of $\Sd$. Moreover, since $\rho\mapsto\pp^\rho$ is affine by definition, the marginal of $\mathbb P_\nu$ on $\O$ is $\pp^{\rho_\nu}$, i.e.
    $$\mathbb P_\nu(\Sd \times O) = \pp^{\rho_\nu}(O)$$
    for any $O\in \O$.
    
    We identify the sub-$\sigma$-algebra $\{\emptyset,\Sd\} \otimes \O$ with $\O$ and an $\O$-measurable function $f$ on $\Omega$ with the $\B \otimes \O$-measurable function $f$ on $\Sd \times \Omega$ defined as $f(\rho, \omega) = f(\omega)$ for all $(\rho, \omega) \in \Sd \times \Omega$. This construction verifies $\ee_{\nu}(  f) =  \ee^{\rho_\nu}(f)$ for any $f$ $\O$-measurable.
    
    \bigskip
    We now construct on $(\Sd\times\Omega,\pp_\nu)$ a process whose law is the one of the  Markov chain defined by $\Pi$ and initial law $\nu$.
    Let $V_i \colon \Omega \to \Ld$, $i \in \nnone$ be the $i^{\text{th}}$ coordinate random variable
    $$V_i (v_1,v_2,\ldots,v_i,\ldots)   =   v_{i},$$
    and let $(W_n)_{n}$ be the $(\O_n)_n$-adapted sequence of products of them:
    $$W_n=V_{n}\cdots V_1$$
    for $n\in\nnone$. For the sake of consistency we set $W_0 = \id$.
    Next, we define the $(\B \otimes \O_n)_n$-adapted sequence of random states $(\rho_n)_{n}$ on $(\Sd\times \Omega,\,\mathcal B \otimes\mathcal O,\,\mathbb P_\nu)$ as $\rho_0(\rho, \omega) = \rho$ for every $\rho \in \Sd$ and $\omega \in \Omega$, and
    $$\rho_{n}=\frac{W_n^{\phantom{*}} \rho_0 W_n^*}{\tr(W_n^{\phantom{*}} \rho_0 W_n^*)}$$
    for $n\in\nnone$.
    Note that $\mathbb P_\nu(\tr(W_n^{\phantom{*}} \rho_0 W_n^*)=0)=0$ for every $n\in\nnzero$, so   $(\rho_{n})_{n}$ is  $\mathbb P_\nu$-almost surely well defined. Moreover,  $(\rho_n)_{n}$ has the same distribution as
    the Markov chain with the initial distribution $\nu$ and  transition kernel $\Pi$.

    \section{Invariant measure on dark subspaces, proof of \Cref{thm:invmeasDark}}\label{sec:invdark}
    In this section we prove \Cref{thm:invmeasDark}, i.e., the exponential convergence towards the unique invariant measure on maximal dark subspaces. The proof has the same structure as the one in \cite{BenFra}. After defining some topological structure on the set of Dark subspaces $\D$ and defining an explicit realization of the Markov chain, we prove two geometric convergences. First, that there exists an $\O$-measurable estimator $\widehat{E}_n$ of $D_n$ with their relative distance decreasing exponentially fast. Second, that for any $\O$-measurable random variable, some shift of $\pp^\rho$ converges exponentially fast in total variation to $\pp^\rho$. We conclude by piecing together these two convergences.
   
    \subsection{The structure of $\D$}
     Dark subspaces are linear subspaces of $\cc^d$, hence elements of a Grassmannian. 
    We denote $\mathcal G_{k,d}(\mathbb C)$ the set of $k$-dimensional subspaces of $\cc^d$ and equip it with the \emph{Grassmann topology}, which is the final topology induced by the function mapping a complex matrix of size $d \times k$ with linearly independent columns to the linear span of these columns. 
 
       With respect to this topology, $\mathcal G_{k,d}(\mathbb C)$ is a compact space  -- see \cite[\nopp I-1-7]{Ferrer94}.
    Moreover, the \emph{gap metric} defined by
    $$d_{\rm G}(Q_1, Q_2) = \|\pi_{Q_1}- \pi_{Q_2}\|,$$
    for $Q_1, Q_2 \in \mathcal G_{k,d}(\mathbb C)$, metrizes this topology -- see \cite[\nopp I-2-6]{Ferrer94}.
    Later on, we shall exploit the fact that the gap metric can be expressed as
    \begin{equation}\label{eq:gapmax}
        d_{\rm G}(Q_1, Q_2)=\max\left\{\sup\nolimits_{{x \in Q_1, \|x\|=1}}\dist(x,Q_2),\ \sup\nolimits_{{x \in Q_2, \|x\|=1}}\dist(x,Q_1)\right\},
    \end{equation}
    where   $\dist(x,Q)=\Vert x-\pi_Qx\Vert$  is the  distance between the vector  $x \in \cc^d$ and the subspace $Q \subset \cc^d$ -- see \cite[Theorem~13.1.1]{GohLan06}.  
    In fact, the two suprema in \Cref{eq:gapmax} coincide  -- see \cite[Lemma~3.2]{Morris10} -- so
    \begin{equation}\label{eq:gapsup}
        d_{\rm G}(Q_1, Q_2)= \sup\nolimits_{{x \in Q_1, \|x\|=1}}\dist(x,Q_2) =  \sup\nolimits_{{x \in Q_2, \|x\|=1}}\dist(x,Q_1).
    \end{equation}
    The last structural result we need is the compactness of $\mathcal D_m$.
    \begin{proposition}
    \label{prop:compact}
        The set $\mathcal D_m$ is compact.
    \end{proposition}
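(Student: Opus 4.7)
The plan is to show that $\mathcal{D}_m$ is a closed subset of the compact space $\mathcal{G}_{r_m,d}(\mathbb{C})$, from which compactness follows immediately. Throughout, we work with the gap metric on the Grassmannian, for which convergence $D_k \to D$ is equivalent to operator-norm convergence of the orthogonal projectors, $\pi_{D_k} \to \pi_D$.

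To prove closedness, I would take a sequence $(D_k)_k \subset \mathcal{D}_m$ converging in gap metric to some $D \in \mathcal{G}_{r_m,d}(\mathbb{C})$ and show that $D$ itself is a dark subspace; since $\dim D = r_m$, it will then lie in $\mathcal{D}_m$. The equivalent characterization \Cref{darkprop} combined with \Cref{prop:darkdefprop}(ii) is the key tool: it guarantees that for every $k$, every $n \in \nnone$, and \emph{every} $(v_1,\ldots,v_n) \in \supp \mutensorn$, there exists $c_k \geq 0$ such that
\[
\pi_{D_k}\, v_1^* \cdots v_n^* v_n \cdots v_1\, \pi_{D_k} \;=\; c_k\, \pi_{D_k}.
\]
Taking the trace gives the explicit formula $c_k = \tr(\pi_{D_k} v_1^* \cdots v_n^* v_n \cdots v_1)/r_m$, which makes the scalar $c_k$ a continuous function of $\pi_{D_k}$.

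Passing to the limit $k \to \infty$ is then straightforward: $\pi_{D_k} \to \pi_D$ in operator norm, matrix multiplication is continuous, and the scalars $c_k$ converge to $c := \tr(\pi_D v_1^* \cdots v_n^* v_n \cdots v_1)/r_m$. Consequently
\[
\pi_D\, v_1^* \cdots v_n^* v_n \cdots v_1\, \pi_D \;=\; c\, \pi_D,
\]
which is \Cref{darkprop} for $D$ at every $(v_1,\ldots,v_n) \in \supp \mutensorn$ and every $n$. This confirms $D$ is a dark subspace of dimension $r_m$, hence $D \in \mathcal{D}_m$.

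No real obstacle arises here; the main point is simply to avoid the measure-theoretic nuisance that the $\mutensorn$-null exceptional set in the definition could a priori depend on $k$. Invoking \Cref{prop:darkdefprop}(ii) to upgrade the almost-sure identity to one holding on all of $\supp\mutensorn$ bypasses this issue entirely, and the rest is continuity.
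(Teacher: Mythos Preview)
Your proof is correct and follows essentially the same route as the paper: show $\mathcal{D}_m$ is closed in the compact Grassmannian by passing the identity \eqref{darkprop} to the limit via continuity of the map $\pi \mapsto \pi v_1^*\cdots v_n^* v_n\cdots v_1 \pi - \tfrac{1}{r_m}\tr(\pi v_1^*\cdots v_n^* v_n\cdots v_1)\pi$. The only cosmetic difference is that the paper works $\mutensorn$-almost surely (implicitly using that a countable union of null sets is null), whereas you invoke \Cref{prop:darkdefprop}(ii) to work pointwise on $\supp\mutensorn$; both are fine.
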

    \begin{proof}
        Since $\mathcal G_{r_m,d}(\mathbb C)$ is compact and $\mathcal D_m\subset \mathcal G_{r_m,d}(\mathbb C)$, it suffices to prove that $\mathcal D_m$ is closed. To this end, consider a sequence of maximal dark projectors $(\pi_n)_{n\in \nn}$ that converges to some projector $\pi$. We will show that $\pi$ is a dark projector of rank $r_m$. It is clear that $\pi$ is an orthogonal projector, because the set of orthogonal projectors is closed. As for the rank of $\pi$, we just have to check that $\tr(\pi)=r_m$, which is straightforward since $\tr(\pi_n)=r_m$ for all $n$ the trace is continuous. 
        
        It remains to show that $\pi$ satisfies \Cref{darkprop}. Fix $k \in \nnone$. Since every $\pi_n$ is dark, for $\mutensor{k}$-almost all  $(v_1, \ldots, v_k)$,
        \begin{equation*} 
            \pi_n v^*_k \cdots v_1^* v_1 \cdots v_k \pi_n-\tfrac{1}{r_m}\tr(\pi_n v^*_k \cdots v_1^* v_1 \cdots v_k) \pi_n=0.
        \end{equation*}
        As a function of $\pi_n$, the left hand side is continuous. Therefore, for $\mu^{\otimes k}$-almost all $(v_1, \ldots, v_k)$,
        \begin{equation*}
        \pi v^*_k \cdots v_1^* v_1 \cdots v_k \pi=  \tfrac{1}{r_m} \pi  
        \end{equation*} 
       and  the proposition is proved. 
    \end{proof}
    \begin{remark}
        This proposition implies $\mathcal D_m$ is Borel measurable as a closed set.
    \end{remark}
    
    \subsection{Probability space}
    As in the case of quantum trajectory, we define a probability space on which the Markov chain is explicitly realized.
     
    Recall that the transition kernel for the Markov chain on maximal dark subspaces reads 
    \begin{equation}\label{kernelK} 
        K(D, B)=\int_{\Ld} {\mathbf{1}}_B(vD)\tr\big(v\tfrac{\pi_D}{r_m}v^*\big)\d\mu(v),
    \end{equation}
    where $D \in \mathcal D_m$ and $B$ is a Borel subset of $\mathcal D_m$. 

We put $\mathcal Q$ for the Borel $\sigma$-algebra on $\mathcal D_m$ and consider  a Borel probability measure  $\chi$  on $\mathcal D_m$. We define the probability measure $\mathbb P_{\chi}$ on  $(\mathcal D_m \times \Omega,\, \mathcal Q \otimes \mathcal O)$  analogously to how  we defined  $\mathbb P_{\nu}$ from a measure $\nu$ on the set of states $\Sd$, i.e., for  $B\in \mathcal Q$ and $O \in \O$  we put
\begin{equation*}
    \mathbb P_\chi(B\times O_n)= \int_{B} \pp^{\pi_D/r_m}(O) \d\chi(D).
\end{equation*}
We write $\mathbb E_{\chi}$ for the expectation with respect to $\mathbb P_{\chi}$ and denote $\rho_{\chi}:=\mathbb E_{\chi}(\pi_D/r_m) \in \Sd$. Recall that $\pp^{\rho_{\chi}} \ll \pp^{\rm ch}$. Also, as before, an $\O$-measurable map $f$ on $\Omega$ is identified with the $\mathcal Q \otimes \O$-measurable map $f$ on $\D \times \Omega$ satisfying $f(D,\omega) = f(\omega)$ for all $D \in \D$ and $\omega \in \Omega$. Then  $\ee_{\chi}(f) = \ee^{\rho_\chi}(f)$.

Now, on $(\mathcal D_m \times \Omega, \mathcal Q \otimes \mathcal O,\mathbb P_\chi)$ we define a $(\mathcal Q \otimes \O_n)_n$-adapted process $(D_n)_n$  by setting $D_0(D,\omega) = D$ for every $(D, \omega) \in \mathcal D_m \times \Omega$ and
$D_n = W_nD_0$
for $n \geq 1$. This process has the same distribution as the Markov chain on $\mathcal D_m$ with the initial distribution $\chi$ and transition kernel $K$, see \Cref{kernelK}.
In terms of dark projectors, we have
$$
\frac{\pi_{D_n}}{r_m} = \frac{W_n\pi_{D_0}W_n^*}{\tr(W_n\pi_{D_0}W_n^*)}
.$$
Observe that
$\mathbb P_{\chi}(W_nD_0 \in \mathcal D_m) = \mathbb P_{\chi}(\tr(W_n\pi_{D_0}W_n^*  ) > 0) = 1$,
so the process under consideration is $\mathbb P_{\chi}$-almost surely well defined.

    \subsection{Some useful process in $\Sd$}\label{sec:M_n}
    In \cite{BenFra}, a key object in the proofs was the $(\O_n)_n$-adapted sequence of random variables $(M_n)_{n}$ taking values in $\Sd$ and defined by
    $$M_n=\frac{W_n^*W_n^{\phantom{*}}}{\tr(W_n^*W_n^{\phantom{*}})}.$$
    Note that $M_n$ is $\mathbb P_\chi$-almost surely well defined  since it is $\O$-measurable, $\mathbb P^{\rho_\chi} \ll \mathbb P^{\rm ch}$ and  $\mathbb P^{\rm ch}(\tr(W_n^* W_n )=0)=0$.
    The following key proposition was proved in \cite{BenFra}:
    \begin{proposition}[{\cite{BenFra}, Proposition~2.2}]\label{prop:existence Minf}
    There exists an $\O$-measurable $\Sd$-valued random variable $M_\infty$ such that 
    $$\lim_{n \to \infty} M_n=M_\infty,$$
    where the limit holds $\pp_\chi$-almost surely and in $L^1(\pp_\chi)$-norm for any Borel probability measure $\chi$ over $\D$.

    Moreover, for any $\rho\in \Sd$,
    $$\frac{\d\pp^\rho}{\d\ppch}=d\tr(\rho M_\infty).$$
    \end{proposition}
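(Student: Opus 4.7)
The plan is to exploit that on each $\O_n$ the quantity $d\,\tr(\rho M_n)$ is precisely the Radon--Nikodym derivative of $\pp^\rho$ with respect to $\ppch$, so that for each fixed $\rho \in \Sd$ the scalar process $(d\,\tr(\rho M_n))_n$ is a uniformly integrable non-negative $\ppch$-martingale, and then to upgrade this per-$\rho$ convergence to convergence of the $\Sd$-valued process $(M_n)_n$ using compactness and separability of $\Sd$.

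First, from the definition of $\pp^\rho$ one computes that on $\O_n$ the densities $\d\pp^\rho/\d\mutensorn = \tr(W_n\rho W_n^*)$ and $\d\ppch/\d\mutensorn = \tfrac{1}{d}\tr(W_n^*W_n)$, which yields
$$L_n(\rho) := \frac{\d\pp^\rho}{\d\ppch}\bigg|_{\O_n} = d\,\tr(\rho M_n).$$
Since $\rho \leq d\cdot \id/d$ forces $\pp^\rho \leq d\,\ppch$ globally, $(L_n(\rho))_n$ is a uniformly integrable $(\O_n)_n$-martingale; Doob's theorem, combined with $\O = \sigma(\cup_n \O_n)$, then provides $\ppch$-almost sure and $L^1(\ppch)$ convergence of $L_n(\rho)$ to the closing variable $\d\pp^\rho/\d\ppch$.

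Second, to upgrade scalar convergence to convergence of the matrix $M_n$, I would choose a countable dense subset $\{\rho_k\}_{k\in\nnone}\subset \Sd$; on the intersection of the full-measure events on which each $L_n(\rho_k)$ converges, the real sequence $\tr(\rho_k M_n)$ converges for every $k$. Because $\Sd$ is compact, every subsequential limit $M^\star$ of $(M_n)_n$ satisfies $\tr(\rho_k M^\star) = \lim_n \tr(\rho_k M_n)$ for all $k$; density of $\{\rho_k\}_k$ together with the fact that the trace pairing against states separates self-adjoint matrices forces $M^\star$ to be unique, so $M_n$ converges pointwise to some $M_\infty\in \Sd$ on a $\ppch$-full-measure event. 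Setting $M_\infty := \id/d$ on the exceptional null set preserves $\O$-measurability, and taking limits in $L_n(\rho) = d\,\tr(\rho M_n)$ identifies $d\,\tr(\rho M_\infty) = \d\pp^\rho/\d\ppch$ for every $\rho\in\Sd$.

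Finally, to transport the convergence from $\ppch$ to an arbitrary $\pp_\chi$ with $\chi$ a Borel probability measure on $\D$, I would use that $\pp_\chi$ restricted to $\O$-measurable events coincides with $\pp^{\rho_\chi}$ for $\rho_\chi = \ee_\chi(\pi_D/r_m) \in \Sd$, and that $\rho_\chi\leq \id$ gives $\pp^{\rho_\chi}\leq d\,\ppch$; thus every $\ppch$-null set is $\pp_\chi$-null, and the same bound transports $L^1$ convergence via dominated convergence together with $\|M_n\|\leq 1$. The main obstacle is the scalar-to-matrix upgrade in step two: turning a countable family of per-$\rho$ almost-sure convergences into almost-sure convergence of the $\Sd$-valued process. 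This rests on the combination of compactness of $\Sd$, separability through a countable dense family of states, and the fact that states linearly span the Hermitian matrices; the remaining martingale and Radon--Nikodym bookkeeping is routine.
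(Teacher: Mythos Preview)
Your proposal is correct and follows the same underlying approach as the paper (which simply defers to \cite[Proposition~2.2]{BenFra} for the core argument and only adds the transport from $\ppch$ to $\pp_\chi$ via $\pp^{\rho_\chi}\leq d\,\ppch$, exactly as you do in your final step). The main organizational difference is that the cited reference works with the matrix-valued process directly: one checks by a one-line computation that $(M_n)_n$ is a bounded $\ppch$-martingale, so martingale convergence in finite dimensions gives $M_n\to M_\infty$ almost surely and in $L^p$ without the density/compactness detour; the Radon--Nikodym identity then follows from the same observation you make, namely $\d\pp^\rho/\d\ppch\big|_{\O_n}=d\,\tr(\rho M_n)$, by passing to the limit. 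Your route through scalar martingales $L_n(\rho)$ for a countable dense family of $\rho$'s is a valid and self-contained alternative, and has the pleasant feature that the martingale property comes for free from L\'evy's theorem on Radon--Nikodym derivatives rather than from a separate conditional-expectation computation; the price is the extra compactness/separability bookkeeping in step two, which the direct matrix-valued argument avoids.
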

    \begin{proof}
    Proposition~2.2 in \cite{BenFra} was expressed with respect to $\pp_\nu$ with $\nu$ a probability measures over $\P(\cc^d)$ instead of $\pp_\chi$. However, the proof of convergence is done with respect to $\ppch$, which implies it holds with respect to $\pp^\rho$ for any $\rho\in\Sd$ since $\pp^\rho\leq d\ \ppch$. Hence, the  limit claims hold with respect to $\pp_\chi$ by definition of this measure.

    The Radon-Nikodym derivative expression is proved in \cite[Proposition~2.2]{BenFra}.
    \end{proof}

    From the polar decomposition of $W_n$ follows the existence of a unitary-valued random variable $U_n\colon  \Omega \to \Ud $ satisfying $$W_n=U_n\sqrt{W_n^*W_n^{\phantom{*}}},$$
    where $n \in \nnzero$. We can choose $(U_n)_n$ so that it is  $(\O_n)_n$-adapted. Note that by definition of $M_n$, $\ppch$-almost surely, $W_n\propto U_n\sqrt{M_n}$. Since $U_n(\omega)\in \Ud$ and $\Ud$ is compact, the sequence $(U_n(\omega))_{n}$ almost surely has an accumulation point. Let us point out that, when $\omega$ is fixed, both the accumulation points and the extraction subsequence leading to one depend on $\omega$.  
    
    The following two propositions, essential in the sequel, will lead to the concentration of quantum trajectories on maximal dark subspaces.
    \begin{proposition}\label{prop:Minfdark} For any $\omega\in \Omega$, let $\mathsf U_{\rm acc}(\omega)$ be the set of accumulation points of $(U_n(\omega))_n$. Then, for any $\rho\in \Sd$, $\pp^\rho$-almost surely, for any $U_\infty\in \mathsf U_{\rm acc}$, the range of $U_\infty M_\infty U_\infty^*$ is a dark subspace.
    \end{proposition}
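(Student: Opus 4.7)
The plan is to characterise darkness of $R := \range(U_\infty M_\infty U_\infty^*)$ as an operator identity on $P := U_\infty M_\infty U_\infty^*$, and then establish this identity by combining the polar decomposition $W_n = U_n|W_n|$ with the $\ppch$-martingale convergence $M_n\to M_\infty$ from \Cref{prop:existence Minf}. Observing that $\sqrt{P} = U_\infty\sqrt{M_\infty}U_\infty^*$, a spectral comparison on the range $R$ shows that, for any Hermitian $A$, $\pi_R A\pi_R \propto \pi_R$ iff $\sqrt{P}A\sqrt{P}\propto P$. Hence darkness of $R$ is equivalent to the identity
\[
\sqrt{P}\,\tilde v^*\tilde v\,\sqrt{P} \;=\; \tr(\tilde v^*\tilde v\,P)\,P
\]
holding for every $l\in\nnone$ and $\mutensor{l}$-a.e.\ $\tilde v = v'_l\cdots v'_1$.

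To approach this identity, I would use the polar-decomposition algebra $|W_n^*|\tilde v^*\tilde v|W_n^*| = U_n\,W_n^*\tilde v^*\tilde v W_n\,U_n^*$, which after normalisation reads
\[
\frac{|W_n^*|\tilde v^*\tilde v|W_n^*|}{\tr(W_n^*W_n)} \;=\; \alpha_n(\tilde v)\,U_n\,\tilde M_n(\tilde v)\,U_n^*,
\]
with $\alpha_n(\tilde v) := \tr(\tilde v F_n\tilde v^*)$, $F_n := W_nW_n^*/\tr(W_n^*W_n)$, and $\tilde M_n(\tilde v) := W_n^*\tilde v^*\tilde v W_n/\tr(W_n^*\tilde v^*\tilde v W_n)$; crucially, $\tilde M_n(\tilde v)$ is the value of $M_{n+l}$ along the extended trajectory in which the next $l$ matrices are prescribed to equal $\tilde v$. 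Along the subsequence $(n_k)$ defining $U_\infty$, continuity of the square root on positive matrices and the convergence $F_{n_k}\to P$ give $|W_{n_k}^*|/\sqrt{\tr(W_{n_k}^*W_{n_k})}\to\sqrt{P}$, $\alpha_{n_k}(\tilde v)\to\tr(\tilde v^*\tilde v P)$, and $U_{n_k}M_\infty U_{n_k}^*\to P$. The target identity therefore reduces to a key claim: for each fixed $l$ and $\mutensor{l}$-a.e.\ $\tilde v$, $\tilde M_n(\tilde v)\to M_\infty$ as $n\to\infty$.

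For the key claim, I propose a Fubini-type argument. Decomposing the $\ppch$-density on $\O_{n+l}$ as $\tr(\tilde v W_nW_n^*\tilde v^*)/d\,\d\mutensor{n}\d\mutensor{l}(\tilde v)$ where $\tilde v$ parametrises $(V_{n+1},\dots,V_{n+l})$, and noting $M_{n+l} = \tilde M_n(\tilde v,\omega_{1:n})$, yields the exact identity
\[
\eech\!\big[\|M_{n+l}-M_n\|\big] \;=\; \int_{\Ld^l}\eech\!\big[\alpha_n(\tilde v)\,\|\tilde M_n(\tilde v)-M_n\|\big]\,\d\mutensor{l}(\tilde v),
\]
whose left-hand side tends to zero by $L^1(\ppch)$-convergence of the martingale $(M_n)$. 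Fubini together with subsequence extraction then gives, for $\mutensor{l}$-a.e.\ $\tilde v$, a $\ppch$-a.s.\ convergence $\tilde M_n(\tilde v)\to M_\infty$ compatible with $(n_k)$ (relying also on $M_n\to M_\infty$); the degenerate case $\tr(\tilde v^*\tilde v P) = 0$ forces $\tilde v\sqrt{P}=0$, so the target identity holds trivially with $c=0$. Combining with the polar-decomposition step yields $\sqrt{P}\tilde v^*\tilde v\sqrt{P}=\tr(\tilde v^*\tilde v P)P$, hence darkness of $R$; the conclusion passes from $\ppch$ to any $\pp^\rho$ by $\pp^\rho\ll\ppch$.

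The main obstacle lies in the key claim: bridging the $\ppch$-martingale convergence along the \emph{random} trajectory with a $\mutensor{l}$-a.e.\ statement for \emph{fixed} $\tilde v$ requires careful control of the weights $\alpha_n(\tilde v)$ and a compatible subsequence extraction (the Fubini step alone gives $L^1$-type control, and degeneration of $\alpha_n(\tilde v)$ must be handled separately as above). Additionally, the proposition demands the identity to hold for \emph{all} $U_\infty\in\mathsf U_{\rm acc}(\omega)$ on a single $\pp^\rho$-null set; this uniformity requires a separability/continuity argument using the compactness of $\Ud$, joint continuity of $(U,\omega)\mapsto U M_\infty(\omega) U^*$, and the closedness of the set of dark subspaces established in \Cref{prop:compact}.
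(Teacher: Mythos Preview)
Your setup is essentially the same as the paper's: the darkness characterisation via $\sqrt{P}\,\tilde v^*\tilde v\,\sqrt{P}=\tr(\tilde v^*\tilde v\,P)\,P$ and the polar-decomposition algebra are exactly what the paper uses (after unitary conjugation, your integrand $\alpha_n(\tilde v)\|\tilde M_n(\tilde v)-M_n\|$ is precisely the integrand in the paper's \Cref{Ech}). The gap is where you put it: the ``key claim'' that $\tilde M_n(\tilde v)\to M_\infty$ along a subsequence compatible with the $\omega$-dependent $(n_k)$ cannot be extracted from the unconditional identity $\eech[\|M_{n+l}-M_n\|]=\int\eech[\alpha_n(\tilde v)\|\tilde M_n(\tilde v)-M_n\|]\,\d\mutensor{l}$. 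Your Fubini step yields, at best, convergence along a subsequence depending on $\tilde v$ (and then on $\tilde v$ and $\omega$ after a second extraction), which is orthogonal to the subsequence $(n_k(\omega))$ defining $U_\infty$. In fact $\tilde M_n(\tilde v)$ depends on $U_n$, which need not converge, so full-sequence convergence of $\tilde M_n(\tilde v)$ is essentially equivalent to the conclusion you are trying to prove.

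The paper resolves this by reversing the order of integration: rather than taking $\eech$ over $\omega$ first, it recognises your inner expression as a \emph{conditional} expectation,
\[
\mathbb E^{\rm ch}\big[\|M_{n+p}-M_n\|\;\big|\;\O_n\big](\omega)\;=\;\int \alpha_n(\tilde v,\omega)\,\|\tilde M_n(\tilde v,\omega)-M_n(\omega)\|\,\d\mutensor{p}(\tilde v),
\]
and shows this tends to $0$ $\ppch$-almost surely for the \emph{full} sequence $n$ (via the $L^2$ martingale convergence of $(M_n)$ and the identity $\mathbb E^{\rm ch}[(M_\infty-M_n)^2\mid\O_n]=\mathbb E^{\rm ch}[M_\infty^2\mid\O_n]-M_n^2$). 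Since the integrand is a continuous function of $(M_n,U_n,\tilde v)$ dominated by $2\|\tilde v\|^2\in L^1(\mutensor{p})$, one may, for $\ppch$-a.e.\ fixed $\omega$, pass to the limit along \emph{any} subsequence $(n_k)$ with $U_{n_k}(\omega)\to U_\infty$ by dominated convergence; the limiting integral is $0$, hence the integrand vanishes $\mutensor{p}$-a.e., giving the darkness identity. This single switch---conditioning on $\O_n$ instead of averaging over $\omega$---eliminates both obstacles you flagged: the subsequence compatibility problem disappears because convergence holds along the full sequence, and the ``all $U_\infty\in\mathsf U_{\rm acc}$'' uniformity is automatic for the same reason, with no appeal to \Cref{prop:compact} needed.
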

    \begin{proof} This proof mirrors the second part of \cite[Proposition~2.2]{BenFra} proof. For the reader's convenience, we repeat the key steps. Remark that since $\pp^\rho\ll\ppch$, it is sufficient to prove the result $\ppch$-almost surely.

        Let $p \in \nnone$. As proved in \cite[Proposition~2.2]{BenFra}, $(M_n)_{n \in \nnzero}$ is a martingale with respect to $\mathbb P^{\rm ch}$. Since it is bounded, it converges $\ppch$-almost surely and in $L^2(\ppch)$. Then, a lengthy direct computation leads to
        \begin{equation*}
            \lim_{n\to\infty} \mathbb E^{\rm ch} \big( (M_{n+p}-M_n)^2\: \big|\: \mathcal O_n\big) = 0. 
        \end{equation*}
        (For the intermediate steps leading to this result, see  \cite[Eq. (17)]{BenFra}). Hence, using Jensen inequality,
        \begin{equation}\label{limitEch}
            \lim_{n\to\infty} \mathbb E^{\rm ch} \big( \|M_{n+p}-M_n\|\: \big|\: \mathcal O_n\big) = 0. 
        \end{equation}
        For any $n \in \nnone$, using the polar decomposition of $W_n$, $M_{n+p}$ becomes
        $$M_{n+p} = \frac{M_n^{\frac 12} U_n^* V_{n+1}^*\ldots V_{n+p}^* V_{n+p}  \ldots V_{n+1} U_n M_n^{\frac 12} }{\tr(  V_{n+1}^*\ldots V_{n+p}^* V_{n+p}  \ldots V_{n+1} U_n M_n  U_n^* )},$$
        and so 
        \begin{align}\nonumber
            \mathbb E^{\rm ch}& \big(  \|M_{n+p} - M_n\|    \:  | \:  \mathcal O_n\big) = \\ \label{Ech} & \int\limits_{\Ld^{\times p}}	  \! \!\! 	\|  M_n^{\frac 12} U_n^*  v^*_{1} \cdots v^*_p v_p \cdots v_1  U_n M_n^{\frac 12}   - \! M_n \!\tr(\! v^*_{1} \cdots v^*_p v_p \cdots v_1  U_n M_n U_n^*)\!\|\! \d\mu^{\otimes p}(v_1, \ldots, v_p). 
        \end{align}
       Now fix a realization $\omega$ and let $(U_{n_k}(\omega))_{k}$ be a subsequence of $(U_{n}(\omega))_{n}$ that is convergent to some unitary operator  $U_\infty(\omega)$ as $k$ tends to infinity. Dropping the notation $\omega$, taking the limit of the right hand side of \Cref{Ech} along $(n_k)_{k}$ and using \Cref{limitEch}, we have 
       \begin{align*}\label{Ech0}
        \int\limits_{\Ld^{\times p}} \!\!\!	    	\|  M_\infty^{\frac 12} U_\infty^*  v^*_{1} \cdots v^*_p v_p \cdots v_1  U_\infty M_\infty^{\frac 12}   -  \! M_\infty \! \tr(\!v^*_{1} \cdots v^*_p v_p \cdots v_1  U_\infty M_\infty U_\infty^*\!)\!\|\! \d\mu^{\otimes p}(v_1, \ldots, v_p)\! = \!0 
        \end{align*}    
        for any $U_\infty\in \mathsf U_{\rm acc}$. 
        Since the expression under the integral is non-negative,
        $$U_\infty M_\infty^{\frac 12} U_\infty^* v^*_{1} \cdots v^*_p v_p \cdots v_1    U_\infty M_\infty^{\frac 12} U_\infty^* \propto  U_\infty M_\infty U_\infty^*$$
        for $\mu^{\otimes p}$-almost all $(v_1, \ldots, v_p)$. Thus, denoting by $\pi_\infty$ the orthogonal projection on the range of $U_\infty M_\infty U_\infty^*$  (which depends on $\omega$), 
        $$\pi_\infty  v^*_{1} \cdots v^*_p v_p \cdots v_1   \pi_\infty  \propto   \pi_\infty$$
         for $\mu^{\otimes p}$-almost all $(v_1, \ldots, v_p)$. This implies $\pi_\infty$ is a  dark projection, as claimed.
    \end{proof}
    
  The following result implies that the dark subspaces of the form   $\operatorname{range}(U_\infty  M_\infty  U_\infty^*)$ considered in the previous proposition are (almost surely) maximal. This proposition is that same as \cite[Theorem~2.8]{kusuoka1989dirichlet}, however our proof is different and we do not assume the matrices are invertible. Recall that $r_m$ is the dimension of maximal dark subspaces.
    
    \begin{proposition}\label{prop:rankM}
        For all $\rho\in\Sd$, the rank of $M_\infty$ is $\mathbb P^{\rho}$-almost surely equal to $r_m$.
    \end{proposition}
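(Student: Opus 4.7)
The plan is to reduce everything to $\ppch$ (since $\pp^\rho\ll\ppch$ for every $\rho\in\Sd$) and then split the argument into three parts: an upper bound $\mathrm{rank}(M_\infty)\le r_m$, a lower bound under the special measures $\pp^{\pi_D/r_m}$ with $D\in\D$, and a transfer from these special measures to $\ppch$ via irreducibility. The upper bound is immediate from \Cref{prop:Minfdark}: on a subsequence along which $U_n\to U_\infty$, the range of $U_\infty M_\infty U_\infty^*$ is a dark subspace, so $\mathrm{rank}(M_\infty)=\dim\mathrm{range}(U_\infty M_\infty U_\infty^*)\le r_m$.

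For the lower bound under $\pp^{\pi_D/r_m}$, $D\in\D$, I exploit the fact that the trajectory keeps rank $r_m$: using \Cref{prop:darkdefprop}(i) for $W_n|_D$ together with the polar decomposition $W_n=U_n\sqrt{W_n^*W_n}$, a direct computation yields the identity
\[
\frac{\pi_{D_n}}{r_m}\;=\;\frac{W_n\pi_D W_n^*}{\tr(W_n\pi_D W_n^*)}\;=\;\frac{U_n\sqrt{M_n}\,\pi_D\,\sqrt{M_n}\,U_n^*}{\tr(\pi_D M_n)},
\]
valid $\pp^{\pi_D/r_m}$-almost surely for every $n$. By compactness of $\D$ (\Cref{prop:compact}) and $\Ud$, one can extract a subsequence along which $D_{n_k}\to D_\infty\in\D$ and $U_{n_k}\to U_\infty$; along it $M_{n_k}\to M_\infty$ by \Cref{prop:existence Minf}, hence $\sqrt{M_{n_k}}\to\sqrt{M_\infty}$ by continuity of the square root on positive semi-definite matrices. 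The Radon--Nikodym formula in \Cref{prop:existence Minf} implies $\pp^{\pi_D/r_m}(\tr(\pi_D M_\infty)=0)=0$, so the denominator stays bounded away from zero in the limit. Passing to the limit yields the same identity with $M_\infty, U_\infty, D_\infty$ in place of $M_n, U_n, D_n$. The left-hand side has rank $r_m$ while the right-hand side has rank at most $\mathrm{rank}(M_\infty)$, so $\mathrm{rank}(M_\infty)\ge r_m$, which combined with the upper bound gives $\mathrm{rank}(M_\infty)=r_m$ $\pp^{\pi_D/r_m}$-almost surely.

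The final step transfers this to $\ppch$ via irreducibility. Set $A=\{\mathrm{rank}(M_\infty)<r_m\}$ and assume for contradiction $\ppch(A)>0$. For each $D\in\D$, $\pp^{\pi_D/r_m}(A)=0$ combined with $\mathrm d\pp^{\pi_D/r_m}/\mathrm d\ppch=(d/r_m)\tr(\pi_D M_\infty)$ forces $\tr(\pi_D M_\infty)=0$ $\ppch$-almost everywhere on $A$. Since $D\mapsto\tr(\pi_D M_\infty)$ is continuous on the compact metric space $\D$, choosing a countable dense subset lets me conclude that $\ppch$-a.e.\ on $A$, $\tr(\pi_D M_\infty)=0$ simultaneously for every $D\in\D$; positivity of $\pi_D$ and $M_\infty$ then gives $D\subset\ker M_\infty$, i.e.\ $\mathrm{range}(M_\infty)\perp E:=\mathrm{span}\bigl(\bigcup_{D\in\D}D\bigr)$. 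By \Cref{prop:darkdefprop}(i), $vD\in\D\cup\{0\}$ for $\mu$-a.a.\ $v$ and every $D\in\D$, so $E$ is $\mu$-invariant. As $E\neq\{0\}$, irreducibility forces $E=\cc^d$, hence $M_\infty=0$ on $A$, contradicting $\tr(M_\infty)=1$. Therefore $\ppch(A)=0$, and absolute continuity concludes the proof for every $\rho$.

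The main obstacle is the subsequential passage to the limit in the lower-bound step: the identity mixes $U_n$ (which only converges along subsequences), $D_n$ (which requires compactness of $\D$) and $M_n$ (which converges $\ppch$-a.s.), so one must extract a single subsequence along which all three objects converge and the denominator's limit remains strictly positive. The irreducibility-based transfer, while conceptually clean, also requires handling the uncountable family $\{\tr(\pi_D M_\infty)=0\}_{D\in\D}$, which is dealt with via continuity and separability of $\D$.
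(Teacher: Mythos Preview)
Your proof is correct but follows a genuinely different route from the paper's.

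The paper works entirely under $\ppch$ and proves the lower bound via the identity
\[
\pi_D\, v_1^*\cdots v_n^*\, M_\infty\, v_n\cdots v_1\, \pi_D \;=\; \tfrac{1}{r_m}\tr(\pi_D\, v_1^*\cdots v_n^*\, M_\infty\, v_n\cdots v_1)\,\pi_D,
\]
obtained by passing to the limit $p\to\infty$ in the dark-subspace property applied to $W_p\,v_n\cdots v_1$. Irreducibility is then applied to $E=\operatorname{linspan}\{v_n\cdots v_1\pi_D x\}$: if the trace on the right were zero for all finite products, $E\subset\ker M_\infty$, contradicting $E=\cc^d$.

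Your argument is more dynamical: you first establish $\mathrm{rank}\,M_\infty=r_m$ under each $\pp^{\pi_D/r_m}$ by tracking the trajectory $\pi_{D_n}/r_m=U_n\sqrt{M_n}\pi_D\sqrt{M_n}U_n^*/\tr(\pi_D M_n)$ and extracting a simultaneous limit (using compactness of $\D$ and $\Ud$ and the Radon--Nikodym formula to keep the denominator positive). You then transfer to $\ppch$ via the Radon--Nikodym density, and invoke irreducibility on the simpler invariant subspace $E=\operatorname{span}\bigl(\bigcup_{D\in\D}D\bigr)$. The paper's approach avoids the subsequential extraction and the countable-density argument on $\D$, at the cost of the less obvious limiting identity above; your approach is perhaps more transparent (the trajectory visibly keeps rank $r_m$) and uses a structurally simpler $\mu$-invariant subspace, but leans more heavily on compactness and on \Cref{prop:existence Minf}. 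One small remark: the invariance $vD\in\D\cup\{\{0\}\}$ for \emph{every} $D\in\D$ simultaneously (needed to conclude $vE\subset E$) really uses part~(ii) of \Cref{prop:darkdefprop}, not just~(i), since the $\mu$-null exceptional set might a priori depend on $D$.
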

    \begin{proof} Again, using $\pp^\rho\ll\ppch$ we only need to prove the result $\ppch$-almost surely. Let $U_\infty$ be an accumulation point of $(U_n)_{n}$. Since $\rank M_\infty = \rank U_\infty M_\infty U_\infty^*$, \Cref{prop:Minfdark} implies
        $$\rank M_\infty \leq   r_m.$$
        
        For the inverse inequality, let $\pi$ be the orthogonal projector on some $D \in \mathcal D_m$. For every  $n \in \nnone$ and  for $\mutensorn$-almost every $(v_1, \ldots, v_n)$ there exists $\lambda \geq 0$ such that
        $$\pi v^*_{1} \cdots v^*_n v_n \cdots v_1 \pi = \lambda\pi,$$  so taking the trace on both sides of this equality yields
        $$\lambda = \tfrac{1}{r_m}\tr(\pi v^*_{1} \cdots v^*_n v_n \cdots v_1).$$
        In what follows we adopt the usual  convention that   $v_n \cdots v_1$ for $n = 0$ is $\id$.  For all $n,p\in\nnzero$
        \begin{equation*}
            \pi v^*_{1} \cdots v^*_n   W_p^*W_p   v_n \cdots v_1 \pi= \tfrac{1}{r_m}\tr(\pi v^*_{1} \cdots v^*_n W_p^*W_p v_n \cdots v_1)\pi
        \end{equation*}
        from which, dividing both sides by $\tr(W_p^*W_p)$ leads to
        \begin{equation*}
            \pi v^*_{1} \cdots v^*_n M_p v_n \cdots v_1  \pi=\tfrac{1}{r_m}{\tr(\pi v^*_{1} \cdots v^*_n M_p v_n \cdots v_1)}\pi
        \end{equation*}
        $\mathbb P^{\rm ch}$-almost surely. Consequently, taking the limit $p$ goes to infinity, for every $n \in \nnzero$ we obtain
        \begin{equation}\label{eq:piDMpiD}
            \pi v^*_{1} \cdots v^*_n M_\infty v_n \cdots v_1 \pi=\tfrac{1}{r_m}{\tr(\pi  v^*_{1} \cdots v^*_n M_\infty v_n \cdots v_1)}\pi
        \end{equation}
        $\mathbb P^{\rm ch}$-almost surely  and, if $n > 0$, for  $\mutensorn$-almost all $(v_1, \ldots, v_n)$. In fact, by continuity  (see \Cref{prop:darkdefprop}(2)), 
        \Cref{eq:piDMpiD} holds for every   $(v_1, \ldots, v_n) \in \supp \mutensorn$.
        Assume for a moment that for $\ppch$-almost all $M_\infty$ there exists $(v_1,\dotsc,v_n)\in \supp\mu^{\otimes n}$ such that $\tr(\pi  v^*_{1} \cdots v^*_n M_\infty v_n \cdots v_1)\neq 0$. Then, \Cref{eq:piDMpiD} and the rank of product inequality yield
        $$\rank M_\infty\geq \rank \pi= r_m$$
        and the proposition is proved.
       
        It remains to prove that for $\ppch$-almost all $M_\infty$ there exists $(v_1,\dotsc,v_n)\in \supp\mu^{\otimes n}$ such that $\tr(\pi  v^*_{1} \cdots v^*_n M_\infty v_n \cdots v_1)\neq 0$.
        Let 
        $$E=\operatorname{linspan}\{v_n\dotsb v_1\pi x : x \in \cc^d,  (v_1,\dotsc,v_n)\in \supp\mu^{\otimes n}, n\in \nnzero\}.$$
        
        Then for $\mu$-almost every $v$, $vE\subset E$. Hence, $E=\cc^d$ as it is non-zero by definition. Assume $\tr(\pi  v^*_{1} \cdots v^*_n M_\infty v_n \cdots v_1\pi)= 0$ for any $(v_1,\dotsc, v_n)\in \supp \mu^{\otimes n}$ with $n\in\nnzero$ arbitrary.  Then \Cref{eq:piDMpiD} implies all the elements of the generating family of $E$ are in $\ker M_\infty$. Hence, $E\subset \ker M_\infty$. Since $E=\cc^d$, that implies $M_\infty=0$. This contradicts $M_\infty\in \Sd$ and the proposition is proved.
    \end{proof}

    \begin{remark}
        For any quantum trajectory $(\rho_n)_n$, $\rho_n=U_n\frac{\sqrt{M_n}\rho_0\sqrt{M_n}}{\tr(\rho_0 M_n)}U_n^*$. Then all the accumulation points $\rho_\infty$ of $(\rho_n)_n$ can be written as $\rho_\infty=U_\infty \frac{\sqrt{M_\infty}\rho_0\sqrt{M_\infty}}{\tr(\rho_0 M_\infty)}U_\infty^*$ with $U_\infty\in \mathsf U_{\rm acc}$. Hence, \Cref{prop:rankM,prop:Minfdark} imply that any accumulation point $\rho_\infty$ of $(\rho_n)_n$ is supported in a maximal dark subspace. Hence, asymptotically in $n$, $\rho_n$ is almost surely supported in a maximal dark subspace. We will come back to this property with more details in \Cref{sec:convergence}.
    \end{remark}
    
    \subsection{Markov chain $\O$-measurable estimator}

    Let $n \in \nnone$ and consider the following max-likelihood estimator
    $$\widehat{E}_n=\operatorname{argmax}_{E\in\mathcal G_{r_m,d}(\cc)}\tr(W_n\pi_{E} W_n^*).$$
    Remark that by definition $\widehat{E}_n$ is an $r_m$-dimensional subspace spanned by eigenvectors of $W_n^*W_n$ of respective eigenvalues $a_1^2(W_n),\dotsc,a_{r_m}^2(W_n)$ with $A\mapsto a_i(A)$ the map giving the singular values of $A$ in decreasing order with multiplicity so that $a_1(A)\geq  \dotsb \geq a_d(A)$. The choice of such subspace can be made in a $(\O_n)_n$-adapted way.

    Then define 
    $$\widehat{D}_n = W_n \widehat{E}_n.$$
    We consider this $(\mathcal O_n)_n$-adapted process as an  estimator of $D_n$ according to the sampling $W_n$, given the max-likelihood estimation $\widehat{E}_n$ of the initial space $D_0$. Note however that $\widehat{D}_n$ is not necessarily a dark subspace. However, by definition of $r_m$, $\widehat{D}_n$ is an element of $\mathcal G_{r_m,d}(\cc)$. Indeed, by definition of $\widehat{E}_n$, $\dim \widehat{D}_n<r_m$ would imply $\rank W_n<r_m$, which contradicts the definition of $r_m$.

    We now upper bound the expectation of $d_{\rm G}(D_n,\widehat{E}_n)$. This estimation relies on the expression of $\dist$ using the exterior algebra of $\cc^d$.
    
    Let  $p \in \nnone$, we denote by $x_1\wedge   \cdots \wedge x_p$   the exterior product of vectors $x_1,\ldots, x_p$ in $\mathbb C^d$, and   by $\bigwedge^p\mathbb C^d$ the vector space generated by the $p$-wedge products $x_1\wedge  \cdots \wedge x_p$. More precisely, $x_1\wedge \cdots \wedge x_p$ stands for the alternating multilinear form $$ (\cc^d)^p  \ni (y_1,\ldots, y_p)\mapsto \det(\langle x_i, y_j\rangle )_{i,j=1}^p.$$ Then, $\{x_1\wedge \cdots \wedge x_p \colon x_1,\ldots, x_p \in \mathbb C^d\}$ is a generating family for  the space  $\bigwedge^p\cc^d$ of alternating multilinear forms on $\cc^d$. We define a hermitian inner product on $\bigwedge^p\cc^d$ as
    \[\langle x_1\wedge \cdots \wedge x_p,\, y_1\wedge \cdots \wedge y_p\rangle = \det\big(\langle x_i, y_j\rangle \big)_{i,j=1}^p, \] and denote the associated norm by $\|x_1\wedge \cdots \wedge x_p\|$.
    By linearity and from the properties of determinants, if $\{x_1,\dotsc,x_p\}$ is a linearly dependent family of vectors, $x_1\wedge \dotsb\wedge x_p=0$. That implies the following standard proposition. For the reader's convenience, we provide a short proof. 
    \begin{proposition}
        \label{prop:dist in wedge}
        For any linearly independent family of vectors $\{y_1,\dotsc,y_p\}$ and any $x\in \cc^d$,
        $$\dist(x,\operatorname{span}\{y_1,\dotsc,y_p\})=\frac{\|x\wedge y_1\wedge\dotsb\wedge y_p\|}{\| y_1\wedge\dotsb\wedge y_p\|}.$$
    \end{proposition}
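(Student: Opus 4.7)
The plan is to decompose $x$ orthogonally with respect to $Q:=\operatorname{span}\{y_1,\dotsc,y_p\}$ and to exploit the determinantal definition of the inner product on $\bigwedge^{p+1}\cc^d$ together with the alternating property of the wedge product. Since $y_1\wedge\dotsb\wedge y_p\neq 0$ by linear independence, division by $\|y_1\wedge\dotsb\wedge y_p\|$ is legitimate throughout.

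First, I would write $x=\pi_Q x + z$ with $z:=x-\pi_Q x$, so that $z\perp Q$ and $\|z\|=\dist(x,Q)$ by the very definition of the distance. Because $\pi_Q x\in Q$, it is a linear combination of $y_1,\dotsc,y_p$, so the alternating property gives $\pi_Q x\wedge y_1\wedge\dotsb\wedge y_p=0$. Multilinearity of the wedge product then yields
\[
x\wedge y_1\wedge\dotsb\wedge y_p \;=\; z\wedge y_1\wedge\dotsb\wedge y_p.
\]

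Second, I would compute the squared norm of the right-hand side from the determinantal formula. Pairing the tuple $(z,y_1,\dotsc,y_p)$ with itself, the associated Gram matrix is
\[
\begin{pmatrix} \|z\|^2 & 0 \\ 0 & G \end{pmatrix},
\]
where the zero blocks come from $\langle z,y_j\rangle=0$ for every $j$, and $G=(\langle y_i,y_j\rangle)_{i,j=1}^p$ is the Gram matrix of $y_1,\dotsc,y_p$. Its determinant factorizes as $\|z\|^2\det G$, and by definition $\det G=\|y_1\wedge\dotsb\wedge y_p\|^2$. Taking square roots gives
\[
\|x\wedge y_1\wedge\dotsb\wedge y_p\| \;=\; \|z\|\,\|y_1\wedge\dotsb\wedge y_p\|.
\]
Dividing both sides by $\|y_1\wedge\dotsb\wedge y_p\|>0$ and recalling $\|z\|=\dist(x,Q)$ yields the claimed identity.

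This is a classical identity and I do not foresee a genuine obstacle; the only point requiring a modicum of care is the bookkeeping for the Gram matrix, specifically that the orthogonality $z\perp Q$ zeros out the first row and column outside the top-left corner, so that the determinant cleanly factorizes. Everything else is a direct consequence of the alternating and multilinear nature of the wedge product together with the definition of its inner product.
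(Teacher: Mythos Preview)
Your proof is correct and follows essentially the same approach as the paper's: orthogonally decompose $x$ with respect to the span, use the alternating property to drop the component in the span, and then read off the block-diagonal structure of the Gram matrix to factorize the determinant. The only cosmetic difference is that the paper normalizes the orthogonal component before computing the Gram determinant, whereas you work directly with $z$; your version is arguably cleaner since it handles the degenerate case $z=0$ without a separate remark.
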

    \begin{proof}
        Let $F=\operatorname{span}\{y_1,\dotsc,y_p\}$ and $x= \pi_Fx+x^\perp$ with $\pi_F$ the orthogonal projector onto $F$. Then, $\|x\wedge y_1\wedge\dotsb\wedge y_p\|=\|x^\perp\wedge y_1\wedge\dotsb\wedge y_p\|$ and $\dist(x,F)=\|x^\perp\|$. Since $x^\perp$ is orthogonal to $F$, setting $y_0=x^\perp/\|x^\perp\|$, the first row and column of $(\langle y_i, y_j\rangle)_{i,j=0}^p$ are $0$ for any $i=0$ or $j=0$ except for $i=j=0$ where it is equal to $1$. Therefore, by the properties of determinants, $\|y_0\wedge y_1 \wedge \dotsb\wedge y_p\|=\|y_1\wedge\dotsb\wedge y_p\|$. Hence,
        $$\|x\wedge y_1\wedge\dotsb\wedge y_p\|=\dist(x,F)\|y_1\wedge\dotsb\wedge y_p\|$$
        and the proposition follows.
    \end{proof}

    The last notion we need is the action of a linear map on exterior products: for  $A\in {\mathcal L}(\mathbb C^d)$ we write $\bigwedge^p A$ for the   operator acting on the basis elements of $\bigwedge^p\cc^d$ as
    $$
    \textstyle{\bigwedge^p} A (x_1\wedge \cdots \wedge x_p)=Ax_1\wedge \cdots  \wedge Ax_p$$
    and then extended linearly to the whole space.
    Since $\bigwedge^p (AB)=\bigwedge^p\! A \,\bigwedge^p\! B$,
    \begin{equation}\label{eq:wedge_submul}
        \|\textstyle{\bigwedge^p} (AB)\|\leq\|\textstyle{\bigwedge^p} A\|\|\textstyle{\bigwedge^p} B\|.
    \end{equation}
    Moreover, for any $p \leq d$,
    \begin{equation}\label{eq:wedge_singular val}
        \|\textstyle{\bigwedge^p} A\|=a_1(A)\cdots a_p(A).
    \end{equation}
    See e.g. \cite[Lemma III.5.3]{BLC}.

    That set, we introduce a sequence $(s(n))_n$ that play the part of $(f(n))_n$ in \cite{BenFra}. For $r_m=1$ they are equal as expected. For any $n\in \nnone$, let
    $$s(n)=\int_{\Ld^{\times n}}\left\|\textstyle{\bigwedge^{r_m+1}}W_n\right\|^{\frac{2}{r_m+1}}\d\mu^{\otimes n}.$$
    First we show this sequence decreases exponentially fast to $0$.
    \begin{lemma}\label{lem:sn}
        There exist positive constants $C$ and $\lambda<1$ such that for every $n \in \nnzero$
        $$s(n)\leq C\lambda^n.$$
    \end{lemma}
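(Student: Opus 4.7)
The plan is to combine a submultiplicativity argument with the almost-sure rank bound on $M_\infty$ from \Cref{prop:rankM} to first show $s(n)\to 0$, and then deduce exponential decay from Fekete's lemma.

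The first step is to verify the submultiplicative bound $s(n+m)\leq s(n)\, s(m)$ for all $n,m\in\nnzero$. This follows directly from \Cref{eq:wedge_submul} applied to the splitting $W_{n+m}=(V_{n+m}\cdots V_{n+1})\,W_n$, together with Fubini's theorem, since under $\mu^{\otimes(n+m)}$ the two blocks $(V_1,\ldots,V_n)$ and $(V_{n+1},\ldots,V_{n+m})$ are independent and the second block has the same law as $(V_1,\ldots,V_m)$.

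The core step is to show $s(n)\to 0$. Using \Cref{eq:wedge_singular val} and the identity $a_i(W_n)^2=\tr(W_n^*W_n)\,a_i(M_n)$, I rewrite
\[
\left\|\textstyle\bigwedge^{r_m+1}W_n\right\|^{2/(r_m+1)}=\tr(W_n^*W_n)\cdot\bigl(a_1(M_n)\cdots a_{r_m+1}(M_n)\bigr)^{1/(r_m+1)}.
\]
The change of measure $\d\pp^{\rm ch}=\tfrac{1}{d}\tr(W_n^*W_n)\,\d\mu^{\otimes n}$ then yields
\[
s(n)=d\,\eech\Bigl[\bigl(a_1(M_n)\cdots a_{r_m+1}(M_n)\bigr)^{1/(r_m+1)}\Bigr].
\]
By AM--GM the integrand is bounded above by $\tfrac{1}{r_m+1}\sum_{i=1}^{r_m+1}a_i(M_n)\leq\tfrac{1}{r_m+1}$, since $\tr M_n=1$. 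On the other hand, \Cref{prop:existence Minf} gives $M_n\to M_\infty$ pointwise $\pp^{\rm ch}$-a.s., hence the singular values converge, and \Cref{prop:rankM} states $\rank M_\infty=r_m$ almost surely, so $a_{r_m+1}(M_\infty)=0$ almost surely. Consequently the integrand converges to $0$ $\pp^{\rm ch}$-a.s., and dominated convergence gives $s(n)\to 0$.

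Given $s(n)\to 0$, pick $n_0$ with $s(n_0)=:\lambda_0<1$. By submultiplicativity, for any $n=kn_0+r$ with $0\leq r<n_0$, $s(n)\leq s(n_0)^k s(r)\leq M\lambda_0^k$ where $M=\max_{0\leq r<n_0}s(r)$. Setting $\lambda=\lambda_0^{1/n_0}<1$ and $C=M\lambda^{-(n_0-1)}$ yields $s(n)\leq C\lambda^n$ for all $n\in\nnzero$, which is the claim. The main technical obstacle is controlling the integrand uniformly in $n$ so that dominated convergence applies; isolating the factor $\tr(W_n^*W_n)$ via the change of measure to $\pp^{\rm ch}$ is precisely what removes the unbounded contribution and leaves the bounded quantity $(a_1(M_n)\cdots a_{r_m+1}(M_n))^{1/(r_m+1)}\leq 1/(r_m+1)$.
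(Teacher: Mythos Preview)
Your proof is correct and follows essentially the same approach as the paper: rewrite $s(n)$ as $d\,\eech$ of a bounded functional of $M_n$ via the change of measure, use \Cref{prop:existence Minf} and \Cref{prop:rankM} with dominated convergence to get $s(n)\to 0$, then invoke submultiplicativity from \Cref{eq:wedge_submul} to upgrade to exponential decay. The only cosmetic differences are that the paper bounds the integrand by $1$ (via $\|M_n\|\le 1$) rather than your sharper AM--GM bound $1/(r_m+1)$, and presents the two steps in the opposite order.
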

    \begin{proof}
        First, by definition of $\ppch$, for any $n\in \nnone$,
        $$s(n)
        =  d\,\mathbb E^{\rm ch}\left[\left(\frac{\Vert \textstyle{\bigwedge^{r_m+1}}W_n\Vert}{\tr(W_n^*W_n^{\phantom{*}})^{\frac{r_m+1}{2}}}\right)^{\frac{2}{r_m+1}}\right].$$
        Then, \Cref{eq:wedge_singular val} implies
        $$\frac{\Vert \textstyle{\bigwedge^{r_m+1}}W_n\Vert}{\tr(W_n^*W_n^{\phantom{*}})^{\frac{r_m+1}{2}}}=
        \bigg\Vert  \textstyle{\bigwedge^{r_m+1}} \bigg(\frac{W_n}{\sqrt{\tr(W_n^*W_n^{\phantom{*}})}}\bigg)\bigg\Vert
        =
        \prod_{i=1}^{r_m+1}a_i(U_n\sqrt{M_n}).$$
        Then, since $\|M_n\|\leq 1$, $a_i(U_n\sqrt{M_n})\leq 1$ for any $i\in\{1,\dotsc,d\}$ and
        $$\prod_{i=1}^{r_m+1}a_i(U_n\sqrt{M_n})\leq 1.$$
        From \Cref{prop:rankM}, $\lim_{n\to\infty} \prod_{i=1}^{r_m+1}a_i(U_n\sqrt{M_n})=0$. Hence, Lebesgue's dominated convergence theorem implies $\lim_{n\to\infty} s(n)=0$.

        Second, \Cref{eq:wedge_submul} implies $(s(n))_n$ is submultiplicative. Thus, there exists $n_0\in \nnone$ such that $s(n_0)<1$ and $s(n)\leq s(l)s^{\lfloor n/n_0\rfloor}(n_0)$ with $l\in \{0,\dotsc,n_0-1\}$ the remainder of the Euclidean division of $n$ by $n_0$. Then fixing $1>\lambda\geq s^{\frac1{n_0}}(n_0)$ non-null and $C=\max_{l\in \{0,\dotsc,n_0-1\}}s(l)\lambda^{-l-1}$ yields the lemma. 
    \end{proof}

    We now show $\sqrt{s(n)}$ upper bounds the expectation of $d_{\rm G}(D_n,\widehat{D}_n)$.
    \begin{lemma}
        \label{lem:dn}
        For any Borel probability measure $\chi$ over $\D$,
        $$\ee_\chi(d(D_n,\widehat{D}_n))\leq \sqrt{s(n)}.$$
    \end{lemma}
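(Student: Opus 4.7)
The plan is to first derive a pointwise bound of the form $d_{\rm G}(D_n,\widehat D_n)\leq a_{r_m+1}(W_n)/\lambda_n$, where $\lambda_n\geq 0$ is the scalar such that $W_n|_{D_0}=\lambda_n U_n|_{D_0}$ for some $U_n\in\Ud$ (such a scalar exists by darkness of $D_0$), and then exploit the observation that the density of $\pp_\chi$ with respect to $\chi\otimes \mu^{\otimes n}$ on $\D\times \Ld^{\times n}$ is precisely $\lambda_n^2$, so that after squaring the pointwise bound and applying Jensen's inequality, this factor cancels.

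For the pointwise bound, I fix $D_0\in \D$ and $(v_1,\dotsc,v_n)\in \supp \mu^{\otimes n}$. By \Cref{prop:darkdefprop}, $W_n|_{D_0}=\lambda_n U_n|_{D_0}$ with $\lambda_n^2=\tr(W_n\pi_{D_0}W_n^*)/r_m$; in particular, every unit $x\in D_n=W_nD_0$ equals $W_ny/\lambda_n$ for some unit $y\in D_0$. The space $\widehat D_n$ is spanned by $W_n\hat f_1,\dotsc,W_n\hat f_{r_m}$, which are pairwise orthogonal with respective norms $a_1(W_n),\dotsc,a_{r_m}(W_n)$, where $\hat f_1,\dotsc,\hat f_{r_m}$ is an orthonormal basis of $\widehat E_n$ made of top right singular vectors of $W_n$; consequently $\|W_n\hat f_1\wedge\dotsb\wedge W_n\hat f_{r_m}\|=a_1(W_n)\dotsb a_{r_m}(W_n)$. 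Applying \Cref{prop:dist in wedge} and using $\|\textstyle{\bigwedge^{r_m+1}} W_n\|=a_1(W_n)\dotsb a_{r_m+1}(W_n)$ from \Cref{eq:wedge_singular val}, I obtain
$$\dist(x,\widehat D_n)=\frac{\|\textstyle{\bigwedge^{r_m+1}} W_n\,(y\wedge \hat f_1\wedge\dotsb\wedge \hat f_{r_m})\|}{\lambda_n\cdot a_1(W_n)\dotsb a_{r_m}(W_n)}\leq \frac{a_{r_m+1}(W_n)}{\lambda_n},$$
where the inequality relies on $\|y\wedge \hat f_1\wedge \dotsb\wedge \hat f_{r_m}\|\leq 1$ (Hadamard's inequality, since $\|y\|=1$ and the $\hat f_i$ are orthonormal). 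Taking the supremum over unit $x\in D_n$ via \Cref{eq:gapsup} then yields $d_{\rm G}(D_n,\widehat D_n)\leq a_{r_m+1}(W_n)/\lambda_n$ on the $\pp_\chi$-full event $\{\lambda_n>0\}$.

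Next, Jensen's inequality gives $\ee_\chi[d_{\rm G}(D_n,\widehat D_n)]^2\leq \ee_\chi[d_{\rm G}(D_n,\widehat D_n)^2]$, and unfolding the definition of $\pp_\chi$ (whose density with respect to $\chi\otimes \mu^{\otimes n}$ on $\D\times \Ld^{\times n}$ equals $\tr(W_n\pi_{D_0}W_n^*)/r_m=\lambda_n^2$), the $\lambda_n^2$ factor cancels against the denominator of the squared bound:
$$\ee_\chi[d_{\rm G}(D_n,\widehat D_n)^2]\leq \ee_\chi\Bigl[\frac{a_{r_m+1}(W_n)^2}{\lambda_n^2}\Bigr]=\int_\D \int_{\Ld^{\times n}}a_{r_m+1}(W_n)^2\d\mu^{\otimes n}\d\chi(D)=\int a_{r_m+1}(W_n)^2\d\mu^{\otimes n}.$$
Since $a_{r_m+1}(W_n)$ is the smallest of $a_1(W_n),\dotsc,a_{r_m+1}(W_n)$, I have $a_{r_m+1}(W_n)^{r_m+1}\leq a_1(W_n)\dotsb a_{r_m+1}(W_n)=\|\textstyle{\bigwedge^{r_m+1}}W_n\|$, whence $a_{r_m+1}(W_n)^2\leq \|\textstyle{\bigwedge^{r_m+1}}W_n\|^{2/(r_m+1)}$ pointwise and integration yields $\int a_{r_m+1}(W_n)^2\d\mu^{\otimes n}\leq s(n)$, completing the estimate.

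The main technical obstacle is the pointwise bound $d_{\rm G}(D_n,\widehat D_n)\leq a_{r_m+1}(W_n)/\lambda_n$: one must carefully parametrize unit vectors of $D_n$ through $D_0$ using the darkness of $D_0$, and then recognize that pre-pending such a $y\in D_0$ to the basis of $\widehat E_n$ produces an $(r_m+1)$-wedge whose image under $\textstyle{\bigwedge^{r_m+1}} W_n$ is controlled by $a_1(W_n)\dotsb a_{r_m+1}(W_n)$. Once this is in hand, the cancellation of $\lambda_n^2$ against the $\pp_\chi$-density is built into the very construction of $\pp_\chi$, so the remaining steps reduce to routine inequalities on singular values.
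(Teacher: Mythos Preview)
Your proof is correct and follows essentially the same route as the paper's: both derive the pointwise bound $d_{\rm G}(D_n,\widehat D_n)\leq a_{r_m+1}(W_n)/\lambda_n$ via the exterior-algebra expression for $\dist(\cdot,\widehat D_n)$ (\Cref{prop:dist in wedge} and \Cref{eq:wedge_singular val}), then square, apply Jensen, and let the density $\lambda_n^2=\tfrac{1}{r_m}\tr(W_n\pi_{D_0}W_n^*)$ cancel, finishing with $a_{r_m+1}(W_n)^2\leq \|\textstyle\bigwedge^{r_m+1}W_n\|^{2/(r_m+1)}$. The only cosmetic differences are that the paper records $\|y\wedge \hat f_1\wedge\dotsb\wedge \hat f_{r_m}\|=\dist(y,\widehat E_n)\leq 1$ rather than invoking Hadamard, and it inserts the singular-value bound before rather than after taking expectation.
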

    \begin{proof}
        Let $(y_1,\dotsc,y_n)$ be an orthonormal basis of $\widehat{E}_n$ such that $W_ny_i=a_i(W_n)U_ny_i$ for each $i\in \{1,\dotsc,r_m\}$. Then, for any $x\in \cc^d$, from \Cref{prop:dist in wedge}
        $$\dist(W_nx,\widehat{D}_n)=\frac{\|\textstyle{\bigwedge^{r_m+1}}W_n\ x\wedge y_1\wedge\dotsb\wedge y_{r_m}\|}{\prod_{i=1}^{r_m}a_i(W_n)}\leq a_{r_m+1}(W_n)\dist(x,\widehat{E}_n).$$
        Then, for any $D\in \D$ such that $\|W_n\pi_D\|\neq 0$, using $\|W_nx\|^2=\frac1{r_m}\tr(W_n\pi_DW_n^*)$ for any $x\in D$ of norm $1$ and $\dist(x,\widehat{E}_n)\leq 1$,
        $$d_{\rm G}^2(W_nD,\widehat{D}_n)\leq \frac{a^2_{r_m+1}(W_n)}{\tfrac 1{r_m} \tr(W_n\pi_DW_n^*)}\leq \frac{\|\textstyle{\bigwedge^{r_m+1}}W_n\|^{\frac{2}{r_m+1}}}{\frac{1}{r_m}\tr(W_n\pi_DW_n^*)}.$$
        Taking the expectation with respect to $\chi$, Jensen's inequality applied to $t\mapsto t^2$ implies
        $$\ee_\chi(d_{\rm G}(D_n,\widehat{D}_n))^2\leq \ee_\chi(d_{\rm G}^2(D_n,\widehat{D}_n))\leq \ee_\chi\left(\frac{\|\textstyle{\bigwedge^{r_m+1}}W_n\|^{\frac{2}{r_m+1}}}{\frac{1}{r_m}\tr(W_n\pi_{D_0}W_n^*)}\right)=s(n)$$
        and the lemma is proved.
    \end{proof}

    In what follows, $\theta$ stands for the left-shift operator on $\Omega$, i.e., $\theta(v_1, v_2, \ldots) = (v_2, v_3, \ldots)$.
    \begin{proposition}\label{prop_bound1}
        There exist    positive constants $C$ and $\lambda<1$ such that for every Borel probability measure $\chi$ over $\D$ and every $n, l\in \nnzero$
        $$\mathbb{E}_\chi (d_{\rm G}(D_{n+l}, \widehat{D}_n\circ \theta^l))\leq C\lambda^n $$
    \end{proposition}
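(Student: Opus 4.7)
The plan is to apply the pointwise bound from the proof of \Cref{lem:dn} to the time-shifted sequence, and then integrate carefully against $\pp_\chi$ using the explicit cylinder density together with the darkness property of $D_0$.

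First, I exploit the cocycle identity $W_{n+l}=(W_n\circ\theta^l)\,W_l$, which gives $D_{n+l}=(W_n\circ\theta^l)\,D_l$ and $\widehat D_n\circ\theta^l=(W_n\circ\theta^l)\,(\widehat E_n\circ\theta^l)$. Since $D_0\in\D$ is dark, \Cref{prop:darkdefprop}(i) ensures $D_l\in\D$ $\pp_\chi$-almost surely. Repeating verbatim the computation that led to the pointwise estimate in the proof of \Cref{lem:dn} --- now applied to the shifted matrix $W_n\circ\theta^l$ acting on $D_l$ --- I obtain, $\pp_\chi$-a.s.,
\[
d_{\rm G}^2\bigl(D_{n+l},\,\widehat D_n\circ\theta^l\bigr)\;\le\;\frac{\bigl\|\textstyle\bigwedge^{r_m+1}(W_n\circ\theta^l)\bigr\|^{2/(r_m+1)}}{\tfrac{1}{r_m}\tr\bigl((W_n\circ\theta^l)\,\pi_{D_l}\,(W_n\circ\theta^l)^*\bigr)}.
\]

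Second, I integrate this bound against $\pp_\chi$. On cylinders of length $n+l$, the measure $\pp^{\pi_D/r_m}$ has density $\tr(w_{n+l}\cdots w_1\pi_D w_1^*\cdots w_{n+l}^*)/r_m$ with respect to $\mu^{\otimes (n+l)}$. Since $D$ is dark, $w_l\cdots w_1\pi_D w_1^*\cdots w_l^*=\tau_l\,\pi_{D_l}$ with $\tau_l:=\tr(w_l\cdots w_1\pi_D w_1^*\cdots w_l^*)/r_m$, so the denominator in the pointwise bound cancels exactly against the factor $\tfrac{1}{r_m}\tr(w_{n+l}\cdots w_{l+1}\pi_{D_l}w_{l+1}^*\cdots w_{n+l}^*)$ appearing in the density. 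The integral collapses to
\[
\ee_\chi\bigl[d_{\rm G}^2(D_{n+l},\,\widehat D_n\circ\theta^l)\bigr]\;\le\;\int_{\D}\int_{\Ld^{\times(n+l)}}\bigl\|\textstyle\bigwedge^{r_m+1}(w_{n+l}\cdots w_{l+1})\bigr\|^{2/(r_m+1)}\tau_l\,\d\mu^{\otimes(n+l)}\,\d\chi(D).
\]
Integrating the last $n$ variables first factorizes by definition of $s(n)$, so the right-hand side equals
\[
s(n)\cdot\int_{\D}\tfrac{1}{r_m}\tr\bigl(\phi^l(\pi_D)\bigr)\,\d\chi(D),
\]
where $\phi(X)=\int vXv^*\,\d\mu(v)$. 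The stochasticity condition \Cref{eq:stoch} means $\phi^*(\id)=\id$, hence $\tr(\phi^l(\pi_D))=\tr(\pi_D\,\phi^{*l}(\id))=\tr(\pi_D)=r_m$, and the last integral equals $1$.

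Third, I combine with \Cref{lem:sn}. Jensen's inequality applied to $t\mapsto t^2$ yields
\[
\ee_\chi\bigl[d_{\rm G}(D_{n+l},\widehat D_n\circ\theta^l)\bigr]\;\le\;\sqrt{s(n)}\;\le\;\sqrt{C}\,(\sqrt{\lambda})^n,
\]
so the proposition follows after renaming the constants. The only step requiring care is the second one: making the cancellation between the darkness-induced proportionality and the pointwise bound rigorous, and then recognizing the leftover integral as $s(n)$ times a conserved trace --- everything else is straightforward bookkeeping.
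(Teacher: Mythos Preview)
Your proof is correct and arrives at the same bound $\sqrt{s(n)}$ as the paper, then concludes via \Cref{lem:sn}. The only difference is packaging: the paper invokes the Markov property in one line, writing $\ee_\chi\bigl(d_{\rm G}(D_{n+l},\widehat D_n\circ\theta^l)\bigr)=\ee_{\chi K^l}\bigl(d_{\rm G}(D_n,\widehat D_n)\bigr)$ and then applying \Cref{lem:dn} directly to the shifted initial measure $\chi K^l$, whereas you unpack this identity by hand via the explicit cylinder density and the darkness factorization---your cancellation of $\tfrac{1}{r_m}\tr((W_n\circ\theta^l)\pi_{D_l}(W_n\circ\theta^l)^*)$ against the density and the subsequent identification of the leftover integral as $s(n)\cdot 1$ is exactly the content of the Markov property made explicit.
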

    \begin{proof}
        Markov property implies that for any $n,l\in \nnzero$, 
        $$\mathbb{E}_\chi (d_{\rm G}(D_{n+l}, \widehat{D}_n\circ \theta^l))=\mathbb{E}_{\chi K^l} (d_{\rm G}(D_{n}, \widehat{D}_n)).$$
        Then, \Cref{lem:dn} implies
        $$\mathbb{E}_\chi (d_{\rm G}(D_{n+l}, \widehat{D}_n\circ \theta^l))\leq \sqrt{s(n)}.$$
        Hence, \Cref{lem:sn} yields the proposition.
    \end{proof}
 
    \subsection{Convergence for $\O$-measurable functions}\label{sec:conv O-meas}
    
    The integer $m$ appearing in \Cref{thm:invmeasDark} is the \emph{period} of the  channel $\phi$, see \cite[Definition~3.1]{BenFra}. For the reader's convenience, we  recall  its definition adapted to the current context, i.e., with $\phi$ assumed to be irreducible.
    
    \begin{definition}
        Let $(E_1, \ldots ,E_k)$ be an orthogonal partition of $\cc^d$, i.e., the subspaces $E_1, \ldots ,E_k$ are non zero and mutually orthogonal and $E_1 \oplus \ldots \oplus E_k = \cc^d$.
        We say that $(E_1, \ldots ,E_k)$ is a \emph{$k$-cycle} of $\phi$ if $v E_j \subset E_{j+1}$ for $\mu$-almost all $v \in \Ld$ and for every $j \in \{1, \ldots, k\}$, with the convention $E_{k+1} = E_1$.
        The \emph{period} $m$ of $\phi$ is the largest $k \in \nnone$ such that there exists a $k$-cycle of $\phi$.
        
    \end{definition}
    The full space $\cc^d$ is a $1$-cycle for $\phi$ and $m\leq d$ since $\dim \cc^d=d$. Hence, $m \in \{1, \ldots, d\}$. From Perron--Frobenius theorem for Schwartz positive\footnote{A completely positive map $T:\Ld\to\Ld$ preserving the identity is necessarily Schwartz in the sense that for any $A\in\Ld$, $T(A^*A)\geq T(A)^*T(A)$. That follows from the positivity of $T\otimes \id_{\mathcal M_2}$ applied to $\begin{pmatrix}A^*A& A^*\\A &\id_{\cc^d}\end{pmatrix}$.} maps -- \cite[Theorem~4.4]{Evans1978} -- there exists two positive constants  $C$  and $\lambda <1$ such that for every  $\rho \in \Sd$ and   every $n \in \nn$,
    \[
    \left\Vert
    \frac 1m \sum_{r=0}^{m-1}\phi^{mn+r}(\rho) -    {\rho_\infty}
    \right\Vert \leq
    C  \lambda^n.
    \]
    This bound was used in \cite{BenFra} to show that $\mathbb P^\rho$ converges in total variation under the shift $\theta$
    towards $\mathbb P^{\rho_\infty}$ and that the rate of this convergence is exponential.
    \begin{proposition}[\cite{BenFra}, Proposition 3.4]\label{prop_bound2}
        There exist  positive constants $C$ and $\lambda <1$ such that for any $\mathcal O$-measurable essentially bounded function $f$, any $\rho  \in \Sd$, and any $n \in \nn$ 
        \[
        \left\vert
        \mathbb E^\rho \left( \frac 1m \sum_{r=0}^{m-1}f \circ \theta^{mn+r}\right) - \mathbb E ^{\rho_\infty}(f)
        \right\vert \leq
        C \Vert f \Vert_{\infty}\lambda^n
        \]
        where $\Vert f \Vert_{\infty}$ is the essential supnorm of $f$ and $\rho_\infty$ is the unique fixed point of $\phi:\Sd\to\Sd$. 
    \end{proposition}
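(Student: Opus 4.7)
The plan is to reduce the bound to the Perron--Frobenius type estimate recalled just before the statement, namely that $\|\tfrac{1}{m}\sum_{r=0}^{m-1}\phi^{mn+r}(\rho) - \rho_\infty\| \leq C\lambda^n$. The bridge between the dynamical object $\pp^\rho$ on $\Omega$ and the algebraic object $\phi$ on $\Sd$ is the shift/channel identity
\begin{equation*}
\ee^\rho(f \circ \theta^k) = \ee^{\phi^k(\rho)}(f), \qquad k \in \nnzero,
\end{equation*}
which holds for every $\mathcal O$-measurable essentially bounded $f$.

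First I would verify this identity for cylinder functions. If $f$ depends only on $(v_1, \ldots, v_\ell)$, then $f \circ \theta^k$ depends only on $(v_{k+1}, \ldots, v_{k+\ell})$, and unfolding the definition of $\pp^\rho$ and integrating the trace $\tr(v_{k+\ell}\cdots v_1 \rho v_1^* \cdots v_{k+\ell}^*)$ over the first $k$ coordinates against $\mu^{\otimes k}$ produces, by cyclicity of the trace and the definition of $\phi$, the integrand $\tr(v_{k+\ell}\cdots v_{k+1}\phi^k(\rho) v_{k+1}^* \cdots v_{k+\ell}^*)$. A standard monotone class argument, combined with $\pp^\rho \leq d\|\rho\|\ppch$, extends the identity to arbitrary essentially bounded $\mathcal O$-measurable $f$.

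Next, the map $A \mapsto \ee^A(f)$ extends by linearity from $\Sd$ to the whole real vector space of self-adjoint matrices, so using the previous identity,
\begin{equation*}
\ee^\rho\Bigl(\tfrac{1}{m}\sum_{r=0}^{m-1} f \circ \theta^{mn+r}\Bigr) = \ee^{\sigma_n}(f), \qquad \sigma_n := \tfrac{1}{m}\sum_{r=0}^{m-1}\phi^{mn+r}(\rho),
\end{equation*}
so the quantity to bound becomes $|\ee^{\sigma_n - \rho_\infty}(f)|$.

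Finally, I would control $|\ee^A(f)|$ by $\|A\|$ for self-adjoint $A$. Writing the Jordan decomposition $A = A_+ - A_-$ with $A_\pm \geq 0$ and $\|A_\pm\| \leq \|A\|$, the bound $A_\pm \leq \|A_\pm\|\,\id$ yields $\pp^{A_\pm} \leq d\|A_\pm\|\ppch$, hence $|\ee^A(f)| \leq 2d\|A\|\|f\|_\infty$. Combined with the Perron--Frobenius estimate $\|\sigma_n - \rho_\infty\| \leq C\lambda^n$, this yields the conclusion with constant $2dC$. The main delicate point is the extension of the shift identity from cylinder functions to arbitrary essentially bounded $\mathcal O$-measurable functions; the rest is linearity and the already stated spectral bound for $\phi$.
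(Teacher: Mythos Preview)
Your argument is correct. The paper itself does not supply a proof of this proposition; it quotes the statement verbatim from \cite{BenFra}, Proposition~3.4, having first recorded the Perron--Frobenius bound $\|\tfrac{1}{m}\sum_{r=0}^{m-1}\phi^{mn+r}(\rho)-\rho_\infty\|\leq C\lambda^n$ that feeds into it. Your reconstruction---the shift/channel identity $\ee^\rho(f\circ\theta^k)=\ee^{\phi^k(\rho)}(f)$, linearity of $\rho\mapsto\ee^\rho$, and the bound $|\ee^A(f)|\leq 2d\|A\|\,\|f\|_\infty$ for self-adjoint $A$---is exactly the natural route and is essentially how the result is obtained in the cited reference. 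One minor sharpening: since $\pp^{A_\pm}$ has total mass $\tr(A_\pm)$, you get directly $|\ee^A(f)|\leq \|A\|_{\rm tr}\|f\|_\infty$, which avoids the factor $2d$; but your version is of course sufficient.
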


    \subsection{Uniqueness of the invariant measure on Dark subspaces}
    We are now ready to prove the convergence towards a unique $K$-invariant measure $\chi_{\textrm{inv}}$ on $\mathcal D_m$. Recall that the Wasserstein distance of order $1$ between two Borel probability measures $\mu$, $\nu$ on $\mathcal D_m$ can be expressed  using Kantorovich--Rubinstein duality  as
    $$W_1(\mu, \nu) = \sup \big\{   \mathbb E_\mu (f) -     \mathbb E_\nu (f) \,|\, f \in {\mathrm{Lip}_1(\mathcal D_m)} \big\},$$
    where $\mathrm{Lip}_1(\mathcal D_m)$ denotes the set of all Lipschitz functions on $\D$ with Lipschitz constant equal to~$1$.

    The following theorem directly implies uniqueness of the $K$-invariant measure and the rest of \Cref{thm:invmeasDark}.
    \begin{theorem}\label{thm_conv}
        There exist positive constants $C$ and $\lambda <1$ such that for any Borel probability measure $\chi$ over $\D$, any $K$-invariant Borel probability measure $\chi_{\rm inv}$ over $\D$ and $n \in \nnone$ 
        \[
        W_1\Bigg(\frac 1m \sum_{r=0}^{m-1}\chi K^{mn+r}, \chi_{\rm inv} \Bigg) \leq C\lambda^n.
        \]
    \end{theorem}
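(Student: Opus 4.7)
The plan is to derive Theorem~\ref{thm_conv} by combining the two exponential estimates already at hand: the geometric convergence of the $\O$-measurable estimator $\widehat{D}_n$ in Proposition~\ref{prop_bound1}, and the total variation convergence of shifted $\O$-measurable functionals in Proposition~\ref{prop_bound2}. Existence of at least one $\chi_{\rm inv}$ is handled separately via Krylov--Bogolyubov, using the compactness of $\D$ (Proposition~\ref{prop:compact}) together with the Feller property of $K$, which follows from the continuity of $D \mapsto \pi_D$ and $D \mapsto vD$ on the support of the integrand $\tr(v \pi_D/r_m\, v^*)$.

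Fix a test function $f \in \Lip_1(\D)$. Since $\D$ is a closed subset of the compact metric space $\mathcal G_{r_m,d}(\cc)$, the McShane--Whitney formula produces a $1$-Lipschitz extension $\tilde f$ to $\mathcal G_{r_m,d}(\cc)$. Because $W_1$ is invariant under adding a constant to the test function, I may normalize so that $\|\tilde f\|_\infty \leq M$, with $M$ bounded by the diameter of $\mathcal G_{r_m,d}(\cc)$, a constant depending only on $r_m$ and $d$.

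For $n\in\nnone$ and $r \in \{0,\dotsc,m-1\}$, split $mn + r = mp + (mq + r)$ with $p := \lfloor n/2\rfloor$ and $q := n - p$. By Proposition~\ref{prop_bound1} and the $1$-Lipschitz property of $\tilde f$,
\[
\big|\ee_\chi(f(D_{mn+r})) - \ee_\chi(\tilde f(\widehat D_{mp} \circ \theta^{mq+r}))\big| \leq C\lambda^{mp}.
\]
Set $g := \tilde f \circ \widehat D_{mp}$. Then $g$ is $\O$-measurable and essentially bounded with $\|g\|_\infty \leq M$, so by the identification of $\O$-measurable functions with functions on $\D \times \Omega$, $\ee_\chi(g \circ \theta^l) = \ee^{\rho_\chi}(g \circ \theta^l)$. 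Proposition~\ref{prop_bound2} then yields
\[
\Big|\tfrac1m \sum_{r=0}^{m-1}\ee_\chi(g \circ \theta^{mq+r}) - \ee^{\rho_\infty}(g)\Big| \leq CM\lambda^q,
\]
and the same bound holds with $\chi$ replaced by any $K$-invariant $\chi_{\rm inv}$, since $\rho_{\chi_{\rm inv}}$ also gives a valid initial state.

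Combining these estimates via the triangle inequality and using the invariance identity $\frac1m\sum_r \ee_{\chi_{\rm inv}}(f(D_{mn+r})) = \chi_{\rm inv}(f)$, one obtains
\[
\Big|\Big(\tfrac1m\sum_{r=0}^{m-1}\chi K^{mn+r}\Big)(f) - \chi_{\rm inv}(f)\Big| \leq 2C\lambda^{mp} + 2CM\lambda^q.
\]
Since $p, q \geq n/2 - 1$, the right-hand side is bounded by $C'(\sqrt{\lambda})^n$. Redefining $\lambda$ as $\sqrt{\lambda}$ and taking the supremum over $f \in \Lip_1(\D)$ via Kantorovich--Rubinstein duality gives the desired Wasserstein bound. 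The main technical care lies in the bookkeeping of the split $mn + r = mp + (mq + r)$, chosen so that Proposition~\ref{prop_bound2} applies to the averaged shift $\{mq + r\}_{r=0}^{m-1}$ with a single integer $q$ in the exponent, matching the estimator window $mp$ of Proposition~\ref{prop_bound1} and allowing a common exponential rate to be extracted; a minor subtlety is the uniform sup-norm control of the McShane extension across all of $\Lip_1(\D)$, handled by the invariance of $W_1$ under additive constants.
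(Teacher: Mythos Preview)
Your proof is correct and follows essentially the same route as the paper: the same $p/q$ split of $mn+r$, the same three-term triangle inequality pivoting on $\tilde f(\widehat D_{mp}\circ\theta^{mq+r})$, and the same invocation of Propositions~\ref{prop_bound1} and~\ref{prop_bound2}. Your explicit McShane--Whitney extension of $f$ to all of $\mathcal G_{r_m,d}(\cc)$ is in fact a point the paper glosses over (since $\widehat D_{mp}$ need not lie in $\D$), so your version is slightly more careful there.
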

    \begin{proof} We follow the strategy of proof of \cite[Theorem~1.1]{BenFra}.
        We need to show that
        \[
        \sup\left\{
        \mathbb{E}_\chi\Bigg(\frac 1m \sum_{r=0}^{m-1}f(D_{mn+r}) \Bigg) - \mathbb{E}_{\chi_{\rm inv}}(f(D_0))
        \:\Big|\: f \in \mathrm{Lip}_1(\mathcal D_m) \right\}
        \]
        is exponentially decreasing as $n$ is growing. The expression being maximized is invariant under adding an arbitrary constant to $f$. Hence we can restrict the supremum to functions $f\in \mathrm{Lip}_1(\mathcal D_m)$ such that $f(D^*)=0$ for an arbitrary $D^*\in\D$. Then, since $\sup_{D,D'\in \D}d_{\rm G}(D,D')\leq1$, any $f\in\mathrm{Lip}_1(\mathcal D_m)$ such that $f(D^*)=0$ has its range contained in $[-1,1]$. Hence, for the remainder of the proof we can restrict our attention to functions $f \in \mathrm{Lip}_1(\mathcal D_m)$ such that $\Vert f \Vert_{\infty} \leq 1$.

        Let $n\in \nnone$ and denote $p = \lfloor  n/2 \rfloor$,  $q = \lceil  n/2 \rceil$ such that $p+q = n$. From the $K$-invariance of $\chi_{{\rm inv}}$,
        \begin{equation}
            \label{eq:conv_ineq1}
            \mathbb{E}_{\chi_{\rm inv}}(f(D_{0})) = \mathbb{E}_{\chi_{\rm inv}}(f(D_{mn+r}))   \ \ \textrm{ for every }   r \in \{0, \ldots, m-1\}.
        \end{equation} 
        Using this invariance and triangular inequality,
        \begin{align}\label{eq:split wasser}
            \begin{split}
            \Bigg| \mathbb{E}_\chi\bigg(\frac 1m \sum_{r=0}^{m-1}f(D_{mn+r}) \bigg) - &\mathbb{E}_{\chi_{\rm inv}}  (f(D_0))
            \Bigg|=\Bigg| \frac 1m \sum_{r=0}^{m-1}\left(\mathbb{E}_\chi\big(f(D_{mn+r}) \big) - \mathbb{E}_{\chi_{\rm inv}} (f(D_{mn+r}))\right) \Bigg|
            \\   \leq\ & 
            \frac 1m \sum_{r=0}^{m-1}| \mathbb E_{\chi}(f(D_{m(p+q)+r}))-\mathbb E_{\chi}(f(\widehat{D}_{mp}\circ \theta^{mq+r})) |\\
            & +	\frac 1m \sum_{r=0}^{m-1}| \mathbb E_{\chi_{\rm inv}}(f(\widehat{D}_{mp}\circ \theta^{mq+r}))-\mathbb E_{\chi_{\rm inv}}(f(D_{m(p+q)+r})) | \\
            & + \Bigg| \frac1m \sum_{r=0}^{m-1}\left(\mathbb E_{\chi}(f(\widehat{D}_{mp}\circ \theta^{mq+r})) - \mathbb{E}_{\chi_{\rm inv}}(f(\widehat{D}_{mp}\circ \theta^{mq+r}))\right) \Bigg|.
            \end{split}
        \end{align}
        We first bound the first two terms of the sum on the right hand side. Since $f$ is $1$-Lipschitz, for every $r \in \{0, \ldots, m-1\}$,
        $$|f(D_{m(p+q)+r}) - f(\widehat{D}_{mp}\circ \theta^{mq+r})| \leq d_{\rm G}(D_{m(p+q)+r}\,,\, \widehat{D}_{mp}\circ \theta^{mq+r}),$$
        so \Cref{prop_bound1} implies there exist $C>0$ and $\lambda\in [0,1)$ independent of $\chi$ and $\chi_{\rm inv}$ such that
        $$| \mathbb E_{\chi}(f(D_{m(p+q)+r}))-\mathbb E_{\chi}(f(\widehat{D}_{mp}\circ \theta^{mq+r})) | \leq C\lambda^{p}$$
        and
        $$| \mathbb E_{\chi_{\rm inv}}(f(D_{m(p+q)+r}))-\mathbb E_{\chi_{\rm inv}}(f(\widehat{D}_{mp}\circ \theta^{mq+r})) | \leq C\lambda^{p}.$$
        The third term in the sum on the right-hand side of \Cref{eq:split wasser} is bounded using \Cref{prop_bound2}. Since $\widehat{D}_{mp}\circ\theta^{mq+r}$ is $\O$-measurable,
        $$ \mathbb E_{\chi}(f(\widehat{D}_{mp}\circ \theta^{mq+r}))=\mathbb E^{\rho_\chi}(f(\widehat{D}_{mp}\circ \theta^{mq+r}))$$
        Then, \Cref{prop_bound2} and $\|f\|_\infty\leq 1$ imply there exists $C>0$ and $\lambda\in [0,1)$ independent on $\chi$ and $\chi_{\rm inv}$ such that
        \begin{align*}
            \Bigg| \frac1m \sum_{r=0}^{m-1}\left(\mathbb E_{\chi}(f(\widehat{D}_{mp}\circ \theta^{mq+r})) - \mathbb{E}^{\rho_\infty}(f(\widehat{D}_{mp}))\right) \Bigg| \leq C\lambda^{q}.
        \end{align*}
        Following the same argument, the same bound holds with $\chi$ replaced by $\chi_{\rm inv}$. Hence, triangular inequality leads to
        $$\Bigg| \frac1m \sum_{r=0}^{m-1}\left(\mathbb E_{\chi}(f(\widehat{D}_{mp}\circ \theta^{mq+r})) - \mathbb{E}_{\chi_{\rm inv}}(f(\widehat{D}_{mp}\circ \theta^{mq+r}))\right) \Bigg|\leq 2 C\lambda^q.$$

        Combining the bounds on the three terms in the sum on the right hand side of \Cref{eq:split wasser}, there exist $C>0$ and $\lambda\in [0,1)$ independent of $\chi$ and $\chi_{\rm inv}$ such that
        $$\left|
        \mathbb{E}_\chi\left(\frac 1m \sum_{r=0}^{m-1}f(D_{mn+r}) \right) - \mathbb{E}_{\chi_{\rm inv}}(f(D_0))
        \right| \leq  C\lambda^{n}$$
        and the theorem is proved.
    \end{proof}
    
    \section{Exponential convergence towards dark subspaces}\label{sec:convergence} 
    In this section we show that quantum trajectories converge exponentially fast towards dark subspaces. Note that for $\rho\in\Sd$ and $D\in\mathcal D_m$, we have $\tr(\rho\pi_D)=1$ if and only if $\supp{\rho}\subset D$. Then, the exponential convergence towards dark subspaces is expressed in next theorem.
    
    \begin{theorem}
        \label{thm:conv to Dark}
        There exist two positive constants $C$ and $\lambda<1$ such that for any Borel probability measure $\nu$ over $\Sd$,
        $$\ee_\nu(\inf_{D\in \D}(1-\tr(\pi_D\rho_n)))\leq C\lambda^n.$$
    \end{theorem}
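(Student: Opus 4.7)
The plan is to compare $\rho_n$ to the estimator $\widehat{D}_n$ of \Cref{sec:invdark}. Although $\widehat{D}_n$ is not necessarily a dark subspace, by compactness of $\D$ (\Cref{prop:compact}) one can select an $\O_n$-measurable $D^*_n\in\D$ minimizing $D\mapsto d_{\rm G}(\widehat{D}_n,D)$. Combining this with the elementary bound $|\tr((\pi_D-\pi_{D'})\rho)|\leq d_{\rm G}(D,D')$ valid for any state $\rho$ gives the pointwise inequality
\[
\inf_{D\in\D}\big(1-\tr(\pi_D\rho_n)\big)\leq \big(1-\tr(\pi_{\widehat{D}_n}\rho_n)\big)+d_{\rm G}\big(\widehat{D}_n,\D\big),
\]
where $d_{\rm G}(\widehat{D}_n,\D):=\inf_{D\in\D} d_{\rm G}(\widehat{D}_n,D)$. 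It then suffices to bound both $\pp_\nu$-expectations on the right, uniformly in $\nu$.

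For the first term, using $W_n=U_n\sqrt{M_n}\sqrt{\tr(W_n^*W_n)}$ and $\pi_{\widehat{D}_n}=U_n\pi_{\widehat{E}_n}U_n^*$, a direct computation together with the identity $\tr(W_n^*W_n)\,\d\mu^{\otimes n}=d\,\d\ppch$ on $\O_n$ yields, for every $\rho\in\Sd$,
\[
\ee^\rho\big[1-\tr(\pi_{\widehat{D}_n}\rho_n)\big]=d\,\ee^{\rm ch}\big[\tr\big((\id-\pi_{\widehat{E}_n})\,M_n\,\rho\big)\big].
\]
Since $(\id-\pi_{\widehat{E}_n})M_n(\id-\pi_{\widehat{E}_n})$ has operator norm equal to the $(r_m+1)$-th eigenvalue of $M_n$ (in decreasing order) and the latter is dominated by the geometric mean of the top $r_m+1$ eigenvalues of $M_n$, this expectation is bounded above by $s(n)$, which decays exponentially by \Cref{lem:sn}.

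For the second term, \Cref{lem:dn} gives $\ee_{\chi_{\rm inv}}[d_{\rm G}(D_n,\widehat{D}_n)]\leq\sqrt{s(n)}$, hence the same bound for $\ee_{\chi_{\rm inv}}[d_{\rm G}(\widehat{D}_n,\D)]$ since $D_n\in\D$. To transfer this to an arbitrary $\pp^\rho$, I use that on $\O$-measurable events $\pp_{\chi_{\rm inv}}=\pp^{\rho_\infty}$, where $\rho_\infty:=\rho_{\chi_{\rm inv}}=\int\pi_D/r_m\,\d\chi_{\rm inv}(D)$ is the unique $\phi$-invariant state (the affinity of $\rho\mapsto\pp^\rho$ makes the identification immediate, and $K$-invariance of $\chi_{\rm inv}$ identifies $\rho_{\chi_{\rm inv}}$ with $\rho_\infty$). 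By \Cref{prop:existence Minf}, $\d\pp^\rho/\d\ppch=d\,\tr(\rho M_\infty)$, which gives both $\pp^\rho\leq d\,\ppch$ (since $\tr(\rho M_\infty)\leq 1$) and, since irreducibility forces $\rho_\infty$ to be full rank, $\pp^{\rho_\infty}\geq d\,\lambda_{\min}(\rho_\infty)\,\ppch$. Composing these yields, uniformly in $\rho\in\Sd$,
\[
\ee^\rho\big[d_{\rm G}(\widehat{D}_n,\D)\big]\leq\frac{\ee_{\chi_{\rm inv}}[d_{\rm G}(\widehat{D}_n,\D)]}{\lambda_{\min}(\rho_\infty)}\leq\frac{\sqrt{s(n)}}{\lambda_{\min}(\rho_\infty)}.
\]
Averaging against $\nu$ and adding the two bounds delivers the theorem, with the rate being the square root of the rate of $s(n)$.

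The main obstacle is the second term: \Cref{lem:dn} only controls the distance between $D_n$ and $\widehat{D}_n$ for initial measures supported on $\D$, whereas the statement allows arbitrary $\nu$ on $\Sd$ — indeed, for such $\nu$, $D_n$ is not even defined. The change-of-measure argument above bridges the gap, and its key input is the irreducibility assumption, which guarantees that the channel's fixed point $\rho_\infty$ has strictly positive smallest eigenvalue and thus that $\pp^{\rho_\infty}$ dominates $\ppch$ up to a positive multiplicative constant.
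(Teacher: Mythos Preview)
Your proof is correct and follows essentially the same approach as the paper: the same pointwise splitting via $\widehat{D}_n$, the same bound $\ee_\nu[1-\tr(\pi_{\widehat{D}_n}\rho_n)]\leq s(n)$ (your routing through $M_n$ and $\ppch$ is just a reformulation of the paper's \Cref{lem:convds}), and the same change-of-measure via the full-rankness of $\rho_\infty$ to pass from $\ee_{\chi_{\rm inv}}$ to $\ee^\rho$ on the $\O$-measurable quantity $d_{\rm G}(\widehat{D}_n,\D)$ (the paper's \Cref{lem:exp conv estim to Dark}). The only cosmetic difference is that the paper writes $\rho\leq c\rho_\infty$ directly rather than going through the Radon--Nikodym derivatives, but your constant $1/\lambda_{\min}(\rho_\infty)$ is exactly the paper's $c$.
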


Before proving this result we shall need a few lemmas. First, \Cref{prop_bound1} implies $D$ can be replaced by $(\widehat{D}_n)_n$.
    \begin{lemma}\label{lem:exp conv estim to Dark}
        There exists two positive constants $C$ and $\lambda<1$ such that for any Borel probability measure $\nu$ over $\Sd$ and any $n\in \nnzero$,
        $$\ee_\nu(\inf_{D\in \D} d_{\rm G}(D,\widehat{D}_n))\leq C \lambda^n.$$
    \end{lemma}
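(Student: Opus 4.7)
The random variable $g_n(\omega) := \inf_{D\in\D} d_{\rm G}(D, \widehat D_n(\omega))$ is $\O$-measurable, since $\widehat D_n$ is $\O_n$-measurable, $\D$ is compact by \Cref{prop:compact}, and $(D,\widehat D) \mapsto d_{\rm G}(D, \widehat D)$ is continuous. With the identification of $\O$-measurable functions as $\B \otimes \O$-measurable ones, this gives $\ee_\nu(g_n) = \ee^{\rho_\nu}(g_n)$ for $\rho_\nu := \int \rho\, d\nu(\rho) \in \Sd$. Hence the task reduces to obtaining an exponential bound on $\ee^\rho(g_n)$ that is uniform over $\rho \in \Sd$.

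Starting from the Markov-chain side, for any Borel probability $\chi$ on $\D$ the process $(D_n)_n$ stays in $\D$ by construction, so $g_n \leq d_{\rm G}(D_n,\widehat D_n)$ and \Cref{prop_bound1} with $l=0$ yields $\ee_\chi(g_n) \leq C\lambda^n$. I would then specialize to $\chi = \chi_{\rm inv}$ and show that the associated state $\rho_{\chi_{\rm inv}} := \int_\D (\pi_D/r_m)\, d\chi_{\rm inv}(D)$ is the unique fixed point $\rho_\infty$ of the channel $\phi$. Indeed, \Cref{prop:darkdefprop}(i) gives the identity $v \pi_D v^* = \tr(v\pi_D v^*)\, \pi_{vD}/r_m$ for every $v \in \supp \mu$ and every $D \in \D$ (with $vD \in \D$ whenever this is non-zero), so a one-line computation rewrites $\phi(\rho_{\chi_{\rm inv}})$ as an integral of $\pi_E/r_m$ against the kernel $K$, and the $K$-invariance of $\chi_{\rm inv}$ then gives $\phi(\rho_{\chi_{\rm inv}}) = \rho_{\chi_{\rm inv}}$. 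By the irreducibility of $\mu$ and the Perron--Frobenius statement recalled in \Cref{subs:framework}, $\phi$ admits a unique fixed point in $\Sd$ and that fixed point is full rank; therefore $\rho_{\chi_{\rm inv}} = \rho_\infty$ is full rank.

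Full-rankedness then provides a constant $c > 0$ (e.g.\ the smallest eigenvalue of $\rho_\infty$) such that every $\rho \in \Sd$ satisfies $\rho \leq c^{-1} \rho_\infty$ in Löwner order. Since $\rho \mapsto \pp^\rho$ is affine and positivity-preserving on $\Ld$, this lifts to the pointwise domination $\pp^{\rho_\nu} \leq c^{-1} \pp^{\rho_\infty}$, hence
\[
\ee_\nu(g_n) = \ee^{\rho_\nu}(g_n) \leq c^{-1} \ee^{\rho_\infty}(g_n) = c^{-1} \ee_{\chi_{\rm inv}}(g_n) \leq C c^{-1}\lambda^n,
\]
which is the claim with constant $C' = C/c$. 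The whole argument is soft; the only slightly non-trivial step is the verification that $\rho_{\chi_{\rm inv}}$ is $\phi$-invariant, and this is a direct unravelling of the definitions using \Cref{prop:darkdefprop}(i) and the fact that the Markov kernel $K$ is precisely the pushforward of $\mu$ that makes this computation collapse.
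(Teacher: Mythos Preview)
Your proof is correct and follows essentially the same approach as the paper: reduce to $\ee^{\rho_\nu}$ by $\O$-measurability, identify $\rho_{\chi_{\rm inv}}=\rho_\infty$ via $K$-invariance, use full-rankedness of $\rho_\infty$ to dominate $\pp^{\rho_\nu}$ by a multiple of $\pp^{\rho_\infty}=\pp_{\chi_{\rm inv}}|_\O$, and finish with \Cref{prop_bound1}. The only cosmetic difference is that the paper writes the $\phi$-invariance of $\rho_{\chi_{\rm inv}}$ as $\ee_{\chi_{\rm inv}}(\pi_{D_0}/r_m)=\ee_{\chi_{\rm inv}}(\pi_{D_1}/r_m)$ rather than unravelling the kernel via \Cref{prop:darkdefprop}(i).
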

    \begin{proof}
        Since $\inf_{D\in \D} d_{\rm G}(D,\widehat{D}_n)$ is $\O$-measurable, 
        $$\ee_\nu(\inf_{D\in \D} d_{\rm G}(D,\widehat{D}_n))=\ee^{\rho_\nu}(\inf_{D\in \D} d_{\rm G}(D,\widehat{D}_n)).$$
        Let $\chi_{\rm inv}$ be the unique $K$-invariant Borel probability measure over $\D$ from \Cref{thm:invmeasDark}. Then $\rho_{\chi_{\rm inv}}=\ee_{\chi_{\rm inv}}(\pi_{D_0}/r_m)=\ee_{\chi_{\rm inv}}(\pi_{D_1}/r_m)=\phi(\rho_{\chi_{\rm inv}})$. Hence $\rho_{\chi_{\rm inv}}=\rho_\infty$ with $\rho_\infty$ the unique fixed point of $\phi:\Sd\to\Sd$. Since we assume $\mu$ is irreducible, $\rho_\infty$ is positive definite and there exist $c>0$ such that for any $\rho\in \Sd$, $\rho\leq c\rho_\infty$. 
        The linear extension of the map $\rho\mapsto \ee^\rho$ to $\Ld$ is positive, thus
        $$\ee^{\rho_\nu}(\inf_{D\in \D} d_{\rm G}(D,\widehat{D}_n))\leq c\ee^{\rho_\infty}(\inf_{D\in \D} d_{\rm G}(D,\widehat{D}_n))$$
        with $c$ independent of $\nu$.
        Then, choosing $D=D_n$, we obtain 
        $$\ee_\nu(\inf_{D\in \D} d_{\rm G}(D,\widehat{D}_n))\leq c\ee_{\chi_{\rm inv}}(d_{\rm G}(D_n,\widehat{D}_n))$$ and \Cref{prop_bound1} yields the lemma.
    \end{proof}

One can remark that $\inf_{D\in \D} d_{\rm G}(D,\widehat{D}_n)$ corresponds to the usual distance to $\mathcal D_m$, the set of maximal dark subspaces that is $d_{\rm G}(\mathcal D_m,\widehat{D}_n)$. Second, $\rho_n$ concentrates exponentially fast on $\widehat{D}_n$.

    \begin{lemma}\label{lem:convds} There exist two positive constants $C$ and $\lambda<1$ such that for any Borel probability measure $\nu$ over $\Sd$ and any $n\in \nnzero$,
        $$\ee_\nu\Big(1-\tr(\pi_{\widehat D_n}\rho_n)\Big)\leq C \lambda^n.$$
    \end{lemma}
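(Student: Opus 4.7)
The plan is to compute $\ee_\nu\bigl(1-\tr(\pi_{\widehat D_n}\rho_n)\bigr)$ by exploiting the cancellation between the normalization in $\rho_n$ and the density of $\pp^\rho$, which reduces the expectation to a deterministic integral controlled by $s(n)$ through \Cref{lem:sn}.

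First, I would write $1-\tr(\pi_{\widehat D_n}\rho_n)=\tr(\pi_{\widehat D_n^\perp}\rho_n)$ with $\pi_{\widehat D_n^\perp}=\id-\pi_{\widehat D_n}$. Unfolding \Cref{eq:pmu} and using $\rho_n=W_n\rho W_n^*/\tr(W_n\rho W_n^*)$, the normalizing denominator cancels against the density of $\pp^\rho$ on $\O_n$, so by Fubini and cyclicity of the trace,
\[
\ee_\nu\bigl(1-\tr(\pi_{\widehat D_n}\rho_n)\bigr) = \int_{\Ld^{\times n}} \tr\bigl(W_n^*\pi_{\widehat D_n^\perp}W_n\,\rho_\nu\bigr)\,\d\mutensorn(v_1,\ldots,v_n),
\]
where $\rho_\nu=\int\rho\,\d\nu(\rho)\in\Sd$.

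Second, I would identify $W_n^*\pi_{\widehat D_n^\perp}W_n$ in an orthonormal eigenbasis $(y_i)_{i=1}^d$ of $W_n^*W_n$ with eigenvalues $a_1^2(W_n)\geq\cdots\geq a_d^2(W_n)$, chosen so that $\widehat E_n=\mathrm{span}\{y_1,\ldots,y_{r_m}\}$. Using the polar decomposition $W_n=U_n\sqrt{W_n^*W_n}$, the family $\{U_n y_i\}_i$ is orthonormal: the non-zero vectors among $\{U_n y_i:i\leq r_m\}$ span $\widehat D_n=W_n\widehat E_n$, while $U_n y_i\in\widehat D_n^\perp$ for $i>r_m$. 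A direct check then gives $W_n^*\pi_{\widehat D_n^\perp}W_n=\sum_{i>r_m}a_i^2(W_n)\,|y_i\rangle\langle y_i|$. Since $\rho_\nu\in\Sd$ has operator norm at most $1$,
\[
\tr\bigl(W_n^*\pi_{\widehat D_n^\perp}W_n\,\rho_\nu\bigr)\leq\sum_{i>r_m}a_i^2(W_n)\leq (d-r_m)\,a_{r_m+1}^2(W_n)\leq (d-r_m)\,\bigl\|{\textstyle\bigwedge^{r_m+1}}W_n\bigr\|^{2/(r_m+1)},
\]
where the last step uses $a_{r_m+1}^{r_m+1}(W_n)\leq\prod_{i=1}^{r_m+1}a_i(W_n)=\|\bigwedge^{r_m+1}W_n\|$, which follows from the ordering of singular values together with \Cref{eq:wedge_singular val}.

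Finally, integrating this uniform-in-$\rho$ bound against $\mutensorn$ and recognising the definition of $s(n)$ yields $\ee_\nu\bigl(1-\tr(\pi_{\widehat D_n}\rho_n)\bigr)\leq(d-r_m)\,s(n)$, and \Cref{lem:sn} provides the exponential decay $s(n)\leq C'\lambda^n$, concluding the proof with $C=(d-r_m)C'$. There is no substantial obstacle here; the one conceptual point is to notice that, once the denominator of $\rho_n$ cancels against the $\pp^\rho$-density, the integrand collapses to the sum of the $d-r_m$ smallest squared singular values of $W_n$, which is precisely the quantity governed by $s(n)$.
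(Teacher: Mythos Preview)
Your proof is correct and follows essentially the same route as the paper's. Both arguments cancel the normalization of $\rho_n$ against the $\pp^\rho$-density, identify the resulting operator $W_n^*\pi_{\widehat D_n^\perp}W_n=(\id-\pi_{\widehat E_n})W_n^*W_n$ with the tail of the singular value decomposition, bound it by $a_{r_m+1}^2(W_n)\leq\|\bigwedge^{r_m+1}W_n\|^{2/(r_m+1)}$, and integrate to recover $s(n)$. The only cosmetic differences are that you integrate out $\rho$ to $\rho_\nu$ before bounding (the paper keeps $\rho_0$ random and bounds pointwise), and you pick up an extra harmless factor $d-r_m$ by summing the diagonal entries rather than using the operator-norm bound $\tr(A\rho_\nu)\leq\|A\|$ directly.
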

    \begin{proof}
        Let $n \in \nn^*$. Since $\widehat{E}_n$ is the span of eigenvectors of $W_n^*W_n$ with non-zero corresponding eigenvalues, and $\widehat{D}_n=W_n\widehat{E}_n$, $\widehat{D}_n=U_n\widehat{E}_n$ with $U_n$ the unitary in the polar decomposition of $W_n$. Using trace cyclicity, it follows that 
        \begin{align*}
              \tr(\pi_{\widehat D_n}\rho_n) \tr(W_n^{\phantom{*}}\rho_0W_n^*) &   
                = 
            \tr(\pi_{\widehat D_n}W_n\rho_0 W_n^*)\\
                & =  \tr(U_n\pi_{\widehat{E}_n}U_n^*U_n\sqrt{W_n^*W_n^{\phantom{*}}}\rho_0 \sqrt{W_n^*W_n^{\phantom{*}}}U_n^*)\\
            & =  \tr(\pi_{\widehat{E}_n}\sqrt{W_n^*W_n^{\phantom{*}}}\rho_0 \sqrt{W_n^*W_n^{\phantom{*}}}).
        \end{align*}
        Then, since $\pi_{\widehat{E}_n}W_n^*W_n=W_n^*W_n\pi_{\widehat{E}_n}$,
        \begin{equation*}
            \Big(1-\tr(\pi_{\widehat D_n}\rho_n)\Big)\tr(W_n^{\phantom{*}}\rho_0W_n^*) = \tr\big( (\id_{\cc^d} - \pi_{\widehat{E}_n})W_n^*W_n^{\phantom{*}}\rho_0\,\big).
        \end{equation*}

        Since $\id_{\cc^d}-\pi_{\widehat{E}_n}$ is the orthogonal projector on a subspace spanned by eigenvectors of $W_n^*W_n$ corresponding to eigenvalues $a^2_{r_m+1}(W_n), \dotsc, a^2_d(W_n)$, $\|(\id_{\cc^d} - \pi_{\widehat{E}_n})W_n^*W_n^{\phantom{*}}\|=a^2_{r_m+1}(W_n)$ and 
        $$\Big(1-\tr(\pi_{\widehat D_n}\rho_n)\Big)\tr(W_n^{\phantom{*}}\rho_0W_n^*)\leq a^2_{r_m+1}(W_n).$$
        Hence, using the ordering of singular values and \Cref{eq:wedge_singular val},
        $$\Big(1-\tr(\pi_{\widehat D_n}\rho_n)\Big)\tr(W_n^{\phantom{*}}\rho_0W_n^*)\leq \left\|\textstyle{\bigwedge^{r_m+1}}W_n\right\|^{\frac2{r_m+1}}.$$
        Then, by definition of $s(n)$ and $\pp_\nu$,
        $$\ee_\nu\Big(1-\tr(\pi_{\widehat D_n}\rho_n)\Big)\leq s(n)$$
        and \Cref{lem:sn} yields the lemma.
    \end{proof}

    We can finally prove the theorem.
    \begin{proof}[Proof of \Cref{thm:conv to Dark}]
        For any $D\in \D$,
        $$\tr(\pi_D\rho_n)=\tr(\pi_{\widehat{D}_n}\rho_n)+\tr((\pi_D-\pi_{\widehat{D}_n})\rho_n).$$
        Hence, matrix H\"{o}lder's inequality in the form $|\tr(A^*B)| \leq \|A\|_{\rm tr } \|B\|$  for $A,B \in \Ld$ implies  
        $$\inf_{D\in \D} (1-\tr(\pi_D\rho_n))\leq 1-\tr(\pi_{\widehat{D}_n}\rho_n)+\inf_{D\in \D}d_G(D,\widehat{D}_n).$$
        Then, \Cref{lem:exp conv estim to Dark,lem:convds} yield the theorem.
    \end{proof}

    \section{Isometries between dark subspaces and $\cc^{r_m}$}\label{sec:proc}

In this section we focus on isometries between the maximal dark subspaces and the reference space $\cc^{r_m}$. They are key to classifying the invariant measures for quantum trajectories, i.e., the $\Pi$-invariant    measures on $\pcd$. To motivate their introduction, let us first consider a straightforward $\Pi$-invariant measure $\nu_{\rm unif}$ coming from the $K$-invariant measure $\chi_{\rm inv}$ on $\mathcal D_m$.  Namely, $\nu_{\rm unif}$ is constructed as the combination of uniform measures on the (projective) dark subspaces with weights given by $\chi_{{\rm inv}}$,  i.e., 
    $$\nu_{\rm unif}= \int_{\mathcal D_m} \mathrm{Unif}_{\P(D)}\,  \mathrm{d}\chi_{\rm inv}(D),$$
    where  $\P(D) = \{\hat x \colon x \in D, x \neq 0\}$  for $D \in \D$.
    We will verify its $\Pi$-invariance by direct computation. Recall that on pure states the action of $\Pi$ reads
    $$\Pi f(\hat y) =
    \int_{\Ld} f(v \cdot \hat y) \|v y\|^2 \d\mu(v) \ \ \text{ for } \hat  y \in \pcd.$$
    \begin{proposition}
        $\nu_{\rm unif}$ is $\Pi$-invariant.
    \end{proposition}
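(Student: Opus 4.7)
The plan is to verify $\Pi$-invariance of $\nu_{\rm unif}$ by testing against an arbitrary continuous function $f$ on $\pcd$ and showing $\int \Pi f\,\mathrm d\nu_{\rm unif}=\int f\,\mathrm d\nu_{\rm unif}$. The whole point of integrating uniform measures on dark subspaces against $\chi_{\rm inv}$ is that it ``factors'' the action of $\Pi$ into the inner (unitary) dynamics, which the uniform measures absorb, and the outer dynamics between dark subspaces, which are governed by $K$.

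Concretely, I first unfold the left-hand side to
\[
\int_{\mathcal D_m}\int_{\P(D)}\int_{\Ld} f(v\cdot\hat y)\,\|vy\|^2\,\mathrm d\mu(v)\,\mathrm d\mathrm{Unif}_{\P(D)}(\hat y)\,\mathrm d\chi_{\rm inv}(D),
\]
and then exchange the $\mu$-integration with the inner integration over $\P(D)$ by Fubini. The key input is the dark subspace property of $D\in\D$: for $\mu$-almost all $v$, there is $\lambda_{v,D}\ge 0$ and $U_v\in\Ud$ with $v|_D=\lambda_{v,D}U_v|_D$, so that for any unit vector $y\in D$,
\[
\|vy\|^2=\lambda_{v,D}^2=\tr\!\bigl(v\tfrac{\pi_D}{r_m}v^*\bigr),
\]
independent of $\hat y\in \P(D)$, and $v\cdot\hat y=U_v\cdot\hat y$ whenever $\lambda_{v,D}>0$. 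This uses \Cref{prop:darkdefprop}(i).

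Next, on the event $\{\lambda_{v,D}>0\}$, the map $\hat y\mapsto v\cdot\hat y$ restricted to $\P(D)$ is (up to the scalar $\lambda_{v,D}$) a unitary isomorphism $\P(D)\to\P(vD)$, and $vD\in\D$ by the second part of \Cref{prop:darkdefprop}(i). By unitary invariance/uniqueness of the uniform measure on projective spaces, the pushforward of $\mathrm{Unif}_{\P(D)}$ by this map is $\mathrm{Unif}_{\P(vD)}$. Setting $F(D):=\int_{\P(D)} f\,\mathrm d\mathrm{Unif}_{\P(D)}$, this yields
\[
\int_{\P(D)} f(v\cdot\hat y)\,\mathrm d\mathrm{Unif}_{\P(D)}(\hat y)=F(vD)
\]
on $\{\lambda_{v,D}>0\}$, and outside this event the factor $\|vy\|^2=0$ kills the contribution. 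Combining the last two displays gives
\[
\int\Pi f\,\mathrm d\nu_{\rm unif}=\int_{\mathcal D_m}\!\int_{\Ld} F(vD)\,\tr\!\bigl(v\tfrac{\pi_D}{r_m}v^*\bigr)\,\mathrm d\mu(v)\,\mathrm d\chi_{\rm inv}(D)=\int_{\mathcal D_m}(KF)(D)\,\mathrm d\chi_{\rm inv}(D),
\]
by the very definition of $K$ from \Cref{kernelK}.

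Finally, $K$-invariance of $\chi_{\rm inv}$ (\Cref{thm:invmeasDark}) yields $\int KF\,\mathrm d\chi_{\rm inv}=\int F\,\mathrm d\chi_{\rm inv}=\int f\,\mathrm d\nu_{\rm unif}$, which concludes the argument. There is essentially no technical obstacle; the only small point to watch is the handling of the $\mu$-null set where $v$ does not act as a scalar multiple of a unitary on $D$, and the measurability of $D\mapsto F(D)$, which follows from continuity of $f$ together with the standard continuity of $D\mapsto \mathrm{Unif}_{\P(D)}$ in the weak topology on $\mathcal G_{r_m,d}(\cc)$.
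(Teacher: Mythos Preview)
Your proof is correct and follows essentially the same approach as the paper: both arguments use the dark subspace property to factor out the constant weight $\|vy\|^2=\tr(v\tfrac{\pi_D}{r_m}v^*)$, push forward $\mathrm{Unif}_{\P(D)}$ to $\mathrm{Unif}_{\P(vD)}$ via the (proportionally) unitary action of $v$, and then invoke the $K$-invariance of $\chi_{\rm inv}$. Your explicit introduction of $F(D)$ and the remark on measurability are minor elaborations, but the structure is identical.
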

    \begin{proof}
        For every $D \in \supchi$ and $v \in \supp\mu$ the map $\P(D) \ni \hat y \mapsto v\cdot \hat y \in \P(vD)$ is bijective, provided that $\tr(v\pi_Dv^*)>0$, and so
        $$  \int_{\pcd}
        f(v \cdot \hat y )  \, \d\mathrm{Unif}_{\P(D)}(\hat y)\
        =   \int_{\pcd}
        f(\hat y )\, \d\mathrm{Unif}_{\P(v D)}(\hat y).$$
        Thus, since $\|v y\|^2 = \tr(v \tfrac{\pi_{ D}}{r_m} v^*)$ for every $\hat y \in \P(D)$, it follows that
        \begin{align*}
            \int_{\pcd} \Pi f(\hat y )\, \d \nu_{\rm unif}(\hat y) & 
            =
            \int_{\Ld} \int_{\mathcal D_m}  \int_{\pcd}
            f(\hat y ) \,\d\mathrm{Unif}_{\P(v D)}(\hat y)\,  \tr(v \tfrac{\pi_{ D}}{r_m} v^*) \d\chi_{\rm inv}(D)\d\mu(v)
            \\ & =
            \int_{\mathcal D_m}  \int_{\pcd}
            f(\hat y )\, \d\mathrm{Unif}_{\P(D)}(\hat y)\,   \d(\chi_{\rm inv}K)(D)
            \\ & =
            \int_{\pcd} f(\hat y )\, \d \nu_{\rm unif}(\hat y),
        \end{align*}
        as desired.
    \end{proof}
     
A natural question now is whether $\nu_{\rm unif}$ is the unique $\Pi$-invariant measure. In \cite{BenFra} it was shown that it is not always the case: 
     \begin{proposition}[\cite{BenFra}, Appendix~C]\label{prop:SU(k)}
Assume $\supp\mu\subset \mathcal{SU}(d)$ and let $G$ be the smallest closed subgroup of $\mathcal{SU}(d)$ such that $\supp\mu\subset G$. 
For any $\hat x\in\pcd$, put $[\hat x]_G$ for the orbit of $\hat x$ with respect to $G$ and the action $G\times \pcd\ni (u,\hat x)\mapsto u\cdot \hat x$, that is, $$[\hat x]_G:=\{\hat y\in\pcd \ |\ \exists u\in G \mbox{ s.t. } \hat y=u\cdot \hat x\}.$$ Then, for any $\hat x \in\pcd$, there exists a unique $\Pi$-invariant probability measure $m_{\hat x}$ supported on $[\hat x]_G$, and this measure is uniform in the sense that it is invariant by the map $\hat x\mapsto u\cdot \hat x$ for any $u\in G$. {More precisely, $m_{\hat x}$ is the image measure of the (normalized) Haar measure on $G$ via the map $g_{\hat x}\colon G \to [\hat x]_G$ acting as $g_{\hat x}(u) = u\cdot \hat x$.}

Note that, in particular, if $G=\mathcal{SU}(d)$, then $[\hat x]_G = \pcd$ for every $\hat x$, so $\Pi$ has a unique invariant probability measure and this measure is the uniform one on $\pcd$.
\end{proposition}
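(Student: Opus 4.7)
The plan is to exploit that when $\supp\mu\subset \SUd$, every $v\in\supp\mu$ is an isometry, so $\|vy\|=1$ for every unit $y\in\cc^d$, and the transition kernel on $\pcd$ collapses to $\Pi f(\hat y) = \int_G f(v\cdot\hat y)\,\d\mu(v)$. Iterating, $\Pi^k f(\hat y) = \int_G f(u\cdot\hat y)\,\d\mu^{*k}(u)$, where $\mu^{*k}$ denotes the $k$-th convolution power of $\mu$ viewed as a measure on the compact group $G$. The orbit $[\hat x]_G=g_{\hat x}(G)$ is the continuous image of compact $G$, hence compact, and it is $\Pi$-invariant since $v\cdot(u\cdot\hat x)=(vu)\cdot\hat x$ with $vu\in G$ for every $v\in\supp\mu\subset G$ and $u\in G$.

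First, I would verify $\Pi$-invariance of $m_{\hat x}:=(g_{\hat x})_\star\mathrm{Haar}_G$ by direct computation. Fubini and left-invariance of $\mathrm{Haar}_G$ (change of variable $u\mapsto v^{-1}u$) give
$$m_{\hat x}(\Pi f) \;=\; \int_G\!\int_G f(vu\cdot\hat x)\,\d\mu(v)\,\d\mathrm{Haar}_G(u) \;=\; \int_G f(u'\cdot\hat x)\,\d\mathrm{Haar}_G(u') \;=\; m_{\hat x}(f).$$
The identical argument shows $m_{\hat x}$ is invariant under $\hat y\mapsto w\cdot\hat y$ for every $w\in G$, which is the claimed ``uniformity'' property.

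For uniqueness, the key tool is the Kawada--Itô theorem: since $G$ is by definition the closed subgroup generated by $\supp\mu$, the Cesaro averages $\bar\mu_n := \tfrac1n\sum_{k=0}^{n-1}\mu^{*k}$ converge weakly to $\mathrm{Haar}_G$. Let $\nu$ be any $\Pi$-invariant probability measure supported on $[\hat x]_G$. For any $\hat y = u_0\cdot\hat x\in[\hat x]_G$ and any continuous $f$ on $\pcd$,
$$\frac{1}{n}\sum_{k=0}^{n-1}\Pi^k f(\hat y) \;=\; \int_G f(u u_0\cdot\hat x)\,\d\bar\mu_n(u) \;\xrightarrow[n\to\infty]{}\; \int_G f(u\cdot\hat x)\,\d\mathrm{Haar}_G(u) \;=\; m_{\hat x}(f),$$
where the middle step combines weak convergence with left-invariance (substitution $u\mapsto uu_0^{-1}$). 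By $\Pi$-invariance, $\int f\,\d\nu = \int \tfrac1n\sum_{k=0}^{n-1}\Pi^k f\,\d\nu$ for every $n$; since the averaged integrands are uniformly bounded by $\|f\|_\infty$ and converge pointwise on $[\hat x]_G$ to the constant $m_{\hat x}(f)$, bounded convergence gives $\int f\,\d\nu = m_{\hat x}(f)$ for every $f\in\mathcal C(\pcd)$, hence $\nu = m_{\hat x}$.

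The final assertion then follows from the classical fact that $\SUd$ acts transitively on $\pcd$, so when $G=\SUd$ the unique orbit is $\pcd$ itself and $m_{\hat x}$ is the Fubini--Study measure, independent of $\hat x$. The only nontrivial input is the Kawada--Itô theorem on Cesaro convergence of convolution powers to the Haar measure of the generated subgroup; the rest reduces to Fubini and left-invariance. I expect the main (minor) subtlety to be ensuring that the Kawada--Itô convergence is applied in its Cesaro form, since $\mu$ may be concentrated on a single coset of a proper closed normal subgroup of $G$ (in which case $\mu^{*k}$ itself need not converge, but the Cesaro means do), but this does not affect the argument above.
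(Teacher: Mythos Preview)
The paper does not give its own proof of this proposition; it is quoted from \cite{BenFra}. Your argument is correct and self-contained. One cosmetic point: in the uniqueness step the substitution $u'=uu_0$ in $\int_G f(uu_0\cdot\hat x)\,\d\mathrm{Haar}_G(u)$ uses \emph{right}-invariance of Haar measure rather than left-invariance; since $G$ is compact (hence unimodular) this makes no difference.

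For comparison, the paper's general machinery recovers the proposition as the degenerate case $r_m=d$, $\mathcal D_m=\{\cc^d\}$, $\chi_{\rm inv}=\delta_{\cc^d}$: taking $J_{\cc^d}=\id$ makes $\J$ trivially smart (hence minimal) with $G_\J=G$, and then the equicontinuity of $\Pi$ (\Cref{thm:Piequicont}), the identification of $\Pi$-minimal sets (\Cref{thm:piminimalsets}), and Raugi's theorem (\Cref{thm:raugi}) together yield exactly the same conclusion. Your direct Kawada--It\^o route is more elementary and sidesteps the equicontinuity and minimal-set analysis entirely, exploiting that in the purely unitary case the dynamics on $\pcd$ is the pushforward of a random walk on a single compact group; the paper's route is designed for the general dark-subspace situation where no such global group structure is available.
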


      Let us point out that, within the setting of this proposition, if $G$ is a proper subgroup of $\mathcal{SU}(d)$,  there may exist mutually singular $\Pi$-invariant measures  on $\pc{d}$. Indeed, this is the case when there exist disjoint orbits under $G$, i.e., $[\hat x]_G\cap [\hat y]_G=\emptyset$ for some $\hat x$ and $\hat y$. This happens, e.g., if $G$ is finite, in which case  there exist infinitely many disjoint trajectories, thus also infinitely many  mutually singular $\Pi$-invariant measures. 
    If, however, there exists a single orbit,  
    i.e., $[\hat x]_G=\pcd$ for any $\hat x$ (in other words,  the action of $G$ on $\pcd$ is transitive), then we have a unique $\Pi$-invariant probability measure on $\pc{d}$.  
    A complete discussion and characterization is provided in \Cref{subs:ergo}.  
    
  Note that the assumption $\supp \mu \subset \mathcal {SU}(d)$  means that the whole space $\cc^d$ is the (unique) maximal dark subspace. A natural approach to the case of multiple maximal dark subspaces is to establish a correspondence between each $D \in \mathcal D_m$ and a common reference space $\cc^{r_m}$ by fixing a  family of maps  $ \J = \{J_D\}_{ D \in \mathcal D_m}$, where $$J_D \colon \cc^{r_m} \to D$$ is a linear isometry.
    This allows us to turn  (random) sequences of isometries acting between maximal dark subspaces into (random) sequences of  unitary operators on  $\cc^{r_m}$, bringing us back to the setting of \Cref{prop:SU(k)}. Mirroring  for $\hat x \in \pcr$ the  construction from \Cref{prop:SU(k)},  we will obtain a family of measures $m_{\hat x, \J}$  on $\pcr$  invariant under the unitary dynamics from $G_\J$. Out of them we can then construct a family of $\Pi$-invariant measures on $\pcd$ by taking  the product measures
    $
    \chi_{\rm inv} \otimes    m_{\hat x, \J}
    $
    on $ \mathcal D_m \times \pcr$  and pushing them forward to $\pcd$ via the map  $\Psi_\J \colon    (D,\hat z) \mapsto J_D\cdot \hat z$, see also Figure~\ref{fig:diagram2}.

    Our ultimate aim, which we will reach in \Cref{sec:invmeasures}, is to show that the set of pushforward measures 
    $(\Psi_\J)_\star(\chi_{\rm inv} \otimes    m_{\hat x, \J})$ over $\hat x \in \pcr$ 
    coincides with the set of all   $\Pi$-ergodic measures on $\pcd$, provided that the family of isometries $\J$ is chosen in an optimal way, i.e., it is a so-called minimal family. 
    In the following subsections  we first formalize  the approach sketched above  and    then discuss  what makes a family of isometries optimal for our construction.

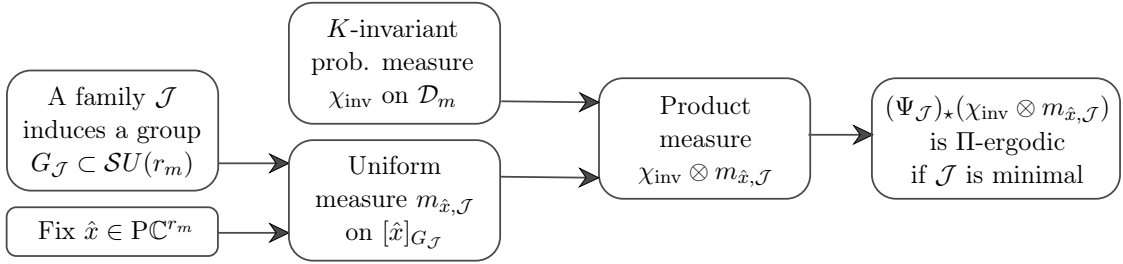
\begin{figure}[h]
\begin{center}
        \scalebox{0.9}{

\tikzset{every picture/.style={line width=0.75pt}} 

\begin{tikzpicture}[x=0.75pt,y=0.75pt,yscale=-1,xscale=1]
  
\draw [color={rgb, 255:red, 74; green, 74; blue, 74 }  ,draw opacity=1 ]   (160.13,132.33) -- (181.62,132.33) -- (196.48,132.33) ;
\draw [shift={(199.48,132.33)}, rotate = 180] [fill={rgb, 255:red, 74; green, 74; blue, 74 }  ,fill opacity=1 ][line width=0.08]  [draw opacity=0] (10.72,-5.15) -- (0,0) -- (10.72,5.15) -- (7.12,0) -- cycle    ;
 
\draw [color={rgb, 255:red, 74; green, 74; blue, 74 }  ,draw opacity=1 ]   (159.22,171.32) -- (181.56,171.32) -- (196.48,171.32) ;
\draw [shift={(199.48,171.32)}, rotate = 180] [fill={rgb, 255:red, 74; green, 74; blue, 74 }  ,fill opacity=1 ][line width=0.08]  [draw opacity=0] (10.72,-5.15) -- (0,0) -- (10.72,5.15) -- (7.12,0) -- cycle    ;
 
\draw [color={rgb, 255:red, 74; green, 74; blue, 74 }  ,draw opacity=1 ]   (490.63,116.33) -- (513.88,116.33) -- (523.72,116.33) ;
\draw [shift={(526.72,116.33)}, rotate = 180] [fill={rgb, 255:red, 74; green, 74; blue, 74 }  ,fill opacity=1 ][line width=0.08]  [draw opacity=0] (10.72,-5.15) -- (0,0) -- (10.72,5.15) -- (7.12,0) -- cycle    ;
 
\draw  [color={rgb, 255:red, 74; green, 74; blue, 74 }  ,draw opacity=1 ] (199.48,55.84) .. controls (199.48,48.34) and (205.55,42.27) .. (213.05,42.27) -- (304.29,42.27) .. controls (311.79,42.27) and (317.87,48.34) .. (317.87,55.84) -- (317.87,96.57) .. controls (317.87,104.07) and (311.79,110.15) .. (304.29,110.15) -- (213.05,110.15) .. controls (205.55,110.15) and (199.48,104.07) .. (199.48,96.57) -- cycle ;
 
\draw  [color={rgb, 255:red, 74; green, 74; blue, 74 }  ,draw opacity=1 ] (41.31,162.45) .. controls (41.31,159.43) and (43.76,156.98) .. (46.78,156.98) -- (154.24,156.98) .. controls (157.26,156.98) and (159.7,159.43) .. (159.7,162.45) -- (159.7,178.85) .. controls (159.7,181.87) and (157.26,184.31) .. (154.24,184.31) -- (46.78,184.31) .. controls (43.76,184.31) and (41.31,181.87) .. (41.31,178.85) -- cycle ;
 
\draw  [color={rgb, 255:red, 74; green, 74; blue, 74 }  ,draw opacity=1 ] (41.48,93.74) .. controls (41.48,86.3) and (47.51,80.27) .. (54.95,80.27) -- (146.39,80.27) .. controls (153.83,80.27) and (159.87,86.3) .. (159.87,93.74) -- (159.87,134.17) .. controls (159.87,141.61) and (153.83,147.65) .. (146.39,147.65) -- (54.95,147.65) .. controls (47.51,147.65) and (41.48,141.61) .. (41.48,134.17) -- cycle ;
  
\draw  [color={rgb, 255:red, 74; green, 74; blue, 74 }  ,draw opacity=1 ] (199.48,132.85) .. controls (199.48,125.39) and (205.53,119.33) .. (213,119.33) -- (304.35,119.33) .. controls (311.81,119.33) and (317.87,125.39) .. (317.87,132.85) -- (317.87,173.41) .. controls (317.87,180.88) and (311.81,186.93) .. (304.35,186.93) -- (213,186.93) .. controls (205.53,186.93) and (199.48,180.88) .. (199.48,173.41) -- cycle ;
 
\draw  [color={rgb, 255:red, 74; green, 74; blue, 74 }  ,draw opacity=1 ] (373.24,98.14) .. controls (373.24,90.67) and (379.29,84.62) .. (386.76,84.62) -- (478.11,84.62) .. controls (485.58,84.62) and (491.63,90.67) .. (491.63,98.14) -- (491.63,138.7) .. controls (491.63,146.17) and (485.58,152.22) .. (478.11,152.22) -- (386.76,152.22) .. controls (379.29,152.22) and (373.24,146.17) .. (373.24,138.7) -- cycle ;
 
\draw  [color={rgb, 255:red, 74; green, 74; blue, 74 }  ,draw opacity=1 ] (526.19,98.14) .. controls (526.19,90.67) and (532.24,84.62) .. (539.71,84.62) -- (651.72,84.62) .. controls (659.18,84.62) and (665.24,90.67) .. (665.24,98.14) -- (665.24,138.7) .. controls (665.24,146.17) and (659.18,152.22) .. (651.72,152.22) -- (539.71,152.22) .. controls (532.24,152.22) and (526.19,146.17) .. (526.19,138.7) -- cycle ;
 
\draw [color={rgb, 255:red, 74; green, 74; blue, 74 }  ,draw opacity=1 ]   (318.37,139.85) -- (370.1,140.42) ;
\draw [shift={(373.1,140.45)}, rotate = 180.62] [fill={rgb, 255:red, 74; green, 74; blue, 74 }  ,fill opacity=1 ][line width=0.08]  [draw opacity=0] (10.72,-5.15) -- (0,0) -- (10.72,5.15) -- (7.12,0) -- cycle    ;
 
\draw [color={rgb, 255:red, 74; green, 74; blue, 74 }  ,draw opacity=1 ]   (318.51,97.54) -- (370.24,98.11) ;
\draw [shift={(373.24,98.14)}, rotate = 180.62] [fill={rgb, 255:red, 74; green, 74; blue, 74 }  ,fill opacity=1 ][line width=0.08]  [draw opacity=0] (10.72,-5.15) -- (0,0) -- (10.72,5.15) -- (7.12,0) -- cycle    ;

\draw (43,88) node [anchor=north west][inner sep=0.75pt]  [font=\normalsize] [align=left] {\begin{minipage}[lt]{84.09pt}\setlength\topsep{0pt}
\begin{center}
 A family  $\mathcal{J}$   \\[-0.1em]  induces   a group   \\[-0.1em]
$ G_{\mathcal{J}}  \subset \mathcal{S} U( r_{m})$
\end{center}

\end{minipage}}; 

\draw (38,163) node [anchor=north west][inner sep=0.75pt]  [font=\normalsize] [align=left] {\begin{minipage}[lt]{94.77pt}\setlength\topsep{0pt}
\begin{center}
Fix  
$\hat{x}  \in \mathrm{P}\mathbb{C}^{r_{m}}$
\end{center}
\end{minipage}};

\draw (210,127) node [anchor=north west][inner sep=0.75pt]  [font=\normalsize] [align=left] {\begin{minipage}[lt]{71.04pt}\setlength\topsep{0pt}
\begin{center}
Uniform \\[-0.1em] measure $m_{\hat{x} ,\mathcal{J}}$  \\[-0.1em] on  $[\hat{x}]_{G_{\J}}$ 
\end{center}  
\end{minipage}};

\draw (202,50) node [anchor=north west][inner sep=0.75pt]  [font=\normalsize] [align=left] {\begin{minipage}[lt]{81.25pt}\setlength\topsep{0pt}
\begin{center}
$K$-invariant \\[-0.1em] prob. measure \\[-0.1em]  $\chi_{\mathrm{inv}}$ on $\D$ 
\end{center}
\end{minipage}};

\draw (385,93) node [anchor=north west][inner sep=0.75pt]  [font=\normalsize] [align=left] {\begin{minipage}[lt]{69.83pt}\setlength\topsep{0pt}
\begin{center}
Product \\[-0.1em] measure \\[-0.125em]
$\chi_{\mathrm{inv}} \otimes m_{\hat{x},\mathcal{J}}$ 
\end{center} 
\end{minipage}};

\draw (531,93) node [anchor=north west][inner sep=0.75pt]  [font=\normalsize] [align=left] {\begin{minipage}[lt]{96.22pt}\setlength\topsep{0pt}
\begin{center} 
$(\Psi_{\J})_{\star}(\chi_{\mathrm{inv}} \otimes m_{\hat{x},\J})$
\\[-0.05em]
is $\Pi$-ergodic 
\\[-0.1em]
if $\mathcal{J}$ is minimal
\end{center}
\end{minipage}}; 
\end{tikzpicture}
}
    \end{center}
  \caption{Diagram summarizing the construction of $\Pi$-ergodic measures 
}\label{fig:diagram2}
\end{figure}

    \subsection{Minimal families of isometries}
 Fix $ \J = \{J_D\}_{ D \in \mathcal D_m}$, where $J_D \colon \cc^{r_m} \to D$ is a linear isometry. 
    On $(\mathcal D_m \times \Omega,\,\mathcal Q \otimes\mathcal O,\,\mathbb P_{\chi_{\rm inv}})$ we consider the random variable
    $$U =   \frac{J^{-1}_{V_1D_0} V_1 J^{\phantom{{-1}}}_{D_{0}}}{\sqrt{\tr(V_1 \frac{\pi_{D_{0}}}{r_m} V_1^*)}}:\mathbb C^{r_m}\rightarrow D_0\rightarrow V_1D_0\rightarrow\mathbb C^{r_m}.$$
    Since  $ \mathbb P_{\chi_{{\rm inv}}}(\tr(V_1 \pi_{D_0} V_1^*) = 0 ) = \mathbb P_{\chi_{{\rm inv}}}(V_1D_0  = \{0\} ) =0$, we see that $U$ is $\mathbb P_{\chi_{{\rm inv}}}$-almost surely well defined and takes values in the unitary group $\Urm$ by definition of dark subspaces. 
    Here and henceforth, if $\omega = (v,\ldots)$ and $\tr(v \pi_D v^*)>0$, we let $u_{v,D}$ stand for $\exp(\mathrm{i} \gamma) U(D, \omega)$, where $\gamma$ is so chosen that $\det u_{v,D} = 1$. That is, $u_{v,D} \propto J_{v D}^{-1}v J_D^{\vphantom{-1}}$ is a special unitary operator induced on the reference space $\cc^{r_m}$ when the map $v$ acts on the system occupying the dark subspace $D$.
    By $G_\J$ we denote the smallest closed subgroup of $\mathcal {SU}(r_m)$ containing the set 
    \begin{equation}
        \label{eq:defS}
    S_\J =\{u_{v,D} \in {\mathcal {SU}(r_m)} \,|\, v \in \supp\mu,\: D \in \supp\chi_{{\rm inv}},\: \tr(v \pi_D v^*)>0\}.
    \end{equation}
    Let ${\rm H}_{G_\J}$ be
    the normalized Haar measure on $G_\J$. Let  $\hat x \in \pcr$ and define $m_{\hat x,\J}$ as the push-forward measure of $H_{G_\J}$ by the map
    $$\begin{array}{rcl} g_{\hat x}\colon \  G_\J&\longrightarrow&[\hat x]_{G_\J}  \subset \pcr \\[0.33em]
       u &\longmapsto& u\cdot \hat x     \end{array}.$$
    We denote,
    $$m_{\hat x,\J}=({g_{\hat x}})_\star H_{G_\J}.$$

    Sending a copy of $m_{\hat x,\J}$ to each maximal dark subspace with a weight prescribed by $\chi_{\rm inv}$, we expect to obtain a $\Pi$-invariant measure on $\pcd$. Indeed,  let
    $$\begin{array}{rcl}\Psi_\J\colon \   \mathcal D_m \times \P(\cc^{r_m}) & \longrightarrow & \pcd\\[0.33em]
       (D,\hat z) &\longmapsto& J_D\cdot \hat z. \end{array}$$
    Then we define the measure $\nu_{\hat x,\J}$ as the push-forward measure of $\chi_{{\rm inv}} \otimes m_{\hat x,\J}$ by $\Psi_\J$. Namely,
    \begin{equation}\label{eq:proc}
        \nu_{\hat x,\J} = (\Psi_\J)_\star(\chi_{{\rm inv}} \otimes m_{\hat x,\J}).
    \end{equation}
    Explicitly,
    \begin{align*}
        \int_{ \pcd }f(\hat y)\d \nu_{\hat x, \J}  (\hat y) 
        =
        \int_{\mathcal D_m}\int_{G_\J} f(J_D u  \cdot \hat x) \d\mathrm{H}_{G_\J}  (u) \d\chi_{\rm inv}(D),
    \end{align*}
    where $f\in \mathcal C(\pcd)$.  As expected, we have the following

    \begin{proposition}\label{prop:productmeas}
        $\nu_{\hat x,\J}$ is $\Pi$-invariant.
    \end{proposition}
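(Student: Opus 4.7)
The plan is to verify $\Pi$-invariance by a direct unfolding of the defining push-forward and Fubini, exploiting two invariances that are essentially built in: the $K$-invariance of $\chi_{\rm inv}$ and the left-invariance of the Haar measure ${\rm H}_{G_\J}$. Concretely, I would start from the expression
\begin{equation*}
\int_{\pcd} \Pi f(\hat y)\,\mathrm d\nu_{\hat x,\J}(\hat y)
=\int_{\mathcal D_m}\!\int_{G_\J}\!\int_{\Ld} f(v J_D u\cdot \hat x)\,\|vJ_D u x\|^2\,\mathrm d\mu(v)\,\mathrm d{\rm H}_{G_\J}(u)\,\mathrm d\chi_{\rm inv}(D),
\end{equation*}
swap integrals (this is licit because the integrand is bounded in $f$, the weight is a probability density, and $\mu$ has a finite second moment), and then use that $D$ is a maximal dark subspace: for every unit vector $y\in D$, \Cref{prop:darkdefprop}(i) gives $\|vy\|^2=\tr(v\tfrac{\pi_D}{r_m}v^*)$. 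Since $J_D u x$ is a unit vector in $D$ whenever $x$ is a unit representative of $\hat x$, the norm factor simplifies to $\tr(v\tfrac{\pi_D}{r_m}v^*)$, free of the integration variable $u$.

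Next I would translate the projective dynamics into the reference space $\cc^{r_m}$. For $(v,D)$ with $\tr(v\pi_Dv^*)>0$, by definition of $u_{v,D}$ one has $vJ_D\propto J_{vD}u_{v,D}$, so on the projective level
\begin{equation*}
vJ_D u\cdot \hat x = J_{vD}\cdot (u_{v,D}u\cdot \hat x).
\end{equation*}
Substituting, the integral becomes
\begin{equation*}
\int_{\mathcal D_m}\!\int_{\Ld}\!\Bigl(\int_{G_\J} f\bigl(J_{vD}\cdot(u_{v,D}u\cdot \hat x)\bigr)\,\mathrm d{\rm H}_{G_\J}(u)\Bigr)\tr\bigl(v\tfrac{\pi_D}{r_m}v^*\bigr)\,\mathrm d\mu(v)\,\mathrm d\chi_{\rm inv}(D).
\end{equation*}
Now comes the key point: for $(v,D)$ in $\supp\mu\times\supp\chi_{\rm inv}$ with $\tr(v\pi_Dv^*)>0$ one has $u_{v,D}\in S_\J\subset G_\J$ by \Cref{eq:defS}. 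Therefore the left-invariance of ${\rm H}_{G_\J}$ lets us delete $u_{v,D}$ inside the inner integral. The integrand then depends on $(v,D)$ only through $vD$, so, recognizing the kernel $K$, the outer double integral collapses to an integral with respect to $\chi_{\rm inv}K$.

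Finally, I would invoke the $K$-invariance $\chi_{\rm inv}K=\chi_{\rm inv}$ from \Cref{thm:invmeasDark} to obtain exactly $\int f\,\mathrm d\nu_{\hat x,\J}$, completing the proof. The only genuine care needed is on a $\mu\otimes\chi_{\rm inv}$-null set where $\tr(v\pi_Dv^*)=0$: on this set both $vD$ and the weight vanish, so it contributes nothing and the dropping of the phase/choice of $u_{v,D}$ is harmless. I do not anticipate a serious obstacle here; the main subtlety is bookkeeping of phases, which evaporates as soon as one remembers we work on $\pcd$ rather than $\cc^d$.
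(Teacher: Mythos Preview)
Your proposal is correct and follows essentially the same route as the paper: unfold the push-forward definition, use the dark-subspace property to replace $\|vJ_Du x\|^2$ by $\tr(v\tfrac{\pi_D}{r_m}v^*)$, rewrite $vJ_D u\cdot\hat x=J_{vD}\cdot(u_{v,D}u\cdot\hat x)$, apply left-invariance of the Haar measure (since $u_{v,D}\in G_\J$), and then use $\chi_{\rm inv}K=\chi_{\rm inv}$. Your extra care about Fubini and the null set where $\tr(v\pi_Dv^*)=0$ is a welcome addition that the paper leaves implicit.
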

    \begin{proof} 
        Recall that for every  $u \in G_\J$ and $D \in \D$ we have
        \begin{equation}\label{darkprob}
        \|v J_D u x \|^2 = \tr(v \tfrac{\pi_D}{r_m}v^*).
        \end{equation} 
        Also, recall that the invariance of Haar measure guarantees that for every $\tilde u \in G_\J$ and every $h\in \mathcal C(\pcr)$,
        \begin{equation}\label{haarinvv}
            \int_{G_\J} h(\tilde u  u \cdot \hat x) \,\mathrm{dH}_{G_\J}(u)  = \int_{G_\J} h(   u \cdot \hat x) \,\mathrm{dH}_{G_\J}( u).
        \end{equation}
        Fix $f\in \mathcal C(\pcr)$.
        From \Cref{darkprob,haarinvv} and the  $K$-invariance of $\chi_{{\rm inv}}$,
        \begin{align*}
            \int_{ \pcd }\Pi f( \hat y) \,& \mathrm{d} \nu_{\hat x,\J} (\hat y)  
            \\
            & = \int_{\Ld}\int_{\mathcal D_m} \left[  \int_{G_\J} f( v J_{D}  u \cdot \hat x) \,\mathrm{dH}_{G_\J}(u) \right] \tr(v\tfrac{\pi_D}{r_m}v^*)\, \d\chi_{\rm inv}(D) \d\mu(v)
            \\ &  =
            \int_{\Ld} \int_{\mathcal D_m} \left[\int_{G_\J} (f\circ J_{vD}) (u_{v,D} u \cdot \hat x) \,\mathrm{dH}_{G_\J}(u)\right] \tr(v\tfrac{\pi_D}{r_m}v^*)\d\chi_{\rm inv}(D)\d\mu(v)
            \\   & =
            \int_{\Ld} \int_{\mathcal D_m} \left[\int_{G_\J} (f\circ J_{vD}) (u \cdot \hat x) \,\mathrm{dH}_{G_\J}(u)\right] \tr(v\tfrac{\pi_D}{r_m}v^*)\d\chi_{\rm inv}(D)\d\mu(v)
            \\  & =
            \int_{\mathcal D_m}\int_{G_\J} (f\circ J_{D}) (u \cdot \hat x) \, \mathrm{dH}_{G_\J}(u)  \mathrm{d}[\chi_{\rm inv}K](D)
            \\  & =   \int_{ \pcd }  f( \hat y) \, \mathrm{d} \nu_{\hat x,\J} (\hat y),
        \end{align*}
        as desired.
    \end{proof}

    \begin{remark}
        If $G_\J = \mathcal{SU}(r_m)$, then   $m_{\hat x, \J} = \operatorname{Unif}_{\pcr}$ and $\nu_{\hat x,\J} = \nu_{\rm unif}$ for all $\hat x \in \pcr$.
    \end{remark}

    Let us stress that all  the objects related to the reference space $\pcr$, in particular the group
    $G_\J$ and thus also the  family of $\Pi$-invariant measures
    $$\Upsilon_\J = \{\nu_{\hat x,\J} \colon \hat x\in \pcr\},$$ 
    depend on the fixed family of isometries $\mathcal J$. It is no surprise that the choice of $\J$ can heavily influence what $\Pi$-invariant measures we obtain in $\Upsilon_\J$. Namely, 
    for two families $\J$ and $\tJ$ it is possible that:
    \begin{itemize}
    \item    $G_{\mathcal J}$ and $G_{\widetilde {\mathcal J}}$   are different and neither of them is a subgroup of the other (even up to unitary equivalence), see   Example \hyperref[ex:16a]{1(6a)}   
    \item    $G_{\J}$ is uncountably infinite  and $G_{\tJ}$ is finite, see   Example \hyperref[ex:16b]{1(6b)}
    \item  $G_{\J}$ is the full special unitary group (so $\Upsilon_\J = \{\nu_{\rm unif}\}$), while $G_{\tJ}$ is its proper subgroup (and $\Upsilon_{\tJ}$ is infinite), Example \hyperref[ex:15b]{1(5b)}. 
    \end{itemize}  
As a consequence, there may exists a $\Pi$-invariant measure of the form $\nu_{\hat x,\tJ}$ which does not belong to the convex hull of $\Upsilon_\J$.  This means that the set $\Upsilon_\J$ does not contain all $\Pi$-ergodic measures.  
Intuitively, this happens when the family $\J$ is poorly chosen in the sense that the group $G_\J$ is too big compared to the dynamics actually taking place in $\pcd$. As a result, we end up with too few measures of the form $m_{\hat x, \J}$ on the reference space $\pcr$, and so by transforming them to $\nu_{\hat x, \J}$ we are not able to recover all $\Pi$-invariant measures. We address this issue by introducing the notion of \emph{minimal families}: 

\begin{definition}
    A family of linear isometries  $\mathcal J =\{J_D\}_{D\in \mathcal D_m}$  is called a \emph{minimal family}  if the group $G_\J$ is, up to a unitary equivalence, minimal as a subgroup of the special unitary group, i.e., for any other family $\tJ$  there exists $Q \in \Urm$ such that $G_\J \subset Q G_{\tJ}Q^{-1}$.
\end{definition}

We will see  that the minimality of $\J$ guarantees that $\Upsilon_\J$ is exactly the set of $\Pi$-ergodic measures.  
More precisely, as mentioned already in \Cref{thm:intro-ergodic}, if $\J$ is a minimal family, then a probability measure $\nu$ on $\pcd$ is $\Pi$-ergodic if and only if $\nu   = \nu_{\hat x, \J}$ for some $\hat x \in \pcr$.  As a first step towards proving this claim, in the next subsection we  explore the properties of minimal families. In particular, we provide a characterization of minimal families in terms of the so-called smart families, which will play a key role in the results leading to the characterization of $\Pi$-invariant measures.

\subsection{Properties of minimal families}\label{subs:minimality}

First, we introduce and explore the notion of \emph{smart families}, which we will then prove to be equivalent to minimal families.
To this end, we need to  introduce some notation. For $n \in \nnone$ we denote $$\V_n = \{v_n \cdots v_1 \,|\, v_1, \ldots, v_n \in \supp \mu\}$$  and put $\mathcal V_0 = \{\id_{\cc^d}\}$ for consistency. Then we define $$\V_{\rm fin} = \bigcup_{n \in \nnzero}\V_n \quad \textrm{ and } \quad \mathcal W_{\rm fin} = \left\{v/\|v\| \colon v \in \mathcal V_{\rm fin}, v \neq 0 \right\},$$ and finally $$\mathcal W = \overline{\mathcal W_{\rm fin}}.$$
Note that, by construction, if $w_1, w_2 \in \mathcal W$ are such that $w_1w_2 \neq 0$, then $w_1w_2/\|w_1w_2\| \in \mathcal W$. Now, we can define  smart families.
 
\begin{definition}\label{def:Dcsmart}
    Let  $\J = \{J_D\}_{D\in \mathcal D_m}$ be a family of linear isometries such that $J_D\cc^{r_m}=D$ and let $D_c\in\mathcal D_m$ be fixed. We say that $\J$ is $D_c-$\emph{smart} if  for every $D  \in \mathcal \supp{\chi_{{\rm inv}}}$  there exists $w \in \mathcal W$ such that $J_{D} \propto  w J_{D_c}$.  
    We say that $\J$ is \emph{smart} if it is $D_c-$smart for all $D_c\in \supp \chi_{\rm inv}$.
\end{definition}

The idea behind $D_c-$smartness is the presence of the `central' dark subspace $D_c$ from which every other dark subspace in $\supp \chi_{{\rm inv}}$ can be reached with the help of some $w \in \mathcal W$ 
without inducing any non-trivial transformation on the reference space $\cc^{r_m}$. This is because the property $J_{D} \propto w J^{\vphantom{-1}}_{D_c}$ is equivalent to $J_{D}^{{-1}} w J^{\vphantom{-1}}_{D_c} \propto \id$, and the following diagram is commuting

\tikzset{every picture/.style={line width=0.75pt}} 
\begin{center}
\scalebox{0.5}{
\begin{tikzpicture}[x=0.75pt,y=0.75pt,yscale=-1,xscale=1]
 
\draw    (295.37,43.89) -- (410.14,43.89) ;
\draw [shift={(409.14,43.89)}, rotate = 180] [fill={rgb, 255:red, 0; green, 0; blue, 0 }  ][line width=0.08]  [draw opacity=0] (12,-3) -- (0,0) -- (12,3) -- cycle    ;
  
\draw    (300.37,192.89) -- (410.14,192.89) ;
\draw [shift={(413.14,192.89)}, rotate = 180] [fill={rgb, 255:red, 0; green, 0; blue, 0 }  ][line width=0.08]  [draw opacity=0] (12,-3) -- (0,0) -- (12,3) -- cycle    ;
 
\draw    (252.85,164.64) -- (252.85,128.64) -- (252.85,70.22) ;
\draw [shift={(252.85,68.22)}, rotate = 90] [fill={rgb, 255:red, 0; green, 0; blue, 0 }  ][line width=0.08]  [draw opacity=0] (12,-3) -- (0,0) -- (12,3) -- cycle    ;
 
\draw    (444.85,165.74) -- (444.85,129.73) -- (444.85,71.32) ;
\draw [shift={(444.85,69.32)}, rotate = 90] [fill={rgb, 255:red, 0; green, 0; blue, 0 }  ][line width=0.08]  [draw opacity=0] (12,-3) -- (0,0) -- (12,3) -- cycle    ;

\draw (240,30) node [anchor=north west][inner sep=0.75pt]  [font=\huge]  {$D_{c}$};

\draw (430,30) node [anchor=north west][inner sep=0.75pt]  [font=\huge]  {$D$};

\draw (235,178) node [anchor=north west][inner sep=0.75pt]  [font=\huge]  {$\mathbb{C}^{r_{m}}$};

\draw (425,178) node [anchor=north west][inner sep=0.75pt]  [font=\huge]  {$\mathbb{C}^{r_{m}}$};

\draw (210,105) node [anchor=north west][inner sep=0.75pt]  [font=\LARGE]  {$J_{D_{c}}$};

\draw (410,105) node [anchor=north west][inner sep=0.75pt]  [font=\LARGE]  {$J_{D}$};

\draw (342,17) node [anchor=north west][inner sep=0.75pt]  [font=\LARGE]  {$\Id$};

\draw (342,172) node [anchor=north west][inner sep=0.75pt]  [font=\LARGE]  {$w$};

\end{tikzpicture}
}
\end{center}

 Our most immediate aim is to establish 
 the existence of a $D_c-$smart family for an arbitrary $D_c \in \D$ and then to prove that $D_c-$smartness implies smartness whenever $D_c\in \supp \chi_{\rm inv}$. These results will be crucial in making the link between smart and minimal families, which is the main result of this section. To achieve these goals, we need a series of auxiliary results. Let us start by collecting a list of useful basic facts on isometries and dark subspaces (the proof is not provided as these properties  follow directly from the definitions).
\begin{proposition}Let $D \in \D$, $\hat y \in \P(D)$ and $w \in \W$. Then
    \begin{enumerate}[label={\rm (\roman*)}] 
    
        \item   $wy \neq 0$ iff  $\tr(w\pi_Dw) >0$ iff $wD \neq \{0\}$.   In particular, $w \cdot \hat y$ and $u_{w,D}$ are well defined as soon as $\tr(w\pi_Dw) >0$.
        
        \item $J_{D} \propto  w J_{D_c} $ implies that $w D_c = D$; in particular, $w D_c \neq \{0\}$.
        
        \item Since all $J_D$'s are isometric, the absolute value of the proportionality constant in $J_{D} \propto  w J_{D_c} $ is   equal to $\tr(w \frac{\pi_D}{r_m}w^*)^{-\frac 12}$.

    \end{enumerate}
\end{proposition}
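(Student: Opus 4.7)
The plan is to reduce all three claims to a single extension of the dark identity from $\supp\mu$ to the whole of $\mathcal W$. I would first show that for every $D\in\mathcal D_m$ and every $w\in\mathcal W$,
\[
\pi_D w^*w\pi_D=\tfrac{1}{r_m}\tr(w\pi_D w^*)\,\pi_D.
\]
By \Cref{prop:darkdefprop}(ii) this identity holds for every finite product $v\in\mathcal V_n$, hence after normalisation on $\mathcal W_{\rm fin}$. Since both sides depend continuously on $w$, passing to the closure extends the identity to $\mathcal W$; the proportionality constant is read off by taking the trace of the left-hand side.

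Item (i) follows by testing this identity on a unit representative $y$ of $\hat y\in\P(D)$: one obtains $\|wy\|^2=\tfrac{1}{r_m}\tr(w\pi_D w^*)$, so $wy\neq 0$ is equivalent to $\tr(w\pi_D w^*)>0$. Under this positivity, no unit vector of $D$ is killed by $w$, which forces $\dim wD=r_m$ and in particular $wD\neq\{0\}$; vanishing of the trace makes $w$ identically zero on $D$, so $wD=\{0\}$. When the trace is positive, $w\cdot\hat y=\widehat{wy}$ is well defined, $w|_D$ is a non-zero scalar multiple of an isometry onto the maximal dark subspace $wD$, and hence $u_{w,D}\propto J_{wD}^{-1}\,w\,J_D$ is a well defined element of $\mathcal{SU}(r_m)$ after the usual phase adjustment fixing the determinant to be one.

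For (ii) and (iii), write $J_D=\alpha\,w\,J_{D_c}$ with $\alpha\neq 0$ since $J_D$ is a non-zero isometry. Applying both sides to $\mathbb C^{r_m}$ yields $D=J_D(\mathbb C^{r_m})=\alpha\,w\,J_{D_c}(\mathbb C^{r_m})=wD_c$, proving (ii). For (iii), forming the outer product $J_D J_D^*$ on both sides gives $\pi_D=|\alpha|^2\,w\,\pi_{D_c}\,w^*$; taking the trace and using $\tr\pi_D=r_m$ then yields $|\alpha|=\tr(w\pi_{D_c}w^*/r_m)^{-1/2}$, which is the desired scalar in view of $D=wD_c$ from (ii).

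I do not foresee any real obstacle: the proposition is essentially direct bookkeeping once the dark identity is available on $\mathcal W$. The only step deserving any care is that continuity extension, the key observation being that the dark identity can be recast as a polynomial identity in $w$ and $w^*$ which is therefore preserved under limits in $\mathcal L(\mathbb C^d)$.
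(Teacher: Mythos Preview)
The paper deliberately omits a proof of this proposition, declaring that the properties ``follow directly from the definitions''. Your argument is correct and supplies precisely the details the authors leave implicit: you extend the dark identity $\pi_D w^*w\pi_D=\tfrac{1}{r_m}\tr(w\pi_D w^*)\,\pi_D$ from finite products to all of $\mathcal W$ by continuity, then read off the three items. The only step worth a second glance---that $wD$ is again a \emph{maximal} dark subspace, so that $J_{wD}$ and hence $u_{w,D}$ make sense---follows from the compactness of $\mathcal D_m$ together with the fact that $\pi_{wD}=r_m\,w\pi_D w^*/\tr(w\pi_D w^*)$ depends continuously on $w$.

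One small point in (iii): your computation correctly yields $|\alpha|=\tr\!\big(w\tfrac{\pi_{D_c}}{r_m}w^*\big)^{-1/2}$, and this is indeed the constant the paper uses elsewhere (for instance in the construction of smart families). The printed statement has $\pi_D$ rather than $\pi_{D_c}$, which appears to be a typo. Your closing phrase ``in view of $D=wD_c$'' does not bridge the two expressions---$\tr(w\pi_D w^*)$ and $\tr(w\pi_{D_c}w^*)$ are generally different---so it is cleanest simply to state the scalar with $\pi_{D_c}$ and, if you wish, flag the misprint.
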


Next, we establish an auxiliary result that concerns the support of $\chi_{{\rm inv}}$. It will allow us to prove the existence of a $D_c-$smart family for an arbitrary dark subspace $D_c$.
 
\begin{proposition}\label{prop:suppchi} For any $D  \in \mathcal D_m$ we have $\supp \chi_{{\rm inv}}  \subset \mathcal W D$.
\end{proposition}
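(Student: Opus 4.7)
The plan is to produce, for each $D' \in \supp \chi_{\rm inv}$, a realization $\omega$ of the quantum trajectory started from $\rho_0 = \pi_D/r_m$ whose dark-subspace chain $(D_n(\omega))_n$ accumulates at $D'$, and then to extract the required $w \in \W$ from the polar decomposition machinery of Section~\ref{sec:M_n}.

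The first step, and the main technical obstacle, is to show that $D'$ is $\pp^{\pi_D/r_m}$-almost surely an accumulation point of $(D_n)_n$. For any open $U \ni D'$, a contradiction argument via dominated convergence shows $\pp^{\pi_D/r_m}(D_n \in U \text{ i.o.}) > 0$: otherwise $\delta_D K^n(U) \to 0$, contradicting the Portmanteau theorem applied to the Cesaro-weak convergence $\frac{1}{m}\sum_{r=0}^{m-1} \delta_D K^{mn+r} \to \chi_{\rm inv}$ from Theorem~\ref{thm:invmeasDark} together with $\chi_{\rm inv}(U) > 0$. Upgrading to probability~$1$ exploits the ergodicity of the chain under $\pp_{\chi_{\rm inv}}$ (immediate from uniqueness of $\chi_{\rm inv}$) together with the absolute continuity $\pp^{\pi_D/r_m} \ll \pp^{\rho_\infty}$, which holds because irreducibility makes $\rho_\infty$ full rank so that $\pi_D/r_m \leq c\,\rho_\infty$. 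Intersecting over a countable neighborhood basis of $D'$ then yields almost-sure accumulation.

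Next I would fix such an $\omega$, intersected with the full-probability events $\operatorname{rank} M_\infty(\omega) = r_m$ (Proposition~\ref{prop:rankM}) and $\tr(\pi_D M_\infty(\omega)) > 0$ (positivity of the Radon--Nikodym density from Proposition~\ref{prop:existence Minf}). Extracting a subsequence $(n_k)$ with $D_{n_k}(\omega) \to D'$ and, by compactness of $\Ud$, a further subsequence along which $U_{n_k}(\omega) \to U_\infty$, the polar decomposition $W_{n_k}/\|W_{n_k}\| = U_{n_k}\sqrt{M_{n_k}/\|M_{n_k}\|}$ (together with $\|M_n\| \geq 1/d$) converges to $w := U_\infty \sqrt{M_\infty(\omega)/\|M_\infty(\omega)\|}$, which belongs to $\W$ by closedness.

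Finally, to verify $wD = D'$: on the one hand, darkness of $D$ gives $\rho_n = \pi_{D_n}/r_m$ so that $\rho_{n_k} \to \pi_{D'}/r_m$, an operator of rank $r_m$; on the other hand, the formula $\rho_{n_k} = U_{n_k}\sqrt{M_{n_k}}\pi_D\sqrt{M_{n_k}}U_{n_k}^*/\tr(M_{n_k}\pi_D)$ passes to a limit (with positive denominator by our choice of $\omega$), giving a positive multiple of $U_\infty\sqrt{M_\infty}\pi_D\sqrt{M_\infty}U_\infty^*$. Equating the two limits and comparing ranges shows $D' = U_\infty\sqrt{M_\infty}D = wD$; the rank of $\pi_{D'}$ automatically forces $\sqrt{M_\infty}|_D$ to be injective, so no degeneracy arises.
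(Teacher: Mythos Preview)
Your upgrade from $\pp^{\pi_D/r_m}(D_n\in U\ \text{i.o.})>0$ to probability~$1$ does not go through as written. Ergodicity of the chain $(D_n)_n$ under $\pp_{\chi_{\rm inv}}$ is a statement on the path space $\D^{\nnzero}$: it gives $\pp_{\delta_{D_0}}(\{W_n D_0\in U\ \text{i.o.}\})=1$ for $\chi_{\rm inv}$-a.e.\ $D_0$. The absolute continuity $\pp^{\pi_D/r_m}\ll\pp^{\rho_\infty}$, on the other hand, lives on $(\Omega,\mathcal O)$. The $\mathcal O$-event you need is $A_D=\{\omega:W_n(\omega)D\in U\ \text{i.o.}\}$ with your \emph{fixed} $D$, and $\pp^{\rho_\infty}(A_D)=\int \pp^{\pi_{D_0}/r_m}(A_D)\,\d\chi_{\rm inv}(D_0)$ involves the probability, under trajectories weighted by $D_0$, that $W_n D$ (not $W_n D_0$) visits $U$ infinitely often. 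Ergodicity says nothing about this mismatched quantity, and $A_D$ is neither $\theta$-invariant on $\Omega$ nor a tail event. Without probability~$1$ for each $U$, the countable intersection over a neighbourhood basis of $D'$ can have probability~$0$, and you cannot produce the single $\omega$ your polar-decomposition argument requires.

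The paper's proof sidesteps all of this. One simply notes that $\supp(\delta_D K^k)\subset\W D$ for every $k$ straight from the definitions, so the Ces\`aro means $\chi_n=\tfrac1{n+1}\sum_{k=0}^n\delta_DK^k$ satisfy $\chi_n(\W D)=1$; uniqueness of $\chi_{\rm inv}$ forces $\chi_n\Rightarrow\chi_{\rm inv}$, and Portmanteau on the closed set $\W D$ gives $\chi_{\rm inv}(\W D)=1$. In fact a correct version of your own Step~1 already contains everything needed: from $\liminf_N \tfrac1N\sum_{k<N}\delta_DK^k(B(D',\epsilon))\geq\chi_{\rm inv}(B(D',\epsilon))>0$ you get, for each $\epsilon$, some $n$ and some $v\in\V_n$ with $d_{\rm G}(vD,D')<\epsilon$; closedness of $\W D$ then finishes. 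The almost-sure accumulation, the $M_\infty$ machinery, and Propositions~\ref{prop:existence Minf}--\ref{prop:rankM} are all unnecessary for this proposition.
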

\begin{proof}Fix $D  \in \mathcal D_m $.
    For  $n \in \nn$ we consider the empirical measure on $\mathcal D_m$, that is
    $$\chi_n =  \tfrac 1{n+1} \sum_{k=0}^{n} \delta_{D}K^k  = \tfrac 1{n+1}  \sum_{k=0}^n \int_{(\Ld)^k} \delta_{v_k\cdots v_1 D } \tr(v_k\cdots v_1 \tfrac{\pi_{ D }}{r_m}v_1^*\cdots v_k^*)\d\mu^{\otimes k}(v_1, \ldots, v_k).$$
    Observe that $\supp \chi_n    \subset     \mathcal W  D$, because  $\supp (\delta_DK^k) = \overline{\{vD \colon v \in \V_k, \tr(v\pi_Dv^*) >0\}}$ for every $k \in \{ 0, \ldots, n\}$.  Therefore, \begin{equation}\label{eq:supWD}
        \chi_n(    \mathcal W D) = 1.
    \end{equation}
    Since $\chi_{{\rm inv}}$ is the unique $K$-invariant measure on $\mathcal D_m$, we have
    \begin{equation}\label{eq:limchin}
    \chi_n   \xrightarrow[n \to \infty]{w^*}
    \chi_{{\rm inv}}.
    \end{equation}

    Indeed, by Prokhorov's theorem, $(\chi_n)_n$ has a weakly-* convergent subsequence.  Let $\chi$ be an accumulation point of $(\chi_n)_n$. Note that $\chi$ is necessarily $K$-invariant.  Since $\chi_{\rm inv}$ is the unique invariant probability measure,  we have $\chi = \chi_{\rm inv}$. Thus, the convergence of \Cref{eq:limchin} holds.

     Now, using Portemanteau's theorem with the closed set $\mathcal W D$, we get
    $$\limsup_{n \to \infty} \chi_n({\mathcal W}D)  \leq \chi_{\rm inv}({\mathcal W}D).$$
    In consequence, from \Cref{eq:supWD} it follows that  $\chi_{\rm inv}({\mathcal W}D) = 1$. Since    ${\mathcal W} D$ is a closed set of full measure, we obtain $\supp\chi_{{\rm inv}} \subset  {\mathcal W}D$, as desired.
\end{proof}

\begin{proposition}\label{prop:smart-construction}
   Let $D_c \in \mathcal D_m$ and let $J \colon \cc^{r_m} \to D_c$ be a linear isometry. There exists a $D_c-$smart family $(J_D)_{D \in \mathcal D_m}$  such that $J_{D_c} = J$.
\end{proposition}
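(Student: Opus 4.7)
The plan is to build the family by using \Cref{prop:suppchi} (i.e., the preceding proposition) to select, for each maximal dark subspace in $\supp\chi_{\rm inv}$, an element of $\W$ that reaches it from $D_c$, and then to transport the chosen $J$ along this element. The value of $J_D$ on subspaces outside $\supp\chi_{\rm inv}$ is irrelevant for $D_c$-smartness and will be chosen arbitrarily.

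More concretely, I would first observe that by \Cref{prop:suppchi} applied with $D = D_c$, every $D\in\supp\chi_{\rm inv}$ belongs to $\W D_c$, so there exists $w_D\in\W$ with $w_DD_c=D$; in particular $\tr(w_D\pi_{D_c}w_D^*)>0$. For $D=D_c$ itself I simply take $w_{D_c}=\id\in\mathcal V_0\subset\W$. The key technical step is to show that $w|_{D_c}$ is proportional to an isometry for every $w\in\W$ with $wD_c\neq\{0\}$. For $w\in\W_{\rm fin}$ this follows directly from \Cref{prop:darkdefprop}(i) applied iteratively to a representation $w\propto v_n\cdots v_1$ with $v_i\in\supp\mu$, giving
\[
\pi_{D_c}w^*w\pi_{D_c}=\tfrac{1}{r_m}\tr(w\pi_{D_c}w^*)\,\pi_{D_c}.
\]
For a general $w\in\W=\overline{\W_{\rm fin}}$, I pass to the limit along a sequence in $\W_{\rm fin}$ converging to $w$; both sides of the identity depend continuously on $w$, so the equality persists.

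Setting $\alpha_D=\sqrt{\tr(w_D\pi_{D_c}w_D^*/r_m)}>0$, the operator $\alpha_D^{-1}w_D|_{D_c}\colon D_c\to D$ is then a linear isometry onto $D$ (it lands in $D$ because $w_DD_c=D$, and it has dimension $r_m$ since $\mathcal D_m$ is closed by \Cref{prop:compact}). I therefore define, for $D\in\supp\chi_{\rm inv}$,
\[
J_D:=\alpha_D^{-1}\,w_D\,J\colon\cc^{r_m}\longrightarrow D,
\]
which is a composition of isometries, hence an isometry. For $D_c$ this yields $J_{D_c}=J$. For $D\in\mathcal D_m\setminus\supp\chi_{\rm inv}$ I pick any isometry $J_D\colon\cc^{r_m}\to D$ (such isometries exist since $\dim D=r_m$). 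By construction $J_D=\alpha_D^{-1}w_DJ_{D_c}$ with $w_D\in\W$ for every $D\in\supp\chi_{\rm inv}$, so the family is $D_c$-smart.

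The main potential obstacle is the verification that the dark-like identity $\pi_{D_c}w^*w\pi_{D_c}\propto\pi_{D_c}$ extends from $\W_{\rm fin}$ to all of $\W$, since the definition of dark subspace is stated only for finite products of matrices from $\supp\mu$. However, this is a straightforward continuity argument using joint continuity of matrix multiplication and of the trace, so no serious difficulty arises; the whole proof is essentially a transport-of-structure construction based on \Cref{prop:suppchi}.
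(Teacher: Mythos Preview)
Your proposal is correct and follows essentially the same construction as the paper: use \Cref{prop:suppchi} to select $w_D\in\W$ with $w_DD_c=D$ for each $D\in\supp\chi_{\rm inv}$, define $J_D$ as the suitably normalized $w_DJ$, and pick arbitrary isometries elsewhere. You include an extra step the paper leaves implicit, namely the verification that $\pi_{D_c}w^*w\pi_{D_c}\propto\pi_{D_c}$ extends from $\W_{\rm fin}$ to all of $\W$ by continuity, which is a welcome clarification; one tiny cosmetic point is that if $D_c\notin\supp\chi_{\rm inv}$ you should make sure the ``arbitrary'' assignment on the complement still sets $J_{D_c}=J$, but this is a trivial adjustment.
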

\begin{proof}
    Set $J_{D_c}= J$.
    From \Cref{prop:suppchi}, for every   $D \in \supp{\chi_{{\rm inv}}}$ we have $D \in {\mathcal{W}} D_c$, so there exists $w \in {\mathcal{W}}$ such that $D = w D_c$. This  guarantees that $\tr(w \pi_{D_c} w^*)>0$ and we  set $$J_{D} = \tr\big(w \tfrac{\pi_{D_c}}{r_m} w^*\big)^{-\frac 12} wJ .$$
    For every   $D \notin \supp{\chi_{{\rm inv}}}$ we set $J_{D}$ to be an arbitrary isometry between $\cc^{r_m}$ and $D$. By construction, $(J_D)_{D \in \mathcal D_m}$ satisfies the desired properties.
\end{proof}

Next,  we show that  $D_c-$smartness is equivalent to smartness, provided that $D_c \in \supchi$.   
Although the following simple lemma can be considered folklore, its proof is included for the sake of completeness as we will invoke this result repeatedly.

\begin{lemma}\label{lem:unitarypowers}\label{lem:unitaryinclusion}
    Let $u \in \Urm$ and $A,B \subset \pcr$.
    \begin{enumerate}[label={\rm (\roman*)}]
        \item  $(u^n)_{n=1}^\infty$ contains a subsequence convergent to $\id$.
        \item   $uA \subset B \subset A$ implies that $A = B= uA$.
    \end{enumerate}
\end{lemma}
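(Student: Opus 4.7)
The plan is to establish both parts by elementary compactness arguments, deducing (ii) from (i).

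For (i), I will use the compactness of the unitary group $\Urm$. Since the iterates $(u^n)_{n\geq 1}$ all lie in $\Urm$, by sequential compactness there is a convergent subsequence $u^{n_k}\to v\in\Urm$. Passing to a further subsequence so that $(n_k)$ is strictly increasing, the differences $m_k:=n_{k+1}-n_k$ form a sequence of positive integers with
$$u^{m_k}=u^{n_{k+1}}(u^{n_k})^{-1}\xrightarrow[k\to\infty]{}vv^{-1}=\id,$$
which yields the desired subsequence of $(u^n)_n$ converging to the identity.

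For (ii), I will iterate the hypothesis $uA\subset A$ to obtain $u^nA\subset A$ for all $n\geq 0$, and then combine this with (i) to force $A\subset uA$; together with the given chain $uA\subset B\subset A$, this will give $A=B=uA$. More precisely, fix $x\in A$ and let $(n_k)\subset\nnone$ be a sequence provided by (i) with $u^{n_k}\to\id$. Then $u^{n_k-1}x=u^{-1}(u^{n_k}x)\to u^{-1}x$, while the iterated inclusion ensures $u^{n_k-1}x\in u^{n_k-1}A\subset A$ for every $k\geq 1$.

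The main (mild) obstacle is converting this convergence into the set membership $u^{-1}x\in A$. This uses closedness of $A$, which is the natural tacit hypothesis in the statement: without it, taking $A$ to be the forward $u$-orbit of a point under an irrational rotation of $\pcr$ and $B=A$ already gives $uA\subsetneq A=B$, contradicting the conclusion. Under the (natural) assumption that $A$ is closed, one obtains $u^{-1}x\in\overline{A}=A$, hence $x\in uA$, and the lemma follows. In the intended applications of the lemma to closed $\Pi$-minimal subsets this hypothesis holds automatically.
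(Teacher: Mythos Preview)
Your proof is correct and follows essentially the same route as the paper: compactness of $\Urm$ to extract a convergent subsequence and then pass to differences of exponents for (i), followed by iteration of $uA\subset A$ combined with (i) for (ii). Your explicit observation that closedness of $A$ is a tacit hypothesis is apt—the paper's own proof uses it implicitly when writing $A=\lim_k u^{n_k}A\subset B$ and $u^{-1}A=\lim_k u^{n_k-1}A\subset A$, and indeed every application of the lemma in the paper is to closed sets (closed subgroups, or sets of the form $X_D=J_D^{-1}(F\cap\P(D))$ with $F$ closed).
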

\begin{proof}
    (i)	As a corollary to a more general theorem, this result can be found in \cite[Corollary~7]{Benitez2006}. There is also a very simple proof that exploits the unitarity of $u$. Namely, since $\Urm$ is compact,   $(u^n)_{n=1}^\infty$ has a convergent subsequence $(u^{n_p})_{p=1}^\infty$, which necessarily is also a Cauchy sequence, i.e., for every $\varepsilon > 0$ there exists $N \in \nn$ such that $$\forall p,\tilde p\geq N \ \ \|u^{n_{\tilde p}}-u^{n_{p}}\| = \|u^{ n_{\tilde p}-n_{p}} - \id \| <\varepsilon.$$
    For $q \in \nnone$ we put $m_q = n_{\tilde p}-n_{p}$, where $p, \tilde p$ are so chosen that $(m_q)_{q=1}^\infty$ is strictly increasing. Then $u^{m_q} \to \id$ as ${q \to \infty}$, as desired.
    
    \smallskip
    
    \noindent
    (ii)	From $uA \subset B \subset A$ it follows that
    $u^n A \subset B \subset A$ for every $n \in \nnone$.
    Extracting from $(u^n)_{n=1}^\infty$ a subsequence $(u^{n_k})_{k=1}^\infty$ convergent to $\id$, we obtain
    $$A = \lim_{k \to \infty} u^{n_k} A \subset B \subset A,$$
    which gives $A = B$.
    To prove that $A = uA$, it suffices to show  $A \subset   uA$. Observe that $u^{n_k-1}A \subset A$ for every $k \in \nnone$, thus also $$ u^{-1}A = \lim_{k \to \infty} u^{n_k-1}A \subset A,$$ which concludes the proof.
\end{proof}

\begin{proposition}\label{prop:smart-propagates} 
   Let $D_c \in \supchi$.   If $\J = \{J_D\}_{D \in \mathcal D_m}$ is $D_c-$smart , then $\mathcal J$ is smart.  
\end{proposition}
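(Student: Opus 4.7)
The plan is to show that $D_c$-smartness implies $D_c'$-smartness for any $D_c' \in \supchi$, by reducing via composition of induced unitaries to a single key construction. Specifically, the composition rule $u_{w_1 w_2, D} = u_{w_1, w_2 D} \cdot u_{w_2, D}$ together with the $D_c$-smart elements $\{w_D\}_{D \in \supchi} \subset \W$ (satisfying $w_D D_c = D$ and $u_{w_D, D_c} \propto \id$) reduces the problem to finding, for each $D_c' \in \supchi$, a single $\tilde w \in \W$ with $\tilde w D_c' = D_c$ and $u_{\tilde w, D_c'} \propto \id$; then $w_D \tilde w \in \W$ maps $D_c' \to D$ with trivial induced unitary, yielding $J_D \propto (w_D \tilde w) J_{D_c'}$ as required for $D_c'$-smartness.

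The construction of $\tilde w$ combines \Cref{prop:suppchi} with a loop iteration at $D_c$. By \Cref{prop:suppchi} applied at $D_c'$, there is $w_0 \in \W$ with $w_0 D_c' = D_c$; set $u_0 := u_{w_0, D_c'} \in \mathcal{SU}(r_m)$. The $D_c$-smart element $w_* \in \W$ with $w_* D_c = D_c'$ and $u_{w_*, D_c} \propto \id$ closes a $D_c$-loop $w_0 w_* \in \W$ whose induced unitary is proportional to $u_0$; iteration gives $(w_0 w_*)^n \in \W$ as $D_c$-loops inducing $u_0^n$. \Cref{lem:unitaryinclusion}(i) supplies a subsequence $n_k \to \infty$ with $u_0^{n_k} \to \id$, whence $u_0^{n_k - 1} \to u_0^{-1}$. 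Passing to a limit in the compact set $\W$ should produce $w_1 \in \W$ satisfying $w_1 D_c = D_c$ and $u_{w_1, D_c} = u_0^{-1}$, after which $\tilde w := w_1 w_0 \in \W$ obeys $\tilde w D_c' = D_c$ and $u_{\tilde w, D_c'} = u_{w_1, D_c} \cdot u_{w_0, D_c'} = u_0^{-1} u_0 = \id$. Equivalently, the whole construction amounts to verifying that the loop set $\Upsilon(D_c, D_c) := \{u_{w, D_c} : w \in \W,\, wD_c = D_c\}$ is a closed subgroup of $\mathcal{SU}(r_m)$.

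The main obstacle will be justifying this limiting step. The normalized iterates $(w_0 w_*)^{n_k - 1}/\|\cdot\| \in \W$ may have their operator norm dominated by action outside $D_c$, so the straight $\W$-limit could have degenerate $D_c$-restriction, leaving the induced unitary undefined. I plan to address this by a double extraction: first, within the compact space of isometries $D_c \to D_c$, extract a subsequence along which the isometric parts $(w_0 w_*)^{n_k - 1}|_{D_c}/\|(w_0 w_*)^{n_k - 1}|_{D_c}\|$ converge to an isometry $U \colon D_c \to D_c$ whose associated special unitary is $u_0^{-1}$; then realize a positive scalar multiple of $U$ as the $D_c$-restriction of an actual element $w_1 \in \W$, possibly by pre- or post-composing with an auxiliary element of $\W$ that restores a uniform lower bound on the $D_c$-norm of the iterates. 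Once this closedness of $\Upsilon(D_c, D_c)$ is secured, the algebraic construction above yields $\tilde w$ and the proposition follows.
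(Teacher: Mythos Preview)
Your approach is the same as the paper's: form the loop at $D_c$ by composing a $D_c$-smart map $w_*$ (the paper's $w_1$) with an arbitrary $w_0 \in \W$ sending $D_c' \to D_c$ (the paper's $w_2$, obtained from \Cref{prop:suppchi}), iterate, extract via \Cref{lem:unitarypowers}(i) a subsequence with $u_0^{k_n}\to\id$, and pass to a limit in the compact set $\W$. The paper carries out exactly this, working directly with $\tilde w_k = (w_2w_1)^{k-1}w_2/\|(w_2w_1)^{k-1}w_2\|$ as a map $D\to D_c$ rather than first building a $D_c$-loop with unitary $u_0^{-1}$, but that difference is cosmetic.

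The ``main obstacle'' you isolate --- that the $\W$-limit $\tilde w$ could have $\tilde w|_D = 0$, because the operator norm of the iterates might be realized outside $D$ --- is a genuine subtlety, and it is worth noting that the paper's proof does not address it explicitly either: after extracting $\tilde w = \lim_n \tilde w_{k_n}$ in $\W$, the paper writes ``$J_{D_c}^{-1}\tilde w J_D \propto \id$, i.e., $J_{D_c}\propto \tilde w J_D$'' without ruling out that the proportionality scalar is zero. So you are being more careful than the published argument on this point. Your suggested remedy (pre- or post-composing with an auxiliary $\W$-element to enforce a uniform lower bound on the restricted norm) is not yet concrete; one natural way to make it work is to arrange that $w_0$ has range contained in $D_c$ (so that every iterate $(w_0w_*)^{k-1}w_0$ has range in $D_c$, forcing the full norm and the $D$-restricted norm to be comparable), which can be achieved by post-composing the $w_0$ from \Cref{prop:suppchi} with a rank-$r_m$ element of $\W$ whose range is $D_c$ --- such elements exist by the analysis of $M_\infty$ in \Cref{sec:M_n}.
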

\begin{proof} 
   Fix $D,D' \in \supchi$.  We need to show that   $J_{D'} \propto w J_{D}$ for some $w \in \mathcal W$. Observe that the $D_c-$smartness of $\J$  guarantees the existence of $w'  \in \mathcal W$ such that   $J_{D'}  \propto w' J_{D_c}$, so it is sufficient to construct  $\widetilde w \in \mathcal W$ such that   $ J_{D_c}  \propto \widetilde w J_{D}$ as we will then have $J_{D'} \propto w'  \widetilde w J^{\vphantom{-1}}_{D}$.  
     
     Using again the $D_c-$smartness of $\J$, we deduce the existence of $w_1 \in \mathcal W$ such that $J_{D}^{-1} w_1 J_{D_c}^{\vphantom{-1}} \propto \id$. Also, via \Cref{prop:suppchi}, there exists $w_2$ such that $D_c = w_2 D$, and for the corresponding unitary we put $u \propto J_{D_c}^{-1} w_2 J_{D}^{\vphantom{-1}}$. 
 The following diagram summarizes the dark subspaces and operators introduced so far.  

\begin{center}
    \scalebox{0.6}{
 
\tikzset{every picture/.style={line width=0.75pt}} 

\begin{tikzpicture}[x=0.75pt,y=0.75pt,yscale=-1,xscale=1]
 
\draw    (102.45,168.55) -- (102.45,138.03) -- (102.45,88.82) ;
\draw [shift={(102.45,86.82)}, rotate = 90] [fill={rgb, 255:red, 0; green, 0; blue, 0 }  ][line width=0.08]  [draw opacity=0] (12,-3) -- (0,0) -- (12,3) -- cycle    ;
 
\draw    (291.71,169.48) -- (291.71,138.96) -- (291.71,89.74) ;
\draw [shift={(291.71,87.74)}, rotate = 90] [fill={rgb, 255:red, 0; green, 0; blue, 0 }  ][line width=0.08]  [draw opacity=0] (12,-3) -- (0,0) -- (12,3) -- cycle    ;
  
\draw    (128,59.67) -- (266.67,59.67) ;
\draw [shift={(126,59.67)}, rotate = 0] [fill={rgb, 255:red, 0; green, 0; blue, 0 }  ][line width=0.08]  [draw opacity=0] (12,-3) -- (0,0) -- (12,3) -- cycle    ;
 
\draw    (314.77,59.67) -- (451.33,59.67) ;
\draw [shift={(453.33,59.67)}, rotate = 180] [fill={rgb, 255:red, 0; green, 0; blue, 0 }  ][line width=0.08]  [draw opacity=0] (12,-3) -- (0,0) -- (12,3) -- cycle    ;
 
\draw    (472.04,169.33) -- (472.04,138.81) -- (472.04,89.6) ;
\draw [shift={(472.04,87.6)}, rotate = 90] [fill={rgb, 255:red, 0; green, 0; blue, 0 }  ][line width=0.08]  [draw opacity=0] (12,-3) -- (0,0) -- (12,3) -- cycle    ;
 
\draw    (332.89,192.05) -- (446.67,192.05) ;
\draw [shift={(448.67,192.05)}, rotate = 180] [fill={rgb, 255:red, 0; green, 0; blue, 0 }  ][line width=0.08]  [draw opacity=0] (12,-3) -- (0,0) -- (12,3) -- cycle    ;
 
\draw    (124.67,50.67) .. controls (172.85,20.85) and (214.49,19.9) .. (272.24,50.28) ;
\draw [shift={(274,51.21)}, rotate = 208.11] [fill={rgb, 255:red, 0; green, 0; blue, 0 }  ][line width=0.08]  [draw opacity=0] (12,-3) -- (0,0) -- (12,3) -- cycle    ;
  
\draw    (270.48,75.11) .. controls (222.74,103.91) and (181.12,104.09) .. (123.33,73.23) ;
\draw [shift={(272.67,73.77)}, rotate = 148.25] [fill={rgb, 255:red, 0; green, 0; blue, 0 }  ][line width=0.08]  [draw opacity=0] (12,-3) -- (0,0) -- (12,3) -- cycle    ;
 
\draw    (274.48,207.11) .. controls (226.74,235.91) and (185.12,236.09) .. (127.33,205.23) ;
\draw [shift={(276.67,205.77)}, rotate = 148.25] [fill={rgb, 255:red, 0; green, 0; blue, 0 }  ][line width=0.08]  [draw opacity=0] (12,-3) -- (0,0) -- (12,3) -- cycle    ;
 
\draw    (141.33,191.67) -- (259.33,191.67) ;
\draw [shift={(139.33,191.67)}, rotate = 0] [fill={rgb, 255:red, 0; green, 0; blue, 0 }  ][line width=0.08]  [draw opacity=0] (12,-3) -- (0,0) -- (12,3) -- cycle    ;
 
\draw    (128,176.6) .. controls (176.18,146.78) and (217.83,145.83) .. (275.58,176.22) ;
\draw [shift={(277.33,177.15)}, rotate = 208.11] [fill={rgb, 255:red, 0; green, 0; blue, 0 }  ][line width=0.08]  [draw opacity=0] (12,-3) -- (0,0) -- (12,3) -- cycle    ;
 
\draw (195,210) node [anchor=north west][inner sep=0.75pt]  [font=\Large]  
{$u$};

\draw (192,170) node [anchor=north west][inner sep=0.75pt]  [font=\Large]  {$\Id$};

\draw (192,132) node [anchor=north west][inner sep=0.75pt]  [font=\Large]  {$\Id$};
 
\draw (378,170) node [anchor=north west][inner sep=0.75pt]  [font=\Large]  {$\Id$};

\draw (188,77) node [anchor=north west][inner sep=0.75pt]  [font=\Large]  {$w_{2}$};

\draw (188,42) node [anchor=north west][inner sep=0.75pt]  [font=\Large]  {$w_{1}$};

\draw (190,7) node [anchor=north west][inner sep=0.75pt]  [font=\Large]  {$\tilde{w}$};

\draw (375,37) node [anchor=north west][inner sep=0.75pt]  [font=\Large]  {$w'$};
 
\draw (73,120) node [anchor=north west][inner sep=0.75pt]  [font=\Large]  {$J_{D}$};

\draw (257,120) node [anchor=north west][inner sep=0.75pt]  [font=\Large]  {$J_{D_{c}}$};

\draw (440,120) node [anchor=north west][inner sep=0.75pt]  [font=\Large]  {$J_{D'}$};

\draw (90,50) node [anchor=north west][inner sep=0.75pt]  [font=\LARGE]  {$D$};

\draw (277,50) node [anchor=north west][inner sep=0.75pt]  [font=\LARGE]  {$D_{c}$};

\draw (462,50) node [anchor=north west][inner sep=0.75pt]  [font=\LARGE]  {$D'$};

\draw (83,180) node [anchor=north west][inner sep=0.75pt]  [font=\LARGE]  {$\mathbb{C}^{r_{m}}$};

\draw (280,180) node [anchor=north west][inner sep=0.75pt]  [font=\LARGE]  {$\mathbb{C}^{r_{m}}$};

\draw (458,180) node [anchor=north west][inner sep=0.75pt]  [font=\LARGE]  {$\mathbb{C}^{r_{m}}$};

\end{tikzpicture}
    
    }
\end{center}

It follows that $w_2w_1 D_c = D_c$ and
    $$J_{D_c}^{-1} w_2   w_1 J_{D_c}^{\vphantom{-1}} =  (J_{D_c}^{-1} w_2 J_{D}^{\vphantom{-1}})( J_{D}^{-1} w_1 J_{D_c}^{\vphantom{-1}}) \propto u;$$
    thus, for every $k \in \nnone$ we have
    $$J_{D_c}^{-1} (w_2   w_1)^{k-1} w_2 J_{D}^{\vphantom{-1}} \propto   u^k.$$   
         By \Cref{lem:unitarypowers}, we can extract a subsequence $(k_n)_{n \in \nn}$ such that $u^{k_n} \to \id$ as $n \to \infty$. Hence, putting
    $\widetilde w_k = (w_2   w_1)^{k-1} w_2 / \|(w_2   w_1)^{k-1} w_2\| \in \W$, $k \in \nnone$, we obtain
    $$J_{D_c}^{ {-1}}  \widetilde w_{k_n} J_{D}^{\vphantom{-1}} \propto u^{k_n}.$$
     Since the right-hand side converges as $n$  tends to infinity, so does the left hand-side and, since  $\W$ is compact, up to another extraction (keeping the same notation), there exists $\widetilde w \in \W$ such that $\widetilde w = \lim_{n \to \infty} \widetilde w_{k_n}$. Hence, $J_{D_c}^{ {-1}}  \widetilde w J_{D}^{\vphantom{-1}} \propto \id$, i.e., $J_{D_c} \propto  \widetilde w J_{D}$, as desired. 
\end{proof}
 
\Cref{prop:smart-propagates} states that if the central space $D_c$ is in $\supchi$, then the
 defining property of a $D_c-$smart family, which we recall is the possibility of going from $D_c$ to any other dark subspace in $\supp \chi_{{\rm inv}}$ via some map from $\W$ without inducing any non-trivial dynamics on $\cc^{r_m}$, propagates to the whole $\supp \chi_{{\rm inv}}$, i.e., for any $D,D' \in \supp\chi_{\rm inv}$ there exists $w \in \W$ such that 
$$J_{D'}^{{-1}} w J^{\vphantom{-1}}_{D} \propto \id.$$
 Equivalently, going from $D$ to $D'$ via the reference space, i.e., $D  \xrightarrow{J^{{-1}}_{D}} \cc^{r_m} \xrightarrow{J_{D'}} D'$, 
does not induce any extra dynamics besides what can be achieved with the help of maps from $\W$: 
$$J_{D'}^{\vphantom{-1}} J^{{-1}}_{D} \propto w|_{D}.$$ 
This leads us to another auxiliary result about smart families. Namely, we identify a dense subset of $G_\J$  whose elements have a particularly nice form. To proceed, we need to introduce some  notation. 

Here and henceforth, for a family $\J = \{J_D\}_{D\in \D}$ we denote by $u_{w, D}$ a special unitary operator on $\cc^{r_m}$ proportional to $J_{wD}^{-1} w J_{D}^{\vphantom{-1}}$, provided that $\tr(w\pi_Dw^*) >0$, and put 
$$R_\J = \{u_{w,D} \in \mathcal{SU}(r_m) \,|\, w \in \W,\: D \in \supp\chi_{{\rm inv}},\: \tr(w \pi_D w^*)>0\}.$$
Also, we denote the set of finite products of generators of $G_\J$ as
$$S^{\rm fin}_\J =\{s_1\cdots s_n \:|\: s_i \in S_\J, \text{ for } i=1, \ldots, n \ \text{ and } n\in \nnone\},$$
where we recall that $S_\J$ is defined as
\begin{equation*}
    S_\J=\{u_{v,D} \in \mathcal{SU}(r_m) \,|\, v \in \supp\mu,\: D \in \supp\chi_{{\rm inv}},\: \tr(v \pi_D v^*)>0\}.
    \end{equation*} 
      
   The following proposition states that in the case of smart families the  elements of the form $u_{w,D}$ are dense in $G_\J$. Note, however, that we need not have $R_\J \subset G_\J$.  

      \begin{proposition}\label{prop:smartgroup}    
       Let $\J$ be a family of isometries. Then
       \begin{enumerate}[label={\rm (\roman*)}] 
            \item   $S^{\rm fin}_\J$ is a dense subset of $G_\J$;
            \item  If $\J$ is   smart, then $S^{\rm fin}_\J  \subset   R_\J$. 
       \end{enumerate}  
       As a consequence, if $\J$ is   smart, then  $G_\J \cap R_\J$ is dense in $G_\J$. 
   \end{proposition}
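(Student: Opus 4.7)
The plan is to prove parts (i) and (ii) in turn and then derive the consequence at once. Both parts exploit the compactness of $\mathcal{SU}(r_m)$ through \Cref{lem:unitarypowers}(i), which allows a closed subsemigroup to function as a subgroup.

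For (i), I would set $H := \overline{S_\J^{\rm fin}}$ and show $H = G_\J$. The inclusion $H \subset G_\J$ is clear since $S_\J^{\rm fin}\subset G_\J$ and $G_\J$ is closed. For the reverse inclusion, it suffices to check that $H$ is a closed subgroup of $\mathcal{SU}(r_m)$ containing $S_\J$. Containment and closedness are immediate, and closure under matrix multiplication follows from joint continuity of the product together with the stability of $S_\J^{\rm fin}$ under products. Only closure under inversion requires \Cref{lem:unitarypowers}(i): for $g \in H$, extracting a subsequence with $g^{n_k} \to \id$ gives $g^{n_k - 1} \to g^{-1}$, and since each $g^{n_k-1}$ lies in $H$ (which is closed under multiplication) and $H$ is closed, one gets $g^{-1} \in H$. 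The minimality of $G_\J$ as a closed subgroup containing $S_\J$ then yields $G_\J \subset H$.

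For (ii), I would proceed by induction on the length $n$ of a product $s_1\cdots s_n \in S_\J^{\rm fin}$, with $s_i = u_{v_i, D_i}$, $v_i \in \supp\mu$, $D_i \in \supchi$, showing that $s_1\cdots s_n = u_{w, D_n}$ for some $w \in \W$ with $\tr(w\pi_{D_n}w^*) > 0$. The base case $n=1$ is trivial because $u_{v_1, D_1} = u_{v_1/\|v_1\|, D_1}$ with $v_1/\|v_1\| \in \W_{\rm fin} \subset \W$. Before tackling the inductive step I would establish the preliminary fact that $v_nD_n \in \supchi$: the kernel $K$ is Feller (by dominated convergence, using the second-moment condition on $\mu$), so $D \mapsto K(D, O)$ is lower semi-continuous for every open $O$, and combined with the $K$-invariance of $\chi_{\rm inv}$ this forces $\supp K(D, \cdot) \subset \supchi$ for every $D \in \supchi$. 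Given the induction hypothesis $s_1\cdots s_{n-1} = u_{w', D_{n-1}}$ with $w' \in \W$, smartness applied to the pair $(D_{n-1}, v_nD_n)$, both in $\supchi$, provides $\widetilde w \in \W$ with $J_{D_{n-1}} \propto \widetilde w J_{v_nD_n}$. Substituting this identification at the junction of the product $u_{w', D_{n-1}}\,u_{v_n, D_n}$ after expanding the definition of $u_{\cdot,\cdot}$ yields, up to a non-zero scalar,
\[
u_{w', D_{n-1}}\, u_{v_n, D_n} \propto J_{w'D_{n-1}}^{-1}\, (w' \widetilde w v_n)\, J_{D_n}.
\]
Setting $w := w'\widetilde w v_n/\|w'\widetilde w v_n\|$ and iterating the closure property ``$w_1, w_2 \in \W$, $w_1w_2 \neq 0 \Rightarrow w_1w_2/\|w_1w_2\| \in \W$'' on the triple $w', \widetilde w, v_n/\|v_n\| \in \W$ places $w$ in $\W$. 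Since $wD_n = w'D_{n-1}$, the right-hand side is proportional to $J_{wD_n}^{-1} w J_{D_n}$, the special-unitary representatives agree, and $\tr(w\pi_{D_n}w^*)>0$ because $wD_n = w'D_{n-1} \neq \{0\}$; hence $s_1\cdots s_n \in R_\J$.

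The consequence is then immediate: smartness of $\J$ gives $S_\J^{\rm fin} \subset R_\J$ via (ii), while $S_\J^{\rm fin} \subset G_\J$ holds trivially, so $S_\J^{\rm fin} \subset G_\J \cap R_\J \subset G_\J$, and the density of $S_\J^{\rm fin}$ in $G_\J$ from (i) transfers to $G_\J \cap R_\J$. The main obstacle I anticipate is the inductive step of (ii): verifying forward invariance of $\supchi$ under $K$ so that smartness can be legitimately invoked at every stage, and carefully tracking $\W$-membership and the proportionality constants (modulo $r_m$-th roots of unity) when reassembling the special-unitary representative of the product.
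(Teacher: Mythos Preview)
Your proof is correct and follows essentially the same approach as the paper's: both parts hinge on \Cref{lem:unitarypowers}(i) to supply inverses in (i), and on smartness to insert bridging elements $\widetilde w \in \W$ between consecutive factors $u_{v_{i-1},D_{i-1}}$ and $u_{v_i,D_i}$ in (ii). The only differences are organizational---you argue by induction where the paper expands the full product at once, and you establish the forward-invariance $v_nD_n \in \supchi$ inline whereas the paper defers this to \Cref{prop:suppchi-invariance}.
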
 
   \begin{proof}
        
    \noindent  (i)   Recall that, by definition, $G_\J$ is the closure of $${\braket{S_\J}} =\{s_1^{\epsilon_1}\cdots s_\ell^{\epsilon_\ell} \:|\: s_i \in S_\J,\, {\epsilon_i} = \pm 1 \text{ for } i=1, \ldots, \ell \ \text{ and } \ell\in \nnone\}.$$  We will show that $$S^{\rm fin}_\J \subset \braket{S_\J} \subset \overline {S^{\rm fin}_\J}.$$
   The first inclusion holds trivially. As for the second one, consider $s = s_1^{\epsilon_1}\cdots s_\ell^{\epsilon_\ell}\in \braket{S_\J}$. By \Cref{lem:unitarypowers} (i), there exists a subsequence $(n_k)_k \subset \nnzero$ such that for every $i \in \{1, \ldots, \ell\}$  we have  
   $s_i^{n_k-1} \to s_i^{-1}$ as $k \to 
   \infty$. For $k \in \nn$ define 
   $$\tilde s_{i,k} = \left\lbrace \begin{array}{ll}  s^{n_k -1}_i & \text{if} \   \epsilon_i =-1 \\ 
       s_i & \text{if} \ \epsilon_i =1
   \end{array}\right.$$
   and   $\tilde s_k = \tilde s_{1,k} \cdots \tilde s_{\ell,k}$ (note that by extracting iteratively, up to $\ell$ times if needed, we have a common subsequence $(n_k)_k$). Then $\tilde s_k\in S^{\rm fin}_\J $  and 
   $$\tilde s_k \xrightarrow{k \to \infty} s;$$
   hence, $s \in \overline {S^{\rm fin}_\J}$, as claimed. 
   It follows, by taking closures, that 
   $$\overline {S^{\rm fin}_\J} = \overline {\braket{S_\J}} = G_\J,$$
   which concludes the proof of (i).

    \noindent  (ii)      
         We show that every finite product  of elements from $S_\J$ is of the form $u_{w,D} \propto J_{wD}^{-1}wJ_D^{\vphantom{-1}}$ with some $w \in \W$ and $D \in \supp \chi_{{\rm inv}}$.
        Consider
        \begin{align*}
            \prod_{i=1}^n u_{v_i,D_i}  & = (  J_{v_1D_1}^{-1}  v_1 J_{D_1}^{\vphantom{-1}}) (J_{v_2D_2}^{-1} v_2 J_{D_2}^{\vphantom{-1}}) \cdots (J_{v_{n-1}D_{n-1}}^{-1} v_{n-1} J_{D_{n-1}}^{\vphantom{-1}}) (J_{v_nD_n}^{-1} v_n J_{D_n}^{\vphantom{-1}})
            \\
            &= \  \: J_{v_1D_1}^{-1}  v_1 (J_{D_1}^{\vphantom{-1}}  J_{v_2D_2}^{-1}) v_2  J_{D_2}^{\vphantom{-1}}\: \cdots \:\: J_{v_{n-1}D_{n-1}}^{-1}  v_{n-1} (J_{D_{n-1}}^{\vphantom{-1}} J_{v_nD_n}^{-1}) v_n J_{D_n}^{\vphantom{-1}}.
        \end{align*}
        From \Cref{prop:smart-propagates} we deduce that 
        $$\forall {i\in\{2\ldots n\}}   \ \ \exists w_i \in \mathcal W \ \ J_{D_{i-1}}^{\vphantom{-1}} J_{v_iD_i}^{-1} \propto  w_i|_{v_iD_i}$$
        and so 
        $$\prod_{i=1}^n u_{v_i,D_i} = J_{v_1D_1}^{-1} v_1  w_2 v_2 w_3 v_3 \cdots w_n v_n J_{D_n}^{\vphantom{-1}}.$$
        Denoting $ w = v_1  w_2 v_2 w_3 v_3 \cdots w_n v_n$ we have $v_1D_1 =   w D_n$ and
        $$\prod_{i=1}^n u_{v_i,D_i} =    J_{ w D_n}^{-1}  w J_{D_n}^{\vphantom{-1}} \propto u_{w, D_n},$$  	 as desired. 
    \end{proof}

 From next lemma we will conclude the invariance of $\supp\chi_{\rm inv}$ under $\mathcal W$. The lemma itself is formulated in a slightly more general setting, covering also the case of $\pcd$, which we will exploit later on. Recall that $F \subset \Sd$ is called  $\Pi$-\emph{invariant} if $\Pi(\rho, F)= 1$ for every $\rho \in F$. 
\begin{lemma}\label{lem:Piinvclosed}
    Let $F \subset \Sd$ be a closed and $\Pi$-invariant subset of $\Sd$. Then for every  $\rho \in F$ and $w \in \W$ such that $\tr(w\rho w^*)>0$ we have $\tfrac{w\rho w^*}{\tr(w\rho w^*)}\in F$.
\end{lemma}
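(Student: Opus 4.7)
The strategy I would use is to bootstrap the invariance condition $\Pi(\rho,F)=1$ — which is only a $\mu$-almost-sure statement — into a pointwise statement valid for every $w\in \W$. I would proceed in three stages, enlarging the allowed class of maps from $\supp\mu$ to $\V_{\rm fin}$ (by iteration) and finally to $\W=\overline{\W_{\rm fin}}$ (by closure).

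The base case is the main technical step. Fix $\rho\in F$ and consider the map $\Phi_\rho\colon v\mapsto \tfrac{v\rho v^*}{\tr(v\rho v^*)}$, which is continuous on the open set $\{v\in\Ld\colon \tr(v\rho v^*)>0\}$. The $\Pi$-invariance rewrites as $\int_{B}\tr(v\rho v^*)\d\mu(v)=0$, where $B:=\{v\colon \tr(v\rho v^*)>0,\ \Phi_\rho(v)\notin F\}$. Because $F$ is closed and $\Phi_\rho$ is continuous, $B$ is open. If some $v_0\in \supp\mu$ were in $B$, a small neighborhood $U\subset B$ of $v_0$ would satisfy $\mu(U)>0$ by definition of $\supp\mu$, while $\tr(v\rho v^*)>0$ throughout $U$ would force $\int_U\tr(v\rho v^*)\d\mu(v)>0$, a contradiction. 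Hence $\Phi_\rho(v)\in F$ for every $v\in\supp\mu$ with $\tr(v\rho v^*)>0$.

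To extend the statement to $\V_{\rm fin}$, I would argue by induction on $n$. Given $v=v_n\cdots v_1$ with each $v_i\in\supp\mu$ and $\tr(v\rho v^*)>0$, set $\rho_0=\rho$ and $\rho_k=\tfrac{v_k\rho_{k-1}v_k^*}{\tr(v_k\rho_{k-1}v_k^*)}$. A short verification shows $v_k\cdots v_1\rho v_1^*\cdots v_k^*\neq 0$ for each $k\leq n$, since otherwise $v\rho v^*=0$ would follow; in particular every denominator is positive. Applying the base case iteratively to $\rho_{k-1}\in F$ yields $\rho_k\in F$ and in particular $\rho_n=\Phi_\rho(v)\in F$. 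Since $\Phi_\rho$ is invariant under positive rescaling of $v$, this gives the conclusion for every $w\in \W_{\rm fin}$.

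For the final passage to $\W$, let $w\in\W$ with $\tr(w\rho w^*)>0$ and take $(w_k)_k\subset\W_{\rm fin}$ with $w_k\to w$. Continuity of $v\mapsto \tr(v\rho v^*)$ gives $\tr(w_k\rho w_k^*)>0$ for all $k$ sufficiently large, and $\Phi_\rho(w_k)\to \Phi_\rho(w)$; since $\Phi_\rho(w_k)\in F$ for those $k$ and $F$ is closed, we get $\Phi_\rho(w)\in F$. The only real obstacle in this program is the promotion from an almost-sure statement to a pointwise one on $\supp\mu$, which is handled by combining the continuity of $\Phi_\rho$, the openness of the ``bad'' set, and the defining property of the topological support of a measure; once this is in place, the induction and the closure step are routine.
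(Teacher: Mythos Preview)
Your proof is correct and follows essentially the same approach as the paper's: both first upgrade the almost-sure statement $\Pi(\rho,F)=1$ to a pointwise statement for all $v\in\supp\mu$ via a contradiction argument using the openness of the ``bad'' set and the definition of topological support, then extend to $\V_{\rm fin}$ (equivalently $\W_{\rm fin}$) by iteration, and finally pass to $\W$ by density and the closedness of $F$. The only cosmetic difference is that the paper phrases the base case as a direct contradiction with $\Pi(\rho,F)=1$ rather than isolating the integral $\int_B\tr(v\rho v^*)\d\mu(v)$, but the underlying reasoning is identical.
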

\begin{proof}Let $\rho \in F$. First we show that the claim holds for maps from     $\supp\mu$. By contradiction, suppose that $\tfrac{v_0\rho v_0^*}{\tr(v_0\rho v_0^*)} \notin F$ for some   $v_0\in \supp \mu$ such that $\tr(v_0\rho v_0^*)>0$. 
    By continuity and since $F$ is closed, there exist $V \subset \Ld$ an open neighbourhood  of $v_0$ such that $\tr(v\rho v^*) > 0$ and $\tfrac{v\rho v^*}{\tr(v\rho v^*)} \notin   F$ for every $v \in  V$.  Therefore,
$$\int_{V}\mathbf 1_F\Big(\tfrac{v\rho v^*}{\tr(v\rho v^*)}\Big) \tr(v\rho v^*) \d\mu(v) = 0 \ \ \text{ and } \ \ \int_{V}\tr(v\rho v^*) \d\mu(v) >0.$$ 
   Then,
    \begin{align*}
        1 = \Pi(\rho, F) =  \int_{\Ld}\mathbf 1_F\Big(\tfrac{v\rho v^*}{\tr(v\rho v^*)}\Big) \tr(v\rho v^*) \d\mu(v)
        & =
        \int_{\Ld\setminus V }\mathbf 1_F\Big(\tfrac{v\rho v^*}{\tr(v\rho v^*)}\Big) \tr(v\rho v^*) \d\mu(v)
        \\ & \leq \int_{\Ld\setminus V } \tr(v\rho v^*) \d\mu(v)<1, 
    \end{align*} 
    which implies $\tfrac{v_0\rho v_0^*}{\tr(v_0\rho v_0^*)} \in F$ by contradiction.
    In consequence, by repeated application of this result,   we deduce that $\tfrac{v\rho v^*}{\tr(v\rho v^*)} \in F$ for every
    $v \in \W_{\rm fin}$   such that $\tr(v\rho v^*)>0$.
    
    To conclude, let us consider $w \in \W$ such that $\tr(w\rho w^*)>0$. Since $\W$ is the closure of $\W_{\rm fin}$, $F$ is closed and $\tr(w\rho w^*)>0$, by continuity $\tfrac{w\rho w^*}{\tr(w\rho w^*)} \in F$, as desired. 
\end{proof}

 To derive the $\W$-invariance of $\supp\chi_{\rm inv} \subset \D$ by means of this lemma, we consider $\nu_{\rm ch}$, the image measure of $\chi_{{\rm inv}}$ by the map 
$$\begin{array}{rcl} q\colon\ \D&\longrightarrow&\Sd\\[0.33em] D &\longmapsto& \frac{\pi_D}{r_m}. \end{array}$$
\begin{proposition}\label{prop:suppchi-invariance}
     If $D \in \supp\chi_{\rm inv}$ and $w \in \mathcal W$ satisfy $wD \neq \{0\}$, then   $wD \in \supp\chi_{\rm inv}$.
\end{proposition}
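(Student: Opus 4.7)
The natural strategy is to apply \Cref{lem:Piinvclosed} to a closed $\Pi$-invariant subset $F\subset \Sd$ that encodes $\supp\chi_{\rm inv}$ via the map $q\colon \D\to \Sd$, $q(D)=\pi_D/r_m$. My candidate is $F:=\supp\nu_{\rm ch}$, the support of the image measure $\nu_{\rm ch}:=q_\star\chi_{\rm inv}$. Concretely, given $D\in \supp\chi_{\rm inv}$ and $w\in \W$ with $wD\neq \{0\}$, I will show that $\rho:=\pi_D/r_m$ lies in $F$, that $F$ satisfies the hypothesis of \Cref{lem:Piinvclosed}, and that $w\rho w^*/\tr(w\rho w^*)$ is exactly $\pi_{wD}/r_m=q(wD)$, which then belongs to $F=q(\supp\chi_{\rm inv})$, giving $wD\in \supp\chi_{\rm inv}$.

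The plan breaks into the following steps. First, I would verify that $\nu_{\rm ch}$ is $\Pi$-invariant: writing the kernel $\Pi$ on points of the form $\pi_D/r_m$ with $D\in\D$ and using $v\pi_Dv^*=\tr(v\pi_Dv^*/r_m)\cdot\pi_{vD}$ (from \Cref{prop:darkdefprop}(i) whenever $\tr(v\pi_Dv^*)>0$), the integral defining $\nu_{\rm ch}\Pi(B)$ reduces by pushforward to $\chi_{\rm inv}K(q^{-1}(B))=\chi_{\rm inv}(q^{-1}(B))=\nu_{\rm ch}(B)$. Second, I would identify $F=\supp\nu_{\rm ch}$ with $q(\supp\chi_{\rm inv})$: since $q$ is continuous and injective and $\supp\chi_{\rm inv}$ is compact (closed in the compact $\D$, see \Cref{prop:compact}), $q(\supp\chi_{\rm inv})$ is closed, carries full $\nu_{\rm ch}$-mass, and minimality of the support combined with the continuity of $q$ yields equality.

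Third, I would verify that $F$ is $\Pi$-invariant as a set, i.e., $\Pi(\rho,F)=1$ for every $\rho\in F$. Since $\Pi$ is Feller (see \cite[Prop.~2.2]{BenFat}), the map $\rho\mapsto\Pi(\rho,F)$ is upper semicontinuous for closed $F$. The $\Pi$-invariance of $\nu_{\rm ch}$ gives $\int_F(1-\Pi(\rho,F))\d\nu_{\rm ch}(\rho)=1-\nu_{\rm ch}(F)=0$, so the open set $\{\rho\in\Sd : \Pi(\rho,F)<1\}$ cannot meet $\supp\nu_{\rm ch}=F$, as otherwise it would contain an open neighborhood with positive $\nu_{\rm ch}$-mass on which the integrand is strictly positive.

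Finally, with $F$ closed and $\Pi$-invariant, \Cref{lem:Piinvclosed} applied to $\rho=\pi_D/r_m\in F$ and $w\in\W$ with $\tr(w\rho w^*)>0$ gives $w\rho w^*/\tr(w\rho w^*)\in F$. The remaining point is that this renormalised element equals $q(wD)$. This requires $w|_D\propto U|_D$ for some $U\in\Ud$ with nonzero proportionality constant, so that $w\pi_Dw^*=\tr(w\pi_Dw^*/r_m)\pi_{wD}$ and $wD\in\D$. For $w\in\W_{\rm fin}$ this is iterated \Cref{prop:darkdefprop}(i); for general $w\in\W$ it follows by approximation, since taking $w_k\to w$ in $\W_{\rm fin}$ with eventually $\tr(w_k\pi_Dw_k^*)>0$ yields $\pi_{w_kD}\to r_m w\pi_Dw^*/\tr(w\pi_Dw^*)$, a projection of trace $r_m$, hence equal to $\pi_{wD}$. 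The main subtle point is this last continuity argument, ensuring that $wD$ has dimension $r_m$ and that $w\pi_Dw^*$ is proportional to $\pi_{wD}$ even for limit elements of $\W$; once this is established, the conclusion $\pi_{wD}/r_m\in F=q(\supp\chi_{\rm inv})$ translates directly into $wD\in\supp\chi_{\rm inv}$.
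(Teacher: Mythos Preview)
Your proposal is correct and follows essentially the same route as the paper: identify $\supp\nu_{\rm ch}=q(\supp\chi_{\rm inv})$, show this set is closed and $\Pi$-invariant, apply \Cref{lem:Piinvclosed}, and read off $wD\in\supp\chi_{\rm inv}$ from $\pi_{wD}/r_m\in\supp\nu_{\rm ch}$. The only difference is organisational: the paper obtains the $\Pi$-invariance of $\supp\nu_{\rm ch}$ by first invoking an external reference (\cite[Proposition~5.12]{benaim2022markov}) to get $K$-invariance of $\supp\chi_{\rm inv}$ and then pushing forward through $q$, whereas you argue directly via the Feller property and upper semicontinuity of $\rho\mapsto\Pi(\rho,F)$---which is in fact the content of the paper's later \Cref{lem:suppisinv}. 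Your version is thus slightly more self-contained at this point; you are also a bit more explicit than the paper about why $w\pi_Dw^*/\tr(w\pi_Dw^*)=\pi_{wD}/r_m$ persists for limit elements $w\in\W$, which the paper treats as immediate.
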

\begin{proof}  
Since $\D$ is compact and $K$ is Feller with a unique invariant probability measure, \cite[Proposition~5.12]{benaim2022markov} implies  $\supp\chi_{\rm inv}$ is a $K$-invariant set. Then, since $q$ is continuous and one to one,
$\supp\nu_{\rm ch} = \{\tfrac {\pi_D}{r_m} \,|\, D \in \supp \chi_{{\rm inv}} \}$ is 
   a $\Pi$-invariant subset of $\Sd$. Thus, we can apply  \Cref{lem:Piinvclosed}, which guarantees that  
if $D \in \supp \chi_{{\rm inv}}$ and $w \in \W$ satisfy $\tr(w\pi_D w^*)>0$, then 
$$\frac{\pi_{wD}}{r_m} = \frac{w\pi_D w^*}{\tr(w\pi_D w^*)}   \in \supp\nu_{\rm ch};$$ hence, $wD \in \supp \chi_{{\rm inv}}$. To conclude the proof, it remains to recall  that the condition $\tr(w \pi_D w^*) > 0$ is equivalent to $wD \neq \{0\}$.
\end{proof}

We are now in position to prove the main result of this section, namely that the notions of minimality  and smartness of families are equivalent. In what follows, we consider two families $\J= \{J_D\}_{D\in \mathcal D_m}$ and $\tJ= \{ \widetilde  J_D \}_{D\in \mathcal D_m}$ and adopt the following notation. The (special) unitaries on the reference space are, respectively, $$u_{w,D} \propto J_{w D}^{-1}w J^{\vphantom{-1}}_{D} \quad \textrm{ and }\quad \widetilde u_{w,D} \propto \widetilde J_{w D}^{-1}w \widetilde J^{\vphantom{-1}}_{D},$$ where
$w \in \mathcal W$ and $D \in \supp \chi_{{\rm inv}}$ satisfy $\tr(w \pi_D w^*)>0$. Also, we put $$Q_D = J^{{-1}}_{D} \widetilde J_{D}^{\vphantom{-1}}$$ for $D \in \mathcal D_m$.
Note that $Q_D \in \Urm$ and  $
\widetilde J_{w D}^{-1}w \widetilde J^{\vphantom{-1}}_{D}  = Q_{wD}^{-1}( J_{w D}^{-1}w J^{\vphantom{-1}}_D )  Q_D$,
that is
\begin{equation}\label{eq:utilde}
    \widetilde u_{w,D}  =  Q_{wD}^{-1}   u_{w,D}Q_D.
\end{equation}

\begin{lemma}\label{lem:QDinG}
Let $D_c \in \supchi$. If $\J= \{J_D\}_{D\in \mathcal D_m}$ is  $D_c-$smart and   $\tJ= \{ \widetilde  J_D \}_{D\in \mathcal D_m}$ is an arbitrary family, then $Q_D^{-1} Q_{D'}^{\vphantom{-1}} \in G_{\tJ}$ for all $D,D' \in \supchi$.
\end{lemma}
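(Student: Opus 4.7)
The plan is to exploit the smartness of $\J$, which holds globally thanks to \Cref{prop:smart-propagates} (applied to the hypothesis that $\J$ is $D_c$-smart with $D_c \in \supchi$). Given any $D, D' \in \supchi$, smartness furnishes some $w \in \W$ with $J_{D'} \propto w J_D$; in particular $wD = D'$, and multiplying on the left by $J_{D'}^{-1}$ collapses $J_{D'}^{-1} w J_D$ to a scalar multiple of $\id_{\cc^{r_m}}$. After rescaling to $\SU(r_m)$, this forces $u_{w,D}$ to be a central element of $\SU(r_m)$, namely a root of unity times the identity.

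Next I would transcribe this relation to the family $\tJ$. Using $\tilde J_D = J_D Q_D$ and $\tilde J_{D'} = J_{D'} Q_{D'}$, the key computation is
\[
\tilde J_{D'}^{-1} w \tilde J_D = Q_{D'}^{-1} \bigl(J_{D'}^{-1} w J_D\bigr) Q_D \propto Q_{D'}^{-1} Q_D,
\]
so that $\tilde u_{w,D}$ is proportional to $Q_{D'}^{-1} Q_D$, with a definite proportionality constant fixed by the $\det = 1$ condition.

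The main technical step is then to certify that $\tilde u_{w,D} \in G_{\tJ}$ for our particular $w \in \W$. For $w = v_n \cdots v_1 \in \V_n$ whose induced chain $D \to v_1 D \to \dotsb \to w D$ stays inside $\supchi$ (which is $\W$-invariant by \Cref{prop:suppchi-invariance}), the telescoping identity
\[
\tilde u_{v_n,\, v_{n-1}\cdots v_1 D} \cdots \tilde u_{v_1, D} \propto \tilde u_{v_n\cdots v_1, D}
\]
puts $\tilde u_{w,D}$ (modulo central scalars) into $S^{\rm fin}_{\tJ} \subset G_{\tJ}$. For a general $w \in \W$, I would approximate it by a sequence in $\V_{\rm fin}$ (using $\W = \overline{\W_{\rm fin}}$) and invoke the closedness of $G_{\tJ}$ to pass to the limit, obtaining $\tilde u_{w,D} \in G_{\tJ}$ in all cases.

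Combining these steps yields an element of $G_{\tJ}$ proportional to $Q_{D'}^{-1} Q_D$, whose inverse (still in $G_{\tJ}$) is proportional to $Q_D^{-1} Q_{D'}$. The hard part will be the careful bookkeeping of the scalar proportionality factors: each $\propto$ hides a root of unity in the center of $\SU(r_m)$, and I must argue that these accumulated scalars collapse so that $Q_D^{-1} Q_{D'}$ itself (and not just a central-coset representative) lies in $G_{\tJ}$. This should follow from the fact that both $u_{w,D}$ and $\tilde u_{w,D}$ are normalized to determinant one at each stage, together with the flexibility afforded by smartness (which allows us to replace $w$ by $w \cdot \tilde w$ for any $\tilde w$ that fixes $D$, adjusting the scalar by a controlled amount).
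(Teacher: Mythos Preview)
Your overall strategy matches the paper's: exploit smartness to produce $w \in \W$ with $u_{w,D} \propto \id$, then use \Cref{eq:utilde} to identify $Q_{D'}^{-1} Q_D$ (up to phase) with $\tilde u_{w,D}$, and argue that this element lies in $G_{\tJ}$. The paper takes a slightly shorter path by routing through the central subspace $D_c$: it shows $Q_D^{-1} Q_{D_c} \in G_{\tJ}$ for each $D \in \supchi$ using $D_c$-smartness directly (so $J_D \propto w J_{D_c}$ gives $u_{w,D_c} = \id$ and hence $Q_D^{-1}Q_{D_c} = \tilde u_{w,D_c}$), and then combines two such relations as $Q_D^{-1} Q_{D'} = (Q_D^{-1} Q_{D_c})(Q_{D'}^{-1} Q_{D_c})^{-1}$. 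Your detour through \Cref{prop:smart-propagates} is unnecessary but harmless.

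The approximation step you propose for general $w \in \W$ does not go through as written. If $w_n \in \W_{\rm fin}$ with $w_n \to w$, the telescoping indeed puts $\tilde u_{w_n, D}$ in $G_{\tJ}$ (modulo center), but passing to the limit requires $\tilde u_{w_n, D} \to \tilde u_{w, D}$, hence $\tilde J_{w_n D} \to \tilde J_{wD}$. Since $\tJ$ is an \emph{arbitrary} family of isometries with no continuity hypothesis on $D \mapsto \tilde J_D$, this convergence can fail: accumulation points of $(\tilde J_{w_n D})_n$ are isometries onto $wD$ but need not coincide with $\tilde J_{wD}$, so the limit of $\tilde u_{w_n,D}$ lands in $G_{\tJ}$ yet need not equal $\tilde u_{w,D}$. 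The paper simply asserts the membership $\tilde u_{w, D_c} \in G_{\tJ}$ in one line and does not spell out this passage either, so you are right that the point deserves care. As for the scalar bookkeeping: your concern is legitimate, since $Q_D$ lies in $\Urm$ and need not have determinant one, so \Cref{eq:utilde} can only hold up to a phase; the cleanest reading is modulo the center of $\Urm$, which is harmless because every downstream use of the lemma concerns projective actions on $\pcr$ and $\pcd$.
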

\begin{proof}
    Let $D,D' \in \supchi$. First, we show that $Q_{D}^{-1}Q_{D_c}^{\vphantom{-1}} \in G_{\tJ}$.
    By the $D_c-$smartness of $\J$,   there exists $  w \in \mathcal W$ such that $J_D \propto w J_{D_c}$, i.e., $u_{w,D_c} = \id$, so \Cref{eq:utilde} leads to
    $ Q_{D}^{-1}Q_{D_c}^{\vphantom{-1}} = \widetilde u_{w,D}   \in G_{\tJ}$,
    as claimed.
    Analogously, $Q_{D'}^{-1}Q_{D_c}^{\vphantom{-1}} \in G_{\tJ}$. It remains to observe that  $$Q_{D}^{-1}Q_{D'}^{\vphantom{-1}} = (Q_{D}^{-1}Q_{D_c}^{\vphantom{-1}}) (Q_{D'}^{-1}Q_{D_c}^{\vphantom{-1}})^{-1}  \in G_{\tJ},$$ which concludes the proof.
\end{proof}

\begin{theorem}\label{thm:smartiffminimal}
    A family of isometries is minimal iff it is smart.
\end{theorem}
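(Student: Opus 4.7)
The proof splits into two directions, one easy and one requiring more care.

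\textbf{Smart $\Rightarrow$ minimal.} Given smart $\J$ and an arbitrary $\tJ$, I would fix any $D_c \in \supchi$ and take the conjugating unitary $Q := Q_{D_c} = J_{D_c}^{-1}\widetilde J_{D_c}$. By \Cref{prop:smartgroup}, $R_\J \cap G_\J$ is dense in $G_\J$, so by closedness of $QG_{\tJ}Q^{-1}$ it suffices to show $u_{w,D} \in QG_{\tJ}Q^{-1}$ for each $u_{w,D} \in R_\J$. Using \eqref{eq:utilde},
\[
Q^{-1}u_{w,D}Q = (Q_{D_c}^{-1}Q_{wD})\,\widetilde u_{w,D}\,(Q_D^{-1}Q_{D_c}),
\]
where the outer factors lie in $G_{\tJ}$ by \Cref{lem:QDinG}, and the middle factor $\widetilde u_{w,D}$ lies in $G_{\tJ}$ by iterating the telescoping decomposition from the proof of \Cref{prop:smartgroup}(ii) (all intermediate dark subspaces remain in $\supchi$ by \Cref{prop:suppchi-invariance}) together with closedness of $G_{\tJ}$ for the limit as $w \in \W$.

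\textbf{Minimal $\Rightarrow$ smart.} Fix $D_c \in \supchi$ and construct a $D_c$-smart $\tJ$ with $\widetilde J_{D_c} = J_{D_c}$ via \Cref{prop:smart-construction}; this $\tJ$ is smart by \Cref{prop:smart-propagates}. The smart-implies-minimal direction above, applied to $\tJ$ against $\J$, produces as explicit conjugating unitary $\widetilde J_{D_c}^{-1}J_{D_c} = \id$, hence $G_{\tJ} \subset G_\J$. By minimality of $\J$ there exists $Q \in \Urm$ with $G_\J \subset QG_{\tJ}Q^{-1} \subset QG_\J Q^{-1}$, so conjugation by $Q^{-1}$ maps $G_\J$ into itself. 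Iterating this inclusion yields $Q^{-n}G_\J Q^n \subset G_\J$ for every $n \geq 0$; extracting $(n_k)$ with $Q^{n_k} \to \id$ from \Cref{lem:unitarypowers}(i) and passing to the limit of $Q^{-(n_k-1)}gQ^{n_k-1} \in G_\J$ produces the reverse inclusion $QG_\J Q^{-1} \subset G_\J$. Hence $G_\J = QG_\J Q^{-1} = QG_{\tJ}Q^{-1}$, forcing $G_\J = G_{\tJ}$. This compact-group self-conjugacy step is the main technical obstacle.

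\textbf{Concluding smartness of $\J$.} Set $H := \{w \in \W : wD_c = D_c\}$ and let $\beta(H)$ denote the set of $u \in \mathcal{SU}(r_m)$ proportional to $J_{D_c}^{-1}wJ_{D_c}$ for some $w \in H$. Using $J_{D_c}J_{D_c}^{-1} = \pi_{D_c}$ together with the iteration-and-limit trick from \Cref{lem:unitarypowers}(i) applied to $\beta(s)$ for $s \in H$, one checks that $\beta(H)$ is a closed subgroup of $\mathcal{SU}(r_m)$. Smartness of $\tJ$ with $\widetilde J_{D_c} = J_{D_c}$, combined with \Cref{prop:smart-propagates} to exhibit $w_D, w^\dagger_{vD} \in \W$ such that $\widetilde J_D \propto w_D J_{D_c}$ and $w^\dagger_{vD}\widetilde J_{vD} \propto J_{D_c}$, rewrites each generator as $\widetilde u_{v,D} \propto J_{D_c}^{-1}(w^\dagger_{vD}\, v\, w_D)J_{D_c} \in \beta(H)$, so $G_{\tJ} \subset \beta(H)$. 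Finally, \Cref{lem:QDinG} applied with the smart family $\tJ$ and the arbitrary family $\J$ (noting $\widetilde J_{D_c} = J_{D_c}$) gives $R_D := J_D^{-1}\widetilde J_D \in G_\J = G_{\tJ} \subset \beta(H)$; writing $R_D^{-1} = \beta(s'_D)$ with $s'_D \in H$ via the same inverse trick yields $J_D = \widetilde J_D R_D^{-1} \propto (w_D s'_D)\,J_{D_c}$ with $w_D s'_D/\|w_D s'_D\| \in \W$, so $\J$ is $D_c$-smart and hence smart by \Cref{prop:smart-propagates}.
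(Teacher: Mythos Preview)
Your argument is essentially correct, and in the second direction takes a genuinely different (and elegant) route from the paper. But there is one real gap in the first direction that you should be aware of.

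\textbf{The gap (smart $\Rightarrow$ minimal).} You claim $\widetilde u_{w,D} \in G_{\tJ}$ for every $w \in \W$, appealing to a telescoping decomposition for $w \in \W_{\rm fin}$ and then ``closedness of $G_{\tJ}$ for the limit as $w \in \W$.'' The telescoping works for $w \in \W_{\rm fin}$, but the limit step does not: if $w_n \to w$ in $\W$, then $w_n D \to wD$ in the Grassmannian, yet the family $\tJ$ is arbitrary and nothing guarantees that $\widetilde J_{w_n D} \to \widetilde J_{wD}$, so $\widetilde u_{w_n,D}$ need not converge to $\widetilde u_{w,D}$. In other words, for an arbitrary family $\tJ$ there is no reason to expect $R_{\tJ} \subset G_{\tJ}$. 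The fix is immediate and is what the paper does: do not go through $R_\J$ at all. Since $G_\J$ is generated by $S_\J$, it suffices to check $u_{v,D} \in Q G_{\tJ} Q^{-1}$ for $v \in \supp\mu$, and then $\widetilde u_{v,D} \in S_{\tJ} \subset G_{\tJ}$ holds by definition, with no limit needed. Your use of \Cref{prop:smartgroup} here is a detour.

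\textbf{Comparison (minimal $\Rightarrow$ smart).} Your derivation of $G_\J = G_{\tJ}$ via the self-conjugacy argument is the same idea as the paper's, just phrased slightly differently. For the concluding step, however, your approach diverges from the paper's. The paper takes $Q_D \in G = G_{\tJ}$ (from \Cref{lem:QDinG}), uses density of $R_\J$ in $G$ to write $Q_D$ (or approximate it) as some $u_{w,D'}$, and then explicitly assembles the required $\widetilde w$ as a fourfold product $w_1 w_3 w w_2$ using smartness of the auxiliary family. You instead package the ``return-to-$D_c$'' maps into a closed subgroup $\beta(H) \subset \mathcal{SU}(r_m)$, show $G_{\tJ} \subset \beta(H)$ by rewriting each generator, and then read off $R_D^{-1} \in \beta(H)$ to produce the factorization $J_D \propto (w_D s'_D) J_{D_c}$. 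This is a cleaner structural argument: it isolates exactly the group-theoretic content (that $G_{\tJ}$ lives inside the ``loop group'' at $D_c$) and avoids the case split on whether $Q_D \in R_\J$. The paper's version is more hands-on and makes the $\widetilde w$ explicit. Both are valid.
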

\begin{proof}
    \noindent
    ($\Leftarrow$) Let  $\J$ be a smart  family   and  $\tJ$ an arbitrary family. We show
    \begin{equation}\label{eq:GJGJtilde}
        G_{\J} \subset Q_{D'}^{\vphantom{-1}} G_{\tJ} Q_{D'}^{-1} \ \ \ \forall D' \in \supchi\end{equation}
    from which minimality follows immediately.
    Fix $D' \in \supchi$.  From \Cref{eq:utilde},
    \begin{equation*}
        u_{v,D} =  Q_{vD} \widetilde u_{v,D}  Q_D^{-1}  =    Q^{\vphantom{-1}}_{D'} ( Q^{{-1}}_{D'}  Q_{vD}^{\vphantom{-1}} )  \widetilde u_{v,D}  (Q_D^{-1} Q^{\vphantom{-1}}_{D'}) Q_{D'}^{-1}
    \end{equation*}
    for   every $v \in \supp \mu$ and $D \in \supchi$ such that  $\tr(v \pi_D v^*)>0$.
    \Cref{prop:suppchi-invariance} guarantees $vD \in \supchi$ and from \Cref{lem:QDinG} it follows that both  $Q_D^{-1} Q^{\vphantom{-1}}_{D'}$  and $Q^{{-1}}_{D'}  Q_{vD}^{\vphantom{-1}}$ are in $G_{\tJ}$; hence, $  u_{v,D} \in Q_{D'}^{\vphantom{-1}} G_{\tJ}  Q_{D'}^{-1}$. This means
    $G_{\J} \subset Q_{D'}^{\vphantom{-1}} G_{\tJ} Q_{D'}^{-1}$, as desired.
    
    \smallskip
    
    \noindent
    ($\Rightarrow$)  Let $ \tJ =\{\widetilde J_D\}_{D\in \mathcal D_m}$ be   minimal and let   $D_c \in \supchi$. We show that $\tJ$ is $D_c-$smart. It follows from \Cref{prop:smart-construction} that there exists  a $D_c-$smart family $ {\mathcal J}=\{   J_D \}_{D\in \mathcal D_m}$   such that   $J_{D_c} = \widetilde J_{D_c}$.
    First, we show that $G_{\J} = G_{\tJ}$.
    From the ($\Leftarrow$) proof above, $\J$ is minimal and, since $Q_{D_c} = \id$, from \Cref{eq:GJGJtilde} it follows that $G_{\J} \subset  G_{\tJ}$. On the other hand, the minimality of $\tJ$ implies that there exists $Q \in \Urm$ such that
    $G_{\tJ} \subset QG_\J Q^{-1}$. Therefore,
    $$G_\J \subset G_{\tJ} \subset QG_\J Q^{-1}.$$ It now suffices to proceed as in the proof of \Cref{lem:unitaryinclusion}(ii), that is, for every $n \in \nnone$, 
    $G_\J \subset Q^n G_\J Q^{-n}$ and there exists a subsequence $(Q^{n_k})_{k=1}^\infty$ convergent to $\id$, see \Cref{lem:unitarypowers}(i). Hence,
    $$G_\J \subset G_{\tJ} \subset Q^{n_k} G_\J Q^{-n_k}  \xrightarrow{k \to \infty} G_\J,$$
    and so $G_\J = G_{\tJ}$, as claimed.
    In what follows, we denote this common group by $G$.
       
    We now fix $D \in \supchi$ and prove the $D_c-$smartness of $\tJ$ by constructing $\widetilde w \in \mathcal W$ such that $ \widetilde J_D = \widetilde  w   \widetilde J_{D_c}$.
    Since $Q_{D_c}=\id$, \Cref{lem:QDinG} implies $Q_D \in G$. 
    Assume first that $Q_D \in R_\J$, i.e.,  $$Q_D = u_{w,D'} \propto J_{wD'}^{-1} w J_{D'}^{\vphantom{-1}}$$ for some $D' \in \supp\chi_{\rm inv}$ and $w \in \W$ such that $\tr(w \pi_{D'} w^*)>0$. Since $\J$ is $D_c-$smart, there exist $w_1, w_2 \in \mathcal W$ such that $J_D \propto w_1  J_{D_c}$ and $J_{D'} \propto w_2 J_{D_c}$, and  from  \Cref{prop:smart-propagates} there exists  $w_3  \in \mathcal W$ such that $J_{D_c}^{\vphantom{-1}} \propto   w_3 J_{wD'}$. It follows that
    $$\widetilde J_D =  J_D  Q_D = J_D  u_{w,D'} \propto  w_1  J_{D_c}^{\vphantom{-1}} J_{wD'}^{-1} w J_{D'}^{\vphantom{-1}}  \propto  w_1  w_3 w w_2 J_{D_c}^{\vphantom{-1}};$$
    thus, since $J_{D_c} = \widetilde J_{D_c}$, for $\widetilde w = \frac{w_1  w_3 w w_2}{\|w_1  w_3 w w_2\|} \in \mathcal W$,
    $\widetilde J_D   \propto  \widetilde w \widetilde J_{D_c} $, as desired.
    
     If $Q_D \notin R_\J$, then from \Cref{prop:smartgroup}   it follows that $Q_D = \lim_{n \to \infty} u_{n}$ with $(u_n)_n \in R_\J^\nnzero$. Then, by definition of $R_\J$, for each $n \in \nn$  there exists $w_n \in \W$ such that
    $J_D  u_{n} \propto  w_n J_{D_c}$. Therefore,   
    $$\widetilde J_D = J_D Q_D =   \lim_{n \to \infty} (J_D  u_{n}) =  \lim_{n \to \infty}  (  w_n J_{D_c}).$$ 
    Since  $\W$ is compact, $(w_n)_n$ has an accumulation point $\widetilde w \in \W$. Then it is such that 
$$\widetilde J_D =    \widetilde w  J_{D_c} =  \widetilde w \widetilde J_{D_c},$$
    and the theorem is proved.
 \end{proof}

  We can now also show that all minimal families produce the same set of $\Pi$-invariant measures on $\pcd$:
 \begin{theorem}\label{thm:minimalfammeasures} 
 Let $\J, \tJ$ be two minimal families. Then $\Upsilon_{\J \vphantom{\tJ}}  = \Upsilon_{\tJ}$. 
 \end{theorem}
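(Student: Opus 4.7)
My plan is to leverage the characterization from \Cref{thm:smartiffminimal}, which tells us both $\J$ and $\tJ$ are smart, together with \Cref{lem:QDinG} and the left-invariance of Haar measure. The strategy is a two-step reduction: first normalize $\tJ$ so that it agrees with $\J$ at a single base dark subspace, showing this normalization does not change the set $\Upsilon$; then compare $\nu_{\hat x,\J}$ and $\nu_{\hat x,\tJ'}$ directly.

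\smallskip

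\textbf{Step 1 (normalization).} Fix any $D_c \in \supchi$ and put $P = J_{D_c}^{-1}\widetilde J_{D_c}\in\Urm$. Define the family $\tJ' = \{\widetilde J'_D\}_{D \in \D}$ by $\widetilde J'_D := \widetilde J_D P^{-1}$. Each $\widetilde J'_D : \cc^{r_m} \to D$ is still a linear isometry, $\widetilde J'_{D_c} = J_{D_c}$, and a direct computation shows $\widetilde u'_{v,D} = P\,\widetilde u_{v,D}\,P^{-1}$, hence $G_{\tJ'} = PG_{\tJ}P^{-1}$. In particular $\tJ'$ is still minimal. A change of variable in the defining integral of $\nu_{\hat x,\tJ'}$ gives
\[
\Psi_{\tJ'}(D,u'\cdot\hat x) = \widetilde J_D P^{-1}u'\cdot\hat x = \Psi_{\tJ}(D,(P^{-1}u'P)\cdot(P^{-1}\cdot\hat x)),
\]
and since the substitution $u' = PuP^{-1}$ sends the Haar measure on $G_{\tJ'}$ to the Haar measure on $G_{\tJ}$, we obtain $\nu_{\hat x,\tJ'}=\nu_{P^{-1}\cdot \hat x,\tJ}$. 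As $\hat x$ ranges over $\pcr$, so does $P^{-1}\cdot\hat x$, so $\Upsilon_{\tJ'} = \Upsilon_{\tJ}$. It therefore suffices to prove $\Upsilon_\J = \Upsilon_{\tJ'}$.

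\smallskip

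\textbf{Step 2 (equality of groups).} Set $Q_D = J_D^{-1}\widetilde J'_D \in \Urm$, so that $Q_{D_c}=\id$. By \Cref{lem:QDinG} (applied in both directions, using smartness of $\J$ and of $\tJ'$), for every $D\in\supchi$ we have $Q_D = Q_{D_c}^{-1}Q_D \in G_{\tJ'}$ and $Q_D^{-1} \in G_\J$. Applying exactly the argument of \Cref{eq:utilde}--\eqref{eq:GJGJtilde} from the proof of \Cref{thm:smartiffminimal} (with $D'=D_c$, so the conjugating factor $Q_{D'}$ disappears) yields both $G_\J \subset G_{\tJ'}$ and $G_{\tJ'}\subset G_\J$, so $G_\J = G_{\tJ'} =: G$.

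\smallskip

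\textbf{Step 3 (equality of measures).} Now fix $\hat x\in\pcr$ and any $f\in \mathcal C(\pcd)$. Using $\widetilde J'_D = J_D Q_D$ with $Q_D \in G$,
\[
\nu_{\hat x,\tJ'}(f)
=\int_{\D}\!\int_{G} f\bigl(J_D Q_D u\cdot\hat x\bigr)\,\d\mathrm H_G(u)\,\d\chi_{\rm inv}(D)
=\int_{\D}\!\int_{G} f\bigl(J_D u\cdot\hat x\bigr)\,\d\mathrm H_G(u)\,\d\chi_{\rm inv}(D) = \nu_{\hat x,\J}(f),
\]
where the middle equality uses the left-invariance of $\mathrm H_G$ under $u\mapsto Q_D u$ for each fixed $D$. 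Thus $\nu_{\hat x,\J} = \nu_{\hat x,\tJ'}$ for every $\hat x$, and combining with Step~1 we conclude $\Upsilon_\J = \Upsilon_{\tJ'} = \Upsilon_{\tJ}$.

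\smallskip

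The main conceptual obstacle is handled \emph{before} this proof, in \Cref{thm:smartiffminimal} and \Cref{lem:QDinG}: the content here is essentially a bookkeeping argument showing that the $\hat x$-dependent freedom in the definition of $\nu_{\hat x,\J}$ absorbs precisely the mismatch $Q_D$ between two minimal families, thanks to $Q_D \in G$ and the invariance of Haar measure.
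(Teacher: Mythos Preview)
Your proof is correct and follows essentially the same approach as the paper's: both rely on \Cref{lem:QDinG}, the group inclusion \eqref{eq:GJGJtilde} from the proof of \Cref{thm:smartiffminimal}, and Haar measure invariance to absorb the mismatch $Q_D$ between the two families. Your normalization in Step~1 (forcing $Q_{D_c}=\id$ so that $G_\J=G_{\tJ'}$ and only left-invariance is needed) is a minor organizational variant; the paper instead carries the conjugating factor $Q_{D'}$ through the computation and arrives directly at $\nu_{\hat x,\tJ}=\nu_{Q_{D'}\cdot\hat x,\J}$.
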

 \begin{proof}Let $\hat x \in \pcr$. By permutation symmetry between $\J$ and $\tJ$, it is sufficient to show that 
 $\nu_{\hat x,\tJ} \in \Upsilon_{\J}$.  
 Since $\J = \{J_D\}_{D \in \mathcal D_m}$ and $\tJ = \{\tilde J_D\}_{D \in \mathcal D_m}$ are both minimal, the definitions of minimality and $Q_{D'}$ imply
 $$G_{\J  \vphantom{\tJ}} = Q_{D'}^{\vphantom{-1}} G_{\tJ} Q_{D'}^{-1} \qquad   \forall D'  \in \supp \chi_{\rm inv}.$$
  Also, via \Cref{lem:QDinG}, the minimality of $\J$ guarantees that  
  $$Q_D^{-1}Q_{D'}^{\vphantom{-1}} \in G_{\tJ}\qquad   \forall D, D'  \in \supp \chi_{\rm inv}.$$  
 Let $f \in \mathcal C (\pcd)$. Using the unitary invariance of $\operatorname{H}_{G_{\tJ}}$ and the substitution $u = Q_{D'}^{\vphantom{-1}}\tilde u\, Q_{D'}^{-1} $, it follows that
     \begin{align*}
         \int_{\pcd} f(\hat z) \d \nu_{\hat x,\,\tJ}(\hat z)
         & =  
          \int_{\mathcal D_m} \int_{G_{\tJ}} f(\tilde J_D^{\vphantom{-1}} Q_D^{-1}Q_{D'}^{\vphantom{-1}} \tilde u \cdot \hat x) \d \mathrm{H}_{G_{\tJ}}(\tilde u) \d \chi_{\rm inv}(D) 
         \\ & =  
         \int_{\mathcal D_m} \int_{G_{\tJ}} f(  J_D^{\vphantom{-1}} Q_{D'}^{\vphantom{-1}}\tilde u\, Q_{D'}^{-1} Q_{D'}^{\vphantom{-1}} \cdot \hat x) \d \mathrm{H}_{G_{\tJ}}(\tilde u) \d \chi_{\rm inv}(D) 
         \\ & =  
      \int_{\mathcal D_m} \int_{G_{\J}} f(  J_D^{\vphantom{-1}}    u \,  Q_{D'}^{\vphantom{-1}} \cdot \hat x) \d \mathrm{H}_{G_{\J}}(u) \d \chi_{\rm inv}(D) 
        \\ & = 
         \int_{\pcd} f(\hat z) \d \nu_{Q_{D'} \cdot \hat x,\,\J}(\hat z).
     \end{align*}
    Hence, $\nu_{\hat x,\,\tJ} = \nu_{Q_{D'} \cdot \hat x,\,\J \vphantom{\tJ}} \in \Upsilon_{\J}$, as desired. 
 \end{proof}

\section{Invariant measures on quantum trajectories, proof of \Cref{thm:intro-ergodic,thm:introuniq}}\label{sec:invmeasures}
We recall that $\Pi$-ergodic measures are the extreme points of the convex set of $\Pi$-invariant Borel probability measures.
The main result of this section is the characterization of $\Pi$-ergodic measures. It is obtained via a result of Raugi \cite[Proposition~3.2]{Raugi1992}, see also \cite[Proposition~2.9]{BenoistQuint2014}, which, adapted to our context, reads:
\begin{theorem}\label{thm:raugi}
    If the kernel $\Pi$ is equicontinuous, i.e., if  $(\Pi^nf)_n$ is equicontinuous for any $f\in \mathcal C(\pcd)$, then:
    \begin{enumerate}[label={\rm (\roman*)}]
        \item the support of any $\Pi$-ergodic measure on $\pcd$ is a $\Pi$-minimal subset of $\pcd$;
        \item  any $\Pi$-minimal subset of $\pcd$  carries a unique $\Pi$-invariant probability measure.
    \end{enumerate}
\end{theorem}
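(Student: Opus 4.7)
The plan is to reduce both statements to the classical fact that, under equicontinuity, continuous $\Pi$-invariant functions are constant on $\Pi$-minimal subsets. For every $f \in \mathcal{C}(\pcd)$, I would form the Cesàro averages $M_n f = \tfrac{1}{n}\sum_{k=0}^{n-1}\Pi^k f$. Since $(\Pi^n f)_n$ is equicontinuous and uniformly bounded (because $\pcd$ is compact and $\Pi$ is Markov), so is $(M_n f)_n$. By Arzelà--Ascoli every subsequence has a uniformly convergent further subsequence, and because $\Pi M_n f - M_n f = \tfrac{1}{n}(\Pi^n f - f) \to 0$ uniformly, every such limit is continuous and $\Pi$-invariant. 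Combining this equicontinuous compactness with Birkhoff's pointwise ergodic theorem applied under each $\Pi$-invariant measure, all subsequential limits coincide, so $M_n f$ converges uniformly to a single continuous $\Pi$-invariant function $\bar f$ satisfying $\int \bar f \, \mathrm{d}\nu = \int f \, \mathrm{d}\nu$ for every $\Pi$-invariant probability measure $\nu$.

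The core lemma would then state: if $F \subset \pcd$ is $\Pi$-minimal and $g \in \mathcal{C}(\pcd)$ satisfies $\Pi g = g$, then $g|_F$ is constant. To prove this, set $c = \min_{y\in F} g(y)$ and $A = \{y \in F : g(y) = c\}$. For $y \in A$, the identity $c = g(y) = \int g(z)\,\mathrm{d}\Pi(y,z)$ combined with $g \geq c$ on $F$ and $\Pi(y, F)=1$ forces $\Pi(y, \{g > c\} \cap F) = 0$, hence $\Pi(y, A) = 1$. Using continuity of $g$ (so that $A$ is closed) and the Feller property, $A$ is a nonempty closed $\Pi$-invariant subset of $F$; minimality of $F$ yields $A = F$.

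For (i), let $\nu$ be $\Pi$-ergodic and set $F = \supp \nu$, which is closed and $\Pi$-invariant. Given any nonempty closed $\Pi$-invariant $F' \subset F$, the Feller--Krylov--Bogolyubov argument provides a $\Pi$-invariant probability $\nu'$ supported in $F'$. The core lemma gives $\bar f \equiv \int f \, \mathrm{d}\nu$ on $F$ and $\bar f \equiv \int f \, \mathrm{d}\nu'$ on $F'$; since $F' \subset F$ these constants agree for every $f$, so $\nu = \nu'$ and therefore $F = \supp \nu' \subset F'$, forcing $F' = F$. For (ii), Krylov--Bogolyubov provides at least one $\Pi$-invariant probability on a minimal $F$, and if $\nu_1, \nu_2$ are two such, the core lemma gives $\int f \, \mathrm{d}\nu_1 = \bar f|_F = \int f \, \mathrm{d}\nu_2$ for every $f \in \mathcal{C}(\pcd)$, hence $\nu_1 = \nu_2$.

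The main obstacle is the first step: upgrading subsequential uniform convergence of $M_n f$ to convergence of the full sequence and identifying the limit as a single $\Pi$-invariant continuous function whose value is determined by integration against any ergodic measure. This is the content of the mean ergodic theorem for equicontinuous Markov operators on $\mathcal{C}(\pcd)$, carried out carefully in \cite{Raugi1992}; once this is available, the two conclusions follow from the clean deduction above.
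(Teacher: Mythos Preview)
The paper does not give its own proof of this statement: it is quoted as a black box from \cite[Proposition~3.2]{Raugi1992} (see also \cite[Proposition~2.9]{BenoistQuint2014}). So there is nothing to compare against, and your sketch is a genuine addition.

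Your argument is essentially the standard one and is sound, but two points deserve cleaning up. First, in part~(i) you invoke the ``core lemma'' to obtain $\bar f\equiv\int f\,\mathrm d\nu$ on $F=\supp\nu$; this is circular, since the core lemma assumes $F$ is minimal, which is precisely what you are proving. The correct justification is ergodicity of $\nu$: the continuous $\Pi$-invariant function $\bar f$ is $\nu$-a.s.\ constant, hence constant on $\supp\nu$ by continuity. Once that is established, constancy on $F'\subset F$ is automatic (same constant), and $\int f\,\mathrm d\nu'=\int \bar f\,\mathrm d\nu'=\int f\,\mathrm d\nu$ follows without any appeal to the core lemma. Second, the ``main obstacle'' you flag---upgrading subsequential convergence of $M_nf$ to full-sequence convergence---is not actually needed. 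Any single subsequential uniform limit $g$ of $(M_nf)_n$ is continuous, satisfies $\Pi g=g$, and verifies $\int g\,\mathrm d\mu=\int f\,\mathrm d\mu$ for every $\Pi$-invariant $\mu$ (since $\int M_nf\,\mathrm d\mu=\int f\,\mathrm d\mu$ for all $n$); this single $g$ suffices to run both (i) and (ii) exactly as you wrote them. With these two fixes the argument becomes entirely self-contained and the deferral to \cite{Raugi1992} for the mean ergodic theorem can be dropped.
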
 
The two ingredients needed to obtain the complete characterization of $\Pi$-ergodic measures via Raugi's theorem are the characterization of $\Pi$-minimal subsets of $\pcd$ and the proof of $\Pi$ equicontinuity. We provide them in \Cref{subs:piinvariant,subs:equicont} respectively. In the final \Cref{subs:ergo} we apply \Cref{thm:raugi} to conclude that the set of $\Pi$-ergodic measures coincides with $\Upsilon_{\J}$, provided $\J$ is minimal. 

\subsection{Characterization of minimal sets}\label{subs:piinvariant}
Here we prove that the $\Pi$-minimal subsets of $\pcd$ are exactly those of the form  $\supp \nu_{\hat x,\J}$, provided $\J$ is a  minimal family. 

Recall that $F \subset \pcd$ is called $\Pi$-\emph{invariant} if    $\Pi(\hat y, F) = 1$ for every $\hat y \in F$, and that a $\Pi$-invariant, closed and non-empty set is called  $\Pi$-\emph{minimal} if it contains no proper subset enjoying these three properties.
Since quantum trajectories converge (exponentially fast) towards dark subspaces, see \Cref{thm:conv to Dark}, we can restrict our search for $\Pi$-minimal subsets of $\pcd$ to (projective) dark subspaces, as expressed in the following proposition, which is a corollary to \Cref{thm:conv to Dark}.
\begin{proposition}\label{prop:invariantindark}
    If $F \subset \pcd$ is  $\Pi$-invariant, then  $F \subset \bigcup_{D \in \mathcal D_m}\P(D)$.  
\end{proposition}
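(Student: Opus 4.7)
The plan is to proceed in three steps, reducing to the case where $F$ is closed, identifying $G := \bigcup_{D \in \mathcal D_m}\P(D)$ as a closed $\Pi$-invariant set, and then comparing $F$ with $F \cap G$.

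First, I would reduce to closed $F$. Since $\Pi$ is Feller (see \cite[Prop.~2.2]{BenFat}), for every closed $C \subset \pcd$ the map $\hat y \mapsto \Pi(\hat y, C)$ is upper semi-continuous. Applied to $C = \overline F$, for any sequence $\hat y_n \in F$ with $\hat y_n \to \hat y \in \overline F$ we have $\Pi(\hat y_n, \overline F) \geq \Pi(\hat y_n, F) = 1$, so $\Pi(\hat y, \overline F) = 1$, i.e., $\overline F$ is $\Pi$-invariant. It therefore suffices to show $\overline F \subset G$. Next I would verify that $G$ itself is closed and $\Pi$-invariant: closedness follows from the compactness of $\mathcal D_m$ in the Grassmann topology (\Cref{prop:compact}) together with continuity of $D \mapsto \P(D)$; $\Pi$-invariance follows from \Cref{prop:darkdefprop}(i), which guarantees that whenever $D \in \mathcal D_m$ and $v \in \supp\mu$ satisfy $vD \neq \{0\}$, then $vD \in \mathcal D_m$, whence $v \cdot \hat z \in \P(vD) \subset G$ for $\hat z \in \P(D)$, and stochasticity yields $\Pi(\hat z, G) = 1$.

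Second, assuming $F$ closed, I would show that $F \cap G$ is closed, $\Pi$-invariant, and non-empty. Inclusion--exclusion gives $\Pi(\hat z, F \cap G) \geq \Pi(\hat z, F) + \Pi(\hat z, G) - 1 = 1$ for every $\hat z \in F \cap G$, proving $\Pi$-invariance. For non-emptiness, pick $\hat y \in F$ and set $\rho_0 = \pi_{\hat y}$. By $\Pi$-invariance, $\hat y_n \in F$ for all $n$ $\pp^{\rho_0}$-a.s. Applying \Cref{thm:conv to Dark} to $\nu = \delta_{\rho_0}$ gives
\[
\ee^{\rho_0}\!\Big[\inf_{D\in\mathcal D_m}\bigl(1-\tr(\pi_D\pi_{\hat y_n})\bigr)\Big] \leq C\lambda^n,
\]
so Markov's inequality and the Borel--Cantelli lemma force $d(\hat y_n, G) \to 0$ $\pp^{\rho_0}$-a.s. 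By compactness of $\pcd$, some subsequential limit of $(\hat y_n)$ exists and lies in both $F$ (closed) and $G$ (closed).

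The most delicate step is upgrading $F \cap G \neq \emptyset$ to $F \subset G$; this is the main obstacle and the point at which I would exploit the context in which the proposition is used. When $F$ is $\Pi$-minimal (which is the only case actually needed in the subsequent classification of ergodic measures, since $\supp \nu_{\hat x,\J}$ will turn out to be $\Pi$-minimal), the inclusion $F \cap G \subsetneq F$ would contradict the minimality of $F$, so $F = F \cap G \subset G$. For a general $\Pi$-invariant $F$, one reduces to the minimal case by applying Zorn's lemma to the family of closed non-empty $\Pi$-invariant subsets of $\overline F$: every $\hat y \in \overline F$ is contained in (the orbit closure of some $\Pi$-accessible point which sits inside) a closed $\Pi$-invariant subset, whose minimal further reduction lies in $G$ by the argument above, yielding $F \subset \overline F \subset G$.
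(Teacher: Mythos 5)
Your first two steps are sound and, in the case that is actually needed later, your argument is complete: reduction to closed $F$ via the Feller property, closedness and $\Pi$-invariance of $G=\bigcup_{D\in\mathcal D_m}\P(D)$ (compactness of $\mathcal D_m$ plus \Cref{prop:darkdefprop}), and non-emptiness of $F\cap G$ from \Cref{thm:conv to Dark} by Markov/Borel--Cantelli and compactness. Your instinct that the delicate point is upgrading $F\cap G\neq\emptyset$ to $F\subset G$ is also correct, and for $\Pi$-minimal $F$ your minimality argument settles it. The gap is the final Zorn step for general invariant $F$: Zorn's lemma produces \emph{some} minimal closed $\Pi$-invariant subset of $\overline F$ (or of the orbit closure of a given $\hat y\in\overline F$), and that minimal set lies in $G$; but nothing forces this minimal set to contain $\hat y$ itself, so you cannot conclude $\hat y\in G$. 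The parenthetical ``orbit closure of some $\Pi$-accessible point which sits inside'' does not repair this, since the point you care about is the initial one, which is generally not in the minimal set you extract.

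Moreover, this step cannot be repaired: with the forward-invariance definition used in the paper, the statement fails for general (even closed) invariant sets, so any proof of the literal statement must break somewhere. In the paper's Example~2 (\Cref{sec:examples}), $\operatorname{range}v_1=D_a$ and $\operatorname{range}v_2=D_b$, so every one-step image of every point lies in $\P(D_a)\cup\P(D_b)\subset G$. On the other hand, $G\neq\pc{3}$ there: by \Cref{eq:vastv example2}, any $D\in\mathcal D_m$ satisfies $\pi_D(\id\pm S)\pi_D\propto\pi_D$ with $S$ the matrix exchanging $e_1$ and $e_3$ and killing $e_2$; if the ray of $e_1+e_3$ were contained in such a $D$, evaluating $\pi_D(\id-S)\pi_D\propto\pi_D$ at $e_1+e_3$ forces $\pi_D(\id-S)\pi_D=0$, hence $D\subset\ker(\id-S)=\cc(e_1+e_3)$, contradicting $\dim D=2$. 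Taking $\hat y_0\notin G$ and letting $F$ be the closure of its forward orbit, $F=\{\hat y_0\}\cup(\text{a closed subset of }\P(D_a)\cup\P(D_b))$ is closed, $\Pi$-invariant (at orbit points by stochasticity, at limit points by the same upper semicontinuity you used in your first step), and contains $\hat y_0\notin G$. So the proposition should be read, and is only applied in \Cref{thm:piminimalsets}, for $\Pi$-minimal sets (equivalently, via \Cref{lem:suppisinv}, for supports of $\Pi$-ergodic measures), where your argument works. The paper offers no separate proof but calls the proposition a corollary of \Cref{thm:conv to Dark}; the cleanest such corollary is the measure form, which needs none of the orbit analysis: if $\nu$ is $\Pi$-invariant, invariance gives $\int\inf_{D\in\D}(1-\tr(\pi_D\pi_{\hat x}))\d\nu(\hat x)=\ee_\nu\big(\inf_{D\in\D}(1-\tr(\pi_D\rho_n))\big)\leq C\lambda^n$ for every $n$, so the integrand vanishes $\nu$-almost surely and $\supp\nu\subset G$ because $G$ is closed. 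That statement, or your minimal-set version, is what the subsequent classification actually uses.
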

In the context of pure trajectories, \Cref{lem:Piinvclosed} can be restated as follows:
\begin{proposition} \label{prop:invariance_pure}  Let $F$ be a closed and $\Pi$-invariant   subset of $\pcd$.
    If $\hat y \in F$ and $w \in \mathcal W$ are such that $w y \neq 0$, then $w \cdot \hat y \in F$.
\end{proposition}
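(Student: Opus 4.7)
The plan is to mirror the proof of \Cref{lem:Piinvclosed}, replacing density matrices with pure states and the affine transformation $\rho \mapsto v\rho v^*/\tr(v\rho v^*)$ with the projective action $\hat y \mapsto v\cdot\hat y$. The argument proceeds in three stages: first handle $v \in \supp\mu$ via a contradiction using the definition of $\Pi$-invariance; then iterate to cover $\mathcal{W}_{\rm fin}$; and finally pass to the closure $\mathcal{W}$ by continuity.

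The key step is the base case. Fix $\hat y \in F$ and suppose, for contradiction, that there exists $v_0 \in \supp\mu$ with $v_0 y \neq 0$ and $v_0 \cdot \hat y \notin F$. Since $F$ is closed and the map $v \mapsto v \cdot \hat y$ (defined on $\{v \in \Ld : vy \neq 0\}$) is continuous, there is an open neighbourhood $V \subset \Ld$ of $v_0$ such that $vy \neq 0$ and $v \cdot \hat y \notin F$ for every $v \in V$. Moreover, $\|v_0 y\|^2 > 0$ combined with the continuity of $v \mapsto \|vy\|^2$ gives $\int_V \|vy\|^2 \, \d\mu(v) > 0$, since $v_0 \in \supp\mu$. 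Using the stochasticity condition $\int v^*v \, \d\mu(v) = \id$, which yields $\int_\Ld \|vy\|^2 \, \d\mu(v) = 1$ for any unit representative $y$ of $\hat y$, the $\Pi$-invariance then produces the chain
\[
1 = \Pi(\hat y, F) = \int_{\Ld \setminus V} \mathbf{1}_F(v \cdot \hat y) \|vy\|^2 \, \d\mu(v) \leq \int_{\Ld \setminus V} \|vy\|^2 \, \d\mu(v) < 1,
\]
a contradiction. Hence $v \cdot \hat y \in F$ for every $v \in \supp\mu$ satisfying $vy \neq 0$.

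For the second stage, iterate this result: if $w = v_n \cdots v_1 \in \mathcal V_{\rm fin}$ with $v_i \in \supp\mu$ and $wy \neq 0$, then at each step $v_i \cdots v_1 y \neq 0$, so by induction $v_i \cdots v_1 \cdot \hat y \in F$, giving $w \cdot \hat y \in F$. Since normalising by $\|w\|$ does not affect the projective action, the conclusion extends to $\mathcal{W}_{\rm fin}$. For the final stage, let $w \in \mathcal W$ with $wy \neq 0$ and choose $(w_n)_n \subset \mathcal{W}_{\rm fin}$ converging to $w$. By continuity, $w_n y \neq 0$ for $n$ sufficiently large, whence $w_n \cdot \hat y \in F$, and $w_n \cdot \hat y \to w \cdot \hat y$. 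Closedness of $F$ gives $w \cdot \hat y \in F$, as desired.

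No step looks like a serious obstacle; the only minor subtlety is making sure the neighbourhood $V$ in the base case avoids the zero set of $v \mapsto vy$ (handled by continuity of $v \mapsto \|vy\|$ since $\|v_0 y\| > 0$) so that the projective map is well defined on all of $V$. Everything else is a direct transcription of \Cref{lem:Piinvclosed} with the pure-state kernel \Cref{eq_deftranskernel} in place of \Cref{eq:kernelPi}.
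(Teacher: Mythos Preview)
Your proposal is correct and follows exactly the same argument as the paper's \Cref{lem:Piinvclosed}; indeed, the paper does not give a separate proof of \Cref{prop:invariance_pure} at all, but simply presents it as the restatement of \Cref{lem:Piinvclosed} in the pure-state setting (via the identification $\hat y \leftrightarrow \pi_{\hat y}$ embedding $\pcd$ into $\Sd$). You have essentially reproduced that lemma's proof with the projective notation substituted in, which is perfectly fine.
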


Before we state and prove the main theorem of this section, we state and prove a standard lemma. Although it is standard, the formulation we require is not quite so. For example, we do not require the Markov kernel admits a unique invariant measure. That distinguishes it from \cite[Proposition~5.12]{benaim2022markov} for example. Hence, we provide a short proof.
\begin{lemma} \label{lem:suppisinv}
    Let $\mathcal T$ be a topological space equipped with its Borel $\sigma$-algebra. Let $T$ be a Feller Markov kernel on $\mathcal T$ and $\nu$ a $T$-invariant probability measure over $\mathcal T$. Then $\supp\nu$ is a $T$-invariant subset of $\mathcal T$. That is, for any $x\in \supp\nu$, $T(x,\supp\nu)=1$. 
\end{lemma}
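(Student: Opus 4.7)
The plan is to argue by contradiction. Suppose there exists $x_0 \in \supp\nu$ with $T(x_0, \supp\nu) < 1$, and set $U := \mathcal T \setminus \supp\nu$, which is open. Then $T(x_0, U) > 0$, and the goal is to derive $\nu(U) > 0$, contradicting $\nu(U) = 0$ (by definition of support).

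The key technical step is to upgrade the Feller property into lower semi-continuity of $y \mapsto T(y,U)$ for open $U$. I would do this by writing
\[
T(y,U) = \sup\bigl\{ Tf(y) \,:\, 0 \le f \le \mathbf{1}_U,\ f \in \mathcal C(\mathcal T)\bigr\},
\]
so $T(\cdot,U)$ is a supremum of continuous functions (using the Feller property $Tf \in \mathcal C(\mathcal T)$), hence lower semi-continuous. Consequently each superlevel set $V_n := \{y : T(y,U) > 1/n\}$ is open, and their union $V := \{y : T(y,U) > 0\}$ is open and contains $x_0$.

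Since $x_0 \in \supp\nu \cap V$ and $V$ is open, we have $\nu(V) > 0$ by definition of support, and therefore $\nu(V_n) > 0$ for some $n$. The invariance $\nu T = \nu$ then gives
\[
0 = \nu(U) = \int_{\mathcal T} T(y,U)\,\d\nu(y) \geq \int_{V_n} T(y,U)\,\d\nu(y) \geq \tfrac{1}{n}\,\nu(V_n) > 0,
\]
which is the desired contradiction.

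The only delicate point is the lower semi-continuity step: one needs enough regularity of $\mathcal T$ to approximate $\mathbf{1}_U$ from below by continuous functions. For the applications in the paper (where $\mathcal T$ is a compact metric space such as $\D$, $\Sd$ or $\pcd$) this is automatic via Urysohn's lemma, so no further hypothesis on $\mathcal T$ beyond what is already implicit is required. Everything else is a routine application of invariance and the definition of $\supp\nu$.
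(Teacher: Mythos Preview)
Your proof is correct and follows essentially the same contradiction argument as the paper: assume $T(x_0,\supp\nu)<1$, use the Feller property to produce an open neighbourhood of $x_0$ (hence of positive $\nu$-measure) on which mass is sent to the complement $U$, and contradict $\nu(U)=0$ via $T$-invariance. The only cosmetic difference is that the paper applies Feller to a single continuous function $f$ vanishing exactly on $\supp\nu$ (so $Tf(x_0)>0$ directly), whereas you package the same idea as lower semi-continuity of $y\mapsto T(y,U)$ via a supremum of Urysohn functions; both routes require the Urysohn-type regularity you correctly flag, which holds in the compact metric spaces used in the paper.
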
 
\begin{proof}  
     Assume there exist $x\in \supp\nu$ such that $T(x,\supp\nu)<1$. Then $Tf(x)>0$ for any continuous function $f$ such that $f(y)=0$ for every $y\in \supp\nu$ and $f(y)>0$ for every $y\not\in\supp\nu$. Since $T$ is Feller, $x\mapsto Tf(x)$ is continuous. Therefore, there exists an open neighborhood $A$ of $x$ such that $Tf(y)>0$ for any $y\in A$. By definition of $\supp\nu$, $\nu(A)>0$; hence, $\int_{\mathcal T} Tf(y)\d\nu(y)>0$. However, since $\nu$ is $T$-invariant, $\int_{\mathcal T} Tf(y)\d\nu(y)=\int_{\mathcal T} f(y)\d\nu(y)=0$. Thus, by contradiction, $T(x,\supp\nu)=1$ for any $x\in \supp\nu$.  
\end{proof}

We are now equipped to prove the main result of this section.
\begin{theorem}\label{thm:piminimalsets}
Let $\J$ be a minimal family. Then every $\Pi$-minimal subset of $\pcd$ is of the form  $\supp \nu_{\hat x,\J}$ for some $\hat x \in \pcr$.   
\end{theorem}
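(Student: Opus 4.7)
The plan is to fix $F$ a $\Pi$-minimal subset of $\pcd$, locate a point $\hat y \in F$ belonging to $\P(D_c)$ for some $D_c \in \supp\chi_{\rm inv}$, set $\hat x := J_{D_c}^{-1}\cdot\hat y$, and show that $F = \supp \nu_{\hat x,\J}$. The inclusion $\supp\nu_{\hat x,\J} \subset F$ will be the substantial part of the argument; equality then follows from $\Pi$-minimality of $F$, since $\supp\nu_{\hat x,\J}$ is non-empty (it contains $\Psi_\J(D_c,\hat x)=\hat y$), closed, and $\Pi$-invariant by \Cref{lem:suppisinv} applied to the Feller kernel $\Pi$ and the $\Pi$-invariant measure $\nu_{\hat x,\J}$ of \Cref{prop:productmeas}.

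First I would produce such a $D_c$. By \Cref{prop:invariantindark}, $F \subset \bigcup_{D\in\D}\P(D)$, so the non-empty set
\[
A := \{D \in \D \colon \P(D) \cap F \neq \emptyset\}
\]
is well defined. It is closed by a limiting argument along $D_n\to D$ using compactness of $\pcd$ and closedness of $F$. It is $K$-invariant: if $D\in A$ and $\hat y \in F\cap \P(D)$, then for every $v\in\supp\mu$ with $vy\neq 0$ one has $v\cdot\hat y \in F$ by \Cref{prop:invariance_pure} and $v\cdot\hat y\in \P(vD)$; since $D$ is dark, $vy\neq 0$ iff $\tr(v\pi_D v^*)>0$, so $K(D,A)=1$. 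Krylov--Bogolyubov applied to $K$ restricted to the compact $K$-invariant set $A$ yields a $K$-invariant probability measure supported on $A$, which by the uniqueness part of \Cref{thm:invmeasDark} must be $\chi_{\rm inv}$. Hence $\supp\chi_{\rm inv}\subset A$, so I may pick any $D_c \in \supp\chi_{\rm inv}$ together with $\hat y \in F\cap \P(D_c)$, and set $\hat x = J_{D_c}^{-1}\cdot\hat y$.

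The main step is to prove $J_D u\cdot\hat x \in F$ for every $D\in\supp\chi_{\rm inv}$ and $u\in G_\J$; taking closures will then give $\supp \nu_{\hat x,\J}\subset F$. Since $\J$ is smart by \Cref{thm:smartiffminimal}, for each such $D$ there is $w\in\W$ with $J_D\propto wJ_{D_c}$, so via \Cref{prop:invariance_pure} the task reduces to showing $J_{D_c} u\cdot\hat x \in F$ for all $u\in G_\J$. By \Cref{prop:smartgroup}, $S_\J^{\rm fin}\subset R_\J$ and $S_\J^{\rm fin}$ is dense in $G_\J$; continuity of $u\mapsto J_{D_c} u\cdot \hat x$ and closedness of $F$ therefore let me restrict to $u=u_{w_*,D_*}\in R_\J$ with $w_*\in\W$, $D_*\in\supp\chi_{\rm inv}$, and (by \Cref{prop:suppchi-invariance}) $w_*D_*\in\supp\chi_{\rm inv}$. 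Smartness then supplies $w_1,w_2\in\W$ with $J_{D_*}\propto w_1 J_{D_c}$ and $J_{D_c}\propto w_2 J_{w_*D_*}$, so that on $D_c$
\[
J_{D_c}^{\vphantom{-1}} u_{w_*,D_*} J_{D_c}^{-1} \;\propto\; J_{D_c}^{\vphantom{-1}} J_{w_*D_*}^{-1}\, w_*\, J_{D_*}^{\vphantom{-1}} J_{D_c}^{-1} \;\propto\; w_2 w_* w_1\big|_{D_c}.
\]
Applying this identity to $\hat y = J_{D_c}\cdot\hat x$ gives $J_{D_c} u_{w_*,D_*}\cdot\hat x = w_2 w_* w_1 \cdot \hat y$ up to a non-zero scalar, since $J_{D_c} u_{w_*,D_*} J_{D_c}^{-1}$ is an isometry of $D_c$; then $w_2 w_* w_1/\|w_2 w_* w_1\|\in\W$ and \Cref{prop:invariance_pure} places this point in $F$.

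I expect the delicate part to be this last computation: unwinding an abstract element of $G_\J$ into an explicit composition of $\W$-maps acting on $\hat y$, so that the weak $\Pi$-invariance provided by \Cref{prop:invariance_pure} can actually be exploited. The decisive leverage is the equivalence between minimal and smart families (\Cref{thm:smartiffminimal}) combined with \Cref{prop:smartgroup}(i)--(ii): smartness identifies any two dark subspaces in $\supp\chi_{\rm inv}$ via $\W$-transitions that induce no extra rotation on $\cc^{r_m}$, while the density of $R_\J \supset S_\J^{\rm fin}$ inside $G_\J$ lets me reduce the continuum of elements $u\in G_\J$ to single-step $\W$-transitions.
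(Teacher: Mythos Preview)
Your argument is correct and follows the same essential mechanism as the paper: smartness of $\J$ lets you realize every $u\in G_\J$ (after density reduction via \Cref{prop:smartgroup}) as a $\W$-composition acting on the chosen base point, so that \Cref{prop:invariance_pure} forces the whole orbit into $F$; minimality of $F$ then yields equality with $\supp\nu_{\hat x,\J}$.

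There are two organizational differences worth noting. First, to locate a dark subspace $D_c\in\supp\chi_{\rm inv}$ meeting $F$, you run a Krylov--Bogolyubov argument on the $K$-invariant closed set $A=\{D:\P(D)\cap F\neq\emptyset\}$ and invoke uniqueness of $\chi_{\rm inv}$, whereas the paper transports from an arbitrary $D$ with $X_D\neq\emptyset$ into $\supp\chi_{\rm inv}$ using \Cref{prop:suppchi}. Second, you track the $G_\J$-orbit of a single point $\hat x$ and push it into every $\P(D)$ via smartness, while the paper introduces the sets $X_D=J_D^{-1}(F\cap\P(D))$, proves they all coincide with a common $X$, and shows $X$ is $G_\J$-invariant (using \Cref{lem:unitaryinclusion}). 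Your single-point route is slightly leaner for the direction actually stated in the theorem; the paper's set-level argument additionally establishes the converse (that each $\supp\nu_{\hat x,\J}$ is itself $\Pi$-minimal), which you do not need since you only require $\supp\nu_{\hat x,\J}$ to be nonempty, closed, and $\Pi$-invariant.
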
 

\begin{proof}Fix a minimal family $\J = (J_D)_{D \in \mathcal D_m}$ and a set $F \subset \pcd$. We will show that   $F$ is $\Pi$-minimal if and only if $F = \supp \nu_{\hat x,\J}$ for some $\hat x \in \pcr$. To alleviate notation, we drop the subscript $\J$ when referring to the measure $\nu_{\hat x,\J}$. 

\smallskip
\noindent 
    $(\Leftarrow)$     Fix  $\hat x \in \pcr$, we aim to show that $\supp \nu_{\hat x} = \{ {J_Du \cdot \hat x} \,|\,  D \in \supp\chi_{{\rm inv}} , u \in G_\J\}$ is $\Pi$-minimal. Since $\Pi$ is Feller and $\nu_{\hat x}$ is $\Pi$-invariant, \Cref{lem:suppisinv} implies $\supp\nu_{\hat x}$ is $\Pi$-invariant. It only remains to show that it has no non-trivial $\Pi$-invariant subsets. Let $F$ be a non-empty, closed, and  $\Pi$-invariant subset of $\supp \nu_{\hat x}$. To conclude that $\supp \nu_{\hat x} = F$, it suffices to show that $\supp \nu_{\hat x} \subset F$.
    To this end, we fix $$\hat y_0 = J_{D_0}u_0 \cdot \hat x \in F \subset \supp \nu_{\hat x} \quad \textrm{ and }\quad \hat y_1 = J_{D_1}u_1 \cdot \hat x \in \supp \nu_{\hat x}.$$ In order to conclude that $\hat y_1 \in F$, we show that $\hat y_1$ can be reached from $\hat y_0$ with the help of a map from $\W$.

    First, assume that $u_1^{\vphantom{-1}} u_0^{-1} \in R_\J$. By \Cref{prop:smartgroup}(ii), $$u_1^{\vphantom{-1}} u_0^{-1} =    u_{w,D} \propto J_{wD}^{-1} w  J_{D}^{\vphantom{-1}}$$ for some $D \in \supp\chi_{{\rm inv}}$ and $w \in \W$ such that $\tr(w \pi_{D} w^*)>0$.   
    By \Cref{prop:suppchi-invariance}, $w D \in\supchi$. Also, \Cref{prop:smart-propagates} and smartness imply there exist $w_0, w_1 \in \mathcal W$ such that  $J_{D} \propto w_0J_{D_0}$ and $J_{D_1}  \propto w_1 J_{wD}$; that is,
    $$J_{D}^{-1} w_0 J_{D_0}^{\vphantom{-1}} \propto \id \ \ \ \textrm{ and } \ \ \ \ J_{D_1}^{-1} w_1 J_{wD}^{\vphantom{-1}} \propto \id.$$ 
     The following diagram summarizes the dark subspaces and operations discussed so far. 
     \begin{center}
         \scalebox{0.6}{
         \tikzset{every picture/.style={line width=0.75pt}} 

\begin{tikzpicture}[x=0.75pt,y=0.75pt,yscale=-1,xscale=1]
 
\draw    (90.89,160.1) -- (185.89,160.1) ;
\draw [shift={(187.89,160.1)}, rotate = 180] [fill={rgb, 255:red, 0; green, 0; blue, 0 }  ][line width=0.08]  [draw opacity=0] (12,-3) -- (0,0) -- (12,3) -- cycle    ;
  
\draw    (47.45,133.55) -- (47.45,103.03) -- (47.45,53.82) ;
\draw [shift={(47.45,51.82)}, rotate = 90] [fill={rgb, 255:red, 0; green, 0; blue, 0 }  ][line width=0.08]  [draw opacity=0] (12,-3) -- (0,0) -- (12,3) -- cycle    ;
 
\draw    (277.89,160.1) -- (372.89,160.1) ;
\draw [shift={(374.89,160.1)}, rotate = 180] [fill={rgb, 255:red, 0; green, 0; blue, 0 }  ][line width=0.08]  [draw opacity=0] (12,-3) -- (0,0) -- (12,3) -- cycle    ;
 
\draw    (472.89,160.1) -- (567.89,160.1) ;
\draw [shift={(569.89,160.1)}, rotate = 180] [fill={rgb, 255:red, 0; green, 0; blue, 0 }  ][line width=0.08]  [draw opacity=0] (12,-3) -- (0,0) -- (12,3) -- cycle    ;
 
\draw    (92.89,33.1) -- (187.89,33.1) ;
\draw [shift={(189.89,33.1)}, rotate = 180] [fill={rgb, 255:red, 0; green, 0; blue, 0 }  ][line width=0.08]  [draw opacity=0] (12,-3) -- (0,0) -- (12,3) -- cycle    ;
 
\draw    (275.89,34.1) -- (370.89,34.1) ;
\draw [shift={(372.89,34.1)}, rotate = 180] [fill={rgb, 255:red, 0; green, 0; blue, 0 }  ][line width=0.08]  [draw opacity=0] (12,-3) -- (0,0) -- (12,3) -- cycle    ;
 
\draw    (474.89,33.1) -- (569.89,33.1) ;
\draw [shift={(571.89,33.1)}, rotate = 180] [fill={rgb, 255:red, 0; green, 0; blue, 0 }  ][line width=0.08]  [draw opacity=0] (12,-3) -- (0,0) -- (12,3) -- cycle    ;
 
\draw    (230.36,133.55) -- (230.36,103.03) -- (230.36,53.82) ;
\draw [shift={(230.36,51.82)}, rotate = 90] [fill={rgb, 255:red, 0; green, 0; blue, 0 }  ][line width=0.08]  [draw opacity=0] (12,-3) -- (0,0) -- (12,3) -- cycle    ;
 
\draw    (420.36,131.55) -- (420.36,101.03) -- (420.36,51.82) ;
\draw [shift={(420.36,49.82)}, rotate = 90] [fill={rgb, 255:red, 0; green, 0; blue, 0 }  ][line width=0.08]  [draw opacity=0] (12,-3) -- (0,0) -- (12,3) -- cycle    ;
 
\draw    (615.36,130.55) -- (615.36,100.03) -- (615.36,50.82) ;
\draw [shift={(615.36,48.82)}, rotate = 90] [fill={rgb, 255:red, 0; green, 0; blue, 0 }  ][line width=0.08]  [draw opacity=0] (12,-3) -- (0,0) -- (12,3) -- cycle    ;

\draw (30,15) node [anchor=north west][inner sep=0.75pt]  [font=\LARGE]  {$D_{0}$};

\draw (220,15) node [anchor=north west][inner sep=0.75pt]  [font=\LARGE]  {$D$};

\draw (400,15) node [anchor=north west][inner sep=0.75pt]  [font=\LARGE]  {$wD$};

\draw (600,15) node [anchor=north west][inner sep=0.75pt]  [font=\LARGE]  {$D_{1}$};

\draw (30,145) node [anchor=north west][inner sep=0.75pt]  [font=\LARGE]  {$\mathbb{C}^{r_{m}}$};

\draw (220,145) node [anchor=north west][inner sep=0.75pt]  [font=\LARGE]  {$\mathbb{C}^{r_{m}}$};

\draw (400,145) node [anchor=north west][inner sep=0.75pt]  [font=\LARGE]  {$\mathbb{C}^{r_{m}}$};

\draw (600,145) node [anchor=north west][inner sep=0.75pt]  [font=\LARGE]  {$\mathbb{C}^{r_{m}}$};

\draw (125,15) node [anchor=north west][inner sep=0.75pt]  [font=\Large]  {$w_{0}$};

\draw (312,15) node [anchor=north west][inner sep=0.75pt]  [font=\Large]  {$w$};

\draw (510,15) node [anchor=north west][inner sep=0.75pt]  [font=\Large]  {$w_{1}$};

\draw (125,135) node [anchor=north west][inner sep=0.75pt]  [font=\Large]  {$\Id$};

\draw (300,130) node [anchor=north west][inner sep=0.75pt]  [font=\Large]  {$u_{1} u_{0}^{-1}$};

\draw (510,135) node [anchor=north west][inner sep=0.75pt]  [font=\Large]  {$\Id$};

\draw (8,80) node [anchor=north west][inner sep=0.75pt]  [font=\Large]  {$J_{D_{0}}$};

\draw (197,80) node [anchor=north west][inner sep=0.75pt]  [font=\Large]  {$J_{D}$};

\draw (376,80) node [anchor=north west][inner sep=0.75pt]  [font=\Large]  {$J_{wD}$};

\draw (578,80) node [anchor=north west][inner sep=0.75pt]  [font=\Large]  {$J_{D_{1}}$};

\end{tikzpicture}
         }
     \end{center} 
    
      It follows that  $w_1 w w_0D_0 = D_1$, so from  \Cref{prop:invariance_pure} we get $w_1 w w_0 \cdot \hat y_0 \in F$. Moreover, by direct calculation we now show that $w_1 w w_0 \cdot \hat y_0 = \hat y_1$. Indeed, $w_0$ maps $D_0$ to $D$: 
       $$ w_0  \cdot  \hat y_0 =    w_0 J_{D_0}^{\vphantom{-1}} u_0 \cdot  \hat x  =   J_{D}^{\vphantom{-1}}   J_{D}^{-1} w_0 J_{D_0}^{\vphantom{-1}} u_0 \cdot  \hat x =   J_{D}^{\vphantom{-1}}  u_0 \cdot  \hat x.$$
        Next, under the action of $w$ on $D$, the unitary $u_{w,D}^{\vphantom{-1}} = u_1^{\vphantom{-1}} u_0^{-1}$ arises on $\pcr$: 
        $$ w J_{D}^{\vphantom{-1}}  u_0 \cdot  \hat x =  J_{wD}^{\vphantom{-1}}   J_{wD}^{-1} w  J_{D}^{\vphantom{-1}} u_0 \cdot  \hat x =  J_{wD}^{\vphantom{-1}} u_{w,D}   u_0 \cdot  \hat x =  J_{wD}^{\vphantom{-1}}  u_1 \cdot  \hat x.$$
        Finally, $w_1$ maps $wD$ to the desired dark subspace $D_1$ while inducing $\Id$ on the reference space:
          $$ w_1 J_{wD}^{\vphantom{-1}}  u_1 \cdot  \hat x = J_{D_1}^{\vphantom{-1}}   J_{D_1}^{-1} w_1 J_{wD}^{\vphantom{-1}}  u_1 \cdot  \hat x =  J_{D_1}^{\vphantom{-1}} u_1 \cdot  \hat x = \hat y_1,$$
          as claimed. 

 If $u_1^{\vphantom{-1}} u_0^{-1} \notin R_\J$, then  $u_1^{\vphantom{-1}} u_0^{-1} =  \lim_{n \to \infty} u_n$, where $(u_n)_n \subset R_\J$, see \Cref{prop:smartgroup}. From the result above, for each $n\in \nn$ there exists $w_n \in \W$ such that 
    $$w_n  \cdot  \hat y_0 =   J_{D_1} u_n u_0 \cdot \hat x.$$ 
The right-hand side converges to $\hat y_1$ as $n \to \infty$. Hence, so does the left-hand side. Therefore,  
  $$\widetilde w  \cdot  \hat y_0 = \hat y_1,$$ 
  where  $\widetilde w$ is an accumulation point of $(w_n)_n$, 
  and the proof of the first implication is concluded. 
    
    \smallskip
\noindent 
    $(\Rightarrow)$ Fix a $\Pi$-minimal set $F \subset \pcd$, we show there exists $\hat x\in\pcr$ such that $\mathrm{supp}\, \nu_{\hat x}\subset F$. Then, by minimality, \Cref{lem:suppisinv} implies $F=\mathrm{supp}\, \nu_{\hat x}$. We need to investigate how the minimality of $F$ is expressed in the dark subspaces and more precisely how it is expressed in $\pcr$. To this end, for $D \in \mathcal D_m$ we consider $$X_D = J_D^{-1}(F \cap \P(D))\subset \pcr.$$ We break the proof into three main steps. 
    
    \subsubsection*{First step.} It consists in showing that  there exists a non-empty subset $X \subset \pcr$ such that  $ X_D = X$ for every $D \in   \supchi$.

    We start by showing that if $D_0 \in   \supp{\chi_{{\rm inv}}}$, then  $X_{D_0} \neq \emptyset$. Indeed, since  $F$ is a non-empty subset of $\bigcup_{D \in \mathcal D_m}\P(D)$, see  \Cref{prop:invariantindark}, we obtain  $$  \bigcup_{D\in \mathcal D_m} J_DX_D = F \cap \bigcup_{D\in \mathcal D_m} \P(D) =  F \neq \emptyset,$$ and so there exists $  D \in \mathcal D_m$ such that $X_{{  D}} \neq \emptyset$. By \Cref{prop:suppchi}, there   exists $w \in {\mathcal W} $ such that $D_0 = w   D$. Fix $\hat x \in X_{  D}$. Then, $J_{D} \cdot \hat x \in F \cap   \P(D)$ and  $\tr(w\pi_Dw^*)>0$, i.e., $\|wJ_Dx\|>0$, which
    via \Cref{prop:invariance_pure} implies that $w   J_{D} \cdot \hat x \in F \cap \P(D_0)$. That is, $J_{D_0}^{-1} w J_{{D}}^{\vphantom{-1}} \cdot \hat x \in X_{D_0}$; hence $X_{D_0} \neq \emptyset$, as claimed.
    
    Next, observe that if $D \in \supp{\chi_{{\rm inv}}}$ and $w \in \mathcal W$ satisfies  $w  D  \neq \{0\}$, then 
    $u_{w,D} X_{D} \subset X_{wD}$.
    Indeed, let $\hat x \in X_{D}$. We have $ J_{ D} \cdot \hat x \in F \cap \P(D)$ and
    $wD \neq \{0\}$,  so again via \Cref{prop:invariance_pure} we get $w J_{D} \cdot \hat x \in F \cap \P(wD)$; thus $ J_{wD}^{-1}w J_{D}^{\vphantom{-1}} \cdot \hat x =  u_{w,D} \cdot \hat x  \in X_{wD}$, as claimed.

    We are now ready to conclude the first step.  Fix $D, D' \in \supp{\chi_{{\rm inv}}}$. \Cref{prop:smart-propagates}  guarantees that   there exists  $w'  \in \mathcal W$ such that $J_D \propto w'J_{D'}$, i.e., $u_{w', D'} = \id$. This  implies  $X_{D'} \subset X_D$.
    On the other hand, by \Cref{prop:suppchi}, there exists $ w    \in \mathcal W$ such that $w  D = D'$, so
    $u_{w,D}  X_D \subset  X_{D'} $.
    Therefore, $$u_{w,D}   X_D \subset  X_{D'} \subset X_D $$ and    \Cref{lem:unitaryinclusion}(ii)   gives  $X_D =  X_{D'}$. For the remainder of the proof we denote this common set by $X$.

    \subsubsection*{Second step.} It consists in showing $X=uX$ for every $u \in G_\J$.

    Since $X$ is closed as the continuous pre-image of the intersection of two closed sets, by \Cref{prop:smartgroup}, it suffices to consider $u \in G_\J \cap R_\J$. Let  $u = u_{w,D}$ for some $D \in \supchi$ and $w \in \W$ such that $\tr(w\pi_Dw^*)>0$. Since $wD \in \supchi$, from the previous step we immediately get the inclusion  $u_{w,D}  X \subset X$, and so   \Cref{lem:unitaryinclusion}(ii) gives the desired equality $u_{w,D}  X = X$. Hence,  $X=uX$ for all $u \in G_\J \cap R_\J$. Note that, as a consequence, $[\hat x]_{G_\J} \subset X$ for any $\hat x \in X$.

    \subsubsection*{Third step, conclusion.} Fix any $\hat x \in X$ and  observe that $$F = \bigcup_{D\in \mathcal D_m} J_D X_D \  \supset    \bigcup_{D\in \supp{\chi_{{\rm inv}}}} \hspace{-5mm} J_D X
    \ \supset \bigcup_{D\in \supp{\chi_{{\rm inv}}}} \hspace{-5mm} J_D  [\hat  x]_{G_\J} = \supp \nu_{\hat x}.$$
    Then, $\supp\nu_{\hat x}\subset F$ and the theorem holds.
\end{proof}

\subsection{Equicontinuity of the kernel}\label{subs:equicont}

In what follows we show that $\Pi$ is equicontinuous, i.e.,  that the family  $(\Pi^nf)_{n\in \nnone}$ is (uniformly) equicontinuous on $\pcd$ for every $f \in  \mathcal C(\pcd)$.
Our approach consists in showing that this is the case when $f$ is a Lipschitz function and then using a density argument. 

On $\pcd$ we use the metric   $\delta(\hat x, \hat y) = \sqrt{1 - \vert\langle x,y\rangle \vert^2}$, which coincides with the gap metric $d_{\rm G}(\cc x,\cc y) =\|\pi_{\hat x} - \pi_{\hat y}\|  = \|x \wedge y\| $ (recall that $x,y$ stand for unit norm representatives of $\hat x, \hat y$ respectively). Also, recall the notation  $\V_n = \{v_n\cdots v_1 \colon (v_1, \ldots, v_n)\in \supp \mutensorn \}$ and observe that it can be rewritten as $\V_n = \{W_n(\omega) \colon \omega \in \Omega, \pi_n(\omega) \in \supp\mutensorn \}$. We repeatedly make use of the stochasticity condition, which in the current context is expressed as  $\int_{\Omega}  W_n^*W_n      \d\mutensorn = \id_{\cc^d}$. 

The following three lemmas establish the equicontinuity property for Lipschitz functions. Similar estimations are made in  \cite[Lemma 4.5]{BHP24} under the assumption of purification (i.e., $r_m =1$). Since we work without this assumption, some bounds need to be adapted.

\begin{lemma}\label{lem:eqM}
    Let  $f \in \mathcal C(\pcd)$ be  Lipschitz. Let  $\hat x, \hat y \in \pcd$ and $n \in \nnone$. Then
    $$\int\limits_{W_n \in A} \left| f(W_n \cdot \hat x) - f(W_n \cdot \hat y)\right| \cdot  \|W_n x\|^2 \d\mutensorn
    \leq  L  \delta(\hat x, \hat y),$$
    where $A = \{w \in \V_n \colon \|w y\|\geq  \|w x\|\}$ and   $L$ is the Lipschitz constant of $f$.
\end{lemma}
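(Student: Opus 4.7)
The plan is to reduce the integral to something controlled by the stochasticity condition $\int_{\Omega} W_n^* W_n \, \d\mutensorn = \id_{\cc^d}$, which is the only "global" input we have. First, I would exploit the Lipschitz property of $f$ to pass to the Fubini--Study metric on $\pcd$, writing
\[
\bigl|f(W_n\cdot \hat x)-f(W_n\cdot \hat y)\bigr|\le L\,\delta(W_n\cdot \hat x, W_n\cdot \hat y) = L\,\frac{\|W_n x\wedge W_n y\|}{\|W_n x\|\,\|W_n y\|},
\]
where the last equality follows from the identification $\delta(\hat a,\hat b)=\|a\wedge b\|$ for unit vectors. Substituting this into the left-hand side and using that on $A$ we have $\|W_n x\|/\|W_n y\|\le 1$, the integrand collapses to $L\,\|W_n x\|\,\|W_n x\wedge W_n y\|/\|W_n y\|\le L\,\|W_n x\|\cdot\bigl(\|W_n x\wedge W_n y\|/\|W_n x\|\bigr)$, so that the integral is bounded by
\[
L\int_{W_n\in A}\|W_n x\|\cdot\frac{\|W_n x\wedge W_n y\|}{\|W_n x\|}\,\d\mutensorn.
\]

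Next, I would interpret the factor $\|W_n x\wedge W_n y\|/\|W_n x\|$ geometrically as the distance from $W_n y$ to the line $\cc W_n x$, i.e., as $\|W_n y - \alpha_n W_n x\|$ with $\alpha_n = \langle W_n x,W_n y\rangle/\|W_n x\|^2$ the orthogonal projection coefficient. Since $\alpha_n$ minimizes $\beta\mapsto \|W_n y-\beta W_n x\|$, we have the deterministic bound
\[
\frac{\|W_n x\wedge W_n y\|}{\|W_n x\|}=\|W_n(y-\alpha_n x)\|\le \|W_n(y-\langle x,y\rangle x)\|,
\]
obtained by specializing $\beta=\langle x,y\rangle$; the key point is that the right-hand side is $\|W_n z\|$ for the \emph{deterministic} vector $z=y-\langle x,y\rangle x$ of norm $\delta(\hat x,\hat y)$.

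The rest is Cauchy--Schwarz followed by two applications of stochasticity. Writing the integrand as a product of $\|W_n x\|$ and $\|W_n z\|$ (after dropping the indicator $A$ upwards), Cauchy--Schwarz gives
\[
\int \|W_n x\|\,\|W_n z\|\,\d\mutensorn\le\Bigl(\int\|W_n x\|^2\d\mutensorn\Bigr)^{1/2}\Bigl(\int\|W_n z\|^2\d\mutensorn\Bigr)^{1/2}.
\]
By the stochasticity condition, $\int\|W_n x\|^2\d\mutensorn=\langle x,\id x\rangle=1$ and $\int\|W_n z\|^2\d\mutensorn=\|z\|^2=\delta(\hat x,\hat y)^2$. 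Combining everything yields the claimed bound $L\,\delta(\hat x,\hat y)$. The only mildly delicate point is bookkeeping the cancellation of norms so that the factor $\|W_n x\|^2$ from the weight is used exactly to supply the first Cauchy--Schwarz factor (together with the ratio bound on $A$), rather than being wasted; no real obstacle is expected.
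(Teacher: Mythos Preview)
Your proof is correct and follows essentially the same route as the paper's: both use the Lipschitz bound, the inequality $\|W_n y\|\ge\|W_n x\|$ on $A$, and then reduce to $\|W_n x\|\,\|W_n z\|$ with $z=y-\langle x,y\rangle x$ before applying Cauchy--Schwarz and stochasticity. The only cosmetic difference is that the paper reaches this via the wedge identity $W_n x\wedge W_n y = W_n x\wedge W_n z$ followed by $\|a\wedge b\|\le\|a\|\,\|b\|$, whereas you phrase the same step through the distance-minimizing property of orthogonal projection.
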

\begin{proof}
    Fix $\hat x,\hat y$. Let $y^\perp=y -  \langle x,y\rangle x$ and $\tilde{y}^\perp$ its normalization. Then, $\|y^\perp\|=\delta(\hat x,\hat y)$. Using the linearity and $x \wedge x  =0$, for any $w \in A$
    \begin{align*}
        \|  w  x \wedge  w    y \|=\|wx\wedge w y^\perp\|=\delta(\hat x,\hat y)\| w x\wedge w\tilde y^\perp\|.
    \end{align*}
    Assume $ w x \neq 0$. Then  $\|w y\| \geq \|wx\| >0$,    so both $w \cdot \hat x$ and $w \cdot \hat y$ are well defined. It follows that
    \begin{align*}
        \delta(w \cdot \hat x, w \cdot \hat y)
        =   \frac{ \|  w   x\wedge w y\|}{ \|w  x\|\|w  y\| } 
        \leq
        \frac{ \|  w   x\wedge w \tilde y^\perp \|}{ \|w  x\|^2 }\delta({\hat x, \hat y}).
    \end{align*}
    Thus, using the Lipschitz property of $f$,
    \begin{align}\label{eq:equicont11}
        | f(w \cdot \hat x) - f(w \cdot \hat y)| \,\|w  x\|^2  \leq L \delta(w \cdot \hat x, w \cdot \hat y) \|w  x\|^2  \leq
        L \delta(\hat x, \hat y)     \|  w   x\wedge w \tilde y^\perp\|.
    \end{align}
    Then, Cauchy-Schwartz inequality and the stochasticity condition imply  
    \begin{align}  \nonumber
        \int_{\Omega}\|  W_n  x \wedge  W_n  y^\perp\| \d\mutensorn
        &  \leq  \int_{\Omega} \|  W_n   x\| \,\| W_n y^\perp\| \d\mutensorn
        \\  & \nonumber \leq
        \left(\ \int_{\Omega} \| W_n   x\|^2  \d\mutensorn\right)^{ \!\frac 12}   \left(\ \int_{\Omega} \|  W_n y^\perp \|^2  \d\mutensorn \right)^{\!   \frac 12}
        \\  &  =  \label{eq:equicont2222} 
        \bigg(\int_{\Omega} \langle x,W_n^*W_nx\rangle   \d\mutensorn \bigg)^{   \frac 12}  \bigg(\int_{\Omega} \langle \tilde y^\perp, W_n^*W_n\tilde y^\perp\rangle   \d\mutensorn   \bigg)^{   \frac 12} 
       =  1.
    \end{align}
    It remains to note that both $W_n \cdot \hat x$ and $W_n \cdot \hat y$ are almost surely well defined with respect to the measure $\|W_nx\|^2\d\mutensorn$ on $A$, so \Cref{eq:equicont11,eq:equicont2222} imply
    \begin{align*}\int\limits_{W_n \in A} \left| f(W_n \cdot \hat x) - f(W_n \cdot \hat y)\right|\, \|W_n x&\|^2 \d\mutensorn 
        \leq     L \delta(\hat x, \hat y),
    \end{align*}
    which concludes the proof.
\end{proof}

\begin{lemma}\label{lem:eqVn} Let $\hat x, \hat y \in \pcd$ and $n \in \nnone$. Then
    \begin{align*} \int_{\Omega}  \big|\|W_n x\|^2 - \|W_n y\|^2 \big|   \d\mutensorn
        \leq   2 \delta(\hat x, \hat y) .\end{align*}
\end{lemma}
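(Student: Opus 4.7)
\medskip

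\noindent
\textbf{Proof plan.} The natural strategy is to rewrite the integrand as a trace involving $W_n^*W_n$ and the projector difference $\pi_{\hat x}-\pi_{\hat y}$, so that the stochasticity condition $\int_\Omega W_n^*W_n\,\d\mutensorn=\id_{\cc^d}$ can be applied after a suitable spectral decomposition. Concretely, I would write
\[
\|W_n x\|^2-\|W_n y\|^2 \;=\; \tr\!\big(W_n^*W_n\,(\pi_{\hat x}-\pi_{\hat y})\big),
\]
and then analyze the rank-two self-adjoint operator $P:=\pi_{\hat x}-\pi_{\hat y}$.

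\medskip

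\noindent
The next step is to diagonalize $P$. Since $P$ has rank at most two, is Hermitian, and satisfies $\tr(P)=0$, its nonzero eigenvalues are $+\lambda$ and $-\lambda$ for some $\lambda\ge 0$. A direct computation (or the identity $\|P\|=\delta(\hat x,\hat y)$ recalled just before \Cref{lem:eqM}) gives $\lambda=\delta(\hat x,\hat y)$. Hence there exists an orthonormal pair $(e_+,e_-)\in(\cc^d)^2$ such that
\[
P \;=\; \delta(\hat x,\hat y)\,\big(\pi_{\hat{e}_+}-\pi_{\hat{e}_-}\big).
\]

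\medskip

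\noindent
Substituting this decomposition, $\tr(W_n^*W_n P)=\delta(\hat x,\hat y)\bigl(\|W_ne_+\|^2-\|W_ne_-\|^2\bigr)$, so by the triangle inequality
\[
\big|\|W_n x\|^2-\|W_n y\|^2\big| \;\le\; \delta(\hat x,\hat y)\,\big(\|W_ne_+\|^2+\|W_ne_-\|^2\big).
\]
Integrating with respect to $\mutensorn$ and applying the stochasticity condition to the unit vectors $e_\pm$ (i.e.\ $\int_\Omega \|W_n e_\pm\|^2\d\mutensorn=\langle e_\pm,e_\pm\rangle=1$) gives the desired bound $2\,\delta(\hat x,\hat y)$.

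\medskip

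\noindent
There is no genuine obstacle: the whole argument is essentially linear algebra plus one application of stochasticity. The only point requiring a (brief) verification is the spectral claim that the nonzero eigenvalues of $\pi_{\hat x}-\pi_{\hat y}$ are $\pm\,\delta(\hat x,\hat y)$, which follows from either a direct $2\times 2$ computation in $\mathrm{span}(x,y)$ or from the identity $\|\pi_{\hat x}-\pi_{\hat y}\|=\delta(\hat x,\hat y)$ already used in the metric $\delta=d_{\rm G}$ discussed earlier. Using an approach based on writing $\|W_nx\|^2-\|W_ny\|^2$ as a polarization identity involving $W_n(x-y)$ leads to a worse constant (of order $2\sqrt 2$ or $4$), so the spectral decomposition route is important for obtaining the sharp constant $2$ stated in the lemma.
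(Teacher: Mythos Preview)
Your proof is correct and follows essentially the same approach as the paper: both rewrite the integrand as $\tr(W_n^*W_n(\pi_{\hat x}-\pi_{\hat y}))$, bound it via the spectral decomposition of $P=\pi_{\hat x}-\pi_{\hat y}$, and then apply the stochasticity condition. The only cosmetic difference is that the paper phrases the middle step abstractly as $|\tr(W_n^*W_nP)|\le\tr(W_n^*W_n|P|)$ and then integrates to obtain $\|P\|_{\rm tr}\le 2\|P\|=2\delta(\hat x,\hat y)$, whereas you make the eigenvectors $e_\pm$ explicit; these are the same computation.
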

\begin{proof} 
 Note that $\big| \|w x\|^2 - \|w y\|^2 \big|     =  | \tr(w^* w (\pi_{\hat x} - \pi_{\hat y})) |   \leq  \tr(w^* w  |  \pi_{\hat x} - \pi_{\hat y}|)$ for all $w \in \V_n$.  
Then, using the stochasticity condition, 
    \begin{align*} \int_{\Omega}   \big|\|W_n x\|^2 - \|W_n y\|^2 \big|   \d\mutensorn 
        \leq \tr\left( \int_{\Omega}  W_n^*W_n      \d\mutensorn |  \pi_{\hat x} - \pi_{\hat y} | \right)  =   \|\pi_{\hat x} - \pi_{\hat y}\|_{\rm tr}.\end{align*}
Since $\operatorname{rank}(\pi_{\hat x} - \pi_{\hat y}) \leq 2$,     the inequality   $\|A\|_{\rm tr} \leq \operatorname{rank}(A) \|A\|$  concludes the proof. 
\end{proof}

\begin{lemma}\label{lem:eqLip}
    Let  $f \in \mathcal C(\pcd)$ be  Lipschitz. Let $\hat x, \hat y \in \pcd$ and $n \in \nnone$. Then
    $$ |\Pi^n f(\hat x) - \Pi^n f(\hat y)|< C \delta(\hat x, \hat y),$$
    where $C$ is a positive constant that depends only on   $f$.
\end{lemma}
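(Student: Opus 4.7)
The plan is to reduce the estimate to Lemmas \ref{lem:eqM} and \ref{lem:eqVn} via a splitting-and-interpolation argument over the set $A = \{w \in \V_n : \|wy\| \geq \|wx\|\}$ and its complement. Starting from
\[
\Pi^n f(\hat x) - \Pi^n f(\hat y) = \int_\Omega \bigl[f(W_n \cdot \hat x)\|W_n x\|^2 - f(W_n \cdot \hat y)\|W_n y\|^2\bigr]\,\d\mutensorn,
\]
I will split the domain of integration into $A$ and $A^c$, and on each piece insert an intermediate term so that the Lipschitz difference $f(W_n \cdot \hat x) - f(W_n \cdot \hat y)$ ends up weighted by the \emph{smaller} of $\|W_n x\|^2$ and $\|W_n y\|^2$, which is the weight needed to invoke Lemma \ref{lem:eqM}. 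Concretely, on $A$ add and subtract $f(W_n \cdot \hat y)\|W_n x\|^2$; on $A^c$ add and subtract $f(W_n \cdot \hat x)\|W_n y\|^2$.

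This produces four terms. Two of them are Lipschitz-difference integrals weighted by the smaller norm: on $A$ the weight is $\|W_n x\|^2$, bounded by $L\,\delta(\hat x, \hat y)$ directly by Lemma \ref{lem:eqM}; on $A^c$ the weight is $\|W_n y\|^2$, bounded by $L\,\delta(\hat x, \hat y)$ by applying Lemma \ref{lem:eqM} with the roles of $x$ and $y$ exchanged, which is legitimate since $A^c \subset \{w \in \V_n : \|wx\| \geq \|wy\|\}$. The remaining two are weight-mismatch terms of the form $\int_B f(W_n \cdot \hat z)\bigl[\|W_n x\|^2 - \|W_n y\|^2\bigr]\,\d\mutensorn$ with $B \in \{A, A^c\}$ and $\hat z \in \{\hat x, \hat y\}$, bounded in absolute value by $\|f\|_\infty \int_B \bigl|\|W_n x\|^2 - \|W_n y\|^2\bigr|\,\d\mutensorn$. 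Summing the two pieces and applying Lemma \ref{lem:eqVn} gives a total contribution of at most $2\|f\|_\infty \cdot 2\delta(\hat x, \hat y)$, so altogether
\[
|\Pi^n f(\hat x) - \Pi^n f(\hat y)| \leq (2L + 4\|f\|_\infty)\,\delta(\hat x, \hat y),
\]
and one may take $C = 2L + 4\|f\|_\infty$, which depends only on $f$ and crucially not on $n$.

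The only care required is to verify that the algebraic manipulations make sense where $W_n x$ or $W_n y$ vanishes, so that $W_n \cdot \hat x$ or $W_n \cdot \hat y$ is undefined. On $\{W_n y = 0\} \cap A$ one has $\|W_n x\| \leq \|W_n y\| = 0$, and both integrands vanish under the standing convention that $f(W_n \cdot \hat \xi)\|W_n\xi\|^2 = 0$ whenever $W_n\xi = 0$; the analogous statement on $A^c$ handles the other degenerate set. No genuine obstacle is anticipated: the splitting is a routine trick and Lemmas \ref{lem:eqM} and \ref{lem:eqVn} already deliver sharp enough $n$-uniform bounds, so assembling them yields the claim directly.
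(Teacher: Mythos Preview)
Your proposal is correct and follows essentially the same approach as the paper: the same splitting over $A$ and $A^c$, the same add-and-subtract interpolation so that the Lipschitz difference is weighted by the smaller of the two norms, and the same appeal to Lemmas~\ref{lem:eqM} and~\ref{lem:eqVn}. The only cosmetic difference is the constant: the paper combines the two weight-mismatch integrals over $A$ and $A^c$ into a single integral over $\V_n$ before invoking Lemma~\ref{lem:eqVn}, obtaining $2(L+\|f\|_\infty)$ rather than your $2L+4\|f\|_\infty$, but this is immaterial.
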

\begin{proof}
    Let $A = \{w \in \V_n \colon \|w y\|\geq \|w x\|\}$   and consider the disjoint decomposition  $\V_n = A \cup A^c$. Then
    \begin{align*}
        |\Pi^nf(\hat x)-\Pi^nf(\hat y)|  \leq
        \int\limits_{W_n \in A^{\phantom{c}}} & \left|f(W_n \cdot \hat x)\|W_n x\|^2 - f(W_n \cdot \hat y)\|W_n y\|^2 \right|  \d\mutensorn
        \\
        +    \int\limits_{W_n \in A^c} & \left|f(W_n \cdot \hat x)\|W_n x\|^2 - f(W_n \cdot \hat y)\|W_n y\|^2 \right|  \d\mutensorn.
    \end{align*}
    The first term on the right-hand side above can be bounded as follows
    \begin{align*} \nonumber
        \int\limits_{W_n \in A}  \big|f(W_n \cdot \hat x)\|W_n x\|^2 -  f(&W_n \cdot \hat y) \|W_n y\|^2 \big|  \d\mutensorn 
        \\ &  \leq
        \int\limits_{W_n \in A} \left| f(W_n \cdot \hat x) - f(W_n\cdot \hat  y)\right| \cdot  \|W_n x\|^2  \d\mutensorn  \\ &  \qquad + \int\limits_{W_n \in A}\left|f(W_n\cdot \hat  y)\right| \cdot  \big|\| W_n x\|^2 - \|W_n y\|^2 \big|  \d\mutensorn
        \\ &  \leq
        \int\limits_{W_n \in A} \left| f(W_n \cdot \hat x) - f(W_n\cdot \hat  y)\right| \cdot  \|W_n x\|^2  \d\mutensorn  \\ &   \qquad + \|f\|_\infty \int\limits_{W_n \in A}  \big|\|W_n x\|^2 - \|W_n y\|^2 \big|  \d\mutensorn.
    \end{align*}
    Bounding  the other term in an analogous manner,
    \begin{align}
        |\Pi^nf(\hat x)-\Pi^nf(\hat y)|  \leq &
        \int\limits_{W_n \in A^{\phantom{c}}}  \left| f(W_n \cdot \hat x) - f(W_n\cdot \hat  y)\right| \cdot  \|W_n x\|^2  \d\mutensorn
        \label{eq:equicont:sum111}  \\  + & \int\limits_{W_n \in A^c} \left| f(W_n \cdot \hat y) - f(W_n\cdot \hat  x)\right| \cdot  \|W_n y\|^2  \d\mutensorn
        \label{eq:equicont:sum222} \\  + & \,\|f\|_\infty  \int\limits_{W_n \in \V_n}  \big|\|W_n x\|^2 - \|W_n y\|^2 \big|  \d\mutensorn. \label{eq:equicont:sum333}
    \end{align}
    
    Let $L$ denote  the Lipschitz constant of $f$.  Then \Cref{lem:eqM} implies each of \Cref{eq:equicont:sum111} and \Cref{eq:equicont:sum222} is bounded by $L  \delta(\hat x, \hat y)$. Then, \Cref{lem:eqVn} implies \Cref{eq:equicont:sum333} is upper bounded by $2\, \|f\|_\infty \,\delta(\hat x, \hat y)$.
    Therefore,
    $$
    |\Pi^nf(\hat x)-\Pi^nf(\hat y)|  \leq
    2 ( L    +   \|f\|_\infty )\, \delta(\hat x, \hat y), $$
    which concludes the proof.
\end{proof}
 
\begin{theorem}\label{thm:Piequicont}
    Let $f \in \mathcal C(\pcd)$. Then $(\Pi^n f)_{n \in \nnone}$ is equicontinuous.
\end{theorem}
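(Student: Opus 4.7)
The plan is to extend the Lipschitz case of \Cref{lem:eqLip} to all continuous functions by density, using that $\Pi$ is a Markov kernel and therefore a contraction in sup norm.

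First, I would recall that $\pcd$, equipped with the metric $\delta$, is a compact metric space, and so the space of Lipschitz functions $\mathrm{Lip}(\pcd) \subset \mathcal C(\pcd)$ is dense in $\mathcal C(\pcd)$ with respect to the uniform norm $\|\cdot\|_\infty$ (for instance by Stone--Weierstrass, or because Lipschitz functions separate points and contain the constants and form an algebra; alternatively one convolves with a Lipschitz mollifier). Second, since $\Pi$ is a Markov kernel, it satisfies $\|\Pi^n h\|_\infty \leq \|h\|_\infty$ for every $h \in \mathcal C(\pcd)$ and every $n \in \nnone$.

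Now fix $f \in \mathcal C(\pcd)$ and $\varepsilon > 0$. Choose a Lipschitz $g \in \mathcal C(\pcd)$ with $\|f - g\|_\infty < \varepsilon/3$. Then for all $n \in \nnone$ and all $\hat x, \hat y \in \pcd$,
\[
|\Pi^n f(\hat x) - \Pi^n f(\hat y)| \leq 2\|\Pi^n(f-g)\|_\infty + |\Pi^n g(\hat x) - \Pi^n g(\hat y)| \leq \tfrac{2\varepsilon}{3} + |\Pi^n g(\hat x) - \Pi^n g(\hat y)|.
\]
By \Cref{lem:eqLip}, there exists a constant $C_g > 0$ (depending only on $g$) such that $|\Pi^n g(\hat x) - \Pi^n g(\hat y)| \leq C_g\, \delta(\hat x, \hat y)$ uniformly in $n$. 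Choosing $\eta = \varepsilon/(3 C_g)$, we obtain $|\Pi^n f(\hat x) - \Pi^n f(\hat y)| < \varepsilon$ whenever $\delta(\hat x, \hat y) < \eta$, uniformly in $n \in \nnone$. This is exactly the (uniform) equicontinuity of $(\Pi^n f)_{n \in \nnone}$.

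There is no real obstacle here: the technical work was already done in \Cref{lem:eqM,lem:eqVn,lem:eqLip}. The only step requiring a sentence of justification is the density of $\mathrm{Lip}(\pcd)$ in $\mathcal C(\pcd)$, which follows from compactness of $\pcd$.
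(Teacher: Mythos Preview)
Your proof is correct and follows essentially the same approach as the paper: both use the density of Lipschitz functions in $\mathcal C(\pcd)$, the contraction property $\|\Pi^n h\|_\infty \leq \|h\|_\infty$, and \Cref{lem:eqLip} via the same $\varepsilon/3$ decomposition. The paper's version is slightly terser (it does not pause to justify density), but the argument is identical.
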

\begin{proof}
    
    Fix   $ \varepsilon > 0$.
    Let $f_L$ be a Lipschitz function satisfying $\|f-f_L\|_\infty < \varepsilon/3.$
    Let $\hat x, \hat y \in \pcd$ and $n \in \nn$. We have
    \begin{align*}
        |\Pi^n f(\hat x) - \Pi^n f(\hat y)|
        &  \leq
        |\Pi^n (f-f_L)(\hat x)|  + |\Pi^n (f -f_L)(\hat y)| + |\Pi^n f_L (\hat x) - \Pi^n f_L(\hat y)|.
    \end{align*}
    Since Markov kernels are contractions on the Banach space of bounded continuous functions,
    $\|\Pi\|_\infty = 1$,
    so
    $$|\Pi^n (f-f_L)(\hat x)|  \leq \|\Pi^n (f-f_L)\|_\infty \leq  \|f-f_L\|_\infty < \varepsilon/3,$$    and, analogously,  $|\Pi^n (f-f_L)(\hat y)| < \varepsilon/3  $.
    \Cref{lem:eqLip} implies the last term is bounded as
    $$|\Pi^n f_L (\hat x) - \Pi^n f_L(\hat y)| \leq C \delta(\hat x, \hat y),$$
    where the constant $C$ depends only on  $f_L$.
    Therefore, for   $\delta=\varepsilon/(3 C)$,
      $$\delta(\hat x, \hat y) <  \delta \ \ \Longrightarrow\ \
    |\Pi^n f(\hat x) - \Pi^n f(\hat y)| < \varepsilon$$
    for any $n \in \nn$ and the theorem holds.
\end{proof}

\subsection{Characterization of ergodic measures}\label{subs:ergo}

Finally, we are in position to prove the main result of this section, which is the content of \Cref{thm:intro-ergodic}. Recall that for $\hat x \in \pcr$ and a family of isometries $\J$ we denote $\nu_{\hat x,\J} = (\Psi_\J)_{\star}(\chi_{\rm inv} \otimes m_{\hat x, \J}  )$ and $\Upsilon_\J = \{\nu_{\hat x,\J}\colon \hat x \in \pcr\}$.

\begin{theorem}\label{thm:charPiinvmeas} 	Let $\J$ be a minimal family. Then $\Upsilon_\J$ is the set of   $\Pi$-ergodic measures.  
\end{theorem}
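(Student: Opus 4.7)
The strategy is to combine Raugi's theorem (Theorem~\ref{thm:raugi}) with the two preceding structural results: the characterization of $\Pi$-minimal subsets as exactly the sets $\supp\nu_{\hat x,\J}$ for $\hat x \in \pcr$ (Theorem~\ref{thm:piminimalsets}), and the equicontinuity of $\Pi$ (Theorem~\ref{thm:Piequicont}). I would organize the argument into a double inclusion, showing first that every $\Pi$-ergodic measure belongs to $\Upsilon_\J$, and then conversely that every element of $\Upsilon_\J$ is $\Pi$-ergodic.

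For the first inclusion, I would fix a $\Pi$-ergodic probability measure $\nu$ on $\pcd$. Since $\Pi$ is equicontinuous by Theorem~\ref{thm:Piequicont}, Raugi's theorem (Theorem~\ref{thm:raugi}(i)) tells us $\supp \nu$ is a $\Pi$-minimal subset of $\pcd$. Invoking Theorem~\ref{thm:piminimalsets}, we obtain $\hat x \in \pcr$ such that $\supp \nu = \supp \nu_{\hat x,\J}$. Now $\nu_{\hat x,\J}$ is $\Pi$-invariant by Proposition~\ref{prop:productmeas} and is supported on this same minimal set, so by Raugi's theorem (Theorem~\ref{thm:raugi}(ii)), which gives uniqueness of the invariant probability measure on a minimal set, we conclude $\nu = \nu_{\hat x,\J} \in \Upsilon_\J$.

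For the converse, fix $\hat x \in \pcr$. Theorem~\ref{thm:piminimalsets} (specifically its $(\Leftarrow)$ direction, which I believe is established in the proof) guarantees that $\supp \nu_{\hat x,\J}$ is itself a $\Pi$-minimal set. Then Theorem~\ref{thm:raugi}(ii) gives that $\supp \nu_{\hat x,\J}$ carries a \emph{unique} $\Pi$-invariant probability measure, which must coincide with $\nu_{\hat x,\J}$. A $\Pi$-invariant probability measure that is the only invariant measure on its support cannot be decomposed as a non-trivial convex combination of other $\Pi$-invariant probability measures (any component would have support contained in $\supp \nu_{\hat x,\J}$, forcing it to equal $\nu_{\hat x,\J}$); hence $\nu_{\hat x,\J}$ is an extreme point of the convex set of $\Pi$-invariant probability measures, i.e.\ $\Pi$-ergodic.

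I do not expect serious obstacles, since all the hard work is already packaged into Theorems~\ref{thm:piminimalsets}, \ref{thm:Piequicont}, \ref{thm:raugi} and Proposition~\ref{prop:productmeas}. The only subtle point to verify carefully is the standard but not entirely trivial fact that the unique $\Pi$-invariant probability measure on a minimal set is automatically ergodic; I would spell this out explicitly using the ergodic decomposition argument sketched above, namely that any ergodic component of $\nu_{\hat x,\J}$ has support inside $\supp\nu_{\hat x,\J}$ and is $\Pi$-invariant, hence equals $\nu_{\hat x,\J}$ by the uniqueness clause.
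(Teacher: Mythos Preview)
Your proposal is correct and follows essentially the same approach as the paper's own proof: both arguments combine Raugi's Theorem~\ref{thm:raugi} (applicable via Theorem~\ref{thm:Piequicont}) with the characterization of $\Pi$-minimal sets in Theorem~\ref{thm:piminimalsets} and Proposition~\ref{prop:productmeas}, proceeding by the same double inclusion. Your write-up even makes the ``uniquely invariant on its support implies ergodic'' step slightly more explicit than the paper does.
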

\begin{proof}
We  show that a probability measure $\nu$ on $\pcd$ is $\Pi$-ergodic if and only if $\nu   = \nu_{\hat x, \J}$ for some $\hat x \in \pcr$. 
The result follows from Raugi's \Cref{thm:raugi}, which can be applied thanks to the equicontinuity of $\Pi$ (\Cref{thm:Piequicont}), and relies on the characterization of $\Pi$-minimal subsets of $\pcd$ (\Cref{thm:piminimalsets}).  

 \begin{itemize}\itemsep=1mm   
    \item[($\Rightarrow$)]
    Let $\nu$ be $\Pi$-ergodic. By \Cref{thm:raugi}, $\supp \nu$ is a $\Pi$-minimal set that carries a unique $\Pi$-invariant probability measure. By \Cref{thm:piminimalsets}, there exists $\hat x \in \pcr$ such that $\supp \nu = \supp \nu_{\hat x, \J}$ and \Cref{prop:productmeas} implies $\nu_{\hat x, \J}$ is $\Pi$-invariant. Therefore, $\nu = \nu_{\hat x, \J}$, as desired.
    
    \item[($\Leftarrow$)]
    Let $\hat x \in \pcr$. By \Cref{thm:piminimalsets}, $\supp \nu_{\hat x, \J}$ is a $\Pi$-minimal subset of $\pcd$. Again by \Cref{thm:raugi}, this set carries a unique $\Pi$-invariant probability measure, and this measure is $\nu_{\hat x, \J}$, see   \Cref{prop:productmeas}. As a consequence, $\nu_{\hat x, \J}$ is uniquely invariant on its support, hence it is ergodic on $\pcd$, as desired. \qedhere
 \end{itemize}   
\end{proof}

From the characterization of $\Pi$-ergodic measures we can immediately deduce the following corollary.

\begin{corollary}\label{cor:Piinv}Let  $\J$ be a minimal family.
    A   probability measure   $\nu$ on $\pcd$ is  $\Pi$-invariant if and only if 
    $$\nu =   \int_{ \P(\cc^{r_m})} \nu_{\hat x,\J}  \   \mathrm{d}\lambda(\hat x)    = (\Psi_{\J})_\star\left(\chi_{{\rm inv}} \otimes \int_{ \P(\cc^{r_m})} m_{\hat x,\J}  \   \mathrm{d}\lambda(\hat x)\right)    $$ 
    for some probability measure $\lambda$ on $ \P(\cc^{r_m})$.  
\end{corollary}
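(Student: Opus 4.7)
The plan is to establish both implications by analysing the map
\[T:\mathcal P(\pcr)\longrightarrow\mathcal M(\pcd),\qquad T(\lambda)=\int_\pcr \nu_{\hat x,\J}\,\mathrm d\lambda(\hat x),\]
and showing that its image is precisely the convex compact set $\mathcal I$ of $\Pi$-invariant Borel probability measures on $\pcd$. The second equality in the statement of the corollary will then follow at once from Fubini's theorem applied to the defining formula $\nu_{\hat x,\J}=(\Psi_\J)_\star(\chi_{\rm inv}\otimes m_{\hat x,\J})$.

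The ($\Leftarrow$) direction is immediate: since each $\nu_{\hat x,\J}$ is $\Pi$-invariant by \Cref{prop:productmeas} and the action of $\Pi$ on measures is linear, $T(\lambda)$ is $\Pi$-invariant for every $\lambda\in\mathcal P(\pcr)$. This already yields $T(\mathcal P(\pcr))\subset\mathcal I$.

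For the ($\Rightarrow$) direction, I would first verify that $T$ is weak-$*$ continuous. The key point is that for every $f\in\mathcal C(\pcd)$, the function
\[g_f(\hat x):=\int_\pcd f\,\mathrm d\nu_{\hat x,\J}=\int_{\D}\int_{G_\J}f(J_Du\cdot\hat x)\,\mathrm{dH}_{G_\J}(u)\,\mathrm d\chi_{\rm inv}(D)\]
is continuous in $\hat x$: for each fixed $(D,u)$ the map $\hat x\mapsto f(J_D u\cdot\hat x)$ is continuous, and since the integrand is dominated by the constant $\|f\|_\infty$, dominated convergence applies. Weak-$*$ continuity of $T$ then follows from $\int f\,\mathrm dT(\lambda)=\int g_f\,\mathrm d\lambda$. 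Since $\mathcal P(\pcr)$ is weak-$*$ compact, $T(\mathcal P(\pcr))$ is a weak-$*$ closed convex subset of $\mathcal I$ that contains $T(\delta_{\hat x_0})=\nu_{\hat x_0,\J}$ for every $\hat x_0\in\pcr$, hence contains $\Upsilon_\J$. By \Cref{thm:charPiinvmeas}, $\Upsilon_\J$ is exactly the set of extreme points of $\mathcal I$, so the Krein--Milman theorem applied to the locally convex space of finite signed Borel measures on $\pcd$ yields $\mathcal I\subset\overline{\mathrm{conv}}(\Upsilon_\J)\subset T(\mathcal P(\pcr))$, giving the desired surjectivity.

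The main step I expect to require care is the continuity of $g_f$; establishing it directly is what avoids having to invoke a measurable-selection argument to pull a Choquet barycentric measure from $\Upsilon_\J$ back to $\pcr$. Once $g_f$ is known to be continuous, the remainder of the argument is a clean Krein--Milman application that exploits the identification of the extreme points of $\mathcal I$ with $\Upsilon_\J$ provided by \Cref{thm:charPiinvmeas}.
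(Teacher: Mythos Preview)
Your argument is correct. The paper itself gives no proof of this corollary beyond the phrase ``from the characterization of $\Pi$-ergodic measures we can immediately deduce the following corollary,'' so the intended route is presumably the standard one: invoke the ergodic decomposition (equivalently Choquet's theorem, $\mathcal I$ being a metrizable simplex) to write any $\Pi$-invariant $\nu$ as a barycenter of a probability measure on $\Upsilon_\J$, and then pull that measure back to $\pcr$ via a Borel section of the continuous surjection $\hat x\mapsto\nu_{\hat x,\J}$.

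Your approach is genuinely different and somewhat more elementary. By establishing the continuity of $g_f$ (hence weak-$*$ continuity and affinity of $T$), you get that $T(\mathcal P(\pcr))$ is a compact convex set containing $\Upsilon_\J$; Krein--Milman then forces $\mathcal I=\overline{\mathrm{conv}}(\Upsilon_\J)\subset T(\mathcal P(\pcr))$, and the reverse inclusion is the easy direction. This bypasses both Choquet's theorem and any measurable-selection step, at the cost of checking the continuity of $\hat x\mapsto\nu_{\hat x,\J}$---which, as you note, is a routine dominated-convergence argument since $(D,u,\hat x)\mapsto f(J_Du\cdot\hat x)$ is jointly continuous and bounded. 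The Fubini justification of the second displayed equality is also fine. In short: the paper leaves the passage from ``ergodic measures are classified'' to ``all invariant measures are classified'' implicit, and your proof fills that gap by a cleaner route than the one the reader is probably expected to supply.
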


In what follows, we denote by $G_{\rm min}$ the subgroup of $\mathcal{SU}(d)$  induced by some minimal family. Let us recall that this group is unique up to unitary equivalence.
\Cref{cor:Piinv} implies $\Pi$ admits a unique invariant probability measure if and only if  $$[\hat x]_{G_{{\rm min}}} = \pcr$$ for some $\hat x \in \pcr$, which is equivalent to the transitivity of the action of $G_{{\rm min}}$ on $\pcr$.

By definition, $\mathcal{SU}(r_m)$ acts transitively on $\pcr$. There may exist proper subgroups of $\mathcal{SU}(r_m)$ that act transitively on $\pcr$. It was shown by Onishchik \cite[Theorem~6]{Oni} (see also \cite{Besse, Shankar}) that,  up to unitary equivalence, the only proper subgroup of $\mathcal{SU}(r_m)$ that acts transitively on $\pcr$ is the symplectic group ${\rm Sp}(r_m/2)$, known also as the hyperunitary group, provided that $r_m$ is even.  
 Let us recall that ${\rm Sp}(r_m/2)$    consists of $r_m \times r_m$ special unitary complex matrices $U$ satisfying $U^TJU=J$, where   $J = \begin{bsmallmatrix}
 0 & \id_{\cc^{r_m/2}} \\ 
  - \id_{\cc^{r_m/2}} & 0    
 \end{bsmallmatrix}$. From that follows next corollary, which is \Cref{thm:introuniq}.
\begin{corollary}\label{cor:uniquePiinvmeas}
    The measure $\nu_{\rm unif}$ is the unique $\Pi$-invariant probability measure on $\pcd$ if and only if   
    \begin{itemize}
        \item     $G_{\rm min} = \mathcal{SU}(r_m)$  if $r_m$ is odd,
         \item    $G_{\rm min} = \mathcal{SU}(r_m)$ or $G_{\rm min} = Q{\rm Sp}(r_m/2)Q^*$ for some $Q \in \mathcal U(r_m)$ if $r_m$ is even. 
    \end{itemize} 
\end{corollary}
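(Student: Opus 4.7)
The strategy is to combine \Cref{cor:Piinv} with Onishchik's classification \cite{Oni} of closed subgroups of $\mathcal{SU}(r_m)$ that act transitively on $\pcr$. First I would reduce the uniqueness of the $\Pi$-invariant probability measure to the transitivity of the $G_{\rm min}$-action on $\pcr$. Since $\Pi$-ergodic measures are the extreme points of the convex set of $\Pi$-invariant measures, $\Pi$ has a unique invariant probability measure iff it has a unique ergodic one. By \Cref{thm:charPiinvmeas}, the ergodic measures are $\Upsilon_\J = \{\nu_{\hat x,\J} : \hat x \in \pcr\}$ for any minimal family $\J$. Moreover, the analysis carried out in the proof of \Cref{thm:piminimalsets} shows that, for any $D \in \supchi$, the map $F \mapsto J_D^{-1}(F \cap \P(D))$ sends each $\Pi$-minimal subset of $\pcd$ to a $G_{\rm min}$-orbit in $\pcr$, and conversely $\supp \nu_{\hat x, \J} \cap \P(D) = J_D \cdot [\hat x]_{G_{\rm min}}$ recovers the minimal set from an orbit. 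Since distinct $\Pi$-minimal subsets are disjoint and $J_D$ induces an injection $\pcr \to \P(D)$, this yields a bijection between $\Pi$-minimal subsets of $\pcd$ and $G_{\rm min}$-orbits of $\pcr$. Hence uniqueness of the invariant measure is equivalent to transitivity of the $G_{\rm min}$-action on $\pcr$.

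Next I would invoke Onishchik's theorem to classify the closed subgroups of $\mathcal{SU}(r_m)$ acting transitively on $\pcr$. Since $\pcr$ is connected and $G_{\rm min}$ is a compact Lie group with finitely many connected components, a standard argument (the orbits of the identity component are closed and partition $\pcr$ into finitely many pieces) shows the identity component $G_{\rm min}^0$ also acts transitively. By Onishchik's theorem, $G_{\rm min}^0$ is, up to unitary conjugation, either $\mathcal{SU}(r_m)$ or $\mathrm{Sp}(r_m/2)$, the latter requiring $r_m$ to be even. In the first case $G_{\rm min} = \mathcal{SU}(r_m)$ directly. In the second, the maximality of $\mathrm{Sp}(r_m/2)$ as a proper closed subgroup of $\mathcal{SU}(r_m)$ forces $G_{\rm min}$ to be either $Q \mathrm{Sp}(r_m/2) Q^*$ for some $Q \in \Urm$ or the full $\mathcal{SU}(r_m)$.

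Finally, I would verify that in the transitive case the unique $\Pi$-invariant measure is exactly $\nu_{\rm unif}$. Because $G_{\rm min} \subset \mathcal{SU}(r_m)$, the Fubini--Study measure $\mathrm{Unif}_{\pcr}$ is $G_{\rm min}$-invariant. The transitivity of $G_{\rm min}$ on $\pcr$, combined with the uniqueness of the invariant probability measure on a compact homogeneous space, implies $m_{\hat x, \J} = \mathrm{Unif}_{\pcr}$ for every $\hat x \in \pcr$; hence $\nu_{\hat x, \J} = (\Psi_\J)_\star(\chi_{\rm inv} \otimes \mathrm{Unif}_{\pcr}) = \nu_{\rm unif}$, establishing both uniqueness and the identification. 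The main point requiring care is the clean identification of $\Pi$-minimal subsets with $G_{\rm min}$-orbits of $\pcr$; this is essentially contained in the first step of the proof of \Cref{thm:piminimalsets} but must be isolated here as a genuine bijection. The remaining inputs — the passage to the identity component of $G_{\rm min}$ and the maximality of $\mathrm{Sp}(r_m/2)$ in $\mathcal{SU}(r_m)$ — are standard.
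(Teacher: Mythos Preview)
Your proposal is correct and follows essentially the same route as the paper: reduce uniqueness of the $\Pi$-invariant measure to transitivity of the $G_{\rm min}$-action on $\pcr$ (the paper does this in one line directly from \Cref{cor:Piinv}, whereas you spell out the bijection between $\Pi$-minimal sets and $G_{\rm min}$-orbits via the first step of \Cref{thm:piminimalsets}), then apply Onishchik's classification. Your passage through the identity component $G_{\rm min}^0$ and the maximality of $\mathrm{Sp}(r_m/2)$ in $\mathcal{SU}(r_m)$ is more careful than the paper, which simply cites Onishchik for arbitrary closed subgroups, but the overall argument is the same.
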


\begin{remark}
In principle, it is possible to obtain $\Upsilon_\J$ equal to the set of all $\Pi$-ergodic measures even if the family $\J= \{J_D\}_{D \in \mathcal D_m}$ used for the construction is not minimal. However, if  the trajectories on $\pcr$ allow us to distinguish between    $G_\J$ and  $G_{\J_{\rm min}}$, where $\J_{\rm min}$  is a minimal family such that $G_{\J_{\rm min}} \subset G_\J$, then $\Upsilon_\J$ contains a non-ergodic measure. More precisely, if   $\hat x_0 \in \pcr$ is such that $[\hat 
 x_0]_{G_{\J_{\rm min}}} \subsetneq [\hat  x_0]_{G_\J}$, then $\nu_{\hat  x_0 , \J} $ is not $\Pi$-ergodic.  
 To see this, consider
$$ \nu = \int_{G_\J} \nu_{u \cdot \hat x_0,\, \J_{\rm min}} \d H_{G_\J}(u).$$ 
By direct calculation we can check that $\nu   = \nu_{\hat x_0, \J}$. Indeed, using the invariance of Haar measure, for any $f \in \mathcal C(\pcd)$ we obtain  
 \begin{align*}
    \int_{\pcd} f(\hat y) \d  \nu (\hat y)  
    & = \int_{\D} \int_{G_{\J_{\rm min}}}  \int_{G_\J} f( J_D \tilde u  u \cdot \hat  x_0) \d H_{G_\J}( u) \d H_{G_{\J_{\rm min}}}( \tilde u) \d \chi_{\rm inv}(D)
 \\ & = \int_{\D} \int_{G_\J}  f(J_D    u \cdot \hat  x_0) \d H_{G_\J}( u)   \d \chi_{\rm inv}(D)
 \\ & =   \int_{\pcd} f(\hat y) \d \nu_{\hat x_0,\J} (\hat y). 
\end{align*} 
Therefore, $\nu_{\hat x_0,\J}$ can be non-trivially  decomposed into a convex combination of $\Pi$-ergodic measures of the form $\nu_{u \cdot \hat x_0,\J_{\rm min}}$, which means that $\nu_{\hat x_0,\J}$ is not $\Pi$-ergodic, as claimed.  
\end{remark}

\section{Examples}\label{sec:examples}
\addtocontents{toc}{\protect\setcounter{tocdepth}{1}}

 \begin{remark}
    The projective space $\pc{2}$ can be represented on the unit sphere of $\rr^3$ using the mapping  $\hat x\mapsto (\tr(\pi_{\hat x}\sigma_x),\tr(\pi_{\hat x}\sigma_y),\tr(\pi_{\hat x}\sigma_z))$ with the three Pauli matrices
    $$\sigma_x=\begin{pmatrix}
        0&1\\1&0
    \end{pmatrix},\quad \sigma_y=\begin{pmatrix}
        0&-\i\\\i&0
    \end{pmatrix}\quad\mbox{and}\quad \sigma_z=\begin{pmatrix}
        1&0\\0&-1
    \end{pmatrix}.$$
    In this context, the unit sphere in $\rr^3$ is called the Bloch sphere. We will use it to illustrate our examples.
 \end{remark}

\subsection*{Example 1.}\label{example1} (Two disjoint Bloch spheres.)
Let us fix the unitaries  $u_1, u_2, u_3, u_4 \in \mathcal{SU}(2)$ and consider the two matrices:
$$v_1 =   \begin{bmatrix}
    0 & \sqrt{\frac{1}{ 3}} u_1  \\
    \sqrt{\frac{1}{ 4}} u_2 &0
\end{bmatrix} \ \ \textrm{and} \ \ v_2 =  \begin{bmatrix}
    0 & \sqrt{\frac{2}{ 3}} u_3  \\
    \sqrt{\frac{3}{ 4}} u_4 & 0
\end{bmatrix}.$$
 
\noindent
\textbf{(1) Irreducibility.}  We show that $\mu = \delta_{v_1} + \delta_{v_2}$ is irreducible  if   $u_1u_2$ and $u_3u_4$  do not share an eigenbasis.   
To this end, observe that $\id/4$ is a fixed point of the quantum channel $\phi(\rho) = v_1\rho v_1^* + v_2\rho v_2^*$, so from \cite[Theorem~6.13]{Wolf} we know that  $\mu$ is irreducible iff the only linear operators on $\cc^4$ that commute  with both Kraus operators $v_1$ and $v_2$ and also with their adjoints are the multiples of identity, i.e., 
$$\{M \in \mathcal M_4\: \colon \:  [M,v_1]=[M, v_1^*] = [M,v_2]=[M, v_2^*] = 0\} = \{\alpha \id_4 \colon \alpha \in \cc\}.$$
To see that this condition is satisfied if  $u_1u_2$ and $u_3u_4$  do  not share an eigenbasis, consider $M = \begin{bsmallmatrix}
    A & B \\ C & D
\end{bsmallmatrix}$ and assume that $M$ commutes with $v_1$, $v_2$, $v_1^*$, $v_2^*$. Then 
$$\sqrt{\tfrac{1}{4} } u_2B  =   \sqrt{\tfrac{1}{3} }Cu_1\ \  \textrm{  and }\ \ \sqrt{\tfrac{1}{3} } Bu_1^*  =   \sqrt{\tfrac{1}{4} } u_2^*C,$$
which implies that $u_2B = \sqrt{\tfrac{4}{3} }Cu_1 =   \sqrt{\tfrac{3}{4} }Cu_1$, thus also $B = C = 0$.
Moreover,
\begin{equation}\label{AD} 
        u_1D = Au_1  \ \ \textrm{ and }
        \ \    u_2A = Du_2 
\end{equation}
which gives 
$ u_1u_2A = u_1Du_2 = Au_1u_2$, 
that is
$
[A, u_1u_2]  =  0.$
Analogously,  $[A, u_3u_4]   = 0$.  

Let us now assume that $u_1u_2$ and $u_3u_4$ do not have a common eigenbasis.
Then both $u_1u_2$ and $u_3u_4$ have distinct eigenvalues (i.e.,  neither $u_1u_2$ nor $u_3u_4$ is a multiple of identity). From the commutativity relations
$[A, u_1u_2]  =  [A, u_3u_4]   = 0$, it follows that  $A$ shares an eigenbasis with both  $u_1u_2$ and $u_3u_4$, which implies that $A = \alpha \id_2$ for some $\alpha \in \cc$. From \Cref{AD} it then follows that $D = \alpha \id_2$,
thus also
$M = \alpha \id_4$, as desired.

\smallskip 

For the remainder of this example we assume that $u_1, u_2, u_3, u_4$ are so chosen that the irreducibility of $\mu$ holds. 

\medskip
 
\noindent   \textbf{(2) Dark spaces.}   One can immediately identify the following two dark subspaces:
$$D_a = \left\{ 
[x,y,0,0]^T
\colon x,y \in \cc\right\}\ \  \textrm{and} \ \  D_b = \left\{ 
[0,0,x,y]^T
\colon x,y \in \cc\right\}.$$
We see that $D_a, D_b$ are orthogonal, so in particular they intersect trivially. The projective dark subspaces $\P(D_a), \P(D_b)$ are isomorphic to $\pc{2}$.

To see that $D_a, D_b$ are maximal dark subspaces, assume $D \subset \cc^4$ is a dark subspace of dimension $3$. Then for any $A\in \operatorname{span}\{ v_1^*v_1, v_2^*v_2\}=\operatorname{span}\{\pi_{D_a},\pi_{D_b}\}$, $\pi_DA\pi_D=\frac13\tr(\pi_DA)\pi_D$. Taking $A=\pi_{D_a}$ or $A=\pi_{D_b}$, it follows from $\rank(\pi_D\pi_{D_{a/b}}\pi_D)\leq 2$ and $\rank \pi_D=3$ that $\tr(\pi_D\pi_{D_{a/b}})=0$ hence $\tr(\pi_D)=0$, which contradicts $\pi_D\neq 0$, so $D$ is not dark.

\medskip

\noindent 
\textbf{(3) Invariant measure for dark spaces.}
{The Markov chain on $ \{D_a, D_b\}$ is deterministic: its transition matrix reads $ \begin{bsmallmatrix}0 & 1 \\ 1 & 0	\end{bsmallmatrix} $. Note that the period is $2$.
Hence,  the  uniform measure  $$ \tfrac 12 \delta_{D_a} + \tfrac 12 \delta_{D_b}$$ is $K$-invariant. 
We have thus identified an invariant measure  on $\mathcal D_m$ and \Cref{thm:invmeasDark} implies it is unique. We can therefore restrict our attention to its support, i.e., to the subspaces $D_a$, $D_b$. However, in principle,   this system may admit additional maximal dark subspaces.
}  

\medskip

\noindent  \textbf{(4) Minimal family.}      The family of isomorphisms $\J_{\rm emb}$ consisting of natural embeddings
$$J_a^{\rm emb} \colon \begin{bsmallmatrix}
    x \\ y
\end{bsmallmatrix}
\mapsto \begin{bsmallmatrix}
    x \\ y \\ 0 \\ 0
\end{bsmallmatrix}
\ \ \ \ \ \ \
J_b^{\rm emb} \colon \begin{bsmallmatrix}
    x \\ y
\end{bsmallmatrix}
\mapsto \begin{bsmallmatrix}
    0 \\ 0 \\ x \\ y
\end{bsmallmatrix}
$$ 
leads to the group generated by the four unitaries: $G_{\J_{\rm emb}} = \overline{ \langle u_1, u_2, u_3, u_4 \rangle}$. This group need not be minimal. 
To obtain a minimal group, let us construct a $D_a-$smart family $\J = \{J_a, J_b\}$ such that $J_a = J_a^{\rm emb}$. Taking $J_b = v_1 J_a = J_a u_2$, we obtain 
$$ u_{1, D_a}
=  u_2^{-1}u_2 = \id ,\ \   u_{2, D_a}
=    u_2^{-1}u_4 , \ \  u_{1, D_b}
=    u_1u_2 , \ \   u_{2, D_b}
=    u_3u_2,$$
and so $G_{\J} = G_{\rm min} = \overline{ \langle u_2^{-1}u_4, u_1u_2, u_3u_2 \rangle}$.

\medskip

\noindent   \textbf{(5)}
Let us take a closer look at a more concrete case. For $\theta\in \rr$ let $R_{x}(\theta) = \exp(\i\theta/2\sigma_x)$  and $R_{z}(\theta)   = \exp(\i\theta/2\sigma_z)$. On the Bloch sphere, these correspond to rotations of angle $\theta$ about the axes $x$ and $z$ respectively.  
Set
$$u_1 = \id ,\ u_2 = R_{x}(\theta_x) ,\   u_3  = R_{z}(\theta_z) ,\ u_4 =\id$$
with arbitrary rotation angles $\theta_x, \theta_z \in (0, 2\pi)$. The family $\J_{\rm emb}$  of  canonical embeddings   is  now minimal (because it is smart)  and $$G_{\J_{\rm emb}} = G_{{\rm min}} = \overline{\langle R_{x}(\theta_x), R_{z}(\theta_z)  \rangle}.$$ Let us recall that we do not adjust the phases explicitly but all unitary operators on the reference space are in fact special unitary matrices. 
There are three qualitatively different cases:

\begin{enumerate}[label={\bf (\alph*)}]\itemsep=1mm
    \item   If $ \theta_x$, $\theta_z \notin \pi\mathbb Q$, then, by a compactness argument, the minimal family $\J_{\rm emb}$ (thus also every other family) induces the full group: $G_{{\rm min}} = \mathcal {SU}(2)$, and so $\nu_{{\rm inv}} =  \frac 12 \mathrm{Unif}_{\P(D_a)} + \frac 12 \mathrm{Unif}_{\P(D_b)}$ is the unique $\Pi$-invariant measure on $\pcd$.
    
    \item \label{ex:15b} If $ \theta_x = \pi$ and $\theta_z \notin \pi\mathbb Q$, then $G_{{\rm min}} = \overline{ \langle \i\sigma_x, R_z(\theta_z) \rangle   } =
    \{
    R_z(t), \i\sigma_x R_z(t)\, | \, t \in \rr
    \}$.  
    Hence, there are infinitely many mutually singular $\Pi$-ergodic measures. Orbits in $\P(\cc^2)$ are in bijection with $[0,1/2]$ where for a given $c\in [0,1/2]$ the orbit is given by $\{\hat x_{t}\}_{t\in \rr}\cup\{\sigma_x\cdot \hat x_{t}\}_{t\in \rr}$ with $x_{t}=[c\mathrm{e}^{it},\sqrt{1-c^2}]^T$.

    On the other hand, a non-minimal family $\tJ  = \{J_a^{\rm emb}, J_b^{\rm emb}R_x(\psi_x)\}$ with $\psi_x \notin \pi\mathbb Q$ induces the full group: $G_{\tJ} = \mathcal {SU}(2)$; thus, the only $\Pi$-invariant measure that $\tJ$ can induce is $\nu_{\rm unif}$ (which is non-ergodic).

    \item   If $ \theta_x ,  \theta_z \in \pi\mathbb{Q}$, then setting $p\in \nnone$ such that $p\theta_x,p\theta_z\in \pi\mathbb Z$ and using $\mathcal{SU}(2)$ commutation relations, any product of $R_{x}(\theta_x)$ and $R_{z}(\theta_z)$ to the power $p^2$ is proportional to the identity. Thus $G_{\rm min}$ is finite. There are infinitely many mutually singular $\Pi$-ergodic measures and each of them is finitely supported.  Note that both $\W_{\rm fin}$ and $\supchi$ are finite; hence, all the sets considered in \Cref{prop:smartgroup} coincide: $  S^{\rm fin}_{\J_{\rm emb}}   =  R_{\J_{\rm emb}}  = G_{\rm min}$.

    \medskip 

   \noindent 
   In particular, if  we set $\theta_x = \theta_z = \pi$, i.e., 
   $$u_1 = \id ,\ u_2 = \i\sigma_x ,\   u_3  = \i\sigma_z ,\ u_4 =\id,$$  then
  $G_{\rm min} = \{\pm\id, \pm\i\sigma_x, \pm\i\sigma_y,  \pm\i\sigma_z\}$  and a generic $\Pi$-ergodic measure 
    is supported on 8 points, which can degenerate to 4 points if an eigenvector of $\sigma_x$ or $\sigma_z$ is in the support. See also  \Cref{fig:ex1pic1}.
      \end{enumerate}
 
\begin{figure}[h]
\begin{center}
\scalebox{0.85}{

\tikzset {_i1wczxb9v/.code = {\pgfsetadditionalshadetransform{ \pgftransformshift{\pgfpoint{89.1 bp } { -108.9 bp }  }  \pgftransformscale{1.32 }  }}}
\pgfdeclareradialshading{_7jw66logx}{\pgfpoint{-72bp}{88bp}}{rgb(0bp)=(1,1,1);
rgb(3.9186504908970425bp)=(1,1,1);
rgb(25bp)=(0.67,0.64,0.55);
rgb(25bp)=(0.78,0.75,0.66);
rgb(400bp)=(0.78,0.75,0.66)}

\tikzset {_8oz7q2gyn/.code = {\pgfsetadditionalshadetransform{ \pgftransformshift{\pgfpoint{89.1 bp } { -108.9 bp }  }  \pgftransformscale{1.32 }  }}}
\pgfdeclareradialshading{_ykz2w2dfa}{\pgfpoint{-72bp}{88bp}}{rgb(0bp)=(1,1,1);
rgb(3.9186504908970425bp)=(1,1,1);
rgb(25bp)=(0.67,0.64,0.55);
rgb(25bp)=(0.78,0.75,0.66);
rgb(400bp)=(0.78,0.75,0.66)}

\tikzset {_ngdxw703z/.code = {\pgfsetadditionalshadetransform{ \pgftransformshift{\pgfpoint{89.1 bp } { -108.9 bp }  }  \pgftransformscale{1.32 }  }}}
\pgfdeclareradialshading{_3sec6s6o5}{\pgfpoint{-72bp}{88bp}}{rgb(0bp)=(1,1,1);
rgb(3.9186504908970425bp)=(1,1,1);
rgb(25bp)=(0.67,0.64,0.55);
rgb(25bp)=(0.78,0.75,0.66);
rgb(400bp)=(0.78,0.75,0.66)}
\tikzset{every picture/.style={line width=0.75pt}} 

\begin{tikzpicture}[x=0.75pt,y=0.75pt,yscale=-1,xscale=1]
 
\path  [shading=_7jw66logx,_i1wczxb9v] (244.94,334.8) .. controls (244.94,286.5) and (284.1,247.35) .. (332.39,247.35) .. controls (380.69,247.35) and (419.84,286.5) .. (419.84,334.8) .. controls (419.84,383.1) and (380.69,422.25) .. (332.39,422.25) .. controls (284.1,422.25) and (244.94,383.1) .. (244.94,334.8) -- cycle ;  
 \draw   (244.94,334.8) .. controls (244.94,286.5) and (284.1,247.35) .. (332.39,247.35) .. controls (380.69,247.35) and (419.84,286.5) .. (419.84,334.8) .. controls (419.84,383.1) and (380.69,422.25) .. (332.39,422.25) .. controls (284.1,422.25) and (244.94,383.1) .. (244.94,334.8) -- cycle ;  
 
\path  [shading=_ykz2w2dfa,_8oz7q2gyn] (83.3,106.78) .. controls (83.3,58.49) and (122.45,19.33) .. (170.75,19.33) .. controls (219.05,19.33) and (258.2,58.49) .. (258.2,106.78) .. controls (258.2,155.08) and (219.05,194.23) .. (170.75,194.23) .. controls (122.45,194.23) and (83.3,155.08) .. (83.3,106.78) -- cycle ; 
 \draw   (83.3,106.78) .. controls (83.3,58.49) and (122.45,19.33) .. (170.75,19.33) .. controls (219.05,19.33) and (258.2,58.49) .. (258.2,106.78) .. controls (258.2,155.08) and (219.05,194.23) .. (170.75,194.23) .. controls (122.45,194.23) and (83.3,155.08) .. (83.3,106.78) -- cycle ;

\draw   (409.3,107.68) .. controls (409.3,59.39) and (448.45,20.23) .. (496.75,20.23) .. controls (545.05,20.23) and (584.2,59.39) .. (584.2,107.68) .. controls (584.2,155.98) and (545.05,195.13) .. (496.75,195.13) .. controls (448.45,195.13) and (409.3,155.98) .. (409.3,107.68) -- cycle ;
 
\draw    (250.2,69.23) .. controls (306.92,21.47) and (360.66,21.23) .. (416.36,68.52) ;
\draw [shift={(417.2,69.23)}, rotate = 220.6] [fill={rgb, 255:red, 0; green, 0; blue, 0 }  ][line width=0.08]  [draw opacity=0] (12,-3) -- (0,0) -- (12,3) -- cycle    ;
 
\draw    (416.6,144.68) .. controls (357.9,194.43) and (306.71,196.22) .. (250.05,145.99) ;
\draw [shift={(249.2,145.23)}, rotate = 41.82] [fill={rgb, 255:red, 0; green, 0; blue, 0 }  ][line width=0.08]  [draw opacity=0] (12,-3) -- (0,0) -- (12,3) -- cycle    ;
 
\draw    (409.6,120.18) .. controls (360.1,156.81) and (309.23,158.21) .. (259.7,120.39) ;
\draw [shift={(258.2,119.23)}, rotate = 37.95] [fill={rgb, 255:red, 0; green, 0; blue, 0 }  ][line width=0.08]  [draw opacity=0] (12,-3) -- (0,0) -- (12,3) -- cycle    ;
  
\draw    (258.2,94.23) .. controls (308.69,58.59) and (356.24,57.26) .. (408.61,92.16) ;
\draw [shift={(410.2,93.23)}, rotate = 214.19] [fill={rgb, 255:red, 0; green, 0; blue, 0 }  ][line width=0.08]  [draw opacity=0] (12,-3) -- (0,0) -- (12,3) -- cycle    ;
 
\draw  [color={rgb, 255:red, 155; green, 155; blue, 155 }  ,draw opacity=1 ][dash pattern={on 4.5pt off 4.5pt}][line width=0.75]  (83.3,106.78) .. controls (83.3,97.12) and (122.45,89.28) .. (170.75,89.28) .. controls (219.05,89.28) and (258.2,97.12) .. (258.2,106.78) .. controls (258.2,116.45) and (219.05,124.28) .. (170.75,124.28) .. controls (122.45,124.28) and (83.3,116.45) .. (83.3,106.78) -- cycle ;
  
\path  [shading=_3sec6s6o5,_ngdxw703z] (409.3,107.68) .. controls (409.3,59.39) and (448.45,20.23) .. (496.75,20.23) .. controls (545.05,20.23) and (584.2,59.39) .. (584.2,107.68) .. controls (584.2,155.98) and (545.05,195.13) .. (496.75,195.13) .. controls (448.45,195.13) and (409.3,155.98) .. (409.3,107.68) -- cycle ;  
 \draw   (409.3,107.68) .. controls (409.3,59.39) and (448.45,20.23) .. (496.75,20.23) .. controls (545.05,20.23) and (584.2,59.39) .. (584.2,107.68) .. controls (584.2,155.98) and (545.05,195.13) .. (496.75,195.13) .. controls (448.45,195.13) and (409.3,155.98) .. (409.3,107.68) -- cycle ;

\draw  [color={rgb, 255:red, 155; green, 155; blue, 155 }  ,draw opacity=1 ][dash pattern={on 4.5pt off 4.5pt}][line width=0.75]  (409.3,107.68) .. controls (409.3,98.02) and (448.45,90.18) .. (496.75,90.18) .. controls (545.05,90.18) and (584.2,98.02) .. (584.2,107.68) .. controls (584.2,117.35) and (545.05,125.18) .. (496.75,125.18) .. controls (448.45,125.18) and (409.3,117.35) .. (409.3,107.68) -- cycle ;
 
\draw [color={rgb, 255:red, 74; green, 74; blue, 74 }  ,draw opacity=1 ]   (284.89,242.11) -- (220.64,207.07) ;
\draw [shift={(218.89,206.11)}, rotate = 28.61] [fill={rgb, 255:red, 74; green, 74; blue, 74 }  ,fill opacity=1 ][line width=0.08]  [draw opacity=0] (12,-3) -- (0,0) -- (12,3) -- cycle    ;
  
\draw  [color={rgb, 255:red, 0; green, 0; blue, 0 }  ,draw opacity=1 ][fill={rgb, 255:red, 208; green, 2; blue, 27 }  ,fill opacity=1 ][line width=1.5]  (94.1,54.72) .. controls (94.1,51.49) and (96.72,48.87) .. (99.95,48.87) .. controls (103.18,48.87) and (105.8,51.49) .. (105.8,54.72) .. controls (105.8,57.95) and (103.18,60.57) .. (99.95,60.57) .. controls (96.72,60.57) and (94.1,57.95) .. (94.1,54.72) -- cycle ;
 
\draw  [color={rgb, 255:red, 0; green, 0; blue, 0 }  ,draw opacity=1 ][fill={rgb, 255:red, 208; green, 2; blue, 27 }  ,fill opacity=1 ][line width=1.5]  (94.1,157.33) .. controls (94.1,154.1) and (96.72,151.48) .. (99.95,151.48) .. controls (103.18,151.48) and (105.8,154.1) .. (105.8,157.33) .. controls (105.8,160.56) and (103.18,163.17) .. (99.95,163.17) .. controls (96.72,163.17) and (94.1,160.56) .. (94.1,157.33) -- cycle ;
 
\draw  [color={rgb, 255:red, 0; green, 0; blue, 0 }  ,draw opacity=1 ][fill={rgb, 255:red, 208; green, 2; blue, 27 }  ,fill opacity=1 ][line width=1.5]  (235.1,54.72) .. controls (235.1,51.49) and (237.72,48.87) .. (240.95,48.87) .. controls (244.18,48.87) and (246.8,51.49) .. (246.8,54.72) .. controls (246.8,57.95) and (244.18,60.57) .. (240.95,60.57) .. controls (237.72,60.57) and (235.1,57.95) .. (235.1,54.72) -- cycle ;
 
\draw  [color={rgb, 255:red, 0; green, 0; blue, 0 }  ,draw opacity=1 ][fill={rgb, 255:red, 208; green, 2; blue, 27 }  ,fill opacity=1 ][line width=1.5]  (234.95,157.33) .. controls (234.95,154.1) and (237.57,151.48) .. (240.8,151.48) .. controls (244.03,151.48) and (246.65,154.1) .. (246.65,157.33) .. controls (246.65,160.56) and (244.03,163.17) .. (240.8,163.17) .. controls (237.57,163.17) and (234.95,160.56) .. (234.95,157.33) -- cycle ;
 
\draw  [color={rgb, 255:red, 155; green, 155; blue, 155 }  ,draw opacity=1 ][dash pattern={on 4.5pt off 4.5pt}][line width=0.75]  (243.81,334.8) .. controls (243.81,325.01) and (283.47,317.07) .. (332.39,317.07) .. controls (381.32,317.07) and (420.98,325.01) .. (420.98,334.8) .. controls (420.98,344.59) and (381.32,352.53) .. (332.39,352.53) .. controls (283.47,352.53) and (243.81,344.59) .. (243.81,334.8) -- cycle ;
 
\draw  [color={rgb, 255:red, 0; green, 0; blue, 0 }  ,draw opacity=1 ][fill={rgb, 255:red, 208; green, 2; blue, 27 }  ,fill opacity=1 ][line width=1.5]  (422.1,58.72) .. controls (422.1,55.49) and (424.72,52.87) .. (427.95,52.87) .. controls (431.18,52.87) and (433.8,55.49) .. (433.8,58.72) .. controls (433.8,61.95) and (431.18,64.57) .. (427.95,64.57) .. controls (424.72,64.57) and (422.1,61.95) .. (422.1,58.72) -- cycle ;
 
\draw  [color={rgb, 255:red, 0; green, 0; blue, 0 }  ,draw opacity=1 ][fill={rgb, 255:red, 208; green, 2; blue, 27 }  ,fill opacity=1 ][line width=1.5]  (422.1,161.33) .. controls (422.1,158.1) and (424.72,155.48) .. (427.95,155.48) .. controls (431.18,155.48) and (433.8,158.1) .. (433.8,161.33) .. controls (433.8,164.56) and (431.18,167.17) .. (427.95,167.17) .. controls (424.72,167.17) and (422.1,164.56) .. (422.1,161.33) -- cycle ;
 
\draw  [color={rgb, 255:red, 0; green, 0; blue, 0 }  ,draw opacity=1 ][fill={rgb, 255:red, 208; green, 2; blue, 27 }  ,fill opacity=1 ][line width=1.5]  (563.1,58.72) .. controls (563.1,55.49) and (565.72,52.87) .. (568.95,52.87) .. controls (572.18,52.87) and (574.8,55.49) .. (574.8,58.72) .. controls (574.8,61.95) and (572.18,64.57) .. (568.95,64.57) .. controls (565.72,64.57) and (563.1,61.95) .. (563.1,58.72) -- cycle ;
 
\draw  [color={rgb, 255:red, 0; green, 0; blue, 0 }  ,draw opacity=1 ][fill={rgb, 255:red, 208; green, 2; blue, 27 }  ,fill opacity=1 ][line width=1.5]  (562.95,161.33) .. controls (562.95,158.1) and (565.57,155.48) .. (568.8,155.48) .. controls (572.03,155.48) and (574.65,158.1) .. (574.65,161.33) .. controls (574.65,164.56) and (572.03,167.17) .. (568.8,167.17) .. controls (565.57,167.17) and (562.95,164.56) .. (562.95,161.33) -- cycle ;
 
\draw  [color={rgb, 255:red, 0; green, 0; blue, 0 }  ,draw opacity=1 ][fill={rgb, 255:red, 208; green, 2; blue, 27 }  ,fill opacity=1 ][line width=1.5]  (256.1,281.72) .. controls (256.1,278.49) and (258.72,275.87) .. (261.95,275.87) .. controls (265.18,275.87) and (267.8,278.49) .. (267.8,281.72) .. controls (267.8,284.95) and (265.18,287.57) .. (261.95,287.57) .. controls (258.72,287.57) and (256.1,284.95) .. (256.1,281.72) -- cycle ;
 
\draw  [color={rgb, 255:red, 0; green, 0; blue, 0 }  ,draw opacity=1 ][fill={rgb, 255:red, 208; green, 2; blue, 27 }  ,fill opacity=1 ][line width=1.5]  (256.1,384.33) .. controls (256.1,381.1) and (258.72,378.48) .. (261.95,378.48) .. controls (265.18,378.48) and (267.8,381.1) .. (267.8,384.33) .. controls (267.8,387.56) and (265.18,390.17) .. (261.95,390.17) .. controls (258.72,390.17) and (256.1,387.56) .. (256.1,384.33) -- cycle ;
 
\draw  [color={rgb, 255:red, 0; green, 0; blue, 0 }  ,draw opacity=1 ][fill={rgb, 255:red, 208; green, 2; blue, 27 }  ,fill opacity=1 ][line width=1.5]  (397.1,281.72) .. controls (397.1,278.49) and (399.72,275.87) .. (402.95,275.87) .. controls (406.18,275.87) and (408.8,278.49) .. (408.8,281.72) .. controls (408.8,284.95) and (406.18,287.57) .. (402.95,287.57) .. controls (399.72,287.57) and (397.1,284.95) .. (397.1,281.72) -- cycle ;
  
\draw  [color={rgb, 255:red, 0; green, 0; blue, 0 }  ,draw opacity=1 ][fill={rgb, 255:red, 208; green, 2; blue, 27 }  ,fill opacity=1 ][line width=1.5]  (396.95,384.33) .. controls (396.95,381.1) and (399.57,378.48) .. (402.8,378.48) .. controls (406.03,378.48) and (408.65,381.1) .. (408.65,384.33) .. controls (408.65,387.56) and (406.03,390.17) .. (402.8,390.17) .. controls (399.57,390.17) and (396.95,387.56) .. (396.95,384.33) -- cycle ;
 
\draw [color={rgb, 255:red, 74; green, 74; blue, 74 }  ,draw opacity=1 ]   (389.89,244.11) -- (454.13,209.07) ;
\draw [shift={(455.89,208.11)}, rotate = 151.39] [fill={rgb, 255:red, 74; green, 74; blue, 74 }  ,fill opacity=1 ][line width=0.08]  [draw opacity=0] (12,-3) -- (0,0) -- (12,3) -- cycle    ;

\draw (249.1,28.23) node [anchor=north west][inner sep=0.75pt]  [font=\normalsize]  {$\frac{1}4$};

\draw (268.1,87.23) node [anchor=north west][inner sep=0.75pt]  [font=\normalsize]  {$\frac{3}4$};

\draw (375,109) node [anchor=north west][inner sep=0.75pt]  [font=\normalsize]  {$\frac{1}3$};

\draw (393.1,163.23) node [anchor=north west][inner sep=0.75pt]  [font=\normalsize]  {$\frac{2}3$};

\draw (306,50) node [anchor=north west][inner sep=0.75pt]  [font=\large]  {$\Id$};

\draw (351,148) node [anchor=north west][inner sep=0.75pt]  [font=\large]  {$\Id$};

\draw (306,17) node [anchor=north west][inner sep=0.75pt]  [font=\large]  {$\sigma_{x}$};

\draw (352,183) node [anchor=north west][inner sep=0.75pt]  [font=\large]  {$\sigma_{z}$};

\draw (209,220.4) node [anchor=north west][inner sep=0.75pt]  [font=\large]  {$J_{a}^{\mathrm{emb}}$};

\draw (432,220.4) node [anchor=north west][inner sep=0.75pt]  [font=\large]  {$J_{b}^{\mathrm{emb}}$};

\draw (152,203.4) node [anchor=north west][inner sep=0.75pt]  [font=\large]  {$D_{a}$};

\draw (491,202.4) node [anchor=north west][inner sep=0.75pt]  [font=\large]  {$D_{b}$};

\draw (314.26,430.45) node [anchor=north west][inner sep=0.75pt]  [font=\large]  {$\mathsf{P}\mathbb{C}^{2}$};

\draw (417,269.4) node [anchor=north west][inner sep=0.75pt]   [font=\normalsize] {$\hat{x}$};

\draw (208,272.4) node [anchor=north west][inner sep=0.75pt]  [font=\normalsize]  {$\sigma_{z} \cdot \hat{x}$};

\draw (207,374.4) node [anchor=north west][inner sep=0.75pt]  [font=\normalsize]  {$\sigma_{y} \cdot \hat{x}$};

\draw (416,373.4) node [anchor=north west][inner sep=0.75pt] [font=\normalsize]   {$\sigma_{x} \cdot \hat{x}$};  

\end{tikzpicture}
}
\end{center} 
         \caption{A generic trajectory  under the action of a minimal group  on the reference space $\pc{2}$  consists of four points. A generic $\Pi$-ergodic measure on $\pc{4}$ is uniform on eight atoms.} \label{fig:ex1pic1}
\end{figure}
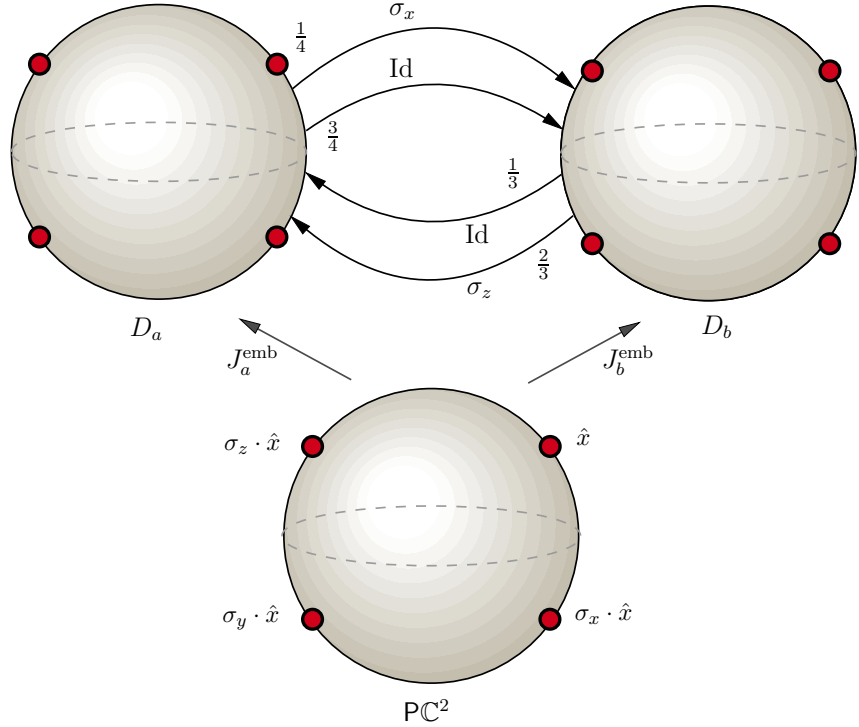

    \noindent   \textbf{(6)}\label{ex:16}
    As in (5c), we set $u_1 = \id$, $u_2 = \sigma_x$, $u_3  = \sigma_z$, $u_4 =\id$, but  
    consider now the following (non-minimal) families:  $$\hspace{10mm} \tJ_1 = \{J_a^{\rm emb}, J^{\rm emb}_bR_z(\tfrac \pi 3)\}, \ \ \ \tJ_2 = \{J_a^{\rm emb}, J_b^{\rm emb}R_z(\tfrac \pi 2)\}, \    \ \ \tJ_3 = \{J_a^{\rm emb}, J_b^{\rm emb}R_z(\psi)\}$$ with $\psi \notin \pi\mathbb Q$. They induce the following groups:
    \begin{align*}
        G_{\tJ_1} & =  \langle \i\sigma_x, R_{z}(\tfrac \pi 3)\rangle \:=\{R_{z}(\tfrac {j \pi} 3 ), \i\sigma_x R_{z}(\tfrac {j \pi} 3)   \colon j= 0, \ldots, 5\}
        \\ G_{\tJ_2} &=  \langle \i\sigma_x, R_{z}(\tfrac \pi 2)\rangle \, =\{R_{z}(\tfrac {j \pi} 2 ),\i\sigma_x R_{z}(\tfrac {j \pi} 2)   \colon j= 0, \ldots, 3\}
        \\ G_{\tJ_3} &= \overline{\langle \i\sigma_x, R_{z}(\psi) \rangle} =\{R_{z}(t), \i\sigma_xR_{z}(t)   \colon t\in \rr\},
    \end{align*}
    see also \Cref{fig:ex1pic2} for a generic trajectory under each of these groups. Then: 
    \begin{enumerate}[label={\bf (\alph*)}]
    \item \label{ex:16a} Considering $G_{\tJ_1}$ and $G_{\tJ_2}$, we see that neither of them is a subgroup of the other. The minimal group $G_{\rm min} = \langle \i\sigma_x, \i\sigma_z \rangle $ is a subgroup of both.

    \item \label{ex:16b} The group $G_{\tJ_3}$ is uncountably infinite, which contrasts with $G_{\rm min}$ being finite.  
    \end{enumerate}
     This illustrates how the choice of a family of isometries can influence what groups on $\cc^{r_m}$, thus also what $\Pi$-invariant measures on $\pcd$, can be generated by the procedure described in \Cref{sec:proc}.

\begin{figure}[h]
\begin{center}
    \scalebox{0.70}{

\tikzset {_rmpfxf0nq/.code = {\pgfsetadditionalshadetransform{ \pgftransformshift{\pgfpoint{89.1 bp } { -108.9 bp }  }  \pgftransformscale{1.32 }  }}}
\pgfdeclareradialshading{_if97qjtp0}{\pgfpoint{-72bp}{88bp}}{rgb(0bp)=(1,1,1);
rgb(3.9186504908970425bp)=(1,1,1);
rgb(25bp)=(0.67,0.64,0.55);
rgb(25bp)=(0.78,0.75,0.66);
rgb(400bp)=(0.78,0.75,0.66)}
\tikzset{_2edlmti46/.code = {\pgfsetadditionalshadetransform{\pgftransformshift{\pgfpoint{89.1 bp } { -108.9 bp }  }  \pgftransformscale{1.32 } }}}
\pgfdeclareradialshading{_p9l1cu52f} { \pgfpoint{-72bp} {88bp}} {color(0bp)=(transparent!0);
color(3.9186504908970425bp)=(transparent!0);
color(25bp)=(transparent!0);
color(25bp)=(transparent!21);
color(400bp)=(transparent!21)} 
\pgfdeclarefading{_hyw8g3tth}{\tikz \fill[shading=_p9l1cu52f,_2edlmti46] (0,0) rectangle (50bp,50bp); } 
\tikzset{every picture/.style={line width=0.75pt}}         

\begin{tikzpicture}[x=0.75pt,y=0.75pt,yscale=-1,xscale=1] 
\path (221.5,0);  
 
\path  [shading=_if97qjtp0,_rmpfxf0nq,path fading= _hyw8g3tth ,fading transform={xshift=2}] (236.81,123.57) .. controls (236.81,64.33) and (284.84,16.3) .. (344.08,16.3) .. controls (403.32,16.3) and (451.34,64.33) .. (451.34,123.57) .. controls (451.34,182.81) and (403.32,230.83) .. (344.08,230.83) .. controls (284.84,230.83) and (236.81,182.81) .. (236.81,123.57) -- cycle ;  
 \draw   (236.81,123.57) .. controls (236.81,64.33) and (284.84,16.3) .. (344.08,16.3) .. controls (403.32,16.3) and (451.34,64.33) .. (451.34,123.57) .. controls (451.34,182.81) and (403.32,230.83) .. (344.08,230.83) .. controls (284.84,230.83) and (236.81,182.81) .. (236.81,123.57) -- cycle ;

\draw  [color={rgb, 255:red, 139; green, 87; blue, 42 }  ,draw opacity=1 ][line width=3]  (249.81,68.73) .. controls (249.81,58.42) and (291.59,50.06) .. (343.13,50.06) .. controls (394.67,50.06) and (436.44,58.42) .. (436.44,68.73) .. controls (436.44,79.04) and (394.67,87.4) .. (343.13,87.4) .. controls (291.59,87.4) and (249.81,79.04) .. (249.81,68.73) -- cycle ;
 
\draw  [color={rgb, 255:red, 208; green, 2; blue, 27 }  ,draw opacity=1 ][fill={rgb, 255:red, 208; green, 2; blue, 27 }  ,fill opacity=1 ] (360.71,87.56) .. controls (360.71,85.04) and (362.75,82.99) .. (365.27,82.99) .. controls (367.79,82.99) and (369.83,85.04) .. (369.83,87.56) .. controls (369.83,90.08) and (367.79,92.12) .. (365.27,92.12) .. controls (362.75,92.12) and (360.71,90.08) .. (360.71,87.56) -- cycle ;
 
\draw  [color={rgb, 255:red, 208; green, 2; blue, 27 }  ,draw opacity=1 ][fill={rgb, 255:red, 208; green, 2; blue, 27 }  ,fill opacity=1 ] (376.67,52.06) .. controls (376.67,49.54) and (378.71,47.5) .. (381.23,47.5) .. controls (383.75,47.5) and (385.79,49.54) .. (385.79,52.06) .. controls (385.79,54.58) and (383.75,56.62) .. (381.23,56.62) .. controls (378.71,56.62) and (376.67,54.58) .. (376.67,52.06) -- cycle ;
 
\draw  [color={rgb, 255:red, 208; green, 2; blue, 27 }  ,draw opacity=1 ][fill={rgb, 255:red, 208; green, 2; blue, 27 }  ,fill opacity=1 ] (307.21,51.06) .. controls (307.21,48.54) and (309.25,46.49) .. (311.77,46.49) .. controls (314.29,46.49) and (316.33,48.54) .. (316.33,51.06) .. controls (316.33,53.58) and (314.29,55.62) .. (311.77,55.62) .. controls (309.25,55.62) and (307.21,53.58) .. (307.21,51.06) -- cycle ;
 
\draw  [color={rgb, 255:red, 208; green, 2; blue, 27 }  ,draw opacity=1 ][fill={rgb, 255:red, 208; green, 2; blue, 27 }  ,fill opacity=1 ] (290.21,85.06) .. controls (290.21,82.54) and (292.25,80.49) .. (294.77,80.49) .. controls (297.29,80.49) and (299.33,82.54) .. (299.33,85.06) .. controls (299.33,87.58) and (297.29,89.62) .. (294.77,89.62) .. controls (292.25,89.62) and (290.21,87.58) .. (290.21,85.06) -- cycle ;
 
\draw  [color={rgb, 255:red, 0; green, 76; blue, 255 }  ,draw opacity=1 ][fill={rgb, 255:red, 0; green, 76; blue, 255 }  ,fill opacity=1 ] (325.71,81.49) -- (336.83,81.49) -- (336.83,92.62) -- (325.71,92.62) -- cycle ;
 
\draw  [color={rgb, 255:red, 155; green, 155; blue, 155 }  ,draw opacity=1 ][dash pattern={on 4.5pt off 4.5pt}][line width=0.75]  (236.81,123.57) .. controls (236.81,111.71) and (284.84,102.1) .. (344.08,102.1) .. controls (403.32,102.1) and (451.34,111.71) .. (451.34,123.57) .. controls (451.34,135.42) and (403.32,145.03) .. (344.08,145.03) .. controls (284.84,145.03) and (236.81,135.42) .. (236.81,123.57) -- cycle ;
 
\draw  [color={rgb, 255:red, 0; green, 76; blue, 255 }  ,draw opacity=1 ][fill={rgb, 255:red, 0; green, 76; blue, 255 }  ,fill opacity=1 ] (337.56,44.49) -- (348.69,44.49) -- (348.69,55.62) -- (337.56,55.62) -- cycle ;
  
\draw  [color={rgb, 255:red, 0; green, 76; blue, 255 }  ,draw opacity=1 ][fill={rgb, 255:red, 0; green, 76; blue, 255 }  ,fill opacity=1 ] (430.88,63.17) -- (442.01,63.17) -- (442.01,74.29) -- (430.88,74.29) -- cycle ;
 
\draw  [color={rgb, 255:red, 0; green, 76; blue, 255 }  ,draw opacity=1 ][fill={rgb, 255:red, 0; green, 76; blue, 255 }  ,fill opacity=1 ] (244.25,63.17) -- (255.37,63.17) -- (255.37,74.29) -- (244.25,74.29) -- cycle ;
  
\draw  [color={rgb, 255:red, 208; green, 2; blue, 27 }  ,draw opacity=1 ][fill={rgb, 255:red, 208; green, 2; blue, 27 }  ,fill opacity=1 ] (245.78,68.73) .. controls (245.78,66.5) and (247.59,64.7) .. (249.81,64.7) .. controls (252.04,64.7) and (253.84,66.5) .. (253.84,68.73) .. controls (253.84,70.96) and (252.04,72.76) .. (249.81,72.76) .. controls (247.59,72.76) and (245.78,70.96) .. (245.78,68.73) -- cycle ;
  
\draw  [color={rgb, 255:red, 208; green, 2; blue, 27 }  ,draw opacity=1 ][fill={rgb, 255:red, 208; green, 2; blue, 27 }  ,fill opacity=1 ] (432.41,68.73) .. controls (432.41,66.5) and (434.22,64.7) .. (436.44,64.7) .. controls (438.67,64.7) and (440.48,66.5) .. (440.48,68.73) .. controls (440.48,70.96) and (438.67,72.76) .. (436.44,72.76) .. controls (434.22,72.76) and (432.41,70.96) .. (432.41,68.73) -- cycle ;
 
\draw  [color={rgb, 255:red, 139; green, 87; blue, 42 }  ,draw opacity=1 ][line width=3]  (249.81,172.31) .. controls (249.81,162) and (291.59,153.64) .. (343.13,153.64) .. controls (394.67,153.64) and (436.44,162) .. (436.44,172.31) .. controls (436.44,182.63) and (394.67,190.99) .. (343.13,190.99) .. controls (291.59,190.99) and (249.81,182.63) .. (249.81,172.31) -- cycle ;
 
\draw  [color={rgb, 255:red, 208; green, 2; blue, 27 }  ,draw opacity=1 ][fill={rgb, 255:red, 208; green, 2; blue, 27 }  ,fill opacity=1 ] (360.71,191.14) .. controls (360.71,188.62) and (362.75,186.58) .. (365.27,186.58) .. controls (367.79,186.58) and (369.83,188.62) .. (369.83,191.14) .. controls (369.83,193.66) and (367.79,195.7) .. (365.27,195.7) .. controls (362.75,195.7) and (360.71,193.66) .. (360.71,191.14) -- cycle ;
  
\draw  [color={rgb, 255:red, 208; green, 2; blue, 27 }  ,draw opacity=1 ][fill={rgb, 255:red, 208; green, 2; blue, 27 }  ,fill opacity=1 ] (376.67,155.64) .. controls (376.67,153.12) and (378.71,151.08) .. (381.23,151.08) .. controls (383.75,151.08) and (385.79,153.12) .. (385.79,155.64) .. controls (385.79,158.16) and (383.75,160.21) .. (381.23,160.21) .. controls (378.71,160.21) and (376.67,158.16) .. (376.67,155.64) -- cycle ;
  
\draw  [color={rgb, 255:red, 208; green, 2; blue, 27 }  ,draw opacity=1 ][fill={rgb, 255:red, 208; green, 2; blue, 27 }  ,fill opacity=1 ] (307.21,154.64) .. controls (307.21,152.12) and (309.25,150.08) .. (311.77,150.08) .. controls (314.29,150.08) and (316.33,152.12) .. (316.33,154.64) .. controls (316.33,157.16) and (314.29,159.2) .. (311.77,159.2) .. controls (309.25,159.2) and (307.21,157.16) .. (307.21,154.64) -- cycle ;
  
\draw  [color={rgb, 255:red, 208; green, 2; blue, 27 }  ,draw opacity=1 ][fill={rgb, 255:red, 208; green, 2; blue, 27 }  ,fill opacity=1 ] (290.21,188.64) .. controls (290.21,186.12) and (292.25,184.08) .. (294.77,184.08) .. controls (297.29,184.08) and (299.33,186.12) .. (299.33,188.64) .. controls (299.33,191.16) and (297.29,193.2) .. (294.77,193.2) .. controls (292.25,193.2) and (290.21,191.16) .. (290.21,188.64) -- cycle ;
 
\draw  [color={rgb, 255:red, 0; green, 76; blue, 255 }  ,draw opacity=1 ][fill={rgb, 255:red, 0; green, 76; blue, 255 }  ,fill opacity=1 ] (325.71,185.08) -- (336.83,185.08) -- (336.83,196.2) -- (325.71,196.2) -- cycle ;
 
\draw  [color={rgb, 255:red, 0; green, 76; blue, 255 }  ,draw opacity=1 ][fill={rgb, 255:red, 0; green, 76; blue, 255 }  ,fill opacity=1 ] (337.56,148.08) -- (348.69,148.08) -- (348.69,159.2) -- (337.56,159.2) -- cycle ;
  
\draw  [color={rgb, 255:red, 0; green, 76; blue, 255 }  ,draw opacity=1 ][fill={rgb, 255:red, 0; green, 76; blue, 255 }  ,fill opacity=1 ] (430.88,166.75) -- (442.01,166.75) -- (442.01,177.88) -- (430.88,177.88) -- cycle ;
 
\draw  [color={rgb, 255:red, 0; green, 76; blue, 255 }  ,draw opacity=1 ][fill={rgb, 255:red, 0; green, 76; blue, 255 }  ,fill opacity=1 ] (244.25,166.75) -- (255.37,166.75) -- (255.37,177.88) -- (244.25,177.88) -- cycle ;
 
\draw  [color={rgb, 255:red, 208; green, 2; blue, 27 }  ,draw opacity=1 ][fill={rgb, 255:red, 208; green, 2; blue, 27 }  ,fill opacity=1 ] (245.78,172.31) .. controls (245.78,170.09) and (247.59,168.28) .. (249.81,168.28) .. controls (252.04,168.28) and (253.84,170.09) .. (253.84,172.31) .. controls (253.84,174.54) and (252.04,176.34) .. (249.81,176.34) .. controls (247.59,176.34) and (245.78,174.54) .. (245.78,172.31) -- cycle ;
 
\draw  [color={rgb, 255:red, 208; green, 2; blue, 27 }  ,draw opacity=1 ][fill={rgb, 255:red, 208; green, 2; blue, 27 }  ,fill opacity=1 ] (432.41,172.31) .. controls (432.41,170.09) and (434.22,168.28) .. (436.44,168.28) .. controls (438.67,168.28) and (440.48,170.09) .. (440.48,172.31) .. controls (440.48,174.54) and (438.67,176.34) .. (436.44,176.34) .. controls (434.22,176.34) and (432.41,174.54) .. (432.41,172.31) -- cycle ;

\draw (450,57.22) node [anchor=north west][inner sep=0.75pt]    {$\hat{x}$};

\end{tikzpicture}
    }
\end{center} 
    \caption{ Trajectories of a generic point $\hat x \in \pc{2}$   under the action of  three non-minimal groups from Example \hyperref[ex:16]{1(6)}. The red bullets are the 12-element set $[\hat x]_{G_{\tJ_1}}$, the blue squares are the 8-element set $[\hat x]_{G_{\tJ_2}}$, and the brown circles correspond to the uncountable set $[\hat x]_{G_{\tJ_3}}$. All three orbits contain $[\hat x]_{G_{\rm min}}$ as a proper subset, cf. \Cref{fig:ex1pic1}.} \label{fig:ex1pic2}
\end{figure}

\subsection*{Example 2.}\label{example2} (Adjacent Bloch spheres.)  
Consider two matrices:
$$v_1 = \frac{1}{\sqrt 2} \begin{bmatrix}
    \cos\theta & \sin\theta & \cos\theta \\
    \sin\theta & -\cos\theta & \sin\theta \\
    0 & 0 & 0
\end{bmatrix} \ \ \textrm{and} \ \ v_2 = \frac{1}{\sqrt 2} \begin{bmatrix}
    0 & 0 & 0 \\
    \cos\phi & -\sin\phi & -\cos\phi \\
    \sin\phi & \cos\phi & -\sin\phi
\end{bmatrix}$$
with $\theta, \phi\in\rr$
\smallskip

\noindent
\textbf{(0) Intuition}.
To develop some intuition about the dynamics of this system, let us first discuss the action of $v_1$ and $v_2$ on some real vectors. 
Denote $$S_{\alpha} = \begin{bmatrix}
    \cos\alpha & \sin\alpha  \\
    \sin\alpha & -\cos\alpha \end{bmatrix} \ \ \textrm{and} \ \ Q_{\alpha} = \begin{bmatrix}
   \cos\alpha & -\sin\alpha   \\
    \sin\alpha & \cos\alpha\end{bmatrix}.$$
When acting on a plane, $S_{\alpha}$ describes the reflection of this plane about a line through the origin which makes an angle $\alpha/2$ with the the horizontal axis, while $Q_{\alpha}$ is the (counterclockwise) rotation about the origin through the angle $\alpha$. 
Consider now the two  planes $$P_a = \left\{[x,y,0]^T \colon x,y \in \rr\right\} \ \ \textrm{and} \ \ P_b = \left\{[0,x,y]^T\colon x,y \in \rr\right\}.$$
The action of $v_1$ and $v_2$ on $P_a$ and $P_b$ can be described as follows. A vector in $P_a$ can either stay in $P_a$ while being reflected using $S_{\theta}$ (if $v_1$ acted), or it can jump to $P_b$ while being rotated by $Q_{\phi}$ (if $v_2$ acted). Similarly, a vector in $P_b$ can either stay in $P_b$ while being rotated by $Q_{\phi + \frac \pi 2}$ (if $v_2$ acted) or jump to $P_a$ while being rotated by $Q_{\theta - \frac \pi 2}$ (if $v_1$ acted). The jumps between planes happen with constant probability $\frac 12$.

\smallskip

Reverting back to the complex field, let $P_a$ and $P_b$ denote now their complex extension, namely
$$P_a = \left\{[x,y,0]^T \colon x,y \in \cc\right\} \ \ \textrm{and} \ \ P_b = \left\{[0,x,y]^T\colon x,y \in \cc\right\}.$$
\noindent
\textbf{(1) Irreducibility}. 
To assure $\mu = \delta_{v_1}+\delta_{v_2}$ is irreducible and avoid trivial statements, we assume that  neither $\theta$ nor $\phi$ are elements of $\mathbb{Z}\pi/2$. Indeed, since $v_1\cc^3=P_a$ and $v_2\cc^3=P_b$, $\mu$ is not irreducible only if $P_a\cap P_b$ is invariant for $v_1$ and $v_2$, namely only if $e_1=[0,1,0]^{T}$ is a common eigenvector of $v_1$ and $v_2$. That is not the case either if $\theta\notin \mathbb{Z}\pi$ or $\phi\notin\pi/2+\mathbb{Z}\pi$.

\smallskip

\noindent
\textbf{(2) Dark subspaces}.
Both $P_a$ and $P_b$ are dark subspaces. Therefore, we denote them $D_a$ and $D_b$ respectively. They share a one-dimensional subspace equal to $\cc e_1$.  
Hence, the two projective dark subspaces $\P(D_a)$ and $\P(D_b)$ intersect at one ray and are both isomorphic to $\pc{2}$, so they can be visualized as two Bloch spheres tangent to each other.  

{To see $D_a$ and $D_b$  are  maximal dark subspaces, it suffices to observe that the whole space $\cc^3$ is not dark. To this end, we consider $(i_1,\ldots,i_n)\in\{1,2\}^n$, $n\in\mathbb N^*$, and observe that  \begin{equation}\label{eq:vastv example2}
    v_{i_1}^*\cdots v_{i_n}^*v_{i_n}\cdots v_{i_1}  = 2^{-n} \begin{bsmallmatrix}
        1 & 0 & \pm 1 \\
        0 & 1 & 0 \\
        \pm  1 & 0 & 1
    \end{bsmallmatrix}.
\end{equation}
Since $v_{i_1}^*\cdots v_{i_n}^*v_{i_n}\cdots v_{i_1}$ is  not proportional  to the identity matrix,  $\cc^3$ is not dark.
Remark that \Cref{eq:vastv example2} implies $M_n=\frac13\begin{bsmallmatrix}
    1 & 0 & \pm 1 \\
    0 & 1 & 0 \\
    \pm  1 & 0 & 1
\end{bsmallmatrix}$ for any $n\in\nnone$.}

\smallskip

\noindent
\textbf{(3) Invariant measure for dark spaces}.
The Markov chain on $\{D_a, D_b\}$ has transition matrix $\frac 12 \begin{bsmallmatrix}1 & 1 \\ 1 & 1 	\end{bsmallmatrix} $;
hence, its unique invariant measure is uniform: $$\chi_{\rm inv}=\tfrac 12 \delta_{D_a} + \tfrac 12 \delta_{D_b}.$$ {As in \nameref{example1}, \Cref{thm:invmeasDark} implies it is the unique $K$-invariant measure on $\mathcal D_m$, and so we restrict our attention to the dark subspaces $D_a, D_b$.}

\smallskip

\noindent
\textbf{(4) Minimal family}. 
For the family of canonical embeddings  $\J_{\rm emb} = \{J_a^{\rm emb}, J_b^{\rm emb}\}$ with
$$J_a^{\rm emb} \colon \begin{bsmallmatrix}   x\\y  \end{bsmallmatrix} \mapsto \begin{bsmallmatrix}   x\\y\\0 \end{bsmallmatrix} \ \ \text{ and } \ \ J_b^{\rm emb} \colon \begin{bsmallmatrix}   x\\y  \end{bsmallmatrix} \mapsto \begin{bsmallmatrix}  0 \\x\\y \end{bsmallmatrix}$$ 
the group $G_{\J_{\rm emb}}$ is the closure of $\langle \i S_{\theta}, Q_{\theta - \frac \pi 2}, Q_{\phi}, Q_{\phi+\frac \pi 2} \rangle$. 
As in the previous example, $\J_{\rm emb}$ need not be minimal. Let us construct a $D_a-$smart family $\J = \{J_a, J_b\}$ with $J_a = J_a^{\rm emb}$. Setting  $J_b = v_2 J_a = v_1 Q_\phi$, we obtain 
$$u_{D_a,1} = \i S_{\theta}, \   u_{D_b,1} = Q_{\theta - \frac \pi 2 + \phi}, \   u_{D_a,2} = \id, \   u_{D_b,2} = Q_{\phi+\frac \pi 2}.$$
and $G_{\rm min}$ is the closure of $\langle \i S_{\theta},  Q_{\theta}, Q_{\phi+\frac \pi 2} \rangle$. If we revert to elements of $\mathcal{SU}(2)$, since $Q(\alpha)=R_y(-2\alpha)=\exp(\i\alpha\sigma_y)$ and $S_\alpha=\i\sigma_zR_y(2\alpha)$ for any $\alpha\in \rr$, 
$$G_{\rm min}=\overline{\langle \i\sigma_z,R_y(2\theta),R_y(2\phi+\pi)\rangle}.$$
We can then distinguish two cases: 
 \begin{enumerate}[label={\bf (\alph*)}, leftmargin=8mm]
    \item    
    If either $\phi$ or $\theta$ is not in $\pi\mathbb Q$, then  
    $$G_{\rm min} = \{R_{y}(t),  \i\sigma_zR_{y}(t) \,\colon t\in \rr\}.$$ 
    Thus, denoting $G_{1,b}$ the minimal group of Example \hyperref[ex:15b]{1(5b)}, 
    $$G_{\rm min}=R_x(\pi/2)R_y(\pi/2)G_{1,b}R_y(-\pi/2)R_x(-\pi/2).$$
    It follows the orbits of $G_{\rm min}$ are the image by $R_x(\pi/2)R_y(\pi/2)$ of the orbits of \nameref{example1}(5b).  For $c\in[0,1/2]$, denoting $C_c=\{R_x(\pi/2)R_y(\pi/2)[c \mathrm{e}^{it}, \sqrt{1-c^2}]^T:t\in \rr\}$, and $A_c=C_c\cup \sigma_xC_c$, $\Upsilon_\J = \{\nu_c\colon c \in [0,1/2]\}$, where  $$\nu_c = \tfrac 12 \operatorname{Unif}_{\P(J_{a}A_c)} + \tfrac 12 \operatorname{Unif}_{\P(J_{b}A_c)}.$$   Then, the only finitely-supported $\Pi$-ergodic measure is $\nu_{0}$, which is a uniform measure on the four atoms $\begin{bsmallmatrix} 1 \\ \mathrm{i} \\ 0 \end{bsmallmatrix}, \begin{bsmallmatrix} 1 \\ \mathrm{-i} \\ 0 \end{bsmallmatrix}, \begin{bsmallmatrix} 0 \\ 1 \\ \mathrm{i} \end{bsmallmatrix}, \begin{bsmallmatrix} 0 \\ 1 \\ \mathrm{-i} \end{bsmallmatrix}$, corresponding to the $y$ oriented poles of the two Bloch spheres.  See \Cref{fig:ex2pic1} for an illustration.

    \item    
   If $\phi, \theta \in \pi\mathbb{Q}$, then
    $G_{\rm min}$ is a finite group and every $\Pi$-ergodic measure is finitely supported.  
   More precisely, if  $\theta =  {\pi p_1}/{q_1}$ and $\phi+  \pi/2 =   {\pi p_2}/{q_2}$, where $\operatorname{gcd}(p_1,q_1)=\operatorname{gcd}(p_2,q_2)=1$ with $\operatorname{gcd}$ denoting the greatest common divisor, then
   $$G_{\rm min} = \langle \i\sigma_z,  R_y(2\pi/q_1), R_y(2\pi/q_2) \rangle   = \langle \i\sigma_z,  R_y(2\pi \operatorname{gcd}(q_1,q_2)/(q_1q_2)) \rangle.$$ 
   Hence, a generic trajectory under $G_{\rm min}$ consists of $2q_1q_2$ points, and a generic $\Pi$-ergodic measure is uniform over $4q_1q_2$ atoms, 
   see also \Cref{fig:ex2pic2}. 
   
   \noindent 
   However, non-uniform $\Pi$-ergodic measure exist as well: $\nu_{\hat x}$ is non-uniform iff the tangency point of the two Bloch spheres belongs to $\supp \nu_{\hat x}$. This happens, for instance, if $\hat x = [0,1]^T$, i.e., the starting point on the reference Bloch sphere corresponds to the tangency point.  
   Then  $\nu_{\hat x}$ has $4q_1q_2-1$ atoms and  the atom at the tangency point, which is $\delta_{[0,1,0]^T}$, has double weight compared to all the other atoms. 
   \end{enumerate}

\begin{figure}[h]
  \begin{center}
      \scalebox{0.65}{
       
\tikzset {_hf7d57enn/.code = {\pgfsetadditionalshadetransform{ \pgftransformshift{\pgfpoint{89.1 bp } { -108.9 bp }  }  \pgftransformscale{1.32 }  }}}
\pgfdeclareradialshading{_eywscd9bn}{\pgfpoint{-72bp}{88bp}}{rgb(0bp)=(1,1,1);
rgb(3.9186504908970425bp)=(1,1,1);
rgb(25bp)=(0.67,0.64,0.55);
rgb(25bp)=(0.78,0.75,0.66);
rgb(400bp)=(0.78,0.75,0.66)}
\tikzset{every picture/.style={line width=0.75pt}}        

\begin{tikzpicture}[x=0.75pt,y=0.75pt,yscale=-1,xscale=1]
 
\path  [shading=_eywscd9bn,_hf7d57enn] (159.3,150.8) .. controls (159.3,79.85) and (216.81,22.33) .. (287.76,22.33) .. controls (358.71,22.33) and (416.22,79.85) .. (416.22,150.8) .. controls (416.22,221.74) and (358.71,279.26) .. (287.76,279.26) .. controls (216.81,279.26) and (159.3,221.74) .. (159.3,150.8) -- cycle ;  
 \draw   (159.3,150.8) .. controls (159.3,79.85) and (216.81,22.33) .. (287.76,22.33) .. controls (358.71,22.33) and (416.22,79.85) .. (416.22,150.8) .. controls (416.22,221.74) and (358.71,279.26) .. (287.76,279.26) .. controls (216.81,279.26) and (159.3,221.74) .. (159.3,150.8) -- cycle ;

\draw  [color={rgb, 255:red, 155; green, 155; blue, 155 }  ,draw opacity=1 ][dash pattern={on 4.5pt off 4.5pt}][line width=0.75]  (159.3,150.8) .. controls (159.3,136.6) and (216.81,125.09) .. (287.76,125.09) .. controls (358.71,125.09) and (416.22,136.6) .. (416.22,150.8) .. controls (416.22,164.99) and (358.71,176.5) .. (287.76,176.5) .. controls (216.81,176.5) and (159.3,164.99) .. (159.3,150.8) -- cycle ;
 
\draw  [color={rgb, 255:red, 208; green, 2; blue, 27 }  ,draw opacity=1 ][line width=2.25]  (159.3,151.65) .. controls (159.3,137.45) and (216.81,125.94) .. (287.76,125.94) .. controls (358.71,125.94) and (416.22,137.45) .. (416.22,151.65) .. controls (416.22,165.85) and (358.71,177.36) .. (287.76,177.36) .. controls (216.81,177.36) and (159.3,165.85) .. (159.3,151.65) -- cycle ;
  
\draw  [color={rgb, 255:red, 139; green, 87; blue, 42 }  ,draw opacity=1 ][line width=2.25]  (188.3,72.69) .. controls (188.3,61.78) and (232.47,52.94) .. (286.96,52.94) .. controls (341.45,52.94) and (385.62,61.78) .. (385.62,72.69) .. controls (385.62,83.59) and (341.45,92.43) .. (286.96,92.43) .. controls (232.47,92.43) and (188.3,83.59) .. (188.3,72.69) -- cycle ;
 
\draw  [color={rgb, 255:red, 139; green, 87; blue, 42 }  ,draw opacity=1 ][line width=2.25]  (188.3,230.69) .. controls (188.3,219.78) and (232.47,210.94) .. (286.96,210.94) .. controls (341.45,210.94) and (385.62,219.78) .. (385.62,230.69) .. controls (385.62,241.59) and (341.45,250.43) .. (286.96,250.43) .. controls (232.47,250.43) and (188.3,241.59) .. (188.3,230.69) -- cycle ;
 
\draw  [draw opacity=0][fill={rgb, 255:red, 0; green, 76; blue, 255 }  ,fill opacity=1 ][line width=1.5]  (281.91,22.33) .. controls (281.91,19.1) and (284.53,16.48) .. (287.76,16.48) .. controls (290.99,16.48) and (293.61,19.1) .. (293.61,22.33) .. controls (293.61,25.56) and (290.99,28.18) .. (287.76,28.18) .. controls (284.53,28.18) and (281.91,25.56) .. (281.91,22.33) -- cycle ;
 
\draw  [draw opacity=0][fill={rgb, 255:red, 0; green, 76; blue, 255 }  ,fill opacity=1 ][line width=1.5]  (281.91,279.26) .. controls (281.91,276.03) and (284.53,273.41) .. (287.76,273.41) .. controls (290.99,273.41) and (293.61,276.03) .. (293.61,279.26) .. controls (293.61,282.49) and (290.99,285.11) .. (287.76,285.11) .. controls (284.53,285.11) and (281.91,282.49) .. (281.91,279.26) -- cycle ;

\draw (264.26,292.01) node [anchor=north west][inner sep=0.75pt]  [font=\Large]  {$\mathsf{P}\mathbb{C}^{2}$};

\end{tikzpicture}
      }
  \end{center} 
      \caption{Invariant subsets of  $\pc{2}$. The Bloch sphere is so oriented that the $y$-axis is pointing upwards. A generic invariant subset consists of two circles of latitude symmetric with respect to the equator of the Bloch sphere (in brown). Degenerate invariant subsets are the equator (in red) and the poles (in blue). 
     Recall that on the Bloch sphere  $Q_{\alpha} = R_y(2\alpha)$ is the rotation around the $y$-axis by the angle $2\alpha$, while $S_{\alpha}$ is the rotation by the angle $\pi$ around an axis determined by some real vector (so contained in the equator) whose exact coefficients depend on $\alpha$. In particular, $S_{\alpha}$ maps the equator to itself, while the upper brown circle is mapped to the lower brown circle and vice versa. 
      }\label{fig:ex2pic1} 
 
\end{figure}
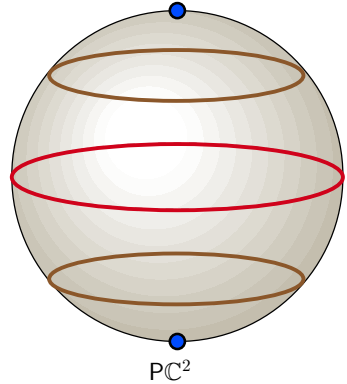

\begin{figure}[h]

\medskip

\tikzset {_26rc30cfz/.code = {\pgfsetadditionalshadetransform{ \pgftransformshift{\pgfpoint{89.1 bp } { -108.9 bp }  }  \pgftransformscale{1.32 }  }}}
\pgfdeclareradialshading{_hab6c0ace}{\pgfpoint{-72bp}{88bp}}{rgb(0bp)=(1,1,1);
rgb(3.9186504908970425bp)=(1,1,1);
rgb(25bp)=(0.67,0.64,0.55);
rgb(25bp)=(0.78,0.75,0.66);
rgb(400bp)=(0.78,0.75,0.66)}

\tikzset {_cxlet4wyq/.code = {\pgfsetadditionalshadetransform{ \pgftransformshift{\pgfpoint{89.1 bp } { -108.9 bp }  }  \pgftransformscale{1.32 }  }}}
\pgfdeclareradialshading{_bz9to6kcq}{\pgfpoint{-72bp}{88bp}}{rgb(0bp)=(1,1,1);
rgb(3.9186504908970425bp)=(1,1,1);
rgb(25bp)=(0.67,0.64,0.55);
rgb(25bp)=(0.78,0.75,0.66);
rgb(400bp)=(0.78,0.75,0.66)}
\tikzset{every picture/.style={line width=0.75pt}}  
\begin{center}
    \scalebox{0.65}{
\begin{tikzpicture}[x=0.75pt,y=0.75pt,yscale=-1,xscale=1]
 
\path  [shading=_hab6c0ace,_26rc30cfz] (25.3,153.8) .. controls (25.3,82.85) and (82.81,25.33) .. (153.76,25.33) .. controls (224.71,25.33) and (282.22,82.85) .. (282.22,153.8) .. controls (282.22,224.74) and (224.71,282.26) .. (153.76,282.26) .. controls (82.81,282.26) and (25.3,224.74) .. (25.3,153.8) -- cycle ; 
 \draw   (25.3,153.8) .. controls (25.3,82.85) and (82.81,25.33) .. (153.76,25.33) .. controls (224.71,25.33) and (282.22,82.85) .. (282.22,153.8) .. controls (282.22,224.74) and (224.71,282.26) .. (153.76,282.26) .. controls (82.81,282.26) and (25.3,224.74) .. (25.3,153.8) -- cycle ;

\draw  [color={rgb, 255:red, 155; green, 155; blue, 155 }  ,draw opacity=1 ][dash pattern={on 4.5pt off 4.5pt}][line width=0.75]  (25.3,153.8) .. controls (25.3,139.6) and (82.81,128.09) .. (153.76,128.09) .. controls (224.71,128.09) and (282.22,139.6) .. (282.22,153.8) .. controls (282.22,167.99) and (224.71,179.5) .. (153.76,179.5) .. controls (82.81,179.5) and (25.3,167.99) .. (25.3,153.8) -- cycle ;
 
\path  [shading=_bz9to6kcq,_cxlet4wyq] (282.22,153.8) .. controls (282.22,82.85) and (339.74,25.33) .. (410.69,25.33) .. controls (481.63,25.33) and (539.15,82.85) .. (539.15,153.8) .. controls (539.15,224.74) and (481.63,282.26) .. (410.69,282.26) .. controls (339.74,282.26) and (282.22,224.74) .. (282.22,153.8) -- cycle ;  
 \draw   (282.22,153.8) .. controls (282.22,82.85) and (339.74,25.33) .. (410.69,25.33) .. controls (481.63,25.33) and (539.15,82.85) .. (539.15,153.8) .. controls (539.15,224.74) and (481.63,282.26) .. (410.69,282.26) .. controls (339.74,282.26) and (282.22,224.74) .. (282.22,153.8) -- cycle ;

\draw  [color={rgb, 255:red, 155; green, 155; blue, 155 }  ,draw opacity=1 ][dash pattern={on 4.5pt off 4.5pt}][line width=0.75]  (282.22,153.8) .. controls (282.22,139.6) and (339.74,128.09) .. (410.69,128.09) .. controls (481.63,128.09) and (539.15,139.6) .. (539.15,153.8) .. controls (539.15,167.99) and (481.63,179.5) .. (410.69,179.5) .. controls (339.74,179.5) and (282.22,167.99) .. (282.22,153.8) -- cycle ;
 
\draw  [color={rgb, 255:red, 208; green, 2; blue, 27 }  ,draw opacity=1 ][line width=1.5]  (25.3,154.65) .. controls (25.3,140.45) and (82.81,128.94) .. (153.76,128.94) .. controls (224.71,128.94) and (282.22,140.45) .. (282.22,154.65) .. controls (282.22,168.85) and (224.71,180.36) .. (153.76,180.36) .. controls (82.81,180.36) and (25.3,168.85) .. (25.3,154.65) -- cycle ;
 
\draw  [color={rgb, 255:red, 208; green, 2; blue, 27 }  ,draw opacity=1 ][line width=1.5]  (282.22,154.65) .. controls (282.22,140.45) and (339.74,128.94) .. (410.69,128.94) .. controls (481.63,128.94) and (539.15,140.45) .. (539.15,154.65) .. controls (539.15,168.85) and (481.63,180.36) .. (410.69,180.36) .. controls (339.74,180.36) and (282.22,168.85) .. (282.22,154.65) -- cycle ;
 
\draw  [color={rgb, 255:red, 139; green, 87; blue, 42 }  ,draw opacity=1 ][line width=1.5]  (54.3,75.69) .. controls (54.3,64.78) and (98.47,55.94) .. (152.96,55.94) .. controls (207.45,55.94) and (251.62,64.78) .. (251.62,75.69) .. controls (251.62,86.59) and (207.45,95.43) .. (152.96,95.43) .. controls (98.47,95.43) and (54.3,86.59) .. (54.3,75.69) -- cycle ;
 
\draw  [color={rgb, 255:red, 139; green, 87; blue, 42 }  ,draw opacity=1 ][line width=1.5]  (312.3,73.69) .. controls (312.3,62.78) and (356.47,53.94) .. (410.96,53.94) .. controls (465.45,53.94) and (509.62,62.78) .. (509.62,73.69) .. controls (509.62,84.59) and (465.45,93.43) .. (410.96,93.43) .. controls (356.47,93.43) and (312.3,84.59) .. (312.3,73.69) -- cycle ;
 
\draw  [color={rgb, 255:red, 139; green, 87; blue, 42 }  ,draw opacity=1 ][line width=1.5]  (54.3,233.69) .. controls (54.3,222.78) and (98.47,213.94) .. (152.96,213.94) .. controls (207.45,213.94) and (251.62,222.78) .. (251.62,233.69) .. controls (251.62,244.59) and (207.45,253.43) .. (152.96,253.43) .. controls (98.47,253.43) and (54.3,244.59) .. (54.3,233.69) -- cycle ;
 
\draw  [color={rgb, 255:red, 139; green, 87; blue, 42 }  ,draw opacity=1 ][line width=1.5]  (312.3,232.69) .. controls (312.3,221.78) and (356.47,212.94) .. (410.96,212.94) .. controls (465.45,212.94) and (509.62,221.78) .. (509.62,232.69) .. controls (509.62,243.59) and (465.45,252.43) .. (410.96,252.43) .. controls (356.47,252.43) and (312.3,243.59) .. (312.3,232.69) -- cycle ;
 
\draw  [color={rgb, 255:red, 0; green, 0; blue, 0 }  ,draw opacity=1 ][fill={rgb, 255:red, 208; green, 2; blue, 27 }  ,fill opacity=1 ][line width=1.5]  (273.08,154.65) .. controls (273.08,149.6) and (277.17,145.5) .. (282.22,145.5) .. controls (287.28,145.5) and (291.37,149.6) .. (291.37,154.65) .. controls (291.37,159.7) and (287.28,163.8) .. (282.22,163.8) .. controls (277.17,163.8) and (273.08,159.7) .. (273.08,154.65) -- cycle ;
 
\draw  [color={rgb, 255:red, 0; green, 0; blue, 0 }  ,draw opacity=1 ][fill={rgb, 255:red, 208; green, 2; blue, 27 }  ,fill opacity=1 ][line width=1.5]  (147.91,179.5) .. controls (147.91,176.27) and (150.53,173.65) .. (153.76,173.65) .. controls (156.99,173.65) and (159.61,176.27) .. (159.61,179.5) .. controls (159.61,182.73) and (156.99,185.35) .. (153.76,185.35) .. controls (150.53,185.35) and (147.91,182.73) .. (147.91,179.5) -- cycle ;
 
\draw  [color={rgb, 255:red, 0; green, 0; blue, 0 }  ,draw opacity=1 ][fill={rgb, 255:red, 208; green, 2; blue, 27 }  ,fill opacity=1 ][line width=1.5]  (19.45,154.65) .. controls (19.45,151.42) and (22.07,148.8) .. (25.3,148.8) .. controls (28.53,148.8) and (31.15,151.42) .. (31.15,154.65) .. controls (31.15,157.88) and (28.53,160.5) .. (25.3,160.5) .. controls (22.07,160.5) and (19.45,157.88) .. (19.45,154.65) -- cycle ;
 
\draw  [color={rgb, 255:red, 0; green, 0; blue, 0 }  ,draw opacity=1 ][fill={rgb, 255:red, 208; green, 2; blue, 27 }  ,fill opacity=1 ][line width=1.5]  (147.91,128.09) .. controls (147.91,124.86) and (150.53,122.24) .. (153.76,122.24) .. controls (156.99,122.24) and (159.61,124.86) .. (159.61,128.09) .. controls (159.61,131.32) and (156.99,133.94) .. (153.76,133.94) .. controls (150.53,133.94) and (147.91,131.32) .. (147.91,128.09) -- cycle ;
 
\draw  [color={rgb, 255:red, 0; green, 0; blue, 0 }  ,draw opacity=1 ][fill={rgb, 255:red, 208; green, 2; blue, 27 }  ,fill opacity=1 ][line width=1.5]  (404.84,179.5) .. controls (404.84,176.27) and (407.45,173.65) .. (410.69,173.65) .. controls (413.92,173.65) and (416.53,176.27) .. (416.53,179.5) .. controls (416.53,182.73) and (413.92,185.35) .. (410.69,185.35) .. controls (407.45,185.35) and (404.84,182.73) .. (404.84,179.5) -- cycle ;
 
\draw  [color={rgb, 255:red, 0; green, 0; blue, 0 }  ,draw opacity=1 ][fill={rgb, 255:red, 208; green, 2; blue, 27 }  ,fill opacity=1 ][line width=1.5]  (533.3,154.65) .. controls (533.3,151.42) and (535.92,148.8) .. (539.15,148.8) .. controls (542.38,148.8) and (545,151.42) .. (545,154.65) .. controls (545,157.88) and (542.38,160.5) .. (539.15,160.5) .. controls (535.92,160.5) and (533.3,157.88) .. (533.3,154.65) -- cycle ;
 
\draw  [color={rgb, 255:red, 0; green, 0; blue, 0 }  ,draw opacity=1 ][fill={rgb, 255:red, 208; green, 2; blue, 27 }  ,fill opacity=1 ][line width=1.5]  (404.84,128.94) .. controls (404.84,125.71) and (407.45,123.1) .. (410.69,123.1) .. controls (413.92,123.1) and (416.53,125.71) .. (416.53,128.94) .. controls (416.53,132.17) and (413.92,134.79) .. (410.69,134.79) .. controls (407.45,134.79) and (404.84,132.17) .. (404.84,128.94) -- cycle ;
 
\draw  [color={rgb, 255:red, 0; green, 0; blue, 0 }  ,draw opacity=1 ][fill={rgb, 255:red, 139; green, 87; blue, 42 }  ,fill opacity=1 ][line width=1.5]  (135.11,251.86) .. controls (135.11,249.5) and (137.03,247.58) .. (139.39,247.58) .. controls (141.75,247.58) and (143.67,249.5) .. (143.67,251.86) .. controls (143.67,254.22) and (141.75,256.14) .. (139.39,256.14) .. controls (137.03,256.14) and (135.11,254.22) .. (135.11,251.86) -- cycle ;
 
\draw  [color={rgb, 255:red, 0; green, 0; blue, 0 }  ,draw opacity=1 ][fill={rgb, 255:red, 139; green, 87; blue, 42 }  ,fill opacity=1 ][line width=1.5]  (237.11,242.86) .. controls (237.11,240.5) and (239.03,238.58) .. (241.39,238.58) .. controls (243.75,238.58) and (245.67,240.5) .. (245.67,242.86) .. controls (245.67,245.22) and (243.75,247.14) .. (241.39,247.14) .. controls (239.03,247.14) and (237.11,245.22) .. (237.11,242.86) -- cycle ;
 
\draw  [color={rgb, 255:red, 0; green, 0; blue, 0 }  ,draw opacity=1 ][fill={rgb, 255:red, 139; green, 87; blue, 42 }  ,fill opacity=1 ][line width=1.5]  (165.11,214.86) .. controls (165.11,212.5) and (167.03,210.58) .. (169.39,210.58) .. controls (171.75,210.58) and (173.67,212.5) .. (173.67,214.86) .. controls (173.67,217.22) and (171.75,219.14) .. (169.39,219.14) .. controls (167.03,219.14) and (165.11,217.22) .. (165.11,214.86) -- cycle ;
 
\draw  [color={rgb, 255:red, 0; green, 0; blue, 0 }  ,draw opacity=1 ][fill={rgb, 255:red, 139; green, 87; blue, 42 }  ,fill opacity=1 ][line width=1.5]  (64.11,223.86) .. controls (64.11,221.5) and (66.03,219.58) .. (68.39,219.58) .. controls (70.75,219.58) and (72.67,221.5) .. (72.67,223.86) .. controls (72.67,226.22) and (70.75,228.14) .. (68.39,228.14) .. controls (66.03,228.14) and (64.11,226.22) .. (64.11,223.86) -- cycle ;

\draw  [color={rgb, 255:red, 0; green, 0; blue, 0 }  ,draw opacity=1 ][fill={rgb, 255:red, 0; green, 76; blue, 255 }  ,fill opacity=1 ][line width=1.5]  (144.61,25.33) .. controls (144.61,20.28) and (148.71,16.19) .. (153.76,16.19) .. controls (158.81,16.19) and (162.91,20.28) .. (162.91,25.33) .. controls (162.91,30.39) and (158.81,34.48) .. (153.76,34.48) .. controls (148.71,34.48) and (144.61,30.39) .. (144.61,25.33) -- cycle ;
 
\draw  [color={rgb, 255:red, 0; green, 0; blue, 0 }  ,draw opacity=1 ][fill={rgb, 255:red, 0; green, 76; blue, 255 }  ,fill opacity=1 ][line width=1.5]  (401.54,25.33) .. controls (401.54,20.28) and (405.63,16.19) .. (410.69,16.19) .. controls (415.74,16.19) and (419.83,20.28) .. (419.83,25.33) .. controls (419.83,30.39) and (415.74,34.48) .. (410.69,34.48) .. controls (405.63,34.48) and (401.54,30.39) .. (401.54,25.33) -- cycle ;
 
\draw  [color={rgb, 255:red, 0; green, 0; blue, 0 }  ,draw opacity=1 ][fill={rgb, 255:red, 0; green, 76; blue, 255 }  ,fill opacity=1 ][line width=1.5]  (144.61,282.26) .. controls (144.61,277.2) and (148.71,273.11) .. (153.76,273.11) .. controls (158.81,273.11) and (162.91,277.2) .. (162.91,282.26) .. controls (162.91,287.31) and (158.81,291.4) .. (153.76,291.4) .. controls (148.71,291.4) and (144.61,287.31) .. (144.61,282.26) -- cycle ;
 
\draw  [color={rgb, 255:red, 0; green, 0; blue, 0 }  ,draw opacity=1 ][fill={rgb, 255:red, 0; green, 76; blue, 255 }  ,fill opacity=1 ][line width=1.5]  (401.54,282.26) .. controls (401.54,277.2) and (405.63,273.11) .. (410.69,273.11) .. controls (415.74,273.11) and (419.83,277.2) .. (419.83,282.26) .. controls (419.83,287.31) and (415.74,291.4) .. (410.69,291.4) .. controls (405.63,291.4) and (401.54,287.31) .. (401.54,282.26) -- cycle ;
 
\draw  [color={rgb, 255:red, 0; green, 0; blue, 0 }  ,draw opacity=1 ][fill={rgb, 255:red, 139; green, 87; blue, 42 }  ,fill opacity=1 ][line width=1.5]  (165.11,93.86) .. controls (165.11,91.5) and (167.03,89.58) .. (169.39,89.58) .. controls (171.75,89.58) and (173.67,91.5) .. (173.67,93.86) .. controls (173.67,96.22) and (171.75,98.14) .. (169.39,98.14) .. controls (167.03,98.14) and (165.11,96.22) .. (165.11,93.86) -- cycle ;
 
\draw  [color={rgb, 255:red, 0; green, 0; blue, 0 }  ,draw opacity=1 ][fill={rgb, 255:red, 139; green, 87; blue, 42 }  ,fill opacity=1 ][line width=1.5]  (234.11,67.86) .. controls (234.11,65.5) and (236.03,63.58) .. (238.39,63.58) .. controls (240.75,63.58) and (242.67,65.5) .. (242.67,67.86) .. controls (242.67,70.22) and (240.75,72.14) .. (238.39,72.14) .. controls (236.03,72.14) and (234.11,70.22) .. (234.11,67.86) -- cycle ;
 
\draw  [color={rgb, 255:red, 0; green, 0; blue, 0 }  ,draw opacity=1 ][fill={rgb, 255:red, 139; green, 87; blue, 42 }  ,fill opacity=1 ][line width=1.5]  (136.11,56.86) .. controls (136.11,54.5) and (138.03,52.58) .. (140.39,52.58) .. controls (142.75,52.58) and (144.67,54.5) .. (144.67,56.86) .. controls (144.67,59.22) and (142.75,61.14) .. (140.39,61.14) .. controls (138.03,61.14) and (136.11,59.22) .. (136.11,56.86) -- cycle ;
 
\draw  [color={rgb, 255:red, 0; green, 0; blue, 0 }  ,draw opacity=1 ][fill={rgb, 255:red, 139; green, 87; blue, 42 }  ,fill opacity=1 ][line width=1.5]  (59.11,83.86) .. controls (59.11,81.5) and (61.03,79.58) .. (63.39,79.58) .. controls (65.75,79.58) and (67.67,81.5) .. (67.67,83.86) .. controls (67.67,86.22) and (65.75,88.14) .. (63.39,88.14) .. controls (61.03,88.14) and (59.11,86.22) .. (59.11,83.86) -- cycle ;
 
\draw  [color={rgb, 255:red, 0; green, 0; blue, 0 }  ,draw opacity=1 ][fill={rgb, 255:red, 139; green, 87; blue, 42 }  ,fill opacity=1 ][line width=1.5]  (393.11,250.86) .. controls (393.11,248.5) and (395.03,246.58) .. (397.39,246.58) .. controls (399.75,246.58) and (401.67,248.5) .. (401.67,250.86) .. controls (401.67,253.22) and (399.75,255.14) .. (397.39,255.14) .. controls (395.03,255.14) and (393.11,253.22) .. (393.11,250.86) -- cycle ;
 
\draw  [color={rgb, 255:red, 0; green, 0; blue, 0 }  ,draw opacity=1 ][fill={rgb, 255:red, 139; green, 87; blue, 42 }  ,fill opacity=1 ][line width=1.5]  (495.11,241.86) .. controls (495.11,239.5) and (497.03,237.58) .. (499.39,237.58) .. controls (501.75,237.58) and (503.67,239.5) .. (503.67,241.86) .. controls (503.67,244.22) and (501.75,246.14) .. (499.39,246.14) .. controls (497.03,246.14) and (495.11,244.22) .. (495.11,241.86) -- cycle ;
 
\draw  [color={rgb, 255:red, 0; green, 0; blue, 0 }  ,draw opacity=1 ][fill={rgb, 255:red, 139; green, 87; blue, 42 }  ,fill opacity=1 ][line width=1.5]  (423.11,213.86) .. controls (423.11,211.5) and (425.03,209.58) .. (427.39,209.58) .. controls (429.75,209.58) and (431.67,211.5) .. (431.67,213.86) .. controls (431.67,216.22) and (429.75,218.14) .. (427.39,218.14) .. controls (425.03,218.14) and (423.11,216.22) .. (423.11,213.86) -- cycle ;
 
\draw  [color={rgb, 255:red, 0; green, 0; blue, 0 }  ,draw opacity=1 ][fill={rgb, 255:red, 139; green, 87; blue, 42 }  ,fill opacity=1 ][line width=1.5]  (322.11,222.86) .. controls (322.11,220.5) and (324.03,218.58) .. (326.39,218.58) .. controls (328.75,218.58) and (330.67,220.5) .. (330.67,222.86) .. controls (330.67,225.22) and (328.75,227.14) .. (326.39,227.14) .. controls (324.03,227.14) and (322.11,225.22) .. (322.11,222.86) -- cycle ;
 
\draw  [color={rgb, 255:red, 0; green, 0; blue, 0 }  ,draw opacity=1 ][fill={rgb, 255:red, 139; green, 87; blue, 42 }  ,fill opacity=1 ][line width=1.5]  (422.11,91.86) .. controls (422.11,89.5) and (424.03,87.58) .. (426.39,87.58) .. controls (428.75,87.58) and (430.67,89.5) .. (430.67,91.86) .. controls (430.67,94.22) and (428.75,96.14) .. (426.39,96.14) .. controls (424.03,96.14) and (422.11,94.22) .. (422.11,91.86) -- cycle ;
 
\draw  [color={rgb, 255:red, 0; green, 0; blue, 0 }  ,draw opacity=1 ][fill={rgb, 255:red, 139; green, 87; blue, 42 }  ,fill opacity=1 ][line width=1.5]  (491.11,65.86) .. controls (491.11,63.5) and (493.03,61.58) .. (495.39,61.58) .. controls (497.75,61.58) and (499.67,63.5) .. (499.67,65.86) .. controls (499.67,68.22) and (497.75,70.14) .. (495.39,70.14) .. controls (493.03,70.14) and (491.11,68.22) .. (491.11,65.86) -- cycle ;
 
\draw  [color={rgb, 255:red, 0; green, 0; blue, 0 }  ,draw opacity=1 ][fill={rgb, 255:red, 139; green, 87; blue, 42 }  ,fill opacity=1 ][line width=1.5]  (393.11,54.86) .. controls (393.11,52.5) and (395.03,50.58) .. (397.39,50.58) .. controls (399.75,50.58) and (401.67,52.5) .. (401.67,54.86) .. controls (401.67,57.22) and (399.75,59.14) .. (397.39,59.14) .. controls (395.03,59.14) and (393.11,57.22) .. (393.11,54.86) -- cycle ;
 
\draw  [color={rgb, 255:red, 0; green, 0; blue, 0 }  ,draw opacity=1 ][fill={rgb, 255:red, 139; green, 87; blue, 42 }  ,fill opacity=1 ][line width=1.5]  (316.11,81.86) .. controls (316.11,79.5) and (318.03,77.58) .. (320.39,77.58) .. controls (322.75,77.58) and (324.67,79.5) .. (324.67,81.86) .. controls (324.67,84.22) and (322.75,86.14) .. (320.39,86.14) .. controls (318.03,86.14) and (316.11,84.22) .. (316.11,81.86) -- cycle ;
 
\draw  [fill={rgb, 255:red, 208; green, 2; blue, 27 }  ,fill opacity=1 ][line width=1.5]  (236.61,129.82) -- (241.44,134.65) -- (236.61,139.49) -- (231.78,134.65) -- cycle ;
 
\draw  [fill={rgb, 255:red, 208; green, 2; blue, 27 }  ,fill opacity=1 ][line width=1.5]  (236.11,168.82) -- (240.94,173.65) -- (236.11,178.49) -- (231.28,173.65) -- cycle ;
 
\draw  [fill={rgb, 255:red, 208; green, 2; blue, 27 }  ,fill opacity=1 ][line width=1.5]  (91.11,127.32) -- (95.94,132.15) -- (91.11,136.99) -- (86.28,132.15) -- cycle ;
 
\draw  [fill={rgb, 255:red, 208; green, 2; blue, 27 }  ,fill opacity=1 ][line width=1.5]  (213.11,126.32) -- (217.94,131.15) -- (213.11,135.99) -- (208.28,131.15) -- cycle ;
 
\draw  [fill={rgb, 255:red, 208; green, 2; blue, 27 }  ,fill opacity=1 ][line width=1.5]  (90.61,171.82) -- (95.44,176.65) -- (90.61,181.49) -- (85.78,176.65) -- cycle ;
 
\draw  [fill={rgb, 255:red, 208; green, 2; blue, 27 }  ,fill opacity=1 ][line width=1.5]  (212.61,171.32) -- (217.44,176.15) -- (212.61,180.99) -- (207.78,176.15) -- cycle ;
 
\draw  [fill={rgb, 255:red, 208; green, 2; blue, 27 }  ,fill opacity=1 ][line width=1.5]  (66.61,130.32) -- (71.44,135.15) -- (66.61,139.99) -- (61.78,135.15) -- cycle ;
 
\draw  [fill={rgb, 255:red, 208; green, 2; blue, 27 }  ,fill opacity=1 ][line width=1.5]  (66.11,167.32) -- (70.94,172.15) -- (66.11,176.99) -- (61.28,172.15) -- cycle ;
 
\draw  [fill={rgb, 255:red, 208; green, 2; blue, 27 }  ,fill opacity=1 ][line width=1.5]  (497.61,130.82) -- (502.44,135.65) -- (497.61,140.49) -- (492.78,135.65) -- cycle ;
 
\draw  [fill={rgb, 255:red, 208; green, 2; blue, 27 }  ,fill opacity=1 ][line width=1.5]  (497.11,169.82) -- (501.94,174.65) -- (497.11,179.49) -- (492.28,174.65) -- cycle ;
 
\draw  [fill={rgb, 255:red, 208; green, 2; blue, 27 }  ,fill opacity=1 ][line width=1.5]  (352.11,128.32) -- (356.94,133.15) -- (352.11,137.99) -- (347.28,133.15) -- cycle ;
  
\draw  [fill={rgb, 255:red, 208; green, 2; blue, 27 }  ,fill opacity=1 ][line width=1.5]  (474.11,127.32) -- (478.94,132.15) -- (474.11,136.99) -- (469.28,132.15) -- cycle ;
 
\draw  [fill={rgb, 255:red, 208; green, 2; blue, 27 }  ,fill opacity=1 ][line width=1.5]  (351.61,172.82) -- (356.44,177.65) -- (351.61,182.49) -- (346.78,177.65) -- cycle ;
 
\draw  [fill={rgb, 255:red, 208; green, 2; blue, 27 }  ,fill opacity=1 ][line width=1.5]  (473.61,172.32) -- (478.44,177.15) -- (473.61,181.99) -- (468.78,177.15) -- cycle ;
  
\draw  [fill={rgb, 255:red, 208; green, 2; blue, 27 }  ,fill opacity=1 ][line width=1.5]  (327.61,131.32) -- (332.44,136.15) -- (327.61,140.99) -- (322.78,136.15) -- cycle ;
 
\draw  [fill={rgb, 255:red, 208; green, 2; blue, 27 }  ,fill opacity=1 ][line width=1.5]  (327.11,168.32) -- (331.94,173.15) -- (327.11,177.99) -- (322.28,173.15) -- cycle ;
  
\draw (140.49,296.1) node [anchor=north west][inner sep=0.75pt]  [font=\Large]  {$D_{a}$};

\draw (398.19,294.63) node [anchor=north west][inner sep=0.75pt]  [font=\Large]  {$D_{b}$};

\end{tikzpicture}
}
\end{center} 
    \caption{The Bloch sphere is so oriented that the $y$-axis is pointing upwards. Three $\Pi$-minimal sets in $\pc{4}$: the set of four brown circles (generic),  of two red equators,  and of four poles (cf.  \Cref{fig:ex2pic1}). If $G$ is infinite,  the uniform measures on these sets are   the $\Pi$-ergodic measures. 
    If $G$ is finite, all $\Pi$-ergodic measures are finitely-supported (here we assume $G$ has 8 elements, which corresponds to setting, e.g.,   $\theta = \pi/2 $ and $\phi = \pi/4$). Brown bullets: a generic $\Pi$-ergodic measure (uniform, each atom with weight $1/16$).   Blue bullets: the ergodic measure supported on the poles (uniform, each atom with weight $1/4$). Red bullets: a non-uniform $\Pi$-ergodic measure (the atom at the touching point  has weight $1/4$,  the other atoms have weight $1/8$). Red diamonds: a    $\Pi$-ergodic measure (uniform, each atom with weight $1/16$).}\label{fig:ex2pic2} 
\end{figure}

  \subsection*{Example 3.}  Here we recall a classical example of a measure $\mu$ with uncountably many dark subspaces \cite[Example~2]{MaaKumm} and describe its ergodic measures by using our framework of smart families. 

Let $\ell, p,r \in \nnone$. We fix a family of complex $p\times p$ matrices $b_1, \ldots, b_\ell$ such that they admit only dark subspaces of dimension $1$ and satisfy the stochasticity condition $\sum_{i=1}^\ell b_i^*b_i = \id$.  
We extend the system by fixing a family $u_1, \ldots, u_\ell$ of special unitary operators on $\cc^r$. We shall consider the joint space $\cc^p \otimes \cc^r$  with the operators
$$v_i = b_i  \otimes u_i, \ \ i \in \{1, \ldots, \ell\}.$$ 
Then $\sum_{i=1}^\ell v_i^*v_i = \id$. We assume that $ \mu=\sum_{i=1}^\ell \delta_{v_i}$ is irreducible. Note that the irreducibility of 
$ \sum_{i=1}^\ell \delta_{b_i}$ follows.

The maximal dark subspaces of the joint system are $r$-dimensional and contain the elements of the set  
$$\mathcal D_\infty=\{y \otimes \cc^r \:|\: y \in \cc^p, \: y\neq 0\}.$$
Actually this set is $\W$-invariant and one on which the process of dark subspaces concentrates.
To see that observe that the process $(M_n)_n$ can be written $M_n=M_n^1\otimes M_n^2$ with $M_n^1$ and $M_n^2$ operators on respectively $\cc^p$ and $\cc^r$. The assumption of triviality of the dark subspaces for the $\{b_1,\dotsc,b_\ell\}$ and \Cref{prop:rankM} guaranty that $M_n^1$ converges to some rank-one projector $\pi_{\hat z}$ for a random $z \in \cc^p$ as $n$ tends to infinity, while on $\cc^r$ the operators are unitary, so $M_n^2 = \id_r$ for every $n$. As a consequence, $M_\infty = \pi_{\hat z} \otimes \id_r$. Then, since for any product $w$ of matrices from $\{v_1,\dotsc,v_\ell\}$, $w=b\otimes u$ for some $p\times p$ matrix $b$ and a unitary $r\times r$ matrix $u$, $U_n$ in the polar decomposition is of the form $\tilde u_n\otimes (u_{i_n}\dotsb u_{i_1})$ with $\tilde u_n$ a $p\times p$ unitary matrix. Hence, every maximal dark subspace that is charged assymptotically coincides with the range of some $\pi_{U \cdot \hat z_0} \otimes \id_r$ with $U \in \mathcal{SU}(p)$ thanks to \Cref{prop:Minfdark}. 

We now restrict our discussion to dark subspaces from $\mathcal D_\infty$.

By \Cref{thm:invmeasDark}, the kernel  $K$  on $\mathcal D_m$, defined by
$$Kf(D)=\sum_{i=1}^\ell f(v_iD)\tr(v_i \tfrac{\pi_D}{r}v_i^*),$$ 
where $f \in \mathcal C (\mathcal D_m)$, 
admits a unique invariant probability measure  $\chi_{{\rm inv}}$.  
Also, by \cite[Theorem~1.1]{BenFra}, the transition kernel $\Pi_b$ on $\pc{p}$, defined by $$ \Pi_b  f(\hat y) = \sum_{i=1}^\ell  f(b_i \cdot \hat y)\|b_i y\|^2,$$ where $ f \in \mathcal C (\pc{p})$, admits a unique invariant measure, which we denote by $\gamma_{{\rm inv}}$.  
We shall now establish the connection between $\chi_{{\rm inv}}$ and $\gamma_{{\rm inv}}$.

The dark subspaces of $\mathcal D_\infty$ can be associated with rays in $\cc^{p}$ via
$$\xi \colon  y \otimes \cc^r \mapsto  \hat y  \in \pc{p}.$$ 
For $\hat y \in \pc{p}$ we denote $D_{\hat y} = y \otimes \cc^r$.  
 Observe that $v_i  D_{\hat y} =
 D_{b_i \cdot \hat y}$ if $b_i y \neq 0$ and also
 $$\tr(v_i {\pi_{D_{\hat y}}}  v_i^*) =    \tr(b_i \pi_{{\hat y}} b_i^* \otimes u_i \id_{\cc^r} u_i^*) = r \|b_i y\|^2.$$  
 Thus, by direct calculation, 
$$ \Pi f (\hat y) = K[f\circ \xi](D_{\hat y})$$
for any $f \in \mathcal C (\pc{p})$. 
From the $K$-invariance of $\chi_{{\rm inv}}$ we deduce the $ \Pi_b$-invariance of $\xi_\star\chi_{{\rm inv}}$ and, since $\gamma_{{\rm inv}}$ is the unique $\Pi_b$-invariant  probability measure, we conclude that 
\begin{equation}\label{eq:ex3imgmeasure}
\gamma_{{\rm inv}} = \xi_\star\chi_{{\rm inv}}.
\end{equation}

\smallskip
   
Let us apply the  minimal family approach to 
describe the set of all $\Pi$-ergodic measures on $\P(\cc^p \otimes \cc^r)$, where we recal that $\Pi$ acts on $f \in \mathcal C (\P(\cc^p \otimes \cc^r))$ as  
\begin{align*}
\Pi f(\hat x) 
 = \sum_{i=1}^\ell    f(v_i \cdot \hat x)\|v_i x\|^2.   
\end{align*} 
 The convergence of quantum trajectories towards dark subspaces allows us to restrict our attention to product states of the form $\hat x=\hat y \otimes \hat z$ with $y\in \cc^p$ and $z\in \cc^r$, where we denote $\hat y\otimes \hat z$ for $\widehat{y\otimes z}$. On this set of states,
\begin{align*}
\Pi f(\hat y \otimes \hat z) = \sum_{i=1}^\ell    f(b_i \cdot \hat y \otimes u_i \cdot \hat z)\|b_i y\|^2.  
\end{align*}
Denote by $G$  the smallest closed subgroup of $\mathcal {SU}(r)$ containing $\{u_1, \ldots, u_\ell\}$, and by $\operatorname{H}_G$ the normalized Haar measure on $G$. Consider the image through $\Psi=(\hat y,\hat z)\mapsto \hat y\otimes \hat z$ of the measures 
$$\gamma_{{\rm inv}}\otimes({g_{\hat z}})_\star \mathrm{H}_G,$$
where  $\hat z \in \pc{r}$ and $g_{\hat z} \colon \mathcal{SU}(r) \ni u \mapsto u \cdot \hat z \in \pc{r}$. With a small abuse of notation we write a measure on $\P(\cc^p)\times\P(\cc^r)$ and its pushfoward by $\Psi$ the same way.
Their $\Pi$-invariance can be verified by direct calculation. In what follows, we show that, quite counter-intuitively, they  may or may not form the set of $\Pi$-ergodic measures. 

First, remark that the family of isomorphisms $\widetilde{\J}$ defined by $\widetilde{J}_{\hat y}:x\mapsto y\otimes x$ with $y$ denoting a normalized representative of $\hat y$ satisfies $G_{\widetilde{\J}}=G$. We show that this family may or may not be minimal through an example. Let $p=r =2$, $q\in (0,\frac12)$  and
$$b_1=\begin{pmatrix}
    \sqrt{q} &0\\
    0&\sqrt{1-q}
\end{pmatrix},\quad b_2=\begin{pmatrix}
    0&\sqrt{q}\\
    \sqrt{1-q}&0
\end{pmatrix},\quad u_1=\i\sigma_x\quad \mbox{and}\quad u_2=\i\sigma_z.$$
Then, $G=\{\pm\id,\pm \i\sigma_x,\pm\i\sigma_y,\pm\i\sigma_z\}$. Computing explicitly the eigenvectors of $v_1=b_1\otimes u_1$ and $v_2=b_2\otimes u_2$, one checks that $\mu$ is irreducible.

Following \cite[Section~8]{BenFat}, the only maximal dark subspaces charged by $\chi_{\rm inv}$ are $e_0\otimes \cc^2$ and $e_1\otimes \cc^2$ with $\{e_0,e_1\}$ the canonical basis of $\cc^2$. Let $J_{\hat e_0}:z\mapsto e_0\otimes z$ and $J_{\hat e_1}=v_2J_{\hat e_0}$. Then $J_{\hat e_1}z=e_1\otimes \i\sigma_z z$ and by construction $\J=\{J_{\hat e_0}, J_{\hat e_1}\}$ (completed with arbitrary isomorphisms for other maximal dark subspaces) is $D_{\hat e_0}$-smart, thus smart and minimal thanks to \Cref{thm:smartiffminimal}. Using composition rules, to determine $G_\J$, we only need to compute four unitary matrices that will generate $R_\J$ and therefore $G_\J$. Direct computations lead to
\begin{align*}
    J_{\hat e_0}^{-1}v_1J_{\hat e_0}^{\vphantom{-1}}&\propto\i\sigma_x &
    J_{\hat e_1}^{-1}v_2J_{\hat e_0}^{\vphantom{-1}}&\propto\id\\
    J_{\hat e_1}^{-1}v_1J_{\hat e_1}^{\vphantom{-1}}&\propto-\i\sigma_x&
    J_{\hat e_0}^{-1}v_2J_{\hat e_1}^{\vphantom{-1}}&\propto-\id.
\end{align*}
Therefore $G_\J=\{\pm\id,\pm\i\sigma_x\}\subsetneq G$. Hence, $\widetilde{\J}$ is not minimal and $\Upsilon_{\widetilde{\J}}$ contains non-ergodic $\Pi$-invariant measures.

Now let us introduce a new set of matrices $\{\frac{1}{\sqrt{2}}v_1,\frac{1}{\sqrt{2}}v_2,\frac{1}{\sqrt{2}}v_3\}$ with the same $v_1$ and $v_2$ and adding $v_3=\id\otimes \i\sigma_y$. The group $G$ generated by the unitary operators $u_i$ is not changed by this addition. The new invariant measure $\chi_{\rm inv}$ is supported on the same two maximaly dark subspaces. Keeping the same definition of $\J$, it is still minimal since it is still $D_{\hat e_0}$-smart. However, the generator $\i\sigma_y$ is added to $G_\J$ since
\begin{align*}
    J_{\hat e_0}^{-1}v_3J_{\hat e_0}^{\vphantom{-1}}=J_{\hat e_1}^{-1}v_3J_{\hat e_1}^{\vphantom{-1}}\propto\i\sigma_y.
\end{align*}
It follows that $G_\J=G$ and $\Upsilon_{\widetilde{\J}}=\Upsilon_{\J \vphantom{\widetilde{\J}}}$.

 \appendix

 \section{Invariant measures on $\Sd$}\label{sec:app}

 In view of the results of \Cref{sec:invmeasures},  there naturally arises a question about $\Pi$-invariant measures on  $\Sd$. We immediately see that uniqueness no longer can hold as we always have $\nu_{\rm unif} = \int_{\D} \operatorname{Unif}_{\P(D)}\d \chi_{\rm inv}(D)$ on pure states and $\nu_{\rm ch}= \int_{\D} \delta_{\pi_D / r_m} \d \chi_{\rm inv}(D)$ on (normalized) maximal dark projectors. 
 In what follows we provide the characterization of $\Pi$-invariant probability measures on $\Sd$. Throughout the appendix we denote  $\mathcal{S}(D) = \{\rho \in \Sd \,\colon\, \supp \rho \subset D\}$ for $D \in \mathcal D_m$. 
 
 First, we observe that the convergence of quantum trajectories towards dark subspaces (\Cref{thm:conv to Dark}) gives the following counterpart of \Cref{prop:invariantindark}:
\begin{proposition} 
The support of any $\Pi$-invariant Borel probability measure on $\Sd$ is contained in $\bigcup_{D \in \mathcal D_m}\mathcal{S}(D)$. 
\end{proposition}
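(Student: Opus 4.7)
The plan is to apply Theorem~\ref{thm:conv to Dark} directly to a $\Pi$-invariant probability measure $\nu$ and pass to the limit in $n$. Since $\nu$ is $\Pi$-invariant, the law of $\rho_n$ under $\mathbb P_\nu$ is $\nu$ itself for every $n \in \nnzero$. Consequently,
\[
\int_{\Sd} \inf_{D \in \D}\big(1-\tr(\pi_D \rho)\big)\, \d \nu(\rho) = \ee_\nu\Big(\inf_{D \in \D}\big(1-\tr(\pi_D \rho_n)\big)\Big) \leq C \lambda^n
\]
for all $n$, with $C>0$ and $\lambda <1$ from Theorem~\ref{thm:conv to Dark}. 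Letting $n \to \infty$, the left-hand side must be $0$.

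The integrand $\rho \mapsto \inf_{D \in \D}(1-\tr(\pi_D \rho))$ is non-negative. Moreover, since $\D$ is compact (Proposition~\ref{prop:compact}) and $(D,\rho)\mapsto \tr(\pi_D\rho)$ is continuous, the infimum is actually attained for every $\rho \in \Sd$. Hence, for $\nu$-almost every $\rho$, there exists $D \in \D$ with $\tr(\pi_D\rho) = 1$, which is equivalent to $\supp \rho \subset D$, i.e., $\rho \in \mathcal S(D)$. Therefore
\[
\nu\Big(\bigcup_{D \in \D}\mathcal S(D)\Big) = 1.
\]

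It remains to upgrade this full-measure statement to a statement about the (topological) support of $\nu$. For that, I would verify that  $\bigcup_{D \in \D}\mathcal S(D)$ is closed in $\Sd$. Indeed, if $(\rho_n)_n$ is a sequence with $\rho_n \in \mathcal S(D_n)$ for some $D_n \in \D$ and $\rho_n \to \rho$, then the compactness of $\D$ (Proposition~\ref{prop:compact}) allows one to extract a subsequence $D_{n_k} \to D_\infty \in \D$. Passing to the limit in $\tr(\pi_{D_{n_k}}\rho_{n_k}) = 1$, we get $\tr(\pi_{D_\infty}\rho) = 1$, so $\rho \in \mathcal S(D_\infty)$. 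Hence the union is closed, and since it has full $\nu$-measure, it contains $\supp \nu$. The main step is really the application of Theorem~\ref{thm:conv to Dark}; once that inequality is in hand, the argument is essentially a straightforward continuity/compactness wrap-up, with the only small verification being that $\bigcup_{D \in \D}\mathcal S(D)$ is closed.
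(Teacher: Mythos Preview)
Your proof is correct and is precisely the argument the paper has in mind: the appendix proposition is stated without detailed proof, only with the remark that it follows from Theorem~\ref{thm:conv to Dark}, and your write-up supplies exactly that deduction. The use of $\Pi$-invariance to identify the law of $\rho_n$ with $\nu$, followed by the compactness/closedness verification, is the expected wrap-up.
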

  
Now, we fix a family of linear isometries $\J =\{J_D\}_{D \in \mathcal D_m}$ such that $J_D\colon \cc^{r_m} \to D$. For each $D \in \mathcal D_m$ we define the map  $T_D\colon \Srm \to \mathcal{S}(D)$  as  
    $$T_D(\rho) = J_D^{\vphantom{*}} \rho J_D^*. $$  
    Note that
$\tr(vT_D(\rho)v^*) = \tr(v\tfrac{\pi_D}{r_m}v^*)$ and   
\begin{equation}\label{eq:mixed}
T^{-1}_{vD}\bigg(\frac{vT_D(\rho)v^*}{\tr(v\tfrac{\pi_D}{r_m}v^*)}\bigg) =  u_{v,D}^{\hphantom{*}} \rho u_{v,D}^*
\end{equation}
    for $v \in \supp \mu$ and $D \in \D$ such that $\tr(v\pi_D v^*)>0$ since $u_{v,D}\propto J_{vD}^{-1}vJ_D$.

    We define the group $G_\J$ as before, i.e., as 
    the smallest closed subgroup of $\mathcal {SU}(r_m)$ containing the set $
    \{u_{v,D} \in {\mathcal {SU}(r_m)} \,|\, v \in \supp\mu,\: D \in \supp\chi_{{\rm inv}},\: \tr(v \pi_D v^*)>0\}$. 
      Then for $\rho \in \Sr$ we define
    $$m_{\rho,\J} = (g_{\rho})_\star{\rm H}_{G_\J},$$ where  ${\rm H}_{G_\J}$ is
     the normalized Haar measure on $G_\J$ and  
     $$\begin{array}{rcl} g_{\rho}\colon\ G_\J&\longrightarrow&\Sr\\[0.33em] u &\longmapsto& u\rho u^*. \end{array}$$ 
  Then we set
$$
        \nu_{\rho,\J} = (\Psi_\J)_\star(\chi_{{\rm inv}} \otimes m_{\rho, \J}) ,
 $$
    where 
  $$\begin{array}{rcl}\Psi_\J\colon\  \mathcal D_m \times \Sr& \longrightarrow&\bigcup_{D \in \mathcal D_m}\mathcal{S}(D) \subset \Sd \\[0.33em](D,\rho) &\longmapsto& T_D (\rho). \end{array}$$   
    Explicitly, for $f\in \mathcal C(\Sd)$,
    \begin{align*}
        \int_{ \Sd }f(\sigma)\d \nu_{\rho,\J}  (\sigma)
        & 
         =
        \int_{\mathcal D_m}\int_{G_\J} f (T_D(u \rho u^*)) \d\mathrm{H}_{G_\J}  (u) \d\chi_{\rm inv}(D). 
    \end{align*} 
     \begin{proposition}
          $\nu_{\rho,\J}$ is $\Pi$-invariant
     \end{proposition}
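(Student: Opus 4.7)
The plan is to mirror the proof of \Cref{prop:productmeas} (the pure-state version) while accounting for the fact that the action on the reference space is now by conjugation $u \mapsto u\rho u^{*}$ instead of $u \mapsto u\cdot \hat x$. The computation rests on three ingredients that are already in place: the two identities recorded immediately before the statement (namely $\tr(vT_D(\rho)v^*)=\tr(v\tfrac{\pi_D}{r_m}v^*)$ and \Cref{eq:mixed}), the left-invariance of the Haar measure $\mathrm H_{G_\J}$, and the $K$-invariance of $\chi_{\rm inv}$.

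First I would fix $f\in\mathcal C(\Sd)$ and unfold
\[
\int_{\Sd}\Pi f(\sigma)\,\d\nu_{\rho,\J}(\sigma)
=\int_{\Ld}\!\int_{\D}\!\int_{G_\J} f\!\left(\frac{vT_D(u\rho u^*)v^*}{\tr(vT_D(u\rho u^*)v^*)}\right)\tr\bigl(vT_D(u\rho u^*)v^*\bigr)\,\d\mathrm H_{G_\J}(u)\,\d\chi_{\rm inv}(D)\,\d\mu(v).
\]
Since $T_D(u\rho u^*)$ has support in $D$, the weight simplifies via the first identity to $\tr(v\tfrac{\pi_D}{r_m}v^*)$, which is independent of both $u$ and $\rho$. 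For this weight to be strictly positive (so that the normalized update is well defined) we are in the situation where $u_{v,D}\in S_\J\subset G_\J$, and \Cref{eq:mixed} then yields
\[
\frac{vT_D(u\rho u^*)v^*}{\tr(vT_D(u\rho u^*)v^*)}=T_{vD}\!\bigl(u_{v,D}^{\phantom *}u\rho u^{*}u_{v,D}^{*}\bigr).
\]

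Next comes the key step. For fixed $v$ and $D$ with $\tr(v\pi_Dv^*)>0$, the left-translation invariance of $\mathrm H_{G_\J}$ applied to the substitution $u\mapsto u_{v,D}^{-1}u$ gives
\[
\int_{G_\J}(f\circ T_{vD})\bigl(u_{v,D}^{\phantom *}u\rho u^{*}u_{v,D}^{*}\bigr)\,\d\mathrm H_{G_\J}(u)
=\int_{G_\J}(f\circ T_{vD})\bigl(u\rho u^{*}\bigr)\,\d\mathrm H_{G_\J}(u).
\]
Denoting the right-hand side by $g(vD)$, the remaining outer integral is
\[
\int_{\Ld}\!\int_{\D}g(vD)\,\tr\!\bigl(v\tfrac{\pi_D}{r_m}v^*\bigr)\,\d\chi_{\rm inv}(D)\,\d\mu(v)=\int_{\D}g(D')\,\d(\chi_{\rm inv}K)(D'),
\]
and the $K$-invariance of $\chi_{\rm inv}$ reduces this to $\int_{\D}g(D')\,\d\chi_{\rm inv}(D')=\int_{\Sd}f(\sigma)\,\d\nu_{\rho,\J}(\sigma)$, which is the desired identity.

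The argument is essentially routine and closely parallels \Cref{prop:productmeas}; the only point that deserves a brief comment is the same one as there, namely that the Haar substitution is only needed on the $(\mu\otimes\chi_{\rm inv})$-full-measure set $\{(v,D):\tr(v\pi_Dv^*)>0,\ u_{v,D}\in G_\J\}$, on which $u_{v,D}$ is well defined and belongs to $G_\J$ by the very definition of $S_\J$ and $G_\J$. Outside this set the integrand vanishes, so the computation is unaffected. No genuinely new difficulty arises relative to the pure-state case.
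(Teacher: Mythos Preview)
Your proof is correct and follows essentially the same route as the paper: unfold the pushforward, use the two displayed identities to replace the weight by $\tr(v\tfrac{\pi_D}{r_m}v^*)$ and the updated state by $T_{vD}(u_{v,D}u\rho u^*u_{v,D}^*)$, apply left-invariance of $\mathrm H_{G_\J}$, and conclude via the $K$-invariance of $\chi_{\rm inv}$. Your explicit remark on the $(\mu\otimes\chi_{\rm inv})$-null set where $\tr(v\pi_Dv^*)=0$ is a small extra care the paper leaves implicit, but otherwise the arguments coincide.
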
   
        \begin{proof}
     Recall that the invariance of Haar measure  guarantees that
        $$
           \int_{G_\J} h(\tilde u  u \rho u^* \tilde u^*) \,\mathrm{dH}_{G_\J}(u)  = \int_{G_\J} h(   u \rho u^* ) \,\mathrm{dH}_{G_\J}( u)
       $$
        for every $\tilde u \in G_\J$ and $h\in \mathcal C(\Sr)$. Using  also \Cref{eq:mixed} and the  $K$-invariance of $\chi_{{\rm inv}}$,  it follows that  
         \begin{align*}
            \int_{ \Sd }\Pi & f( \sigma) \,\mathrm{d}  \nu_{\rho,\J} (\sigma) 
            \\ &  =
            \int_{\Ld} \int_{\mathcal D_m} \left[\int_{G_\J} (f\circ T_{vD}) (u_{v,D}^{\vphantom{*}} u^{\vphantom{*}} \rho u^*u_{v,D}^* ) \,\mathrm{dH}_{G_\J}(u)\right] \tr(v\tfrac{\pi_D}{r_m}v^*)\d\chi_{\rm inv}(D)\d\mu(v)
            \\   & =
            \int_{\Ld} \int_{\mathcal D_m} \left[\int_{G_\J} (f\circ T_{vD}) (u \rho u^*) \,\mathrm{dH}_{G_\J}(u)\right] \tr(v\tfrac{\pi_D}{r_m}v^*)\d\chi_{\rm inv}(D)\d\mu(v)
            \\  & =
            \int_{\mathcal D_m}\int_{G_\J} (f\circ T_{D}) (u \rho u^*) \, \mathrm{dH}_{G_\J}(u)  \mathrm{d}[\chi_{\rm inv}K](D)
            \\  & =   \int_{ \Sd }  f( \sigma) \, \mathrm{d} \nu_{\rho,\J} (\sigma)
        \end{align*}          
     for every $f\in \mathcal C(\Sr)$, as desired.   
        \end{proof}

Therefore, $\Upsilon_\rho = \{\nu_{\rho,\J} \colon \rho \in \Sr\}$ is a set of $\Pi$-invariant measures.         
Following the same strategy as for $\pcd$, we can show via Raugi's theorem that $\Upsilon_\rho$ coincides with the set of all $\Pi$-ergodic measures on $\Sd$ if $\J$  is a minimal family. To reach this conclusion, we need to check the equicontinuity of $\Pi$ on $\Sd$ and characterize the $\Pi$-minimal subsets of $\Sd$. 

For the latter task, the following result is a counterpart of \Cref{thm:piminimalsets}. 
We do not provide the proof as it is identical as for pure states, up to writing $u\rho u^*$ instead of $u \cdot \hat x$ on the reference space and $w \tilde\rho  w^*/\tr(w \tilde \rho w^*)$ instead of $w \cdot \hat y$ on dark subspaces. In the $(\Rightarrow)$ part of the proof  the family of sets $X_D = T_D^{-1}(F \cap \mathcal{S}(D))\subset \Sr$ over $D \in \mathcal D_m$ is to be examined, where $F$ is a $\Pi$-minimal subset of $\Sd$.
\begin{theorem} 
Let $\J$ be a minimal family. Then every $\Pi$-minimal subset of $\Sd$ is of the form $\supp \nu_{\rho,\J}$ for some $\rho \in \Sd$.   
\end{theorem}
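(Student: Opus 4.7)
My plan is to mirror the proof of \Cref{thm:piminimalsets} line by line, replacing the projective framework with the density-matrix one. Concretely, the substitutions are: $\P(D)$ becomes $\mathcal{S}(D)$, the reference point $\hat x\in\pcr$ becomes $\rho\in\Srm$, the $G_\J$-action $u\cdot\hat x$ becomes conjugation $u\rho u^\ast$, and the normalized action $w\cdot\hat y$ becomes $\tilde\rho\mapsto w\tilde\rho w^\ast/\tr(w\tilde\rho w^\ast)$ for $w\in\W$ and $\tilde\rho$ supported in an appropriate dark subspace. The crucial identity driving the pure-state proof, namely $u_{w,D}\cdot\hat x\propto J_{wD}^{-1}wJ_D\cdot\hat x$, is replaced by \Cref{eq:mixed}, and the $\W$-invariance of closed $\Pi$-invariant subsets of $\Sd$ is supplied by \Cref{lem:Piinvclosed} (in place of \Cref{prop:invariance_pure}).

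For the ($\Leftarrow$) direction I would fix $\rho\in\Srm$ and a non-empty closed $\Pi$-invariant set $F\subset\supp\nu_{\rho,\J}$, and show $\supp\nu_{\rho,\J}\subset F$. Picking $\sigma_0=T_{D_0}(u_0\rho u_0^\ast)\in F$ and $\sigma_1=T_{D_1}(u_1\rho u_1^\ast)\in\supp\nu_{\rho,\J}$, I will produce $\widetilde w\in\W$ such that $\widetilde w\sigma_0\widetilde w^\ast/\tr(\widetilde w\sigma_0\widetilde w^\ast)=\sigma_1$. When $u_1u_0^{-1}=u_{w,D}\in R_\J$, the smartness of $\J$ (\Cref{thm:smartiffminimal}) combined with \Cref{prop:smart-propagates} yields $w_0,w_1\in\W$ with $J_D\propto w_0J_{D_0}$ and $J_{D_1}\propto w_1J_{wD}$. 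Setting $\widetilde w=w_1ww_0$, three applications of \Cref{eq:mixed} show that $\widetilde w$ sends $\sigma_0$ to $\sigma_1$ under the normalized conjugation action, and \Cref{lem:Piinvclosed} puts $\sigma_1$ in $F$. The general case $u_1u_0^{-1}\in G_\J$ follows by density of $R_\J\cap G_\J$ in $G_\J$ (\Cref{prop:smartgroup}) and compactness of $\W$.

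For the ($\Rightarrow$) direction, given a $\Pi$-minimal $F\subset\Sd$, define $X_D=T_D^{-1}(F\cap\mathcal{S}(D))\subset\Srm$. The argument proceeds in three steps matching \Cref{thm:piminimalsets}. Step~1 establishes that $X_D=X$ is non-empty and independent of $D$ for all $D\in\supchi$: non-emptiness uses the inclusion $F\subset\bigcup_{D\in\D}\mathcal{S}(D)$ (the first proposition of the appendix) combined with the $\W$-invariance of $F$ via \Cref{lem:Piinvclosed} and the reachability statement of \Cref{prop:suppchi}; independence then follows from smartness together with \Cref{lem:unitarypowers}(ii). Step~2 shows $uXu^\ast=X$ for every $u\in G_\J$: the inclusion $u_{w,D}Xu_{w,D}^\ast\subset X$ for $u_{w,D}\in R_\J\cap G_\J$ is immediate from \Cref{eq:mixed} and the $\W$-invariance of $F$, and the reverse inclusion follows from a conjugation-action version of \Cref{lem:unitaryinclusion}(ii); density through \Cref{prop:smartgroup} together with closedness of $X$ extends the equality to all of $G_\J$. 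Step~3 concludes: for any $\rho\in X$ the orbit $\{u\rho u^\ast:u\in G_\J\}$ lies in $X$, hence $F\supset\bigcup_{D\in\supchi}T_D(\{u\rho u^\ast:u\in G_\J\})=\supp\nu_{\rho,\J}$, and $\Pi$-minimality forces equality.

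The only fresh technical point is the conjugation-action analogue of \Cref{lem:unitaryinclusion}(ii): if $uAu^\ast\subset B\subset A\subset\Srm$ and $u\in\Urm$, then $A=B=uAu^\ast$. This follows from \Cref{lem:unitarypowers}(i) exactly as in the projective case, since iterating gives $u^nAu^{-n}\subset B\subset A$ and a subsequence $u^{n_k}\to\id$ forces $A\subset B\subset A$. With this micro-lemma in hand, the three-step argument translates verbatim, so the main obstacle, as the authors themselves indicate, is bookkeeping rather than substance.
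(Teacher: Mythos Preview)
Your proposal is correct and matches the paper's approach exactly: the authors explicitly state that the proof is identical to that of \Cref{thm:piminimalsets} up to the substitutions $u\cdot\hat x \rightsquigarrow u\rho u^*$ and $w\cdot\hat y \rightsquigarrow w\tilde\rho w^*/\tr(w\tilde\rho w^*)$, and they single out precisely the family $X_D = T_D^{-1}(F\cap\mathcal S(D))$ that you use in the $(\Rightarrow)$ part. Your identification of \Cref{lem:Piinvclosed} as the replacement for \Cref{prop:invariance_pure} and of \Cref{eq:mixed} as the replacement for the projective action identity is spot on.
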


For the equicontinuity of $\Pi$, we need the following two lemmas, which are counterparts of \Cref{lem:eqM,lem:eqVn}, respectively. 
 \begin{lemma}\label{lem:A2} 
     Let  $f \in \mathcal C(\Sd)$ be  Lipschitz. Let  $\rho_1,\rho_2 \in \Sd$ and $n \in \nnone$. Then
     $$\int_{W_n \in A} \left| f\left(\frac{W_n \rho_1 W_n^*}{\tr(W_n \rho_1 W_n^*)}\right) - f\left(\frac{W_n \rho_2 W_n^*}{\tr(W_n \rho_2 W_n^*)}\right)\right|  {\tr(W_n \rho_1 W_n^*)} \d\mutensorn
     \leq  C  \|\rho_1 - \rho_2\|_{\rm tr},$$
     where $A = \{w \in \V_n \colon {\tr(w \rho_2 w)} \geq  {\tr(w \rho_1 w^*)}\}$ and
     $C>0$ is a constant depending on the Lipschitz constant of $f$ and the dimension $d$ of the system.
 \end{lemma}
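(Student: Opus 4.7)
The plan is to mirror the proof of \Cref{lem:eqM}, replacing the geometry of wedge products in $\cc^d$ by trace-norm estimates for mixed states. Set $\sigma_i(w) = w\rho_iw^*/\tr(w\rho_iw^*)$ for $i=1,2$ and $w\in \V_n$ such that $\tr(w\rho_iw^*)>0$. The first step is to write
\begin{equation*}
\sigma_1(w) - \sigma_2(w) = \frac{w(\rho_1 - \rho_2)w^*}{\tr(w\rho_1 w^*)} + \frac{w\rho_2 w^*}{\tr(w\rho_2 w^*)} \cdot \frac{\tr(w(\rho_2-\rho_1)w^*)}{\tr(w\rho_1 w^*)},
\end{equation*}
which, via the triangle inequality for the trace norm and $\|w\rho_2w^*\|_{\rm tr} = \tr(w\rho_2 w^*)$, yields
\begin{equation*}
\|\sigma_1(w) - \sigma_2(w)\|_{\rm tr} \leq \frac{\|w(\rho_1-\rho_2)w^*\|_{\rm tr}}{\tr(w\rho_1 w^*)} + \frac{|\tr(w(\rho_2-\rho_1)w^*)|}{\tr(w\rho_1 w^*)}.
\end{equation*}
Since $|\tr(w(\rho_2-\rho_1)w^*)| \leq \|w(\rho_1-\rho_2)w^*\|_{\rm tr}$, multiplying by $\tr(w\rho_1 w^*)$ and invoking the Lipschitz property of $f$ (with constant $L$) gives
\begin{equation*}
\left|f(\sigma_1(w)) - f(\sigma_2(w))\right| \tr(w\rho_1 w^*) \leq 2L\,\|w(\rho_1-\rho_2)w^*\|_{\rm tr}
\end{equation*}
for every $w \in \V_n$ with $\tr(w\rho_1w^*)>0$. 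Note that here the restriction to $A$ plays no explicit role for the present bound; the constraint $\tr(w\rho_2w^*)\geq \tr(w\rho_1w^*)$ becomes relevant only in the analogue of the decomposition $\V_n = A \cup A^c$ in the Lipschitz step of the equicontinuity proof (cf.\ \Cref{lem:eqLip}), where one exchanges the roles of $\rho_1$ and $\rho_2$.

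The second step is to integrate. Decompose $\rho_1 - \rho_2 = A_+ - A_-$ with $A_\pm \geq 0$ having orthogonal supports, so that $\|\rho_1-\rho_2\|_{\rm tr} = \tr A_+ + \tr A_-$. Then $w A_\pm w^* \geq 0$ and the triangle inequality gives
\begin{equation*}
\|w(\rho_1-\rho_2)w^*\|_{\rm tr} \leq \tr(w A_+ w^*) + \tr(w A_- w^*).
\end{equation*}
Integrating against $\mu^{\otimes n}$ and using the stochasticity relation $\int_\Omega W_n^*W_n \d\mu^{\otimes n} = \id_{\cc^d}$,
\begin{equation*}
\int_\Omega \|W_n(\rho_1-\rho_2)W_n^*\|_{\rm tr} \d\mu^{\otimes n} \leq \tr\!\left(\int_\Omega W_n^*W_n\d\mu^{\otimes n}\,A_+\right) + \tr\!\left(\int_\Omega W_n^*W_n\d\mu^{\otimes n}\,A_-\right) = \|\rho_1-\rho_2\|_{\rm tr}.
\end{equation*}
Combining with the pointwise bound yields the claim with $C = 2L$.

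No serious obstacle is expected: unlike \Cref{lem:eqM}, the computation does not require an exterior-algebra detour because trace-norm contractivity of $w\cdot w^*$ is automatic. The only delicate point is to ensure that the term coming from the differing normalizations $\tr(w\rho_iw^*)$ is absorbed into $\|w(\rho_1-\rho_2)w^*\|_{\rm tr}$, which is precisely what the identity $|\tr(w(\rho_2-\rho_1)w^*)|\leq \|w(\rho_1-\rho_2)w^*\|_{\rm tr}$ accomplishes. The constant $C=2L$ in fact does not depend on $d$, so the statement is slightly conservative on that account.
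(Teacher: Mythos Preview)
Your proof is correct and follows the same overall structure as the paper's: both split $\sigma_1(w)-\sigma_2(w)$ into a ``numerator'' term $w(\rho_1-\rho_2)w^*$ and a ``normalization'' term, then bound each and integrate using stochasticity. The difference lies in the integration step. The paper bounds $\|w(\rho_1-\rho_2)w^*\|_{\rm tr}\leq \|\rho_1-\rho_2\|_{\rm tr}\,\tr(w^*w)$ pointwise via H\"older and then integrates $\tr(W_n^*W_n)$ to pick up a factor $d$, yielding $C=2Ld$. You instead keep $\|W_n(\rho_1-\rho_2)W_n^*\|_{\rm tr}$ under the integral and use the Jordan decomposition $\rho_1-\rho_2=A_+-A_-$ together with stochasticity applied to $A_\pm$ separately, arriving at $C=2L$ with no dimension dependence. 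Your route is slightly sharper and confirms, as you note, that the statement's allowance for $d$-dependence is not needed.

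One small clarification: your remark that ``the restriction to $A$ plays no explicit role'' is not quite right. Your algebraic identity and the ensuing bound require $\tr(w\rho_2 w^*)>0$ for $\sigma_2(w)$ to make sense, and on $A$ this is guaranteed by $\tr(w\rho_2 w^*)\geq \tr(w\rho_1 w^*)>0$ on the support of the measure $\tr(W_n\rho_1 W_n^*)\d\mu^{\otimes n}$. Off $A$ that implication fails, so the constraint does enter, if only to make the integrand well-defined almost surely.
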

 \begin{proof}  
Fix $w \in \W$ and observe that 
  \begin{align} \nonumber 
    \bigg \|  \frac{w \rho_1 w^*}{\tr(w \rho_1 w^*)}   -  \frac{w \rho_2 w^*}{\tr(w \rho_2 w^*)}\bigg \|_{\rm tr}   \tr(w \rho_1 w^*)  & \tr(w \rho_2 w^*) 
      \\  \leq  & \  {\left \|  {w \rho_1 w^*}{\tr(w \rho_2 w^*)} -  {w \rho_2 w^*}{\tr(w \rho_2 w^*)}\right \|_{\rm tr}  }   \nonumber   
    \\[0.75em]  \nonumber    & +  { \left \|{w \rho_2 w^*}{\tr(w \rho_1 w^*)} -   {w \rho_2 w^*}{\tr(w \rho_2 w^*)}\right \|_{\rm tr}}  
     \\[0.75em]  =   & \  \left \|  {w (\rho_1 - \rho_2) w^*}{}     \right \|_{\rm tr} \tr(w \rho_2 w^*)  \label{eq:app1a}
    \\[0.75em] &+  |\tr(w^*w (\rho_1 - \rho_2) )  |\: \| w \rho_2 w^* \|_{\rm tr}.   \label{eq:app1b}
 \end{align}  In what follows, we bound the two terms in the last sum above separately.
 
For \Cref{eq:app1a}, H\"{o}lder's inequality for Schatten norms used twice implies $\|w(\rho_1-\rho_2)w^*\|_{\tr}\leq \|\rho_1-\rho_2\|_{\tr} \|w\|^2$. Then, the inequality $\|w\|^2\leq \tr(w^*w)$ implies
\begin{align}\label{eq:app1aa}
     \left \|  {w (\rho_1 - \rho_2) w^*}{}     \right \|_{\rm tr}  \tr(w \rho_2 w^*)  
     \leq   \|\rho_1 - \rho_2\|_{\rm tr} \tr(w^*w)  \tr(w \rho_2 w^*).  
\end{align} 

For \Cref{eq:app1b}, matrix H\"{o}lder's inequality and the domination of the sup norm by the trace norm lead to
\begin{align}\label{eq:app1bb}
     |\tr(w^*w (\rho_1 - \rho_2) )  | \: \| w \rho_2 w^* \|_{\rm tr}   
      \leq  \|\rho_1 - \rho_2\|_{\rm tr}  \tr(w^*w)  \tr(w \rho_2 w^*).
 \end{align}
As a consequence, for every $w \in \W$ such that $\tr(w \rho_2 w^*) >0$, merging bounds \eqref{eq:app1aa} and \eqref{eq:app1bb} and dividing by $\tr(w \rho_2 w^*)$ lead to
\begin{align*}
    \bigg \| \frac{w \rho_1 w^*}{\tr(w \rho_1 w^*)}   -   \frac{w \rho_2 w^*}{\tr(w \rho_2 w^*)}\bigg \|_{\rm tr}  \tr(w \rho_1 w^*)  &   \leq    2  \|\rho_1 - \rho_2\|_{\rm tr} \tr(w^*w).  
\end{align*}
Therefore, using the Lipschitz property of $f$, 
     \begin{align*} 
         \bigg| f\bigg(\frac{w \rho_1 w^*}{\tr(w \rho_1 w^*)}\bigg)- \bigg(\frac{w \rho_2 w^*}{\tr(w \rho_2 w^*)}\bigg)\bigg|  \tr(w \rho_1 w^*)  
         \leq 
        2L   \|\rho_1 - \rho_2\|_{\rm tr} \tr(w^*w).
     \end{align*}
     
     It remains to note that both ${W_n \rho_1 W_n^*}/{\tr(W_n \rho_1 W_n^*)}$ and ${W_n \rho_2 W_n^*}/{\tr(W_n \rho_2 W_n^*)}$ are almost surely well defined with respect to the measure $ \tr(W_n \rho_1 W_n^*)  \d\mutensorn$ on $A$. The stochasticity condition assures that $\int_{W_n \in A} \tr(W_n^*W_n)  \d\mutensorn = d$, so it follows that
     \begin{align*}\int\limits_{W_n \in A} \bigg| f\bigg(\frac{W_n \rho_1 W_n^*}{\tr(W_n \rho_1 W_n^*)}\bigg) - f\bigg(\frac{W_n \rho_2 W_n^*}{\tr(W_n \rho_2 W_n^*)}&\bigg)\bigg|   {\tr(W_n \rho_1 W_n^*)} \d\mutensorn 
          \leq 2L d    \|\rho_1 - \rho_2\|_{\rm tr}, 
     \end{align*}
     which concludes the proof.
 \end{proof} 
 
 \begin{lemma}\label{lem:A3}  Let $\rho_1, \rho_2 \in \Sd$ and $n \in \nnone$. Then
     \begin{align*} \int_{\Omega}  \big| \tr(W_n \rho_1 W_n^*) - \tr(W_n \rho_2 W_n^*) \big|   \d\mutensorn
         \leq    \|\rho_1 - \rho_2\|_{\rm tr}.\end{align*}
 \end{lemma}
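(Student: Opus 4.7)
The plan is to mirror the argument of \Cref{lem:eqVn} but to exploit trace-norm bounds rather than the rank-2 reduction that worked for rank-one projectors. The starting observation is that, by the cyclic property of the trace, for every $w \in \V_n$
\[
\big| \tr(w \rho_1 w^*) - \tr(w \rho_2 w^*) \big| = \big| \tr(w^*w\,(\rho_1 - \rho_2)) \big|.
\]
Set $A = \rho_1 - \rho_2$. Since $A$ is Hermitian, it admits a Jordan decomposition $A = A_+ - A_-$ with $A_\pm \geq 0$ and $A_+ A_- = 0$, so that $|A| = A_+ + A_-$ and $-|A| \leq A \leq |A|$. Because $w^*w \geq 0$, this operator inequality yields
\[
\big| \tr(w^*w\, A) \big| \leq \tr(w^*w\, |A|).
\]

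Applying this pointwise in $\omega$ to $W_n(\omega)$ and integrating against $\mu^{\otimes n}$, we would then invoke the iterated stochasticity condition
\[
\int_{\Omega} W_n^* W_n \d\mutensorn = \id_{\cc^d},
\]
which follows from \Cref{eq:stoch} by induction on $n$ (conditioning on the first $n-1$ factors). Using Fubini (all quantities are non-negative), we obtain
\[
\int_{\Omega} \tr\big(W_n^*W_n \, |A|\big) \d\mutensorn = \tr\!\bigg(\Big(\!\int_{\Omega} W_n^*W_n \d\mutensorn\Big)|A|\bigg) = \tr(|A|) = \|\rho_1 - \rho_2\|_{\rm tr}.
\]
Combining the two displays yields the claimed bound.

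I do not anticipate any real obstacle: the estimate is essentially a one-line consequence of the operator inequality $-|A|\leq A\leq |A|$, the positivity of $W_n^*W_n$, and the stochasticity condition. Compared to \Cref{lem:eqVn}, we avoid the rank-2 factor of $2$ precisely because $\|\cdot\|_{\rm tr}$ already sums the absolute eigenvalues of $\rho_1-\rho_2$, so no Schatten-to-operator norm conversion is needed.
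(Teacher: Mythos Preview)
Your proof is correct and follows essentially the same approach as the paper: the pointwise bound $|\tr(w^*w(\rho_1-\rho_2))|\leq \tr(w^*w\,|\rho_1-\rho_2|)$ (which you justify via the Jordan decomposition, and the paper simply attributes to the argument of \Cref{lem:eqVn}) followed by integration using the stochasticity condition $\int_\Omega W_n^*W_n\,\d\mutensorn=\id$. Your additional remark explaining the absence of the factor $2$ compared to \Cref{lem:eqVn} is accurate and helpful.
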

 \begin{proof}  
    As in   \Cref{lem:eqVn}, $\big| \tr(w \rho_1 w^*) - \tr(w \rho_2 w^*) \big|  \leq \tr(  w^* w   |\rho_1 - \rho_2|)$ for every  $w \in \V_n$, so  
     \begin{align*} \int_{\Omega}  \big| \tr(W_n \rho_1 W_n^*) - \tr(W_n \rho_2 W_n^*) \big|   \d\mutensorn
              \leq      
           \tr \bigg( \int_{\Omega} W_n^* W_n  \d\mutensorn |\rho_1 - \rho_2| \bigg) 
              =    \|\rho_1 - \rho_2 \|_{\rm tr},\end{align*} 
as desired.\end{proof}

Estimations analogous to those in the the proof of \Cref{lem:eqLip} together with \Cref{lem:A2,lem:A3} lead to the following result.
\begin{theorem} 
$\Pi$ is equicontinuous on $\Sd$.  
\end{theorem}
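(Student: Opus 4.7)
The plan is to imitate the argument of Theorem \ref{thm:Piequicont} from Subsection \ref{subs:equicont}, using the density-matrix analogues \Cref{lem:A2,lem:A3} in place of \Cref{lem:eqM,lem:eqVn}. The pure-state proof never used purity in an essential way: it rested on (i) a Lipschitz-type bound on the integrand over the half of the sample space where one trajectory carries more weight than the other, (ii) a total-variation-type control of the difference of the weights themselves, and (iii) a density argument combined with the contraction property $\|\Pi\|_\infty = 1$. All three ingredients transfer to $\Sd$ without change of structure.

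First I would establish a Lipschitz version of the theorem, i.e., that for every Lipschitz $f \in \mathcal C(\Sd)$ there exists a constant $C_f$, independent of $n$, such that
$$|\Pi^n f(\rho_1) - \Pi^n f(\rho_2)| \leq C_f \|\rho_1 - \rho_2\|_{\rm tr}$$
for all $\rho_1, \rho_2 \in \Sd$ and $n\in\nnone$. Setting $A = \{w\in\V_n \colon \tr(w\rho_2 w^*) \geq \tr(w\rho_1 w^*)\}$, I decompose the integral defining $\Pi^n f(\rho_1) - \Pi^n f(\rho_2)$ over $A$ and $A^c$. On each piece, adding and subtracting $f\bigl(\tfrac{W_n \rho_j W_n^*}{\tr(W_n \rho_j W_n^*)}\bigr)\tr(W_n \rho_i W_n^*)$ with $i \neq j$ chosen so the remaining weight is the smaller of the two, and applying the triangle inequality, produces a sum of the form
\begin{align*}
    |\Pi^n f(\rho_1) - \Pi^n f(\rho_2)| \leq \ & \int_{W_n \in A} \bigl| f\bigl(\tfrac{W_n \rho_1 W_n^*}{\tr(W_n \rho_1 W_n^*)}\bigr) - f\bigl(\tfrac{W_n \rho_2 W_n^*}{\tr(W_n \rho_2 W_n^*)}\bigr)\bigr|\,\tr(W_n \rho_1 W_n^*) \d\mutensorn \\
    & + \int_{W_n \in A^c} \bigl| f\bigl(\tfrac{W_n \rho_1 W_n^*}{\tr(W_n \rho_1 W_n^*)}\bigr) - f\bigl(\tfrac{W_n \rho_2 W_n^*}{\tr(W_n \rho_2 W_n^*)}\bigr)\bigr|\,\tr(W_n \rho_2 W_n^*) \d\mutensorn \\
    & + \|f\|_\infty \int_{\Omega}\bigl|\tr(W_n \rho_1 W_n^*) - \tr(W_n \rho_2 W_n^*)\bigr| \d\mutensorn.
\end{align*}
The first two integrals are each bounded by a multiple of $\|\rho_1 - \rho_2\|_{\rm tr}$ via \Cref{lem:A2} (applied with the roles of $\rho_1, \rho_2$ swapped on $A^c$ so the smaller weight lies in the denominator), while the third is controlled by \Cref{lem:A3}. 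This yields the claimed Lipschitz bound with a constant depending only on $L$, $\|f\|_\infty$, and $d$.

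The conclusion for a general $f \in \mathcal C(\Sd)$ follows by approximation. Since $\Sd$ is a compact metric space (with the trace-norm metric), Lipschitz functions are dense in $\mathcal C(\Sd)$; so given $\varepsilon >0$ I can choose a Lipschitz $f_L$ with $\|f - f_L\|_\infty < \varepsilon/3$. Using $\|\Pi\|_\infty = 1$, we have $\|\Pi^n(f - f_L)\|_\infty \leq \|f - f_L\|_\infty < \varepsilon/3$ for every $n$, so
$$|\Pi^n f(\rho_1) - \Pi^n f(\rho_2)| \leq 2\|\Pi^n(f - f_L)\|_\infty + |\Pi^n f_L(\rho_1) - \Pi^n f_L(\rho_2)| < \tfrac{2\varepsilon}{3} + C_{f_L}\|\rho_1 - \rho_2\|_{\rm tr},$$
which is at most $\varepsilon$ as soon as $\|\rho_1 - \rho_2\|_{\rm tr} < \varepsilon/(3 C_{f_L})$, a bound independent of $n$. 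This is the desired uniform equicontinuity of $(\Pi^n f)_{n\in \nnone}$.

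The main technical point, just as in the pure-state case, is organising the splitting at $A$ so that the larger of the two weights always sits in the denominator of the bound produced by \Cref{lem:A2}; this is the role of the set $A$ and its complement. The only genuinely new ingredient compared to the pure-state argument is the trace-norm Lipschitz estimate already packaged into \Cref{lem:A2}, whose proof absorbs the non-trivial work of handling simultaneously the normalising denominators $\tr(W_n \rho_i W_n^*)$ and the operator-valued numerators.
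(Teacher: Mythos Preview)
Your proposal is correct and follows exactly the approach the paper intends: the paper's proof consists of the single sentence ``Estimations analogous to those in the proof of \Cref{lem:eqLip} together with \Cref{lem:A2,lem:A3} lead to the following result,'' and what you have written is precisely that analogue, including the $A/A^c$ splitting, the three-term decomposition, and the density-plus-contraction argument of \Cref{thm:Piequicont}.
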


Therefore, applying Raugi's \Cref{thm:raugi}, we arrive at the desired conclusion.
\begin{theorem} 
Let $\J$ be a minimal family. Then $\Upsilon_\J$ coincides with the set of all $\Pi$-ergodic measures on $\Sd$. 
As a consequence, a  probability measure  $\nu$ on $\Sd$  is  $\Pi$-invariant    iff 
$$\nu =  \int_{ \Sr} \nu_{\rho,\J}  \d\lambda(\rho)$$ for some probability measure $\lambda$ on $ \Sr$.  
\end{theorem}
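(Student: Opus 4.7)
The plan is to mirror exactly the strategy used in Section~\ref{sec:invmeasures} for pure states, now carried out on $\Sd$. The backbone is Raugi's \Cref{thm:raugi}, applied to the Feller kernel $\Pi$ acting on the compact space $\Sd$: once we know that $\Pi$ is equicontinuous and that the $\Pi$-minimal subsets of $\Sd$ are exactly the sets $\supp\nu_{\rho,\J}$ for $\rho\in\Sr$, the characterization of $\Pi$-ergodic measures follows essentially by matching: each $\Pi$-ergodic measure $\nu$ must be the unique invariant probability measure on the $\Pi$-minimal set $\supp\nu$, and since $\nu_{\rho,\J}$ is already known to be $\Pi$-invariant and supported on a $\Pi$-minimal set, the two must coincide.

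The two missing inputs are provided in the appendix and should be invoked in turn. First, I would appeal to the equicontinuity result: the Lipschitz estimates of \Cref{lem:A2} and \Cref{lem:A3} replace their $\pcd$ counterparts \Cref{lem:eqM} and \Cref{lem:eqVn}, and feeding them into the splitting argument of \Cref{lem:eqLip} (with the decomposition $\Ld^{\times n}=A\cup A^c$ according to which of $\tr(W_n\rho_1W_n^*)$, $\tr(W_n\rho_2W_n^*)$ is larger) yields $\|\Pi^n f(\rho_1)-\Pi^n f(\rho_2)\|\le C\|\rho_1-\rho_2\|_{\rm tr}$ uniformly in $n$ for Lipschitz $f$, whence equicontinuity for all $f\in\mathcal C(\Sd)$ by density. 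Second, I would use the characterization of $\Pi$-minimal subsets of $\Sd$ stated in the appendix: by \Cref{thm:conv to Dark} any such set sits inside $\bigcup_{D\in\D}\mathcal S(D)$, and then the proof of \Cref{thm:piminimalsets} carries over verbatim upon replacing $J_D\cdot\hat x$ by $T_D(\rho)=J_D\rho J_D^*$, $w\cdot\hat y$ by $w\tilde\rho w^*/\tr(w\tilde\rho w^*)$ on dark subspaces, and $u\cdot\hat x$ by $u\rho u^*$ on the reference space $\Sr$. The key identity making this substitution work is \Cref{eq:mixed}, which ensures that one step of the dynamics factors into ``shift the dark subspace via $v$'' and ``act by $u_{v,D}$ on the reference state.''

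With equicontinuity and the identification of $\Pi$-minimal sets in hand, the first claim follows in the same two-line argument as \Cref{thm:charPiinvmeas}. For ($\Rightarrow$), any $\Pi$-ergodic $\nu$ has $\supp\nu$ $\Pi$-minimal, so $\supp\nu=\supp\nu_{\rho,\J}$ for some $\rho\in\Sr$; since $\nu_{\rho,\J}$ is $\Pi$-invariant and \Cref{thm:raugi}(ii) gives uniqueness on a minimal set, $\nu=\nu_{\rho,\J}$. For ($\Leftarrow$), each $\nu_{\rho,\J}$ is uniquely invariant on the $\Pi$-minimal set $\supp\nu_{\rho,\J}$, hence extremal in the set of $\Pi$-invariant probability measures on $\Sd$, i.e., ergodic. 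The integral representation of arbitrary $\Pi$-invariant measures then follows from the ergodic decomposition theorem on the compact metrizable space $\Sd$: every invariant probability measure is a mixture of ergodic ones, and ergodicity has just been identified with the parametrization $\rho\mapsto\nu_{\rho,\J}$ from $\Sr$. Measurability of this parametrization (needed to express the mixture as an integral over $\Sr$ against a probability measure $\lambda$) follows from the explicit formula $\nu_{\rho,\J}=(\Psi_\J)_\star(\chi_{\rm inv}\otimes m_{\rho,\J})$ and continuity of $\rho\mapsto m_{\rho,\J}$ in the weak-$*$ topology.

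The main obstacle I anticipate is purely bookkeeping: verifying that the minimal-set characterization proof of \Cref{thm:piminimalsets} really does translate line-for-line to $\Sd$. The delicate point is the first step, where one shows that the sets $X_D:=T_D^{-1}(F\cap\mathcal S(D))\subset\Sr$ are independent of $D\in\supp\chi_{\rm inv}$ and closed under $G_\J\cap R_\J$ (hence under all of $G_\J$ by density, \Cref{prop:smartgroup}). The argument again uses \Cref{prop:suppchi-invariance} and the $\Pi$-analogue of \Cref{prop:invariance_pure} on mixed states, namely \Cref{lem:Piinvclosed}, together with \Cref{lem:unitaryinclusion}(ii); what one needs to check is that \Cref{eq:mixed} correctly encodes the action $u_{w,D}\rho u_{w,D}^*\in X_{wD}$ whenever $\rho\in X_D$ and $w\in\W$ with $wD\neq\{0\}$. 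Once this is verified, the rest of the proof in the mixed setting follows the pure-state blueprint without further difficulty.
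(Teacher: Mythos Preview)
Your proposal is correct and follows essentially the same route as the paper: invoke the equicontinuity of $\Pi$ on $\Sd$ (via \Cref{lem:A2,lem:A3} fed into the splitting argument of \Cref{lem:eqLip}) and the identification of $\Pi$-minimal subsets of $\Sd$ as the sets $\supp\nu_{\rho,\J}$ (the proof of \Cref{thm:piminimalsets} transported via \Cref{eq:mixed} and the family $X_D=T_D^{-1}(F\cap\mathcal S(D))$), then apply Raugi's \Cref{thm:raugi} exactly as in \Cref{thm:charPiinvmeas}. The paper's own proof is no more than the one-line ``applying Raugi's \Cref{thm:raugi}, we arrive at the desired conclusion,'' so your write-up is in fact more detailed than the original while remaining faithful to it.
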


As already mentioned, for density matrices we always have multiple $\Pi$-invariant mutually singular measures. 
This is because the unitary dynamics that takes place on dark subspaces preserves the state spectrum. Thus, each set of states with the same non degenerate spectrum carries a $\Pi$-invariant probability measure and this measure is unique iff $G_{\rm min}$ acts transitively on the set $\{(\hat x_1,\dotsc,\hat x_{r_m}): (x_1,\dotsc,x_{r_m})\mbox{ is an orthonormal basis of }\cc^{r_m}\}$, e.g., if $G_{\rm min} = \mathcal{SU}(r)$.

\begin{theorem}  Fix $\lambda_1 \geq \ldots \geq \lambda_{r_m} \geq 0$ such that $\sum_{i=1}^{r_m} \lambda_i = 1$.  If $G_{\rm min} = \mathcal{SU}(r_m)$, there exists a unique $\Pi$-invariant probability measure  on $\{\rho \in \Sd \,|\, \operatorname{ev}_{i}(\rho) = \lambda_i, i = 1, \ldots, r_m\}$, where $\operatorname{ev}_{i}(\rho)$ is the $i$-th largest eigenvalue of $\rho$ (counted with multiplicities), and  the formula for this measure reads 
 $$   \int \operatorname{Unif}_{\mathcal S_\lambda(D)} \d\chi_{\rm inv}(D),$$
 where $\mathcal S_\lambda(D) = \{\rho \in \mathcal S(D) \,|\,  \operatorname{ev}_{i}(\rho) = \lambda_i, i = 1, \ldots, r_m\}$. 
 In particular,  for $\lambda_1 =  1$ this measure is $\nu_{\rm unif}$, and for $\lambda_1 = \ldots = \lambda_{r_m} =\frac {1}{r_m}$ it is $\nu_{\rm ch}$. 
\end{theorem}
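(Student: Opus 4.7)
The strategy is to reduce the statement to the classification of $\Pi$-invariant probability measures on $\Sd$ established in the preceding theorem, and then exploit the transitivity of $\mathcal{SU}(r_m)$ on isospectral sets. Fix a minimal family $\J$. By the classification, any $\Pi$-invariant probability measure $\nu$ on $\Sd$ can be written as $\nu = \int_{\Sr}\nu_{\rho,\J}\d\Lambda(\rho)$ for some probability measure $\Lambda$ on $\Sr$, so the task becomes to pin down $\Lambda$ from the assumption that $\nu$ is supported in $E_\lambda = \{\rho \in \Sd \,|\, \operatorname{ev}_i(\rho) = \lambda_i\}$.

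The first step is to observe that the spectrum is preserved by $\Pi$ on states supported in maximal dark subspaces. Indeed, if $\rho \in \mathcal{S}(D)$ with $D \in \D$ and $v \in \supp\mu$ satisfies $v\rho v^* \neq 0$, then by \Cref{prop:darkdefprop}(i), $v|_D = \alpha U|_D$ for some $\alpha > 0$ and $U \in \Ud$; hence $v\rho v^*/\tr(v\rho v^*) = U\rho U^*|_{\cc^d}$, which has the same spectrum as $\rho$. Since every $\Pi$-invariant measure on $\Sd$ is supported in $\bigcup_{D\in\D}\mathcal{S}(D)$ and $\Psi_\J$ restricted to $\mathcal D_m \times \Sr$ preserves spectrum (because each $J_D$ is an isometry), if $\nu$ is supported on $E_\lambda$ then $\Lambda$ must be supported on the isospectral set $\Sigma_\lambda := \{\sigma \in \Sr \,|\, \operatorname{ev}_i(\sigma) = \lambda_i\}$.

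The second step uses the hypothesis $G_{\rm min} = \mathcal{SU}(r_m)$. The conjugation action $u\cdot\sigma = u\sigma u^*$ of $\mathcal{SU}(r_m)$ on $\Sr$ is transitive on each isospectral set by the spectral theorem. Therefore, for every $\rho \in \Sigma_\lambda$, the orbit $\{u\rho u^* \colon u \in \mathcal{SU}(r_m)\}$ coincides with $\Sigma_\lambda$, and $m_{\rho,\J} = (g_\rho)_\star \operatorname{H}_{\mathcal{SU}(r_m)}$ is, by uniqueness of invariant measures on homogeneous spaces, the unique $\mathcal{SU}(r_m)$-invariant probability measure on $\Sigma_\lambda$. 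In particular, $m_{\rho,\J}$ does not depend on $\rho \in \Sigma_\lambda$, and consequently neither does $\nu_{\rho,\J}$. Denoting this common measure $\nu_\lambda$, we obtain $\nu = \int_{\Sigma_\lambda}\nu_\lambda\d\Lambda(\rho) = \nu_\lambda$, which proves uniqueness.

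It remains to identify $\nu_\lambda$ with the formula in the statement. For each $D\in\D$, the map $\sigma\mapsto J_D\sigma J_D^*$ pushes forward the unique $\mathcal{SU}(r_m)$-invariant probability measure on $\Sigma_\lambda$ to a probability measure on $\mathcal S_\lambda(D)$ invariant under the conjugation action of the unitary group of $D$; this is, by definition, $\operatorname{Unif}_{\mathcal S_\lambda(D)}$. Hence
\[
\nu_\lambda = (\Psi_\J)_\star(\chi_{\rm inv} \otimes m_{\rho_\lambda,\J}) = \int_{\D} \operatorname{Unif}_{\mathcal S_\lambda(D)}\d\chi_{\rm inv}(D),
\]
and specializing to $\lambda = (1,0,\ldots,0)$ recovers $\nu_{\rm unif}$, while $\lambda = (1/r_m,\ldots,1/r_m)$ gives the Dirac mass at $\pi_D/r_m$ inside each dark subspace and hence $\nu_{\rm ch}$. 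The main subtlety of the argument, where care is needed, is the first step: one must check that, although $E_\lambda$ is not a priori $\Pi$-invariant as a subset of the whole of $\Sd$, its intersection with $\bigcup_{D\in\D}\mathcal{S}(D)$ is, and this is what allows the classification theorem to be applied with $\Lambda$ forced onto $\Sigma_\lambda$.
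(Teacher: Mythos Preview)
Your proof is correct and follows essentially the same line of reasoning as the paper. The paper does not actually write out a formal proof of this theorem; it is presented as a direct consequence of the classification theorem for $\Sd$ and the observation, made in the text just before the statement, that the unitary dynamics inside dark subspaces preserves the spectrum and that $\mathcal{SU}(r_m)$ acts transitively on isospectral sets --- precisely the two ingredients you use.
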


\begin{remark}In the case $r_m = 2$, the purity of a state (i.e. $\tr(\rho^2)$) determines its spectrum. Assume that $G_{\rm min}   = \mathcal{SU}(2)$. Then for every given level of purity there exists a unique $\Pi$-invariant probability measure. That is, if $s \in [\frac 12, 1]$, the unique $\Pi$-invariant probability measure on $\{\rho \in \Sd \,|\, \tr(\rho^2) = s\}$ is 
 $$\int \operatorname{Unif}_{\mathcal S_s(D)} \d\chi_{\rm inv}(D),$$
 where  $\mathcal S_s(D) = \{\rho \in \mathcal S(D)\,|\,  \tr(\rho^2) = s\}$. In particular, this remark applies to Examples \hyperref[example1]{1}~\&~\hyperref[example2]{2}. 
\end{remark}

\addtocontents{toc}{\protect\setcounter{tocdepth}{0}}
 \section*{Acknowledgments}
The authors were supported by the ANR project ``ESQuisses'' grant no. \mbox{ANR-20-CE47-0014-01} and by the ANR project ``Quantum Trajectories'' grant no. ANR-20-CE40-0024-01.
 \addtocontents{toc}{\protect\setcounter{tocdepth}{1}}
 
    \printbibliography
     
\end{document}